\newtheorem{thm}{Theorem}[subsection]
\newtheorem{prop}[thm]{Proposition}
\newtheorem{cor}[thm]{Corollary}
\newtheorem{lem}[thm]{Lemma}
\newtheorem{lemma}[thm]{Lemma}
\theoremstyle{definition}
\newtheorem{definition}[thm]{Definition}
\newtheorem{ex}[thm]{Example}
\newtheorem{rem}[thm]{Remark}
\newtheorem{art}[thm]{}
\newtheorem{cons}[thm]{Construction}
\newtheorem{remi}[thm]{Reminder}
\newtheorem{theointro}{Theorem}
\numberwithin{paragraph}{section}
\numberwithin{equation}{subsection}
\def\P{{\mathbb P}}
\def\N{{\mathbb N}}
\def\Z{{\mathbb Z}}
\def\R{{\mathbb R}}
\def\C{{\mathbb C}}
\def\A{{\mathbb A}}
\def\G{{\mathbb G}}
\def\L{{\mathbb L}}
\def\T{{\mathbb T}}
\def\KC{{\mathscr C}}
\def\KO{{\mathcal O}}
\newcommand{\metr}{{\|\hspace{1ex}\|}}
\newcommand{\Hom}{{\rm Hom}}
\def\an{{\rm an}}
\def\div{{\rm div}}
\def\trop{{\rm trop}}
\def\Trop{{\rm Trop}}
\def\tor{{\rm tor}}
\DeclareMathOperator{\TZ}{TZ}
\DeclareMathOperator{\rec}{rec}
\mathchardef\mdash="2D
\newcommand{\Xan}{{X^{\rm an}}}
\newcommand{\Yan}{{Y^{\rm an}}}
\newcommand{\Zan}{{Z^{\rm an}}}
\newcommand{\Uan}{{U^{\rm an}}}
\newcommand{\Ccal}{{\mathscr C}}
\newcommand{\Dcal}{{\mathscr D}}
\newcommand{\Ocal}{{\mathscr O}}
\newcommand{\Tcal}{{\mathscr T}}
\newcommand{\Div}{{\rm div}}
\newcommand{\cyc}{{\rm cyc}}
\newcommand{\Spec}{{\rm Spec}}
\newcommand{\im}{{\rm im}}
\newcommand{\Star}{{\rm star}}
\newcommand{\supp}{{\rm supp}}
\newcommand{\relint}{{\rm relint}}
\newcommand{\tint}{{\rm int}}
\newcommand{\dpa}{{d'_{\rm P}}}
\newcommand{\dpb}{{d''_{\rm P}}}
\newcommand{\Rsup}{{\R_{\infty}}}
\newcommand{\torus}{{\T}}
\newcommand{\alphatrop}{{\alpha_{\trop}}}
\newcommand{\betatrop}{{\beta_{\trop}}}
\newcommand{\varphitrop}{{\varphi_{\trop}}}
\newcommand{\psitrop}{{\psi_{\trop}}}
\DeclareMathOperator{\PS}{PS}
\DeclareMathOperator{\Th}{Th}
\DeclareMathOperator{\Conv}{Conv}
\DeclareMathOperator{\Cone}{Cone}
\DeclareMathOperator{\MA}{MA}
\begin{document}

\title{A tropical formula for non-archimedean local heights}

\author[J.I.Burgos Gil]{Jos\'e Ignacio Burgos Gil\,\orcidlink{0000-0003-2640-2190}}
\address{
J. I. Burgos Gil,
Instituto de Ciencias Matem\'aticas (CSIC-UAM-UCM-UCM3), 
Calle Nicol\'as Cabrera 15, Campus de la Universidad 
Aut\'onoma de Madrid, Cantoblanco, 28049 Madrid, Spain}
\email{burgos@icmat.es}

\author[W.~Gubler]{Walter Gubler \,\orcidlink{0000-0003-2782-5611}}
\address{W. Gubler, Mathematik, Universit{\"a}t 
Regensburg, 93040 Regensburg, Germany}
\email{walter.gubler@mathematik.uni-regensburg.de}

\author[K.~K\"unnemann]{Klaus K{\"u}nnemann\,\orcidlink{0000-0003-1109-9096}}
\address{K. K{\"u}nnemann, Mathematik, Universit{\"a}t 
Regensburg, 93040 Regensburg, Germany}
\email{klaus.kuennemann@mathematik.uni-regensburg.de}

 \thanks{J.~I.~Burgos was partially supported by grants
   PID2022-142024NB-I00 and  CEX2023-001347-S funded by
   MICIU/AEI/10.13039/501100011033.
W.~Gubler and K.~K{\"u}nnemann were supported by the collaborative research center SFB 1085 \emph{Higher Invariants - Interactions between Arithmetic Geometry and Global Analysis} funded by the Deutsche Forschungsgemeinschaft.}

\date{\today}

\begin{abstract}

We introduce $\delta$-forms on tropical toric varieties generalizing the construction of Mihatsch for $\R^n$. These $\delta$-forms will be used to define the star-product with Green functions of piecewise smooth type on a tropical toric variety. As an application, we show that non-archimedean local heights of projective varieties can be computed using the star-product on a suitable complete tropical toric variety. 
  
On the way, we show that open subsets of a simplicial tropical toric variety have a locally finite simplicial decomposition which is constant towards the boundary.
\bigskip

MSC: Primary 14G40; Secondary  14M25, 14T25, 32P05, 32U40, 52B20. 
\end{abstract}

\maketitle
\setcounter{tocdepth}{1}

\tableofcontents

\section{Introduction}\label{intro}

Arakelov theory provides a modern approach to diophantine geometry that uses schemes as models at finite places and the analysis of Green currents on complex manifolds at infinite places. 
In the case of curves, this ingenious construction goes back to Arakelov who was motivated by Mordell's conjecture. 
Faltings used Arakelov theory to solve this major conjecture from the previous century. Gillet and Soul\'e laid the foundations for Arakelov theory in higher dimension. 
The backbone of their theory is an arithmetic intersection product on projective regular schemes over the integers which is induced by the usual intersection product at geometric parts and by a $*$-product of Green currents on the associated complex manifold. 
The arithmetic intersection product is used to introduce heights of projective varieties as an arithmetic degree. 
This construction goes back to Faltings in his proof of the Mordell--Lang conjecture for closed subvarieties of abelian varieties.

As the canonical heights for closed subvarieties of abelian varieties cannot be handled properly by using models, there is a natural desire to handle the finite places also in an analytic way by using an analogue of Green currents on the underlying Berkovich analytic space. 
The theory of real valued differential forms and currents on Berkovich spaces, introduced by Chambert-Loir and Ducros,  
provides us with such a tool in non-archimedean geometry. 
However, the metric on line bundles coming from the models used in Arakelov theory are not smooth in the sense of Chambert-Loir and Ducros. 
This was the reason that $\delta$-forms were introduced in \cite{gubler-kuenne2017}, a generalization of the real valued differential form motivated by tropical geometry. 
It was shown in \cite{gubler-kuenne2017} that $\delta$-forms can be used to compute  Monge--Amp\'ere measures as a product of $\delta$-forms and that a star-product on the associated Berkovich space can be defined for projective varieties computing their local heights. 
The purpose of this paper is to show that such a local height can always be computed by a tropical formula using $\delta$-forms on tropical toric varieties. 

\subsection{Forms and currents on $\R^n$} \label{subsec: forms and currents on euclidean space}

Lagerberg introduced real valued smooth $(p,q)$-forms on $\R^n$ for all integers $0 \leq p,q \leq n$ \cite{lagerberg-2012}. 
For an open subset $U$ of $\R^n$, the space of such forms is denoted by $A^{p,q}(U)$. 
He showed that 
\[
A(U)= \bigoplus_{p,q} A^{p,q}(U)
\]
is naturally a bigraded $\R$-algebra with anticommuting differentials $d'\colon A^{p,q}(U) \to A^{p+1,q}(U)$ and $d''\colon A^{p,q}(U) \to A^{p,q+1}(U)$ behaving similarly as their complex analytic counterparts. We call the elements of $A(U)$ \emph{smooth Lagerberg forms} on $U$. 
They form a sheaf on $\R^n$. 
There is integration of compactly supported smooth Lagerberg forms of top-degree such that the formula of Stokes holds and the elements of $A^{0,0}(U)$ are by definition the smooth functions on $U$. 

Similarly as in the complex case, Lagerberg defines the space of currents 
\[
D^{n-p,n-q}(U)=D_{p,q}(U)
\] 
as the topological dual of the space of compactly supported smooth $(p,q)$-forms $A_c^{p,q}(U)$. 
The differentials $d'$ and $d''$ on $D(U)=\bigoplus_{p,q}D_{p,q}(U)$ are obtained by duality up to the usual sign.

Using integration, we have a canonical embedding $A^{p,q}(U)\hookrightarrow D^{p,q}(U)$. 
Other examples of currents are given by currents of integration over
weighted polyhedra. 
By linearity, any weighted rational polyhedral complex $C=(\Ccal,m)$ in $\R^n$ of pure codimension $k$  gives rise to a Lagerberg current $\delta_C \in D^{k,k}(\R^n)$. 
Then $C$ is a \emph{tropical cycle}, i.e.~balanced, if and only if $\delta_C$ is $d'$-closed (equivalently $d''$-closed), see \cite[Proposition 3.8]{gubler-forms}.

\subsection{Forms and currents on Berkovich spaces} \label{subsec: forms and currents on Berkovich spaces}

Chambert--Loir and Ducros \cite{chambert-loir-ducros} used Lagerberg forms to introduce real valued smooth $(p,q)$-forms on a Berkovich analytic space $X$ over a non-archimedean field $K$. 
They form a bigraded sheaf $A = \bigoplus_{p,q}A^{p,q}$ of differential $\R$-algebras with differentials $d',d''$ similarly as in the complex case and as for Lagerberg forms. 
In fact, any $\alpha \in A^{p,q}$ is locally given in around a given $x \in X$ by an open neighbourhood $W$, a morphism $\varphi\colon W \to \G_{\rm m}^n$ and a smooth Lagerberg form $\alphatrop$ on $\R^n$ by using a pull-back along $\varphitrop \coloneqq \varphi \circ \trop$. 
For a good Berkovich space $X$ of pure dimension, there is integration of forms of top-degree and a formula of Stokes. 
Moreover, Chambert-Loir and Ducros define the space $D(X)$ as the topological dual of the space $A_c(X)$ of compactly supported smooth forms, again with similar properties as in the complex case and in the case of Lagerberg currents.

\subsection{Tropical toric varieties} \label{subsec: tropical toric varieties}

For computational purposes, it is desirable that a differential form on a Berkovich analytic space $X$ can be given by a global tropicalization. 
Then we can compute products directly on the tropical side. 
Jell showed in his thesis \cite{jell-thesis} that smooth forms are
indeed induced by global tropicalizations of projective varieties. 
Note that such a result is only possible if we consider extended tropicalizations $\trop\colon X \to N_\Sigma$ for a tropical toric variety $N_\Sigma$ associated to a fan $\Sigma$ in $\R^n$ based on the lattice $N=\Z^n$. 
The space $N_\Sigma$ is a compactification of $\R^n$ with boundary orbits $N(\sigma)$ associated to the cones $\sigma \in \Sigma \setminus\{0\}$. 
The space $N_\Sigma$ was studied by Mumford, and later by Kajiwara and Payne in the context of tropicalizations. 
We refer to \S \ref{subsec: notation} for a brief account. 
For example, if $\Sigma$ is the fan generated by the cone $\{x \in \R^n \mid \forall i \Rightarrow x_i \geq 0\}$, then $N_\Sigma= \R_\infty^n$. 
To generalize Lagerberg forms to $N_\Sigma$, one has to impose the condition that they are \emph{constant towards the boundary}, see \cite{jell-shaw-smacka2015} and  \cite{jell-thesis}. 
In this way, the smooth Lagerberg forms build a sheaf $A$ of bigraded differential $\R$-algebras \emph{smooth Lagerberg forms} and the smooth Lagerberg currents build a sheaf of bigraded differential $\R$-vector spaces as in the case $\R^n$.

\subsection{$\delta$-forms on $\R^n$} \label{subsec: delta-forms on euclidean space}

By tropical intersection theory, tropical cycles form a graded $\R$-algebra  which we denote by $\TZ^\cdot(\R^n)$. 
Here, the upper grading is by codimension. 
The idea of $\delta$-forms is now to combine these two algebras. 
Mihatsch \cite{mihatsch2021} showed that there is a bigraded differential $\R$ algebra 
\[
B(\R^n)= \bigoplus B^{p,q}(\R^n)
\]
with respect to natural differentials $d'$ and $d''$ which includes $A(\R^n)$ as a bigraded  $\R$-subalgebra with the same differentials. By construction, $B^{p,q}(\R^n)$ is realized as an $\R$-subspace of $D^{p,q}(\R^n)$ which also leads to the differentials $d',d''$. 
Moreover, $\TZ(\R^n)$ is an $\R$-subalgebra of $B(\R^n)$ using $C \mapsto \delta_C$, hence $\TZ^{k}(\R^n) \subset B^{(k,k)}(\R^n)$. Again, we have integration of $\delta$-forms of top-degree and the formula of Stokes holds, see \cite{mihatsch2021} for details. The construction can be done for any open subset of $\R^n$ leading to a  sheaf of bigarded differential $\R$-algebras $B$ on $\R^n$.

A Lagerberg form $\alpha$ on $\R^n$ is called \emph{piecewise smooth} if there is a locally finite polyhedral complex $\Ccal$ covering $\R^n$ such that $\alpha|_\Delta$ is the restriction of a smooth Lagerberg form from $\R^n$ for every $\Delta \in \Ccal$. Especially important for this paper are the \emph{piecewise smooth functions} which are the piecewise smooth forms of bidegree $(0,0)$. Mihatsch showed that the bigraded $\R$-algebra $\PS(\R^n)$ of piecewise smooth Lagerberg forms is a bigraded $\R$-subalgebra of $B(\R^n)$. 

\subsection{$\delta$-forms on Berkovich analytic spaces} \label{subsec: delta-forms on Berkovich analytic spaces}

Mihatsch \cite{mihatsch2021a} introduced $\delta$-forms on any  boundaryless Berkovich analytic space $X$ over a non-archimedean field $K$. Relevant in this paper is that this can be applied for any open subset $W$ of the analytification of an algebraic variety over $K$ as $W$ is then boundaryless. Mihatsch shows that there is a bigraded  differential  $\R$-subspace $B^{p,q}(X)$ of the space $D^{p,q}(X)$ of currents from \S \ref{subsec: forms and currents on Berkovich spaces} such that 
$$B(X) = \bigoplus_{p,q} B^{p,q}(X)$$
is a differential $\R$-algebra with similar properties as the algebra of real valued smooth differential forms $A(X)$. In particular, $A(X)$ is a bigraded differential $\R$-subalgebra of $B(X)$. This leads to a sheaf $B$ of bigraded differential $\R$-algebras with anticommuting differentials $d',d''$ on $X$.

By definition, \emph{piecewise smooth forms} on $X$ are locally given by piecewise smooth Lagerberg forms similarly as in the smooth case explained in \S \ref{subsec: forms and currents on euclidean space}. They form a graded $\R$-subalgebra $\PS(X)$ of $B(X)$ \cite[Proposition 4.7]{mihatsch2021a}. We define \emph{piecewise smooth functions} on $X$ as piecewise smooth forms of bidegree $(0,0)$. 

\subsection{Guideline to this paper} \label{subsec: guideline}

Let $X$ be a projective variety over a non-archime\-dean field $K$. 
{Our}  goal is to give a tropical formula for the local height of $X$ with respect to a line bundle endowed with piecewise smooth metric. 
In fact, almost all metrics occurring in diophantine geometry are of this kind. 
We were inspired by Jell's global tropicalizations of smooth $(p,q)$-forms, see \ref{subsec: tropical toric varieties}.  
Since the metrics are not smooth and since we have to work with extended tropicalizations $\psitrop\colon \Xan \to N_\Sigma$, we need to work with $\delta$-forms on tropical toric varieties $N_\Sigma$ which we will introduce and study in Section \ref{section-delta-forms}. 

Green functions are 
main players in Arakelov theory. 
To compute local heights tropically, we need a $*$-product of piecewise smooth tropical Green functions on $N_\Sigma$. 
This is 
achieved in Section \ref{sec:a tropical star-product} based on the fundamental tropical Poincar\'e--Lelong formula given in Section \ref{Section: tropical Green functions and the PL-equation}.

In Section \ref{section: Non-archimedean Arkelov theory}, we introduce the $*$-product of piecewise smooth Green functions on $\Xan$ which can be used to define the local heights mentioned above. 
We then show that finitely many piecewise smooth Green functions and also any $\delta$-form on $\Xan$ can be given by using a single extended global tropicalization $\psitrop\colon \Xan \to N_\Sigma$. 
Since the $*$-products on the tropical and on the non-archimedean side are compatible, this leads to the desired tropical formula for local heights in Section \ref{section MA-measures and local heights}. 

In the following, we will describe our results more detailed. 

\subsection{Delta-forms on tropical toric varieties} \label{subsec: delta-forms on tropical toric varieties}

In Sections \ref{polyhedra} and  \ref{forms-and-currents}, we extend  the notions of polyhedra and piecewise smooth forms to any open subset $U$ of the tropical toric variety $N_\Sigma$. 
Polyhedra on $N_\Sigma$ behave rather pathologically unless they
are constant towards the boundary. 
A \emph{polyhedral current} on $U$ is a current of the form 
\begin{equation}\label{def-polyhedral-current}
T= \sum_\Delta \alpha_\Delta \wedge \delta_{[\Delta,\mu_\Delta]\in I} \in D(U)
\end{equation}
where $I$ is a locally finite family of weighted polyhedra $[\Delta,\mu_\Delta]$ in $U$ and where $\alpha_\Delta \in A(U)$. 
If all the polyhedra $\Delta \in I$ are constant towards the boundary, then we call the polyhedral current also \emph{constant towards the boundary}.

\begin{theointro}   \label{introthm delta-forms}
There is a unique  sheaf $B=\bigoplus_{p,q}B^{p,q}$ on $N_\Sigma$ of differential $\R$-algebras with respect to differentials $d',d''$ with the following properties for any open subset $U$ of $N_\Sigma$:
\begin{enumerate}
\item 
Every $\beta \in B^{p,q}(U)$ is a polyhedral current on $U$ which is constant towards the boundary and $d'\beta,d''\beta$ agree with the differentials of currents.
\item 
The bigraded differential $\R$-algebra $B(U \cap \R^n)$ agrees with Mihatsch's definition of $\delta$-forms recalled in \S \ref{subsec: delta-forms on euclidean space}.
\end{enumerate}
\end{theointro}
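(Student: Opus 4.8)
The plan is to identify the only possible $B$, guided by properties (1) and (2), and then to verify that this candidate actually carries the claimed structure.

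\emph{The candidate and uniqueness.} For any open $U\subseteq N_\Sigma$ the torus orbit $U\cap\R^n$ is a dense open subset, so by (1) every $\beta\in B^{p,q}(U)$ is a polyhedral current on $U$ (in the sense of \eqref{def-polyhedral-current}) which is constant towards the boundary, and by (2) together with the sheaf axiom $\beta|_{U\cap\R^n}\in B^{p,q}(U\cap\R^n)$ in Mihatsch's sense. Conversely, the restriction to $U\cap\R^n$ of a polyhedral current on $U$ that is constant towards the boundary is again constant towards the boundary, and a constant-towards-the-boundary polyhedral current is the canonical extension of that restriction — the current analogue of the facts about polyhedra and piecewise smooth forms established in \S\ref{polyhedra} and \S\ref{forms-and-currents}. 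Hence the two properties force
\[
B^{p,q}(U)=\bigl\{\,T\in D^{p,q}(U)\ :\ T \text{ is a polyhedral current, constant towards the boundary, with } T|_{U\cap\R^n}\in B^{p,q}(U\cap\R^n)\,\bigr\},
\]
and uniqueness follows once this $B$ is shown to work.

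\emph{Existence.} Take the displayed formula as the definition of $B^{p,q}(U)$. Both membership conditions are local on $U$, so $B=\bigoplus_{p,q}B^{p,q}$ is a subsheaf of the sheaf of currents $D$; property (1) holds by construction and (2) because over an open subset of $\R^n$ the boundary condition is empty. Smooth Lagerberg forms on $N_\Sigma$ are constant towards the boundary and restrict on the torus to smooth Lagerberg forms, hence to $\delta$-forms, so $A\subseteq B$. It remains to make $B$ a sheaf of differential $\R$-algebras. The differentials $d',d''$ are inherited from $D$, and that $d'\beta,d''\beta$ are again polyhedral currents constant towards the boundary amounts to the statement that $d',d''$ preserve this property; this reduces, through the local product description of $N_\Sigma$ near a boundary orbit (a neighbourhood of a boundary point is homeomorphic to $\Omega\times Z$ with $\Omega$ open in a Euclidean space and $Z$ an open subset of a lower-dimensional tropical toric variety, in whose coordinates the boundary is approached), to Mihatsch's corresponding result on the Euclidean factor \cite{mihatsch2021}; compatibility with the restriction to $U\cap\R^n$ is automatic. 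For the exterior product one defines $\beta\wedge\beta'$ to be the canonical extension of Mihatsch's product $(\beta|_{U\cap\R^n})\wedge(\beta'|_{U\cap\R^n})$; this is well defined and lies in $B^{p+p',q+q'}(U)$ precisely because Mihatsch's exterior product of two $\delta$-forms that are constant towards the boundary is again constant towards the boundary — the key lemma, proved again by the local product reduction. Associativity, bigraded commutativity and the Leibniz rule then descend from the corresponding identities over $\R^n$, using the uniqueness of constant-towards-the-boundary extensions.

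\emph{Main obstacle.} The crux is the key lemma that the $\delta$-form operations preserve constancy towards the boundary, and its proof faces two complications. First, a point of a boundary orbit $N(\sigma)$ may lie in the closures of several maximal cones and be described by several local product structures $\Omega\times Z$, so one must show that the product, and the reduction of $d',d''$, computed in one chart agrees with the one computed in another — an independence-of-chart statement in the spirit of Chambert-Loir--Ducros \cite{chambert-loir-ducros}. Second, when $\sigma$ is not simplicial the transverse factor $Z$ is a genuine affine tropical toric variety rather than a product of half-lines, so the reduction to the Euclidean case is not immediate; to control it one works directly with the underlying weighted polyhedral complexes of the currents and exploits the fine structure of polyhedral complexes on $N_\Sigma$ that are constant towards the boundary, in particular the structural result announced in the abstract that an open subset of a simplicial tropical toric variety admits a locally finite simplicial decomposition constant towards the boundary, which puts these complexes into a standard shape adapted to the boundary stratification. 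Once the key lemma and chart-independence are in place, the remaining verifications — sheaf gluing, the Leibniz rule, associativity — are routine reductions to Mihatsch's $\delta$-forms on $\R^n$.
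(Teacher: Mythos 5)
There is a genuine gap, and it appears already in your identification of the candidate space. The sheaf constructed in the paper consists of polyhedral currents \emph{of sedentarity $0$} that are constant towards the boundary and restrict to Mihatsch $\delta$-forms on the dense orbit (Definition \ref{definition of delta-form}); your displayed formula drops the sedentarity condition. This is not cosmetic: a current such as $\alpha_\Delta\wedge\delta_{[\Delta,\mu_\Delta]}$ for a positive-dimensional polyhedron $\Delta$ of sedentarity $\sigma\neq 0$ that is constant towards the boundary restricts to $0$ on $U\cap\R^n$, hence trivially satisfies your membership test, yet it is \emph{not} the canonical extension of its restriction to the dense orbit. So your key assertion that ``a constant-towards-the-boundary polyhedral current is the canonical extension of that restriction'' is false, and with it both halves of your argument collapse: the uniqueness/forcing step does not pin down the space, and the product defined as ``canonical extension of the product of restrictions'' is inconsistent on your space (it would give $1\wedge T=0\neq T$ for boundary-supported $T$). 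Moreover your space is not closed under $d'$: for an unbalanced boundary polyhedron the differential produces boundary terms that are not polyhedral, so property (1) (applied to $d'\beta$, which must again lie in $B$) fails. The sedentarity-$0$ requirement is exactly what makes ``determined by the restriction to $U\cap\R^n$'' true and the extension procedure meaningful.

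Even after correcting the space, the crux — that the product and the differentials preserve constancy towards the boundary — is not established by your argument for a general fan. Your reduction to a local product chart $\Omega\times Z$ with a Euclidean transverse factor only applies when the relevant cone is simplicial, and the fix you invoke, the locally finite simplicial decomposition, is proved only for simplicial $\Sigma$ (Theorem \ref{thm:1}), whereas Theorem \ref{introthm delta-forms} is asserted for arbitrary fans; the chart-independence issue you flag is also left unresolved. The paper avoids both problems by never using transverse charts: constancy towards the boundary is characterized through the family of restrictions $\alpha(\sigma)$ to the orbits $U\cap N(\sigma)$ (each an open subset of a vector space) together with local pullback compatibility along the projections $\pi_{\sigma,\tau}$ (Propositions \ref{constant towards the boundary for polyhedral currents} and \ref{delta-forms and constant towards the boundary}); the product is then defined stratum-wise by $(\alpha\wedge\beta)(\sigma)=\alpha(\sigma)\wedge\beta(\sigma)$, using that Mihatsch's operations commute with pullback along surjective affine maps (Lemma \ref{balancing}), and closure under $d'$ is obtained by showing that the residue $\partial'\alpha$ only involves sedentarity-$0$ facets (Lemma \ref{support lemma for top-forms}, Proposition \ref{criterion for delta-forms}). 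Uniqueness of the product then follows simply because a sedentarity-$0$ polyhedral current is determined by its restriction to $U(0)$. To repair your proof you would need to replace the chart argument by such a stratum-wise compatibility argument (or otherwise handle non-simplicial cones), and restore the sedentarity-$0$ condition throughout.
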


This is proven in Theorem \ref{product on delta-forms}.  
The elements of $B(U)$ are called \emph{$\delta$-forms on  $U$}. 
We show in Proposition \ref{pull-back of delta-forms} that there is a pull-back of $\delta$-forms with respect to equivariant morphisms of tropical toric varieties that is compatible with products. 
There are integrals for compactly supported 
$\delta$-forms of top-degree.  
In Theorem \ref{theorem: Stokes for delta-forms}, we show the formula of Stokes for $\delta$-forms over any polyhedron in $N_\Sigma$.

An interesting question is whether a $\delta$-form $T$ on $U$ has a locally finite polyhedral complex of definition $\Ccal$ in $U$  which is constant towards the boundary. This means that we are asking for a presentation \eqref{def-polyhedral-current} with $\Delta$ ranging over $\Ccal$. 
This question is related to the combinatorial problems mentioned in \S \ref{subsec: subdivisions on tropical toric varieties} below and touched in Subsection \ref{sec:part-result-subd}. 
Based on the results from this subsection, we give a positive answer to our question in Proposition \ref{special-presentation-delta-forms} if the fan $\Sigma$ is \emph{simplicial}.

A large advantage when working with $\delta$-forms is that it is enough to work with \emph{formal $\delta$-currents} 
defined as linear functionals on the space $B_c(N_\Sigma)$ of compactly supported $\delta$-forms.
We work here with formal currents without continuity assumptions as we will not need any regularization process in this paper. 
We denote by $E^{p,q}(N_\Sigma)$ the space of formal $\delta$-currents acting on $B_c^{n-p,n-q}(N_\Sigma)$ where $n$ is the dimension of the tropical toric variety $N_\Sigma$. 

\subsection{Subdivisions on tropical toric varieties} \label{subsec: subdivisions on tropical toric varieties}

Let $U$ be an open subset of the tropical toric variety $N_\Sigma$. 
In Subsection \ref{sec:part-result-subd}, we look at the following combinatorial problem. 

\medskip 

\noindent
\textbf{Question B.} Is there a locally finite polyhedral decomposition $\Ccal$ of $U$? This means a locally finite polyhedral complex $\Ccal$ of polyhedra in $U$ which are constant towards the boundary and such that the support of $\Ccal$ is $U$.

\medskip

\stepcounter{theointro}

We cannot answer this question in general, but we have the following result.

\begin{theointro}
Let $\Sigma $ be a simplicial fan, let $U$ be an open subset of $N_\Sigma$, and let $\Tcal=(T_j)_{j \in J}$ be a locally finite family of polyhedra in $U$ that are constant towards the boundary.
Then there exists a locally finite simplicial complex $\Ccal$ of $U$ of polyhedra in $U$ which are constant towards the boundary such that 
 for all $\Delta \in \Ccal$ and all $\sigma \in \Sigma $,  every polyhedron $T\in  \Tcal$ is a locally finite union of simplices in $\Ccal$.
\end{theointro}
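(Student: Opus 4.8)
The plan is to reduce the statement on the partial compactification $N_\Sigma$ to the classical triangulation theorems for locally finite polyhedral complexes in $\R^n$, exploiting the stratification of $N_\Sigma$ by the toric orbits $N(\sigma)$, $\sigma \in \Sigma$. The crucial structural input is that every polyhedron in $U$ that is constant towards the boundary is, roughly, built from a genuine polyhedron in some orbit $N(\sigma) \cong \R^{n}/\langle\sigma\rangle$ together with its ``recession data'' towards the deeper strata. So first I would set up, for each cone $\sigma \in \Sigma$, the trace $\Tcal_\sigma$ of the family $\Tcal$ on the open star $U \cap N(\sigma_\bullet)$; because the $T_j$ are constant towards the boundary, each $T_j$ restricts on $N(\sigma)$ to an ordinary polyhedron, and the family $\Tcal_\sigma$ is again locally finite there. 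The strategy is to triangulate these traces orbit by orbit, starting from the open dense orbit $\R^n = N(0)$ and proceeding by increasing codimension of $\sigma$, in such a way that the triangulation chosen on a deeper orbit is compatible with (i.e.\ refines the recession fan / boundary behaviour of) the triangulations already chosen on the shallower orbits.

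The key steps, in order, would be: (1) On the open orbit $\R^n$, apply the standard result that a locally finite family of polyhedra in $\R^n$ admits a common locally finite simplicial refinement $\Ccal_0$; here one must additionally arrange that $\Ccal_0$ is constant towards the boundary, which is where the hypothesis that $\Sigma$ is \emph{simplicial} enters — one first refines by the fan $\Sigma$ itself (legitimate since $\Sigma$ is simplicial, so its cones are already simplices in the sense needed) and then triangulates each cone-region compatibly with the recession cones of the $T_j$, using that the recession fan of a family constant towards the boundary is subordinate to $\Sigma$. (2) Inductively, suppose simplicial complexes $\Ccal_\tau$ have been constructed on $U \cap N(\tau_\bullet)$ for all $\tau \subsetneq \sigma$, mutually compatible. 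On the orbit $N(\sigma)$, the traces of the already-built simplices from the shallower orbits determine, via the ``boundary maps'' $N(\tau) \to N(\sigma)$ of the tropical toric variety, a locally finite polyhedral structure; enlarge $\Tcal_\sigma$ by these polyhedra and triangulate the enlarged family by the $\R^n$-case applied to $N(\sigma)/(\text{its own fan } \Sigma(\sigma))$, again forcing constancy towards the boundary. (3) Assemble: declare $\Ccal$ to be the set of all polyhedra in $U$ that are constant towards the boundary and whose trace on each orbit $N(\sigma)$ is a simplex of $\Ccal_\sigma$ — equivalently, take closures in $N_\Sigma$ of the simplices built on the open orbit together with the matching boundary simplices. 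Verify that $\Ccal$ is a locally finite simplicial complex (local finiteness from local finiteness on each orbit plus local finiteness of $\Sigma$), that its members are constant towards the boundary by construction, and that each $T_j$ is a locally finite union of members of $\Ccal$ because this holds orbit-by-orbit and $T_j$ is the closure of its trace on the open orbit it meets. The compatibility ``for all $\Delta\in\Ccal$ and all $\sigma\in\Sigma$'' is then automatic since every $\Delta$ already has a well-defined simplicial trace on each $N(\sigma)$.

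The main obstacle, and the place where I expect the real work to be, is step (2): making the triangulations on orbits of different codimension genuinely compatible, so that a simplex on a shallow orbit has, as its ``limit'' on a deeper orbit, exactly a union of simplices of the deeper triangulation — not some polyhedron transverse to it. This requires a careful bookkeeping of how the recession data of a polyhedron constant towards the boundary propagates through the tower of orbits, and it is precisely here that simpliciality of $\Sigma$ is indispensable: for a non-simplicial cone $\sigma$, the natural cone-over-a-polytope that describes the passage to the boundary need not itself be triangulable while respecting the fan structure without introducing new rays, which would break the ``constant towards the boundary'' requirement. A secondary technical point is ensuring at each stage that one can triangulate while keeping the complex locally finite and, simultaneously, subordinate to $\Sigma$; this should follow from the results of Subsection~\ref{sec:part-result-subd} on the combinatorics of polyhedra constant towards the boundary, which I would invoke rather than reprove. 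I would also need the elementary but slightly delicate fact that the closure operation $N(\tau)\supset \Delta \mapsto \overline{\Delta}\subset N_\Sigma$ sends simplices to ``simplices'' in the appropriate extended sense and is compatible with intersections, so that gluing the orbitwise complexes yields an honest simplicial complex.
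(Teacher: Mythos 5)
There is a genuine gap, and it sits exactly where you yourself locate ``the real work'': your step (2) does not go through as described, and the paper's proof of Theorem \ref{thm:1} avoids the problem by a device that is missing from your proposal. The difficulty is that for polyhedra of sedentarity $0$ the triangulation of a deeper orbit is not something you get to choose: once you have triangulated the dense orbit with recession cones in $\Sigma$, the traces of the closures on each $N(\sigma)$ are \emph{forced} (Lemma \ref{lemm:1}, Lemma \ref{lemm:4}, Proposition \ref{prop:1}). So when you ``enlarge $\Tcal_\sigma$ by these polyhedra and triangulate the enlarged family'' to accommodate a member of $\Tcal$ of positive sedentarity, you subdivide the boundary traces, and the closures of the already-chosen simplices of the open orbit then meet the new boundary simplices in sets that are not common faces; the assembled $\Ccal$ of step (3) is no longer a polyhedral complex. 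To repair this you would have to propagate the boundary subdivision back into the open orbit, and then iterate — your proposal contains no mechanism for this. The paper's solution is the \emph{thickening} construction in the second reduction step of the proof of Theorem \ref{thm:1}: using simpliciality of $\Sigma$ to get coordinates $N_{\Sigma_\tau}\simeq(\R\cup\{\infty\})^{r}\times\R^{d-r}$, each compact boundary polyhedron $\Lambda$ is replaced by a sedentarity-$0$ polyhedron $\Th(\Lambda)\subset U$ having $\Lambda$ as a face, so that after triangulating only in $N_\R$ the boundary triangulations come for free by taking closures. Your orbit-by-orbit induction is, in effect, the problem restated rather than solved.

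A second, smaller but real, gap is your step (1). The ``standard'' common simplicial refinement of a locally finite family in $\R^n$ gives local finiteness in $\R^n$ only; what is needed is a complete complex with recession cones in $\Sigma$ that is locally finite \emph{in $N_\Sigma$}, i.e.\ near the boundary, and this is the nontrivial input from Coles--Friedenberg (\cite[Theorem 1.1]{ColesFriedenberg23:localfincomp}) together with Lemma \ref{lemm:3}. Moreover, being constant towards the boundary only gives that $\rec(T)\cap|\Sigma|$ is a union of cones of $\Sigma$ (Proposition \ref{recession-cone-boundary-condition}), not $\rec(T)\in\Sigma$, so one must first intersect with such a complex (the paper's first reduction step) before one even has compactness of the $T_j$ and control of recession cones. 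Finally, to refine successively for infinitely many $T_j$ while keeping the complex locally finite and the recession cones in $\Sigma$, the paper performs a careful inductive subdivision by minimal concave functions (regular subdivisions relative to a piecewise linear function attached to $\rec(T_{j}(0))$), arranged so that only polyhedra meeting the current $T_j$ are subdivided — and simpliciality of the intermediate complexes is used essentially to guarantee that untouched faces are not subdivided, so the process stabilizes locally. None of this bookkeeping is addressed by an appeal to a standard refinement theorem, so as it stands the proposal does not yield the statement.
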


A slightly more general result will be proven in Theorem \ref{thm:1}. It implies a positive answer of Question B if  $\Sigma$ is \emph{simplicial}.

\subsection{Green functions on tropical toric varieties} \label{Green functions on tropical toric varieties}
Green functions are the main players in Arakelov theory. 
In \S \ref{subsection: tropical Green functions}, we introduce an analogue of Green functions on $N_\Sigma$. 
A \emph{tropical Green function} on $N_\Sigma$ is a function $g\colon \R^n \to \R$ such that for every $x \in N_\Sigma$, there is an open neighbourhood $\Omega$, a linear functional $m(y)=m_1y_1+ \dots + m_ny_n$ on $\R^n$ with $m_i\in \Z$ and a continuous function $h$ on $\Omega$ such that $g=m +h$ on $\Omega\cap \R^n$. 
Then the associated \emph{tropical toric Cartier divisor} $D$ is defined by $(\Omega,m)$, where $\Omega$ ranges over an open covering of sets as above and $m$ is the corresponding linear functional. 
In \S \ref{subsection: tropical Cartier divisors}, we recall the  notion of tropical Cartier divisors on $N_\Sigma$ introduced in Meyer's thesis \cite{meyer-thesis}. 
Relevant for us is the special case of tropical toric Cartier divisors. 
We mainly consider Green functions \emph{of piecewise smooth type} which means that the functions $h$ above are piecewise smooth. 
For such a Green function, the first Chern $\delta$-form $c_1(D,g) \in B^{1,1}(N_\Sigma)$ is given on $\Omega$ by $d'd''h$. 

For a tropical cycle $C$ in $\R^n$ with closure $\overline C$ in $N_\Sigma$, we have an associated formal $\delta$-current $\delta_{\overline C}$ of integration and we will show in Construction \ref{wedge product of function with tropical cycle} that $g \wedge \delta_{\overline C}$ makes sense as a formal $\delta$-current on $N_\Sigma$. 
We have the \emph{tropical Poincar\'e--Lelong formula}:

\begin{theointro} \label{introthm: tropical Poincare-Lelong}
Let $g$ be a Green functions for the tropical toric Cartier divisor $D$ of piecewise smooth type and let $\delta_{\overline C}$ be the current of integration for a tropical cycle $C$ in $\R^n$. 
Then we have 
\[
d'd'' g \wedge \delta_{\overline C} = c_1(D,g) - \delta_{D \cdot \overline{C}} \in E(N_\Sigma)
\]
where $D \cdot \overline{C}$ is the tropical intersection product studied in \S \ref{subsection: intersection product}.
\end{theointro}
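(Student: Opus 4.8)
The plan is to reduce the identity to the classical tropical Poincaré--Lelong formula on $\R^n$ together with a careful analysis of the behaviour at the boundary orbits $N(\sigma)$. Since all objects in the formula are formal $\delta$-currents in $E(N_\Sigma)$, it suffices to test them against an arbitrary compactly supported $\delta$-form $\omega \in B_c^{n-1,n-1}(N_\Sigma)$ and to verify the scalar identity. The first step is local: around a point $x \in N_\Sigma$ choose an open neighbourhood $\Omega$ on which $g = m + h$ with $m$ linear integral and $h$ piecewise smooth, so that $c_1(D,g)|_\Omega = d'd''h$. On $\Omega \cap \R^n$ the function $g$ differs from the piecewise smooth function $h$ by the \emph{smooth} (indeed linear) function $m$, and $d'd''m = 0$; hence $d'd''g \wedge \delta_{\overline C} = d'd''h \wedge \delta_{\overline C}$ there. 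This already shows the statement is local on $N_\Sigma$ and that we may assume $g$ itself is piecewise smooth.

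Next I would treat the open part $\R^n \subset N_\Sigma$ and the boundary separately. Over $\R^n$ the desired equation is exactly Mihatsch's (or the classical) tropical Poincaré--Lelong formula for $\delta$-forms on $\R^n$: for a piecewise smooth function $g$ and a tropical cycle $C$ one has $d'd''(g \wedge \delta_C) = (d'd''g)\wedge \delta_C + \text{(intersection term)}$, where the intersection term is the current of integration over the tropical intersection $D \cdot C$ of the Weil divisor of $g$ with $C$. This is available from the theory recalled in Subsections \ref{subsec: delta-forms on euclidean space} and \ref{subsection: intersection product}; the only point to check is that the Weil divisor cut out on $C$ by the Cartier datum $(\Omega, m)$ agrees locally with the tropical divisor of the piecewise linear part of $g$, which is the standard correspondence between tropical Cartier divisors and Weil divisors on tropical cycles (Meyer's thesis). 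Because $B_c(N_\Sigma)$-forms restrict to compactly supported $\delta$-forms on $\R^n$ only after we control supports near the boundary, I would phrase this via the Stokes formula for $\delta$-forms on polyhedra in $N_\Sigma$ from Theorem \ref{theorem: Stokes for delta-forms}: writing $\langle d'd''g \wedge \delta_{\overline C}, \omega\rangle = \int_{\overline C} g\, d'd''\omega$ and integrating by parts twice, the boundary contributions are controlled precisely by the "constant towards the boundary" hypothesis.

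The main obstacle I expect is exactly the behaviour at the boundary orbits $N(\sigma)$ for $\sigma \in \Sigma \setminus \{0\}$: one must show that no extra boundary current appears, i.e. that the naive formula, valid on $\R^n$, extends across $N_\Sigma$ with no correction term supported on $\partial N_\Sigma$. Here the key input is that $g$ is a genuine tropical Green function — so near the boundary $g$ is, up to a continuous (piecewise smooth) function, a linear functional $m$ that extends continuously to $\Omega \subset N_\Sigma$ — and that $\overline C$ is the closure of a tropical cycle and hence meets the boundary in a controlled (recession-fan) way. Concretely, I would stratify according to the orbits, use the product-type local structure $N_\Sigma \supset \Omega \cong (\text{orbit chart}) \times \Rsup^{k}$ near $N(\sigma)$, and invoke that $\delta$-forms and the polyhedra defining $\overline C$ and $g$ are constant towards the boundary, so that both sides of the equation are "pulled back" from a lower-dimensional tropical toric variety along the boundary retraction; the equality on the open part then propagates to the closure by continuity of the pairing with $\delta$-forms and a density/limit argument, or more cleanly by the Stokes formula applied on truncated polyhedra $\Delta \cap \{$bounded$\}$ and passing to the limit, where the "constant towards the boundary" condition kills the truncation boundary terms. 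Finally I would assemble the local identities into the global one using a partition of unity subordinate to the covering by the sets $\Omega$, noting that $c_1(D,g)$, $\delta_{D\cdot\overline C}$ and $d'd''g\wedge\delta_{\overline C}$ are all additive in such decompositions and well defined globally in $E(N_\Sigma)$.
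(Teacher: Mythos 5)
There is a genuine gap, and it sits exactly at the heart of the theorem. Your first reduction --- ``$d'd''m=0$, hence $d'd''g\wedge\delta_{\overline C}=d'd''h\wedge\delta_{\overline C}$, so we may assume $g$ is piecewise smooth'' --- confuses the form $d'd''m$ with the formal $\delta$-current $d'd''(m\wedge\delta_{\overline C})$. The latter vanishes only after restricting to $\Omega\cap\R^n$; as a formal $\delta$-current on $\Omega\subset N_\Sigma$ it equals $-\delta_{D\cdot\overline C}|_\Omega$, which for a tropical toric Cartier divisor is supported \emph{entirely} in the boundary $N_\Sigma\setminus\R^n$ (the corner locus of a single linear function on $\R^n$ is zero, cf.\ Remark \ref{Meyer and Gross}). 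So discarding the linear part $m$ discards precisely the term $-\delta_{D\cdot\overline C}$ you are trying to produce; what remains after your reduction is only the easy half $d'd''(h\wedge\delta_{\overline C})=c_1(D,g)\wedge\delta_{\overline C}$. Consistently with this, your stated goal for the boundary analysis --- ``show that no extra boundary current appears'' and that the constant-towards-the-boundary hypothesis ``kills the truncation boundary terms'' --- is backwards: the correct identity \emph{does} have a current supported on $\partial N_\Sigma$, namely $\delta_{D\cdot\overline C}$, and the content of the proof is to compute it. In the paper's argument (proof of Theorem \ref{PL-formula for piecewise affine}, case $\phi=m$, codimension-one sedentarity), after cutting at $\{x_1=R\}$ and integrating by parts twice, one boundary integral does vanish for large $R$, but the other stabilizes to $-\int_{\div_\infty(\phi)}\omega$; the constant-towards-the-boundary property of $\omega$ is what identifies this slice integral with the integral over the boundary stratum --- it does not make it zero. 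Your plan, taken literally, would prove the false formula $d'd''(g\wedge\delta_{\overline C})=c_1(D,g)\wedge\delta_{\overline C}$ without the divisor term.

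Beyond this central issue, the proposal also skips the reductions that make the local computation legitimate: in the paper one first passes, via toric completion, subdivision and toric resolution together with the two projection formulas (Propositions \ref{projection formula of intersection product} and \ref{projection formula for above product}), to a complete regular simplicial fan compatible with the recession structure of $C$ and of the polyhedral complex of definition of the affine part, so that $\overline C$ is constant towards the boundary and $\delta_{\overline C}$ is a closed $\delta$-form (otherwise it is only a closed polyhedral current, and products such as $c_1(D,g)\wedge\delta_{\overline C}$ and the reduction to $C=N_\Sigma$ are not available), and so that boundary strata of codimension at least two can be dismissed for degree reasons. Your appeal to ``propagation by continuity along the boundary retraction'' cannot replace these steps, because $g\wedge\delta_{\overline C}$ is \emph{not} constant towards the boundary: the linear part $m$ is unbounded there, which is exactly why the boundary divisor appears.
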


\subsection{The star-product of tropical Green functions} \label{star-product of tropical Green functions}
Let $C$ be a tropical cycle in $\R^n$ of codimension $p$ and let $T \in E^{p-1,p-1}(N_\Sigma)$. For a Green function $g$ with associated tropical toric Cartier divisor $D$, we define the $*$-product by
$$(D,g) * (\overline C, T) \coloneqq (D \cdot \overline C, g \wedge \delta_{\overline C}+ c_1(D,g)\wedge T).$$
Here, the tropical intersection product is supported in the boundary $N_\Sigma \setminus \R^n$ and so we are forced to consider also  classical tropical cycles supported in the boundary. They are constructed from the closures of classical tropical cycles of boundary orbits $N(\sigma)$ for $\sigma \in \Sigma$. In Section \ref{sec:a tropical star-product}, we generalize the $*$-product to such tropical cycles and show that it is symmetric and satisfies a projection formula.

\subsection{Non-archimedean Arakelov theory and extended tropicalizations} \label{subsec: non-archimedean Arakelov theory and extended tropicalizations}

Let $X$ be an algebraic variety over a non-archimedean field $K$. 
Then there are Green functions of piecewise smooth type $g$ for Cartier divisors $D$ 
on $X$ and we will introduce a $*$-product similarly as in \ref{star-product of tropical Green functions}. 
They induce a first Chern $\delta$-form $c_1(D,g) \in B^{1,1}(X^\an)$ such that the Poincar\'e--Lelong formula holds. 
For Green functions of smooth type, this was done by Chambert-Loir and Ducros \cite{chambert-loir-ducros} and the generalization to the piecewise smooth case is due to Mihatsch \cite{mihatsch2021a}. 
The $*$-product for Green functions was defined in \cite{gubler-kuenne2017} in a more special setting and extends easily to Green functions of piecewise smooth type as we show in \S \ref{subsec: star-product with ps Green functions}.

Now let $\psi\colon X \to X_\Sigma$ be a morphism to a toric variety $X_\Sigma$ associated to the fan $\Sigma$ in $\R^n=N_\R$. 
Then the canonical extended tropicalization $\trop\colon X_\Sigma^\an \to N_\Sigma$ yields an extended tropicalization map $\psitrop\colon \Xan\to N_\Sigma$. 
We relate the $\delta$-forms on the tropical and non-archimedean side by showing that there is a pull-back
\[
\psi_\trop^*\colon B(N_\Sigma) \longrightarrow B(\Xan)
\]
which is a homomorphism of bigraded differential $\R$-algebras that 
is compatible with integration. 
Then we 
can state our main global tropicalization result:

\begin{theointro} \label{global tropicalization result}
Let $g_0,\dots,g_d$ be Green functions of piecewise smooth type for Cartier divisors $D_0,\dots, D_d$ on the projective variety $X$ and let $\beta_0,\dots, \beta_m$ be $\delta$-forms on $\Xan$. 
Then there is a closed embedding $\psi\colon X \to X_\Sigma$ into a projective toric variety $X_\Sigma$ with the following properties:
\begin{enumerate}
\item 
For $i=0, \dots, d$, there is a toric Cartier divisor $E_i$ on $X_\Sigma$ with associated tropical toric Cartier divisor $D_{i,\trop}$  such that $\psi^*(E_i)=D_i$.
\item 
For $i=0, \dots, d$, there is a tropical Green function $g_{i,\trop}$ of piecewise smooth type for $D_{i,\trop}$ such that $g_i=g_{i,\trop} \circ \psi_\trop$.
\item 
For $j=1,\dots, m$, there is $\beta_{j,\trop} \in B(N_\Sigma)$ with $\psi_\trop^*(\beta_{j,\trop})$. 
\end{enumerate}
\end{theointro}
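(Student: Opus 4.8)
The plan is to reduce, by compactness, to finitely many tropical charts, to clear denominators inside a single projective embedding so that all the given data factors through one extended tropicalization, and then to glue the resulting local tropical Green functions and $\delta$-forms by a partition of unity on $N_\Sigma$. Since $g_i$ is of piecewise smooth type for $D_i$, it is locally on $\Xan$ of the form $m_i+h_i$, where $m_i$ is the tropicalization of a local equation of $D_i$ (a rational function, hence a morphism to $\G_{\rm m}$ on a Zariski open) and $h_i$ is piecewise smooth, thus locally a pull-back $\varphi_\trop^*\tilde h$ of a piecewise smooth Lagerberg function along a moment map $\varphi\colon U\to\G_{\rm m}^n$; likewise each $\beta_j$ is locally $\varphi_\trop^*\gamma$ for a $\delta$-form $\gamma$ on an open subset of $\R^n$. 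As $X$ is projective, $\Xan$ is compact, so finitely many algebraic morphisms $\varphi_k\colon U_k\to\G_{\rm m}^{n_k}$ ($U_k\subseteq X$ Zariski open, $k=1,\dots,r$) together with opens $W_k\subseteq U_k^\an$ covering $\Xan$ present all the $D_i$, $g_i$ and $\beta_j$ in this way.

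Next I would fix a sufficiently ample line bundle $L$ on $X$ so that every coordinate unit of every $\varphi_k$ is a quotient of global sections of $L$ and every $\Ocal_X(D_i)\otimes L$ is very ample, and take for $\psi\colon X\hookrightarrow X_\Sigma$ the product of the closed embeddings into projective spaces defined by $L$ and by $\Ocal_X(D_i)\otimes L$ $(i=0,\dots,d)$, choosing on each factor a basis of sections containing all the relevant ones. Then $X_\Sigma$ is a smooth projective toric variety whose fan $\Sigma$ is simplicial, every coordinate unit of each $\varphi_k$ and every $D_i$ is pulled back from the toric structure, and hence: (i) each $D_i$ equals $\psi^*E_i$ for a toric Cartier divisor $E_i$ on $X_\Sigma$, with associated tropical toric Cartier divisor $D_{i,\trop}$; (ii) on $U_k^\an$ one has $\varphi_{k,\trop}=\ell_k\circ\psi_\trop$, where $\psi_\trop\colon\Xan\to N_\Sigma$ is the extended tropicalization and $\ell_k\colon N_\R\to\R^{n_k}$ is linear with integral matrix. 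In particular each $g_i$ and each $\beta_j$ is, on $W_k$, a pull-back along the single map $\psi_\trop$.

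The crucial step is the gluing on $N_\Sigma$. On a neighbourhood $\Omega_k$ of $\psi_\trop(W_k)$ in $N_\Sigma$ one has a local piecewise smooth tropical Green function $m_{i,\trop}+\ell_k^*\tilde h_{i,k}$ for $D_{i,\trop}$ and a $\delta$-form $\ell_k^*\gamma_{j,k}$, and on overlaps their pull-backs along $\psi_\trop$ agree. Picking a piecewise smooth partition of unity $(\rho_k)$, subordinate to $(\Omega_k)$ and constant towards the boundary, on a neighbourhood of $\Trop(X)=\psi_\trop(\Xan)$, I would set $g_{i,\trop}$ to be locally $m_{i,\trop}+\sum_k\rho_k\ell_k^*\tilde h_{i,k}$ and $\beta_{j,\trop}:=\sum_k\rho_k\ell_k^*\gamma_{j,k}$, extended by the linear part of $D_{i,\trop}$, resp.\ by $0$, away from $\Trop(X)$. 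Since $\sum_k\psi_\trop^*\rho_k=1$ on $\Xan$ and $\psi_\trop^*$ is a homomorphism of bigraded differential $\R$-algebras compatible with piecewise smooth coefficients, this gives $g_i=g_{i,\trop}\circ\psi_\trop$ and $\psi_\trop^*(\beta_{j,\trop})=\beta_j$. To check that $g_{i,\trop}$ is genuinely of piecewise smooth type and that $\beta_{j,\trop}\in B(N_\Sigma)$ one must arrange that the polyhedral structures occurring are constant towards the boundary of $N_\Sigma$; here the combinatorial results of Section~\ref{polyhedra} (Theorem~\ref{thm:1}, Proposition~\ref{special-presentation-delta-forms}) enter, applicable because $\Sigma$ is simplicial, together with the observation that continuity of $\varphi_k$ at a boundary point of $\Trop(X)$ forces $\ell_k$ to annihilate the corresponding recession directions.

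I expect this last step to be the main obstacle: turning the finitely many compatible local pieces into genuine global objects on the tropical toric variety while respecting the ``constant towards the boundary'' condition built into tropical Green functions and $\delta$-forms on $N_\Sigma$ — this is what forces the passage through a product of projective spaces with its simplicial fan and the use of the subdivision results of Section~\ref{polyhedra}. By contrast, the reduction to finitely many charts and the ``clear denominators in the projective embedding'' device are essentially standard, modelled on Jell's global tropicalization of smooth forms. Once the construction is in place, the compatibility of the tropical and non-archimedean $*$-products proved in the preceding sections yields the tropical formula for non-archimedean local heights.
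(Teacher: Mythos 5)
Your overall strategy (reduce by compactness to finitely many charts, pass to a single multiprojective embedding refining all of them, glue by a partition of unity on the tropical side, and split each $g_i$ locally as $m_i+h_i$) is essentially the route the paper takes, but the decisive gluing step has a genuine gap. From $\sum_k\psi_\trop^*(\rho_k)=1$ and multiplicativity of $\psi_\trop^*$ you only get
\[
\psi_\trop^*(\beta_{j,\trop})=\sum_k \psi_\trop^*(\rho_k)\,\psi_\trop^*(\ell_k^*\gamma_{j,k}),
\]
and to identify the right-hand side with $\beta_j$ you need the local identity $\psi_\trop^*(\ell_k^*\gamma_{j,k})=\beta_j$ to hold on all of $\psi_\trop^{-1}(\{\rho_k\neq 0\})$, not merely on $W_k$. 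Since $\trop$, hence $\psi_\trop$, is far from injective, $\psi_\trop^{-1}(\Omega_k)$ is in general strictly larger than $W_k$, and on the excess the $k$-th tropical representative has nothing to do with $\beta_j$ (resp.\ $g_i$); requiring $\rho_k$ to be subordinate to a neighbourhood $\Omega_k$ of $\psi_\trop(W_k)$ does not prevent this. Removing exactly this defect is the point of the paper's tropical charts (Definition \ref{tropical charts}, Lemma \ref{first step for affine tropicalization}): one shrinks each $W_k$ to a set of the form $\{r_1<|f_1|<s_1\}\cap\dots\cap\{r_m<|f_m|<s_m\}$ and adjoins the functions $f_1,\dots,f_m$ (after clearing denominators, as sections entering the projective embedding) to the coordinates, so that $W_k=\psi_\trop^{-1}(\Omega_k)$ for a tropical open $\Omega_k$; only then does the partition-of-unity computation of Propositions \ref{global tropicalization for affine} and \ref{global tropicalization for quasi-projective} go through. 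Your construction never arranges this preimage property, so the concluding identities $g_i=g_{i,\trop}\circ\psi_\trop$ and $\psi_\trop^*(\beta_{j,\trop})=\beta_j$ do not follow as written.

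A secondary point: you locate the main difficulty in the constant-towards-the-boundary condition and invoke the simplicial subdivision results of Section \ref{polyhedra} (Theorem \ref{thm:1}, Proposition \ref{special-presentation-delta-forms}); these are not needed for this theorem. Smooth partitions of unity on $N_\Sigma$ exist and are automatically constant towards the boundary, the boundary behaviour is absorbed by the local pull-back constructions \ref{delta-forms with A-tropicalizations} and \ref{delta-forms with T-tropicalizations}, and $\delta$-forms glue because they form a sheaf, so no simpliciality of $\Sigma$ enters. The paper also organizes the final assembly slightly differently: it tropicalizes each Green function and each $\delta$-form separately (Proposition \ref{tropicalization of Green functions} writes $g_i=g'_i+(g_i-g'_i)$ with $g'_i$ pulled back from an arbitrary tropical Green function for $D_{i,\trop}$ and $g_i-g'_i$ a globally defined piecewise smooth function on the compact $\Xan$, handled by Proposition \ref{global tropicalization for quasi-projective}) and then passes to the product of the resulting embeddings; this avoids your cutoff extension of $g_{i,\trop}$ away from $\Trop(X)$ and directly yields a multiprojective $X_\Sigma$.
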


This follows from Propositions \ref{global tropicalization for quasi-projective} and \ref{tropicalization of Green functions} by passing to the product of the corresponding closed embeddings. 
This shows also that we may use a multiprojective space for $X_\Sigma$. 

\subsection{Local heights of projective varieties} \label{subsec: local heights of projective varieties}

Let $X$ be a $d$-dimensional projective variety over the non-archimedean field $K$ and let $D_0,\dots, D_d$ be properly intersecting Cartier divisors on $X$. 
For $j=0,\dots, d$, let $g_i$ be a Green function for $D_i$ of piecewise smooth type and let $\widehat{D_i} \coloneqq (D_i,g_i)$. Then we define the \emph{local height of $X$} by 
\[ 
\lambda_{\widehat{D_0}, \dots, \widehat{D_d}}(X) \coloneqq \bigl((D_0,g_0) * \dots  *(D_d,g_d)* (X,0)\bigr)(1).
\]
It follows from Proposition \ref{properties local heights} that these local heights agree with the usual local heights in Arakelov theory. 
Using Theorem \ref{global tropicalization result}, we deduce in Theorem \ref{computation of local heights tropically} the following tropical formula for local heights.

\begin{theointro} \label{introthm: computation of local heights tropically}
Let $X$ be a projective $d$-dimensional variety over $K$. 
Given  $\widehat{D_0}, \dots, \widehat{D_d}$ as above, 
there exists a closed embedding $\psi\colon X \to X_\Sigma$ into a projective toric variety $X_\Sigma$ such that $\psitrop$ tropicalizes $\widehat{D_i}=(D_i,g_i)$ by a pair $(D_{i,\trop},g_{i,\trop})$ as in Theorem \ref{global tropicalization result}. 
For the tropical cycle $C=\psi_{\trop,*}(X)$, we have
\[
\lambda_{\widehat{D_0}, \dots, \widehat{D_d}}(X)=\bigl((D_{0,\trop},g_{0,\trop})* \dots * (D_{d,\trop},g_{d,\trop})*(C,0)\bigr)(1).
\]
\end{theointro}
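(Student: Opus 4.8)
The plan is to reduce the assertion entirely to the global tropicalization theorem (Theorem \ref{global tropicalization result}) together with the compatibility of the two star-products under pull-back. First I would invoke Theorem \ref{global tropicalization result} applied to the Green functions $g_0,\dots,g_d$ (taking the list of $\delta$-forms $\beta_j$ empty, or just not used) to produce a closed embedding $\psi\colon X \to X_\Sigma$ into a projective toric variety, toric Cartier divisors $E_i$ on $X_\Sigma$ with associated tropical toric Cartier divisors $D_{i,\trop}$ satisfying $\psi^*(E_i)=D_i$, and tropical Green functions $g_{i,\trop}$ of piecewise smooth type for $D_{i,\trop}$ with $g_i = g_{i,\trop}\circ\psi_\trop$. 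Since $X$ is projective, we may arrange $X_\Sigma$ projective (with $\Sigma$ complete), which is exactly what is needed for the tropical star-product of \S\ref{star-product of tropical Green functions} to be defined on the relevant tropical cycles supported in $N_\Sigma$. Set $C \coloneqq \psi_{\trop,*}(X)$, the push-forward of the fundamental tropical cycle of $\Xan$ under $\psitrop$; this is a tropical cycle in $N_\Sigma$ of dimension $d$ (possibly with boundary components), and it is the object on which the tropical side of the formula is evaluated.

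The main step is then to show, by induction on the number of divisors, that
\[
(D_0,g_0)*\dots*(D_k,g_k)*(X,0) \quad\text{pushes forward under } \psitrop \text{ to }\quad (D_{0,\trop},g_{0,\trop})*\dots*(D_{k,\trop},g_{k,\trop})*(C,0)
\]
as formal $\delta$-currents, in the sense that applying the non-archimedean current to $1$ gives the same number as applying the tropical current to $1$. The inductive step amounts to the identity
\[
\psi_{\trop,*}\bigl((D_i,g_i)*(\overline Z, S)\bigr) = (D_{i,\trop},g_{i,\trop})*\psi_{\trop,*}(\overline Z,S),
\]
where the star-products are the ones defined respectively in \S\ref{subsec: star-product with ps Green functions} and \S\ref{star-product of tropical Green functions}. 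Unwinding the definitions, this splits into two compatibilities: first, that $\psi_\trop^*$ sends $c_1(D_{i,\trop},g_{i,\trop})$ to $c_1(D_i,g_i)$ — which holds because $g_i = g_{i,\trop}\circ\psi_\trop$ and $c_1$ is locally $d'd''$ of the Green function, and $\psi_\trop^*$ commutes with $d',d''$ as it is a homomorphism of differential $\R$-algebras (recalled in \S\ref{subsec: non-archimedean Arakelov theory and extended tropicalizations}); second, that $\psi_\trop^*$ intertwines the tropical intersection product $D_{i,\trop}\cdot(-)$ with the divisorial intersection $D_i\cdot(-)$ on cycles, i.e. $\psi_{\trop,*}(D_i\cdot Z) = D_{i,\trop}\cdot\psi_{\trop,*}(Z)$. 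The latter is the projection formula for tropicalizations of intersection products — the tropical analogue of Fulton--Sturmfels — extended to the toric compactification; for $X$ projective and $\psi$ a closed embedding this is available, and the $*$-product was already shown to satisfy a projection formula in Section \ref{sec:a tropical star-product}. Combining these with the tropical Poincaré--Lelong formula (Theorem \ref{introthm: tropical Poincare-Lelong}) on the tropical side and its non-archimedean counterpart on the other side closes the induction.

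Finally, the base case is the identity $\psi_{\trop,*}(X,0) = (C,0)$, which is the definition of $C$, and the evaluation at $1$ of both sides agrees because $\psi_\trop^*(1)=1$ and integration is preserved under $\psi_\trop^*$ (compatibility of the pull-back with integration, as stated in \S\ref{subsec: non-archimedean Arakelov theory and extended tropicalizations}, together with the fact that the fundamental cycle of $\Xan$ maps properly since $X_\Sigma$ is proper). Collecting the chain of equalities through $k=d$ yields
\[
\lambda_{\widehat{D_0},\dots,\widehat{D_d}}(X) = \bigl((D_{0,\trop},g_{0,\trop})*\dots*(D_{d,\trop},g_{d,\trop})*(C,0)\bigr)(1),
\]
which is the claim. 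I expect the main obstacle to be the second compatibility — that $\psitrop$ carries the non-archimedean divisorial intersection product to the tropical intersection product on $N_\Sigma$, uniformly at each inductive stage including the components of the cycles that have migrated into the boundary $N_\Sigma\setminus\R^n$. This requires the extension of tropical intersection theory to $N_\Sigma$ (the ``$D\cdot\overline C$'' of \S\ref{subsection: intersection product}) to be genuinely compatible with the algebraic intersection theory on the toric variety $X_\Sigma$ and its orbits, and matching up properness and multiplicities along the toric stratification is where the real care is needed; everything else is formal bookkeeping with the already-established $\psi_\trop^*$ being a morphism of differential algebras compatible with integration and star-products.
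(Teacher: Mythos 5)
Your proposal follows essentially the same route as the paper: the paper obtains the joint tropicalization from Proposition \ref{tropicalization of Green functions} (the projective case of Theorem \ref{global tropicalization result}) and then concludes by iterating Lemma \ref{star-product and tropicalization}, which is precisely your inductive step $\psi_{\trop,*}\bigl((D_i,g_i)*(Z,T)\bigr)=(D_{i,\trop},g_{i,\trop})*\psi_{\trop,*}(Z,T)$, established there from Proposition \ref{first Chern form and tropicalization}, the identity $\psi_{\trop,*}(g\wedge\delta_Z)=g_\trop\wedge\delta_C$ of Remark \ref{compatibility of wedge and tropicalization}, and the compatibility of the two intersection products (the boundary issue you rightly flag). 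The only difference is presentational: the paper packages the compatibilities you enumerate into that single lemma rather than spelling out the induction.
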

This is indeed a \emph{tropical formula} as the right hand side is
given as the integral of a $*$-products of tropical Green functions
that can be computed completely on the tropical toric variety
$N_\Sigma$.

\subsection{Notation and Conventions}\label{subsec: notation}

The set of natural numbers $\N$ includes $0$. 
The set-theoretic inclusion $A \subset B$ allows $A=B$. 
We set $\R_{\geq 0} \coloneqq \{r \in \R \mid r \geq 0\}$ and $\Rsup \coloneqq \R \cup \{ \infty \}$. 
A \emph{lattice} is a free $\Z$-module of finite rank. 
A \emph{variety} over a field $F$ is an integral scheme which is of finite type and separated over $\Spec\, F$.

A \emph{non-archimedean field} is
a field $K$  complete with respect to
a given ultrametric absolute value $|\phantom{a}|\colon K\to \R_{\geq 0}$.
If $Y$ is a variety over $K$, then $\Yan$ denotes the analytification of $Y$ as a Berkovich analytic space. 

A \emph{fan} is a finite polyhedral complex $\Sigma$ in $N_\R$ consisting of strictly convex polyhedral rational cones. For a cone $\sigma$ in the fan $\Sigma$, write $N(\sigma)\coloneqq N_\R / \langle\sigma\rangle_\R$.  
We have canonical projection maps $\pi_{\sigma} \colon N_{\R}
\to N(\sigma)$ and $\pi_{\sigma, \tau} \colon N(\tau)
\to N(\sigma)$ for $\tau \prec \sigma$.

We denote by $X_\Sigma$ the corresponding toric variety with dense torus $\torus$ and by $N_\Sigma$ the corresponding partial compactification of $N_\R$. We recall that
\begin{align*}
	N_\Sigma \coloneqq  \coprod \limits_{\sigma \in \Sigma } N(\sigma)
\end{align*}
as a set. For the topology on $N_\Sigma$ and other details, we refer to \cite{burgos-gubler-jell-kuennemann1}. We call $N_\Sigma$  the \emph{tropical toric variety associated to $\Sigma$}.

For a fan $\Sigma$, we call \emph{star of $\sigma$} and denote $\Star_\Sigma(\sigma)$ the set of cones $\tau \in \Sigma$ with $\sigma \prec \tau$. 
We note that $\Sigma(\sigma) \coloneqq \{\pi_\sigma(\tau)\mid \tau \in \Star_\Sigma(\sigma)\}$ is a fan in $N(\sigma)$ and there is a canonical bijective correspondence between
$\Star_\Sigma(\sigma)$ and $\Sigma(\sigma)$ given by $\pi_\sigma$. 
Note that in the literature on toric varieties, often $\Sigma(\sigma)$ is called the star of $\sigma$ as it is the fan of the orbit closure associated to $\sigma$.

For tropical toric varieties, we will consider the following morphisms. 
	Let $N_{\Sigma'}'$ be another tropical toric variety with underlying lattice $N'$ of rank $n'$ and 
	let $L\colon N'_\R \to N_\R$ be a linear map. We call a map $F\colon N'_{\Sigma'} \to N_\Sigma$ \emph{$L$-equivariant} if it is continuous and $L$-equivariant in the sense that $F(x'+y')=F(x')+L(y')$ for every $x'\in N_{\Sigma '}$ and
	$y'\in N_\R'$.
Equivalently, we can consider an affine map $E\colon N_\R' \to N(\sigma)$ for some $\sigma \in \Sigma$ with $E(x'+y')=E(x')+\pi_\sigma(L(y'))$ for every $x'\in N'$ and
$y'\in N_\R'$ such that for
each cone $\tau '\in \Sigma'$ there is a cone $\tau \in \Sigma
(\sigma )$ such that  $\pi _{\sigma }(L(\tau '))\subset \tau
$. The equivalence is given by noting that $F_{N_\R'}$ is induced by such an $E$ and conversely, $F$ is the unique extension of $E$ to a continuous map $N_{\Sigma'}'\to N_\Sigma$.

\subsection{Acknowledgements}
We would like to thank Andreas Gross for helpful remarks.

\section{Polyhedra on tropical toric varieties}\label{polyhedra}
 
We start this section with a subsection on polyhedra on  tropical toric varieties. 
We will see that these polyhedra behave rather pathologically unless they are constant towards the boundary. 
Since polyhedra play a crucial role in this paper, we advice the reader to have a careful look at the first subsection. 
In the second subsection, we will introduce polyhedral complexes. 
Then we look at the problem of subdivisions mentioned in \S \ref{subsec: subdivisions on tropical toric varieties}. 
For those less interested in such combinatorial problems, this subsection can be skipped in a first reading.
 
In Section \ref{polyhedra}, we fix the following notation: Let $N$ be a free abelian group of rank $n$, $M=\mathrm{Hom}_\Z(N,\Z)$ its dual and denote by $N_\R$  and $M_\R$ the scalar extensions to $\R$. 
We consider a fan $\Sigma$ with associated \emph{tropical toric variety $N_\Sigma$}, see \S \ref{subsec: notation}.

\subsection{Polyhedra}
\label{subsection: polyhedra on tropical toric varieties}

A \emph{polyhedron $\Delta$} in $N_\R$ is by definition an intersection of finitely many closed half-spaces $\{v \in N_\R \mid m_i(v) \geq c_i\}$ with $m_i \in  M_\R$ and $c_i \in \R$. 
If we can choose all $m_i \in M$ and all $c_i$ in a subgroup $\Gamma$ of $\R$, then we call $\Delta$ a $(\Z,\Gamma)$-polyhedron.
	The empty set and $N_\R$ are polyhedra, although the empty set is special and sometimes has to be treated separately. 
A bounded polyhedron in $N_\R$ is called a \emph{polytope}. 
Given a non-empty polyhedron $\Delta$ in $N_\R$, we denote by $\A_\Delta$ the affine subspace of $N_\R$ generated by $\Delta$ and by $\L_\Delta$ the linear subspace of $N_\R$ underlying $\A_\Delta$.  
We define $\dim\Delta\coloneqq \dim_\R\L_\Delta$ and $\dim\Delta\coloneqq -1$ if $\Delta =\emptyset$.

If $\Delta \not =\emptyset$, then we denote by
\[
\mathrm{rec}(\Delta)\coloneqq \{v\in N_\R\,|\,\Delta+v\subset \Delta\}
\]
the \emph{recession cone} of $\Delta$.
We also denote by
$\relint(\Delta )$ the \emph{relative interior} of $\Delta $, that is, the interior of $\Delta $ as a subset of $\A_{\Delta }$ when $\Delta$ is non-empty and the empty set otherwise.  

A \emph{polyhedron} $\Delta$ in $N_\Sigma$ is  defined as the topological closure in $N_\Sigma$ of a polyhedron $\Delta(\sigma)$ in $N(\sigma)$ for some $\sigma\in \Sigma$.
If $\Delta $ is non-empty, then the cone $\sigma$ and the polyhedron $\Delta(\sigma)$ are uniquely determined by $\Delta$. 
In this situation, we say that $\Delta$ has \emph{sedentarity} $\mathrm{sed}(\Delta)\coloneqq\sigma$ and call $\Delta(\sigma)$ the \emph{finite part of $\Delta$}. 
We define $\dim\Delta=\dim \Delta(\sigma)$.
Given a polyhedron $\Delta$ in $N_\Sigma$, we write
$\Delta(\tau)\coloneqq N(\tau)\cap \Delta$ for each $\tau\in \Sigma$
and get
  \[
\Delta=\bigcup\limits_{\genfrac{}{}{0pt}{}{\tau\in\Sigma}
{\mathrm{sed}(\Delta)\prec\tau}}\Delta(\tau).
\]
A \emph{face} of a polyhedron $\Delta$ in $N_\Sigma$ is defined as the
topological closure in $N_\Sigma$ of a face of a polyhedron
$\Delta(\tau)$ for some $\tau\in \Sigma$.

Let $\Delta$ be a polyhedron in $N_\Sigma$ of sedentarity $\sigma \in \Sigma$. 
The following lemma, describes the closure $\Delta$ of the finite part
$\Delta(\sigma)$ precisely. 
We pick any $\tau \in \Sigma$ which has $\sigma$ as a face and we
consider again the canonical projection
$\pi_{\tau,\sigma } \colon N(\sigma) \to N(\tau)$.  We write $\tau '$
for the image of $\tau $ in $N(\sigma )$.  Recall that any non-empty
polyhedron in an euclidean space is the sum of its recession cone and
of a polytope \cite[Theorem 19.1]{rockafellar1970}.  Let
$C=\rec(\Delta (\sigma ))$, so $\Delta (\sigma )=K+C$ for some
polytope $K$ in $N(\sigma)$.

\begin{lemma} \label{lemm:1}
In the above notation, the polyhedron $\Delta$  has the following properties:
\begin{enumerate}
\item \label{item:1} 
The subset $\Delta (\tau )$ is non-empty if and only if $C\cap \relint(\tau ')$ is non-empty.
\item \label{item:2} 
If $C\cap \relint(\tau ')\not = \emptyset$, then $\Delta (\tau)=\pi _{\tau ,\sigma }(\Delta (\sigma ))$. 
Therefore, we have
\begin{displaymath}
\pi _{\tau ,\sigma }^{-1}(\Delta (\tau ))=\Delta (\sigma )+\L_{\tau '}.
\end{displaymath}
\end{enumerate}
\end{lemma}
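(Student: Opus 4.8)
\emph{Approach and steps.} The plan is to reduce to the case $\sigma=0$ and then analyse convergence in $N_\Sigma$ directly. Since $\Delta=\overline{\Delta(\sigma)}$ with $\Delta(\sigma)\subseteq N(\sigma)$, and the orbit closure $\overline{N(\sigma)}$ in $N_\Sigma$ is canonically the tropical toric variety $N(\sigma)_{\Sigma(\sigma)}$ of the star fan --- with $N(\tau)$ corresponding to $N(\sigma)(\tau')$ and $\pi_{\tau,\sigma}$ to the quotient $N(\sigma)\to N(\sigma)/\L_{\tau'}$, see \cite{burgos-gubler-jell-kuennemann1} --- I would first replace $(N_\R,\Sigma,\sigma,\tau,\Delta(\sigma))$ by $(N(\sigma),\Sigma(\sigma),0,\tau',\Delta(\sigma))$. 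So from now on assume $\sigma=0$, write $P\coloneqq\Delta(\sigma)$, $C=\rec(P)$, $P=K+C$ with $K$ a polytope, and let $\pi_\tau\colon N_\R\to N(\tau)=N_\R/\L_\tau$ be the projection; we must show $\overline P\cap N(\tau)\neq\emptyset$ iff $C\cap\relint(\tau)\neq\emptyset$, and that in this case $\overline P\cap N(\tau)=\pi_\tau(P)$. Throughout I use the description of the topology of $N_\Sigma$ from \cite{burgos-gubler-jell-kuennemann1}: a net $(z_\lambda)$ in $N_\R$ converges to $\bar x\in N(\tau)$ exactly when $\langle m,z_\lambda\rangle\to\langle\bar m,\bar x\rangle$ for all $m\in\tau^\perp\cap M$ and $\langle m,z_\lambda\rangle\to+\infty$ for all $m\in(\tau^\vee\cap M)\setminus\tau^\perp$; since $\tau$ is rational, this in fact forces $\langle m,z_\lambda\rangle\to+\infty$ for every $m\in M_\R$ with $m\in\tau^\vee\setminus\tau^\perp$.

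First I would prove the implication $C\cap\relint(\tau)\neq\emptyset\Rightarrow\Delta(\tau)=\pi_\tau(P)$; this also yields the ``if'' part of \ref{item:1}. Fix $v\in C\cap\relint(\tau)$. For $p\in P$ the ray $p+\R_{\geq 0}v$ lies in $P$, and $p+tv\to\pi_\tau(p)$ as $t\to+\infty$: indeed $\langle m,p+tv\rangle$ is the constant $\langle m,p\rangle$ for $m\in\tau^\perp\cap M$ (since $\langle m,v\rangle=0$), and tends to $+\infty$ for $m\in(\tau^\vee\cap M)\setminus\tau^\perp$ (since $\langle m,v\rangle>0$ on $\relint\tau$). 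Hence $\pi_\tau(P)\subseteq\overline P\cap N(\tau)$. For the reverse inclusion, which holds unconditionally, take $x_\lambda\in P$ with $x_\lambda\to\bar x\in N(\tau)$; pairing with $m\in\tau^\perp\cap M$ shows $\pi_\tau(x_\lambda)\to\bar x$ in $N(\tau)$, and since $\pi_\tau(P)$ is the image of a polyhedron under a linear map it is again a polyhedron, hence closed, so $\bar x\in\pi_\tau(P)$. The displayed identity then follows from $\Delta(\tau)=\pi_\tau(\Delta(\sigma))$ and the general fact $\pi_\tau^{-1}(\pi_\tau(P))=P+\ker\pi_\tau$, noting that $\ker\pi_\tau=\L_\tau$ corresponds to $\L_{\tau'}$.

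The remaining point --- the ``only if'' direction of \ref{item:1} --- is where the real work lies, and I expect it to be the main obstacle. If $\bar x\in\overline P\cap N(\tau)$, pick a net $x_\lambda=u_\lambda+v_\lambda\in P$ with $u_\lambda\in K$, $v_\lambda\in C$ and $x_\lambda\to\bar x$; passing to a subnet with $u_\lambda\to u\in K$ (compactness of $K$) and using continuity of the translation action of $N_\R$ on $N_\Sigma$, we get $v_\lambda\to\bar x-\pi_\tau(u)\in N(\tau)$ with $v_\lambda\in C$. So it suffices to show: \emph{if $C$ is a polyhedral cone with $C\cap\relint(\tau)=\emptyset$, then no net in $C$ converges to a point of $N(\tau)$.} The tempting argument --- passing to a subnet with $v_\lambda/\lVert v_\lambda\rVert\to d$ and concluding $d\in C\cap\relint(\tau)$ --- does \emph{not} work, since the limit direction $d$ may land on $\partial\tau$ even when $C$ genuinely meets $\relint\tau$ through other points. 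Instead I would construct $m\in M_\R$ with $m\in\tau^\vee$, $-m\in C^\vee$ and $m\notin\tau^\perp$: then $\langle m,v_\lambda\rangle\leq 0$ for all $\lambda$, while $m\in\tau^\vee\setminus\tau^\perp$ forces $\langle m,v_\lambda\rangle\to+\infty$, a contradiction. To build $m$, suppose no such $m$ exists, i.e. $\tau^\vee\cap(-C^\vee)\subseteq\tau^\perp$. Since $\tau^\vee\cap(-C^\vee)=(\tau+(-C))^\vee$ and $\tau-C$ is a Minkowski sum of polyhedra, hence a closed convex cone, the bipolar theorem gives $\tau-C=(\tau-C)^{\vee\vee}\supseteq(\tau^\perp)^\vee=\L_\tau$. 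Choosing $w\in\relint(\tau)$, we may therefore write $-w=t-c$ with $t\in\tau$ and $c\in C$; then $c=t+w\in\relint(\tau)$ (a cone element plus a relative-interior point of that cone), contradicting $C\cap\relint(\tau)=\emptyset$. Hence $m$ exists --- and $m\in\tau^\vee\setminus\tau^\perp$ indeed gives $m\geq 0$ on $\tau$ and $m>0$ on $\relint\tau$ --- which completes the proof. Everything outside this construction is bookkeeping with the topology of $N_\Sigma$ and with polyhedral duality.
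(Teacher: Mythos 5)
Your proof is correct, but it takes a different route from the paper, whose entire proof is two lines: after the same reduction you perform (replace $N_\Sigma$ by the orbit closure $\overline{N(\sigma)}\cong N(\sigma)_{\Sigma(\sigma)}$, so that the polyhedron has sedentarity $(0)$), the paper simply cites Osserman--Rabinoff \cite[Lemma 3.9]{osserman-rabinoff2013}. What you supply instead is a self-contained proof of that sedentarity-$(0)$ statement. The two inclusions you prove first ($\pi_\tau(P)\subseteq\Delta(\tau)$ when $C\cap\relint(\tau)\neq\emptyset$, via rays $p+\R_{\geq0}v$, and $\Delta(\tau)\subseteq\pi_\tau(P)$ unconditionally, using that $\pi_\tau(P)$ is a polyhedron and hence closed) are routine, and the genuine content is the ``only if'' direction of part (i): there your reduction, via compactness of $K$ in $P=K+C$ and the translation action, to the assertion that no net in $C$ can converge into $N(\tau)$ when $C\cap\relint(\tau)=\emptyset$, followed by the bipolar-theorem construction of a functional $m\in\tau^\vee\cap(-C^\vee)$ with $m\notin\tau^\perp$ (if none existed, $\L_\tau\subseteq\tau-C$ and $C$ would meet $\relint(\tau)$), is a clean separation argument that correctly avoids the tempting but flawed ``normalize $v_\lambda$ and pass to a limit direction'' shortcut. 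Two details you handled are worth stressing: the extension of the boundary-convergence criterion from $m\in M$ to $m\in M_\R$ is genuinely needed, because the paper's polyhedra (hence $C$) need not be rational while $\tau$ is, and your decomposition of $m$ into lattice generators of $\tau^\vee$ settles it; and subtracting the moving net $u_\lambda$ needs either joint continuity of the action or the direct pairing computation, both of which are available. In short: the paper buys brevity by outsourcing the combinatorial core to a reference, while your argument buys self-containedness at the cost of length, with the duality step playing the role of the cited lemma.
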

\begin{proof}
If $\Delta$ has sedentarity $(0)$, this is a result of Osserman and Rabinoff \cite[Lemma 3.9]{osserman-rabinoff2013}. 
The general case follows from that replacing $N_\Sigma$ by the tropical toric variety given by the closure of $N(\sigma)$.
\end{proof}

\begin{cor}\label{cor:2} 
Let $\sigma \prec \tau \prec \rho $ be cones of $\Sigma $ and $\Delta $ a polyhedron of $N_{\Sigma}$ of sedentarity $\sigma $. 
Then $\overline{\Delta (\tau )}(\rho )$  is either empty or agrees with $\Delta (\rho )$. 
\end{cor}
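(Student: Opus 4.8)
The plan is to obtain the statement from two applications of Lemma \ref{lemm:1}, preceded by an elementary observation that disposes of one inclusion for free. First I would use that $\Delta$ is closed in $N_\Sigma$: by definition it is the topological closure of its finite part $\Delta(\sigma)$. Since $\Delta(\tau)=N(\tau)\cap\Delta\subseteq\Delta$, this gives $\overline{\Delta(\tau)}\subseteq\Delta$, and therefore
\[
\overline{\Delta(\tau)}(\rho)=N(\rho)\cap\overline{\Delta(\tau)}\subseteq N(\rho)\cap\Delta=\Delta(\rho).
\]
Thus if $\overline{\Delta(\tau)}(\rho)=\emptyset$ there is nothing to prove, and I may assume $\overline{\Delta(\tau)}(\rho)\neq\emptyset$. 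Then $\Delta(\rho)\neq\emptyset$ by the displayed inclusion, and also $\Delta(\tau)\neq\emptyset$, since otherwise $\overline{\Delta(\tau)}$ itself would be empty.

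Next, since $\Delta$ has sedentarity $\sigma$ and $\Delta(\tau),\Delta(\rho)$ are both non-empty, Lemma \ref{lemm:1} (part (i) provides the relevant recession-cone conditions, and part (ii) then the identifications) applied to $\Delta$ with the cones $\tau$ and $\rho$, each of which has $\sigma$ as a face, yields $\Delta(\tau)=\pi_{\tau,\sigma}(\Delta(\sigma))$ and $\Delta(\rho)=\pi_{\rho,\sigma}(\Delta(\sigma))$. In particular $\Delta(\tau)$ is a polyhedron in $N(\tau)$, so $\overline{\Delta(\tau)}$ is a polyhedron in $N_\Sigma$ of sedentarity $\tau$ with finite part $\Delta(\tau)$, and Lemma \ref{lemm:1} may be applied once more, now to $\overline{\Delta(\tau)}$ with the cone $\rho\succ\tau$ and using $\overline{\Delta(\tau)}(\rho)\neq\emptyset$, to get $\overline{\Delta(\tau)}(\rho)=\pi_{\rho,\tau}(\Delta(\tau))$. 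Combining these identities with the transitivity $\pi_{\rho,\tau}\circ\pi_{\tau,\sigma}=\pi_{\rho,\sigma}$ of the canonical projections $N(\sigma)\to N(\tau)\to N(\rho)$ gives
\[
\overline{\Delta(\tau)}(\rho)=\pi_{\rho,\tau}\bigl(\pi_{\tau,\sigma}(\Delta(\sigma))\bigr)=\pi_{\rho,\sigma}(\Delta(\sigma))=\Delta(\rho),
\]
which is the assertion.

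I do not expect a genuine obstacle here; the argument is essentially bookkeeping with sedentarities and the projection maps. The points that require a little care are: ensuring that $\overline{\Delta(\tau)}$ is a legitimate polyhedron of sedentarity $\tau$ before invoking Lemma \ref{lemm:1} for it — which is exactly why one first treats the case $\Delta(\tau)=\emptyset$ separately — and checking the non-emptiness hypotheses needed to use part (ii) of Lemma \ref{lemm:1} in each of the three applications, all of which follow from part (i) together with the inclusion $\overline{\Delta(\tau)}(\rho)\subseteq\Delta(\rho)$ established at the start. A more hands-on alternative would track everything through recession cones, using $\rec(\pi_{\tau,\sigma}(\Delta(\sigma)))=\pi_{\tau,\sigma}(\rec(\Delta(\sigma)))$ and the commutation of relative interiors with linear maps to re-derive the non-emptiness equivalences, but the closedness observation above makes this unnecessary.
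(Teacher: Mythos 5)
Your proof is correct and follows essentially the same route as the paper's: three applications of Lemma \ref{lemm:1} combined with the transitivity $\pi_{\rho,\tau}\circ\pi_{\tau,\sigma}=\pi_{\rho,\sigma}$. Your preliminary closure-inclusion $\overline{\Delta(\tau)}(\rho)\subseteq\Delta(\rho)$ just makes explicit the non-emptiness of $\Delta(\rho)$ that the paper asserts without comment.
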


\begin{proof}
Denote by $\tau '$ and $\rho' $ the images of $\tau $ and $\rho $ in $\Sigma (\sigma )$ and by $\rho ''$ the image of $\rho $ in $\Sigma (\tau )$ (see \ref{subsec: notation}.
Assume that $\overline{\Delta (\tau )}(\rho )\not = \emptyset$, then $\Delta (\tau )\not = \emptyset$ and $\Delta (\rho )\not = \emptyset$. 
Therefore, applying  Lemma \ref{lemm:1} three times, we deduce
\begin{displaymath}
\overline{\Delta (\tau )}(\rho )=\pi _{\rho ,\tau }(\pi _{\tau,\sigma }(\Delta (\sigma )))=\pi _{\rho ,\sigma }(\Delta (\sigma)) = \Delta (\rho )   
  \end{displaymath}
proving the claim. 
\end{proof}

\begin{lemma}\label{lemm:2}
   In the real vector space $N_\R$, we consider  a strictly convex
  polyhedral cone $C$,  a compact set $\Omega$,  a polyhedron $K$
  and $v\in \relint(C)$. Then there exists a real number $\lambda
  \ge 0$ such that
  \begin{displaymath}
    (K+\L_{C})\cap (\Omega +\lambda v)\subset K+C.
  \end{displaymath}
\end{lemma}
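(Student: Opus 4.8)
The plan is to peel off the two ``soft'' ingredients --- translation invariance of $K+\L_C$ along $\L_C$, and the absorbing property of $\relint(C)$ --- and thereby reduce everything to one quantitative statement: points of $\Omega$ lying in $K+\L_C$ can be written as an element of $K$ plus a \emph{uniformly short} vector of $\L_C$. We may assume $K\neq\emptyset$, since otherwise both sides are empty. Recall that for a cone $C$ one has $\A_C=\L_C$, so $\relint(C)$ is computed relative to $\L_C$.

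\textbf{Step 1 (reduction).} Since $v\in\L_C$ and $K+\L_C$ is stable under translation by $\L_C$, one checks directly that for every $\lambda\ge 0$
\[
(K+\L_C)\cap(\Omega+\lambda v)=\bigl((K+\L_C)\cap\Omega\bigr)+\lambda v .
\]
Hence it suffices to produce $\lambda\ge 0$ with $\omega+\lambda v\in K+C$ for every $\omega\in(K+\L_C)\cap\Omega$. Writing such an $\omega$ as $\omega=k_\omega+w_\omega$ with $k_\omega\in K$ and $w_\omega\in\L_C$, it is enough to arrange $\lambda v+w_\omega\in C$, for then $\omega+\lambda v=k_\omega+(\lambda v+w_\omega)\in K+C$. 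Fix a norm $\|\cdot\|$ on $N_\R$. Because $v\in\relint(C)$ there is $\varepsilon>0$ with $\{u\in\L_C\mid\|u-v\|\le\varepsilon\}\subset C$; multiplying by $\lambda>0$ and using that $C$ is a cone gives $\{u\in\L_C\mid\|u-\lambda v\|\le\lambda\varepsilon\}\subset C$. So $\lambda v+w_\omega\in C$ as soon as $\|w_\omega\|\le\lambda\varepsilon$.

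\textbf{Step 2 (the uniform bound).} It remains to prove: there is $M\ge 0$ such that every $\omega\in(K+\L_C)\cap\Omega$ admits a decomposition $\omega=k_\omega+w_\omega$ with $k_\omega\in K$, $w_\omega\in\L_C$ and $\|w_\omega\|\le M$. Granting this, the lemma follows by taking $\lambda:=(M+1)/\varepsilon$. To establish the bound, write $K=K_0+\rec(K)$ with $K_0$ a polytope (\cite[Theorem 19.1]{rockafellar1970}), so that $K+\L_C=K_0+C'$ where $C':=\rec(K)+\L_C=\rec(K+\L_C)$ is a polyhedral cone. For $\omega\in(K+\L_C)\cap\Omega$ write $\omega=k_0+c$ with $k_0\in K_0$, $c\in C'$; since $K_0$ and $\Omega$ are bounded, $\|c\|\le\|\omega\|+\|k_0\|\le R'$ for a constant $R'$ depending only on $K_0$ and $\Omega$. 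Now fix finite generating families of the cone $\rec(K)$ and of the subspace $\L_C$. By Carathéodory for cones, $c$ is a nonnegative combination of a linearly independent subfamily of these generators; there are only finitely many such subfamilies, and on each of them the coefficients are controlled linearly by $\|c\|$ (the generating vectors being linearly independent). Collecting the $\rec(K)$-part into $r\in\rec(K)$ and the $\L_C$-part into $w\in\L_C$, this yields $\|w\|\le M'\|c\|\le M'R'=:M$ with $M'$ depending only on $\rec(K)$ and $\L_C$. Finally $\omega=(k_0+r)+w$ with $k_0+r\in K_0+\rec(K)=K$, which is the required decomposition. (Alternatively, this uniform bound is an instance of Hoffman's lemma applied to the linear inequality system defining $K$.)

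\textbf{Main obstacle.} Steps~1 and the absorbing argument are formal; the genuine content is Step~2, the uniform control of the $\L_C$-component. The subtle point there is that the decomposition must have its first summand \emph{in $K$} (not merely in $K+\L_C$), which is why one must split off the polytope part $K_0$ of $K$ first and only run the Carathéodory/linear-algebra estimate on the recession cone $\rec(K)$.
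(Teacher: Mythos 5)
Your proof is correct, but it takes a genuinely different route from the paper's. The paper's argument is convex-analytic: after the same initial reduction (replacing $\Omega$ by $(K+\L_{C})\cap\Omega$), it introduces the function $\varphi\colon K+\L_{C}\to\R_{\ge 0}\cup\{\infty\}$, $\varphi(p)=\inf\{\lambda\ge 0\mid p+\lambda v\in K+C\}$, shows it is finite using exactly the absorbing property of $v\in\relint(C)$ that you use, shows it is a closed convex function because its epigraph is the intersection of a closed convex set with a half-space, and then invokes \cite[Theorem 10.2]{rockafellar1970} (continuity of closed convex functions on polyhedra) to get continuity; compactness of $\Omega$ then gives $\lambda=\max_{\Omega}\varphi$. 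You instead make the statement quantitative: after the same reduction, you isolate the single estimate that points of $(K+\L_{C})\cap\Omega$ admit a decomposition $k+w$ with $k\in K$ and $w\in\L_{C}$ of uniformly bounded norm, proved via the Minkowski--Weyl splitting $K=K_{0}+\rec(K)$ together with Carath\'eodory for cones (or Hoffman's lemma), and you conclude with an explicit scaling argument for $\relint(C)$. Your version avoids the continuity machinery and produces an explicit admissible $\lambda=(M+1)/\varepsilon$, at the cost of the Carath\'eodory bookkeeping; the paper's version is shorter once the Rockafellar continuity theorem is taken as a black box. Your subtle point --- that the first summand of the decomposition must lie in $K$, which forces you to split off the polytope part $K_{0}$ before running the cone estimate --- is exactly right and is where a naive version of your Step~2 would fail.
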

\begin{proof} For any real number $\lambda \ge 0$, 
  if $p\in (K+\L_{C})\cap (\Omega +\lambda v)$ then
  $p\in \big((K+\L_{C})\cap \Omega\big)+\lambda v$. Therefore, after
  replacing $\Omega$ by $ (K+\L_{C})\cap \Omega$, we can assume that
  $\Omega \subset K+\L_{C}$. If we prove that the function 
  $\varphi\colon K+\L_{C}\to  \R_{\ge 0}\cup \{\infty\}$ given by
  \begin{displaymath}
    p\mapsto \inf\{ \lambda \in \R_{\ge 0}\mid p+\lambda v\in K+C\}
  \end{displaymath}
 is 
  finite and continuous, then compactness of $\Omega $ would yield
  the result for $\lambda \coloneqq \max\{\varphi(y)\mid y \in \Omega\}$. Since $v\in \relint (C)$, for every vector $w\in \L_{C}$
  there is a $\lambda \in \R_{\ge 0}$ such that $w+\lambda v \in
  C$. Therefore the function $\varphi$ is finite. 

  On $(K+\L_{C})\times \R$ we consider the subset of
  points
  \begin{displaymath}
    E_{0}=\{(p,\lambda )\in (K+\L_{C})\times \R\mid p+\lambda v \in K+C\}
  \end{displaymath}
  This is a closed convex set because it is the preimage of a closed
  convex set by an affine function. Let us consider the closed convex subset $E=E_{0}\cap ((K+\L_{C})\times
  \R_{\ge 0})$.  By construction, $E$
  is the epigraph of the function $\varphi$. Since $E$ is closed,
  $\varphi$ is a closed convex function in the sense of \cite[\S
  7]{rockafellar1970}. Since $K$ is a polyhedron, by \cite[Theorem
  10.2]{rockafellar1970} the function $\varphi$ is continuous
  finishing the proof of the lemma. 
\end{proof}

For a cone $\sigma \in \Sigma$, recall from \S \ref{subsec: notation} that $\Sigma(\sigma)$ is the fan in $N(\sigma)$ given as the image of $\Sigma$ with respect to the morphism $\pi_\sigma\colon N_\R \to N(\sigma)$. 
The following lemma is similar to \cite[Remark 3.3]{osserman-rabinoff2013}.

\begin{lemma}\label{lemm:5}
Let $\Delta $ be a non-empty polyhedron in $N_{\Sigma }$ of sedentarity $\sigma $. 
Then $\Delta $ is compact if and only if $\rec(\Delta(\sigma))\subset |\Sigma (\sigma)|$. 
\end{lemma}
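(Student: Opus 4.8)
The plan is to reduce to the case of sedentarity $(0)$ and then handle the two implications separately. Since $\Delta$ is the closure in $N_\Sigma$ of the polyhedron $\Delta(\sigma)\subseteq N(\sigma)$, and the closure of $N(\sigma)$ in $N_\Sigma$ is the closed subset $\coprod_{\sigma\prec\rho}N(\rho)$, which is the tropical toric variety $N(\sigma)_{\Sigma(\sigma)}$ associated with the fan $\Sigma(\sigma)$ in $N(\sigma)$, compactness of $\Delta$ can be tested inside that subspace. So, exactly as in the proof of Lemma~\ref{lemm:1}, I would replace $N_\Sigma$ by $N(\sigma)_{\Sigma(\sigma)}$ and reduce to the statement: for a polyhedron $P$ in $N_\R$ with closure $\overline P$ in $N_\Sigma$, $\overline P$ is compact if and only if $\rec(P)\subseteq|\Sigma|$ (the sedentarity-$(0)$ case, which is the analogue of \cite[Remark~3.3]{osserman-rabinoff2013}). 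By \cite[Theorem~19.1]{rockafellar1970} I write $P=K+C$ with $K$ a polytope and $C=\rec(P)$.

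For the implication $\rec(P)\subseteq|\Sigma|\Rightarrow\overline P$ compact: assume $C\subseteq|\Sigma|$. I would first establish the set identity $\overline P=\{k+\xi\mid k\in K,\ \xi\in\overline C\}$, where $\overline C$ is the closure of $C$ in $N_\Sigma$; here $\supseteq$ follows by approximating a point of $\overline C$ by points of $C$ and using continuity of the translation action $N_\Sigma\times N_\R\to N_\Sigma$ (see \cite{burgos-gubler-jell-kuennemann1}), while $\subseteq$ follows by writing a point of $\overline P$ as $\lim_n(k_n+c_n)$ with $k_n\in K$ and $c_n\in C$, passing to a subsequence with $k_n\to k\in K$, and noting that then $c_n=(k_n+c_n)-k_n$ converges in $\overline C$. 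Granting this, $\overline P$ is the continuous image of $K\times\overline C$, so it remains to prove that $\overline C$ is compact. Using $C\subseteq|\Sigma|$ I write $C=\bigcup_{\tau\in\Sigma}(C\cap\tau)$, a finite union, so $\overline C=\bigcup_{\tau\in\Sigma}\overline{C\cap\tau}\subseteq\bigcup_{\tau\in\Sigma}\overline\tau$; it therefore suffices to show that $\overline\tau$ is compact for each $\tau\in\Sigma$, since then $\overline{C\cap\tau}$ is closed in the compact set $\overline\tau$ and a finite union of compacta is compact. By Lemma~\ref{lemm:1}, $\overline\tau$ meets $N(\rho)$ only for faces $\rho$ of $\tau$, so it lies in the affine chart $U_\tau=\coprod_{\rho\prec\tau}N(\rho)$, which is homeomorphic to $\Hom_{\mathrm{monoid}}(\tau^\vee\cap M,\Rsup)$ with the topology of pointwise evaluation. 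Choosing monoid generators $m_1,\dots,m_k$ of $\tau^\vee\cap M$, for any sequence $(v_n)$ in $\tau$ the numbers $\langle m_i,v_n\rangle$ lie in the compact interval $[0,+\infty]\subseteq\Rsup$; passing to a subsequence with $\langle m_i,v_n\rangle\to a_i\in[0,+\infty]$ for all $i$, the assignment $m\mapsto\lim_n\langle m,v_n\rangle$ is a well-defined monoid homomorphism $\tau^\vee\cap M\to\Rsup$ to which $(v_n)$ converges. Hence $\overline\tau$ is sequentially compact, and since $U_\tau$ embeds into $\Rsup^k$ and is thus metrizable, $\overline\tau$ is compact.

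For the implication $\overline P$ compact $\Rightarrow\rec(P)\subseteq|\Sigma|$ I would argue by contraposition. Suppose $v\in\rec(P)\setminus|\Sigma|$, so $v\neq0$, and fix $p\in P$; then $p+nv\in P\subseteq\overline P$ for all $n\in\N$. If $\overline P$ were compact, then, $N_\Sigma$ being metrizable (see \cite{burgos-gubler-jell-kuennemann1}), some subsequence $p+n_jv$ with $n_j\to\infty$ would converge to a point $x\in\overline P$ lying in $N(\rho)$ for a unique $\rho\in\Sigma$. By the description of the topology of $N_\Sigma$, this convergence forces $\langle m,p\rangle+n_j\langle m,v\rangle$ to converge in $\Rsup=\R\cup\{+\infty\}$ for every $m\in\rho^\vee\cap M$; since $n_j\to\infty$, this is possible only if $\langle m,v\rangle\geq0$. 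As $\rho$ is rational, $\rho^\vee\cap M$ spans $\rho^\vee$ over $\R_{\geq 0}$, so $v\in(\rho^\vee)^\vee=\rho\subseteq|\Sigma|$, contradicting the choice of $v$. This proves $\rec(P)\subseteq|\Sigma|$ and finishes the lemma.

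The main obstacle I anticipate is not a single computation but the careful bookkeeping with the topology of the partial compactification $N_\Sigma$: one needs that the orbit closures $N(\sigma)_{\Sigma(\sigma)}$ are closed toric pieces (for the reduction), that the closure in $N_\Sigma$ of each cone of $\Sigma$ is compact, that the translation action is jointly continuous, and that convergence of a sequence of $N_\R$ to a boundary point of $N_\Sigma$ is governed by the pairing with $M$, as used above; all of these are available from \cite{burgos-gubler-jell-kuennemann1}. The only genuinely polyhedral inputs are the decomposition $P=K+\rec(P)$ from \cite[Theorem~19.1]{rockafellar1970} and the biduality $(\rho^\vee)^\vee=\rho$ for rational polyhedral cones.
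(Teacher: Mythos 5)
Your argument is correct, but it takes a genuinely different route from the paper. The paper's proof is a completion argument: it embeds $\Sigma(\sigma)$ into a complete fan $\Sigma'$ using \cite[III, Theorem 2.8]{Ewald:convexity}, compares $\Delta$ with its closure $\Delta'$ in the compact space $N(\sigma)_{\Sigma'}$, and uses Lemma~\ref{lemm:1}~\ref{item:1} to show that $\rec(\Delta(\sigma))\subset|\Sigma(\sigma)|$ holds exactly when $\Delta'$ acquires no points in the strata of $\Sigma'\setminus\Sigma(\sigma)$, i.e.\ exactly when $\Delta=\Delta'$; compactness of $\Delta$ is then equivalent to this equality. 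You avoid toric completion altogether: for the direction $\rec(\Delta(\sigma))\subset|\Sigma(\sigma)|\Rightarrow$ compact, you use the Minkowski--Weyl decomposition $P=K+\rec(P)$, the identity $\overline{P}=K+\overline{\rec(P)}$ (via joint continuity of the translation action and metrizability of $N_\Sigma$), and a hands-on proof that the closure of each cone of $\Sigma$ is compact in the monoid-homomorphism chart; for the converse you run a direct sequence argument along the ray $p+nv$, reading off $\langle m,v\rangle\ge 0$ for $m\in\rho^\vee\cap M$ from the pointwise-convergence description of the topology and concluding $v\in\rho$ by cone biduality. What each approach buys: the paper's proof is shorter and leans on already-established tools (Lemma~\ref{lemm:1} plus the completion theorem), whereas yours is self-contained at the level of the explicit topology of $N_\Sigma$, needs no completion theorem, and produces along the way the independently useful facts that $\overline{K+C}=K+\overline{C}$ and that cone closures $\overline{\tau}$ are compact. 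Two small points worth tightening if you write this up: justify (or cite in charts) the joint continuity of the action $N_\Sigma\times N_\R\to N_\Sigma$, and note explicitly that relative sequential compactness of $\tau$ gives compactness of $\overline{\tau}$ because the ambient chart is metrizable, since your sequences are taken in $\tau$ rather than in $\overline{\tau}$.
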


\begin{proof}
By \cite[III. Theorem 2.8]{Ewald:convexity} any fan is a subfan of a complete fan. 
Let $\Sigma '$ be a complete fan in $N(\sigma )$ with $\Sigma(\sigma ) \subset \Sigma '$. 
Let $\Delta' $ be the closure of $\Delta $ in $N(\sigma )_{\Sigma '}$. 
Then $\Delta '$ is compact. 
Since
\begin{displaymath}
|\Sigma '|\setminus |\Sigma(\sigma ) |=\bigcup_{\tau \in \Sigma '\setminus \Sigma(\sigma ) }\relint(\tau ),  
\end{displaymath}
we deduce  that $\rec(\Delta(\sigma ) )\subset |\Sigma(\sigma ) |$ if and only if, for  any cone $\tau \in \Sigma '\setminus \Sigma(\sigma ) $, the condition $\rec(\Delta(\sigma ) )\cap \relint(\tau )=\emptyset$ is satisfied. 
By Lemma \ref{lemm:1}~\ref{item:1}, this condition is equivalent to $\Delta'(\tau )=\emptyset$ for all  $\tau \in \Sigma '\setminus \Sigma(\sigma )$, which in turn is equivalent to $\Delta '\subset N(\sigma )_{\Sigma (\sigma )}$. 
So equivalent to the condition $\Delta =\Delta '$ and we conclude that $\Delta $ is compact if and only if $\rec(\Delta(\sigma))\subset |\Sigma (\sigma )|$.
\end{proof}

\begin{definition} \label{definition: constant towards the boundary for polyhedra}
Let $\Delta $ be a polyhedron in $N_{\Sigma }$ of sedentarity $\sigma $. 
We say that $\Delta$ is \emph{constant towards the boundary} if for every cone $\tau$ in $\Sigma$ such that  $\tau \succ \sigma$ with projection $\pi_{\tau,\sigma } \colon N(\sigma) \to N(\tau)$ and every $p \in N(\tau)_\R$, we have 
\[
\pi_{\tau,\sigma}^{-1}(\Delta(\tau))\cap \Omega
=\Delta(\sigma)\cap \Omega
\] 
for some open neighbourhood $\Omega$ of $p$ in $N_\Sigma$. 
\end{definition}

\begin{prop}\label{recession-cone-boundary-condition}
  Let $\Delta$ be a polyhedron of sedentarity $\sigma$. Then $\Delta $ 
  is constant towards
  the boundary if and only if
  $\rec(\Delta(\sigma))\cap |\Sigma(\sigma )|$
   is a union of cones from
  $\Sigma(\sigma)$.
\end{prop}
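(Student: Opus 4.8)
The plan is to reduce the statement to a careful analysis of when the two conditions in Definition \ref{definition: constant towards the boundary for polyhedra} match the combinatorial condition on $\rec(\Delta(\sigma))$. Write $C \coloneqq \rec(\Delta(\sigma))$ and $\Delta(\sigma) = K + C$ for a polytope $K$ as in the setup preceding Lemma \ref{lemm:1}. Fix $\tau \succ \sigma$ and let $\tau'$ be the image of $\tau$ in $N(\sigma)$; the whole question is local on $N(\tau)$, so I will work one cone $\tau$ at a time and analyze the set $\pi_{\tau,\sigma}^{-1}(\Delta(\tau))$ near an arbitrary point $p \in N(\tau)_\R$.

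First I would treat the case $\Delta(\tau) = \emptyset$. By Lemma \ref{lemm:1}\ref{item:1} this happens exactly when $C \cap \relint(\tau') = \emptyset$. In that case $\pi_{\tau,\sigma}^{-1}(\Delta(\tau)) = \emptyset$, so the defining equality of "constant towards the boundary" at points of $N(\tau)$ becomes $\Delta(\sigma) \cap \Omega = \emptyset$ for a neighbourhood $\Omega$ of $p$ — i.e. $p$ must not lie in the closure of $\Delta(\sigma)$ taken in $N_\Sigma$, equivalently $N(\tau) \cap \overline{\Delta(\sigma)} = \emptyset$, which is again $C \cap \relint(\tau') = \emptyset$ by Lemma \ref{lemm:1}. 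So on such $\tau$ the constant-towards-the-boundary condition is automatic. When instead $C \cap \relint(\tau') \neq \emptyset$, Lemma \ref{lemm:1}\ref{item:2} gives $\pi_{\tau,\sigma}^{-1}(\Delta(\tau)) = \Delta(\sigma) + \L_{\tau'}$, so the required equality becomes
\[
(\Delta(\sigma) + \L_{\tau'}) \cap \Omega = \Delta(\sigma) \cap \Omega
\]
for some neighbourhood $\Omega$ of $p$ in $N_\Sigma$, for every $p \in N(\tau)_\R$. Since $\Delta(\sigma) \subset \Delta(\sigma) + \L_{\tau'}$ always, the content is the reverse inclusion, and I would restrict to a neighbourhood inside $N(\sigma)$ first (the finite part), then separately check that the behaviour towards deeper strata is controlled by induction on the codimension of $\tau$.

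The heart of the argument is the finite-part statement: $(K + C + \L_{\tau'}) \cap \Omega \subset K + C$ for a neighbourhood $\Omega$ of any boundary direction $\tau'$ if and only if $C \cap \L_{\tau'}$ — more precisely $C \cap \relint(\tau')$ together with the faces — fits inside a subfan. Here is where Lemma \ref{lemm:2} enters: given $v \in \relint(\tau') \cap C$ (which exists in this case), approaching the boundary stratum $N(\tau)$ corresponds, in the language of the topology on $N_\Sigma$, to moving in the direction $+\lambda v$ with $\lambda \to \infty$, and Lemma \ref{lemm:2} says $(K + \L_C)\cap(\Omega_0 + \lambda v) \subset K + C$ for large $\lambda$ and compact $\Omega_0$. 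I would use this to show that if $\relint(\tau') \subset C$ (equivalently $\tau' \subset C$, since $C$ is closed), then $\L_{\tau'} \subset \L_C$ and the near-boundary neighbourhoods are eventually absorbed into $K+C$, giving the equality; conversely, if $C \cap \relint(\tau') \neq \emptyset$ but $\tau' \not\subset C$, I would produce a sequence of points in $(\Delta(\sigma) + \L_{\tau'}) \setminus \Delta(\sigma)$ converging in $N_\Sigma$ to a point of $N(\tau)$, violating the equality — concretely, pick $w \in \L_{\tau'} \setminus \L_C$ (possible since $\tau'$ meets $C$ but is not contained in it, so $\L_{\tau'} \not\subset \L_C$ unless $\tau' \subset C$... this needs a small check using strict convexity of $\tau'$) and translate a fixed point of $\Delta(\sigma)$ by $w + \lambda v$. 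Running this over all $\tau \succ \sigma$, the condition "for every $\tau$, either $C \cap \relint(\tau') = \emptyset$ or $\tau' \subset C$" is precisely the statement that $C \cap |\Sigma(\sigma)| = \bigcup\{\tau' \in \Sigma(\sigma) \mid \tau' \subset C\}$ is a union of cones of $\Sigma(\sigma)$, because every point of $|\Sigma(\sigma)|$ lies in the relative interior of a unique cone.

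I expect the main obstacle to be the bookkeeping in the converse direction: producing the bad sequence requires knowing that when $C$ meets $\relint(\tau')$ but does not contain $\tau'$, the sum $\Delta(\sigma) + \L_{\tau'}$ genuinely sticks out of $\Delta(\sigma)$ arbitrarily close to the stratum $N(\tau)$ in the $N_\Sigma$-topology — this is a compatibility between the polyhedral geometry and the somewhat delicate topology of the partial compactification, and it is exactly the kind of pathology the paper warns about for polyhedra that are not constant towards the boundary. I would handle it by unwinding the explicit description of convergence in $N_\Sigma$ (a point $x_\lambda \in N(\sigma)$ converges to $\bar x \in N(\tau)$ iff $\pi_{\tau,\sigma}(x_\lambda) \to \bar x$ and $x_\lambda$ escapes to infinity appropriately in the $\tau'$-direction) and combining it with Lemma \ref{lemm:2}, iterating over the chain $\sigma \prec \tau \prec \rho \prec \dots$ via Corollary \ref{cor:2} to pass from the finite-part statement to the full statement about all strata simultaneously.
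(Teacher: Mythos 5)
Your overall strategy is the same as the paper's: fix $\tau\succ\sigma$, use Lemma \ref{lemm:1} to identify $\pi_{\tau,\sigma}^{-1}(\Delta(\tau))$ with $\Delta(\sigma)+\L_{\tau'}$ when $C\cap\relint(\tau')\neq\emptyset$ (and to dispose of the case $\Delta(\tau)=\emptyset$), use Lemma \ref{lemm:2} for the absorption direction, and finish with the observation that ``for every $\tau'$, either $C\cap\relint(\tau')=\emptyset$ or $\tau'\subset C$'' is equivalent to $C\cap|\Sigma(\sigma)|$ being a union of cones. One remark: since both sides of the defining equality are subsets of $N(\sigma)$, the condition is checked one cone $\tau$ at a time; the induction over deeper strata and the appeal to Corollary \ref{cor:2} at the end of your proposal are unnecessary.

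There is, however, a genuine gap in your necessity direction. Your violating points are built from some $w\in\L_{\tau'}\setminus\L_C$, and you claim such a $w$ exists (up to ``a small check using strict convexity'') whenever $C\cap\relint(\tau')\neq\emptyset$ but $\tau'\not\subset C$. This is false: take $N(\sigma)=\R^2$, let $\tau'$ be the first quadrant (a cone of $\Sigma(\sigma)$) and $\Delta(\sigma)=C=\Cone\bigl((1,0),(1,1)\bigr)$. Then $C$ meets $\relint(\tau')$, $\tau'\not\subset C$, both cones are strictly convex, yet $\L_{\tau'}=\L_C=\R^2$, so no such $w$ exists; strict convexity does not rescue the claim. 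The repair is easy: either argue directly as the paper does --- on a standard neighbourhood $\Omega$ of $p\in\Delta(\tau)$ with $\Omega\cap N(\sigma)=q+\Omega_0+\tau'$, the assumed equality forces $q+\tau'\subset\Delta(\sigma)$ and hence $\tau'\subset\rec(\Delta(\sigma))$, no sequence needed --- or, if you want an explicit violation, take $u\in\relint(\tau')\setminus C$ (nonempty because $C$ is closed and $\tau'\not\subset C$) and $x\in\Delta(\sigma)$; then $x+\lambda u\in\Delta(\sigma)+\L_{\tau'}$, $x+\lambda u\to\pi_{\tau,\sigma}(x)\in\Delta(\tau)$ in $N_\Sigma$, and $x+\lambda u\notin\Delta(\sigma)$ for $\lambda\gg 0$ since $u\notin\rec(\Delta(\sigma))$. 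A second, smaller slip: in the sufficiency direction you invoke Lemma \ref{lemm:2} with the cone $C=\rec(\Delta(\sigma))$ and $v\in\relint(\tau')\cap C$, but the lemma requires the cone to be strictly convex and $v$ to lie in \emph{its} relative interior, neither of which is guaranteed for $\rec(\Delta(\sigma))$. Apply it instead with the cone $\tau'$ and the polyhedron $K\coloneqq\Delta(\sigma)$, which yields $(\Delta(\sigma)+\L_{\tau'})\cap(q+\Omega_0+\lambda v)\subset\Delta(\sigma)+\tau'=\Delta(\sigma)$, exactly as in the paper's proof.
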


\begin{proof}
By definition, $\Delta $ is the closure on $N_{\Sigma }$ of $\Delta(\sigma ) \subset N(\sigma )$. 
Moreover $\Delta(\sigma )=K+C$, where $K$ is a compact polyhedron and $C=\rec(\Delta (\sigma ))$ is a finitely generated cone \cite[Theorem 19.1]{rockafellar1970}. 
Assume that $\Delta $ is constant towards the boundary. 
Let $\tau$ be a cone in $\Sigma$ with $\sigma \prec \tau $. 
We first assume that $\Delta(\tau )\not =\emptyset$. 
We pick any $p\in \Delta(\tau)$.
Let $\pi _{\tau ,\sigma}\colon N(\sigma)\to N(\tau)$ be the projection and $\tau '=\pi _{\sigma }(\tau )$.
Since $\Delta $ is constant towards the boundary, there exists an open neighbourhood $\Omega$ of $p$ such that
\[
\pi_{\tau,\sigma}^{-1}(\Delta(\tau))\cap \Omega
=\Delta(\sigma)\cap \Omega.
\] 
By definition of the topology of $N_{\Sigma }$ (see \cite[Remark 3.1.2]{burgos-gubler-jell-kuennemann1}), we can assume that
$\Omega \cap N(\sigma )$ is of the form $\Omega \cap N(\sigma)=q+\Omega_0+\tau' $, where $\Omega_0$ is a neighbourhood of $0$ in a subspace $\L_{\tau' }^{\perp}$ of $N(\sigma )$ such that $\L_{\tau' }\oplus \L_{\tau' }^{\perp}=N(\sigma )$ and $q\in N(\sigma )$ is a point with $\pi _{\tau,\sigma}(q)=p$. 
Then using that $\Delta $ is constant towards the boundary, we deduce
\begin{displaymath}
q+\tau' \subset \pi_{\tau,\sigma}^{-1}(\Delta(\tau))\cap \Omega
=\Delta(\sigma )\cap \Omega
\end{displaymath}
and hence $q+\tau' \subset \Delta$. 
In particular, $q\in \Delta $ and $\tau' \subset \rec(\Delta(\sigma))$. 
If $\Delta(\tau )= \emptyset$, then Lemma \ref{lemm:1}~\ref{item:1} yields $\relint(\tau')\cap\rec(\Delta(\sigma ))=\emptyset$.
Note that if $\tau$ runs through $\Star_\Sigma(\sigma)$, then $\tau'$ runs through the fan $\Sigma(\sigma)$ of $N(\sigma)$.  
We conclude that for every cone $\tau '\in \Sigma(\sigma)$ either
$\tau '\subset \rec(\Delta(\sigma ))$ or
$\relint(\tau' )\cap \rec(\Delta(\sigma) )=\emptyset$.  
This, together with the properties of a fan, implies that $\rec(\Delta(\sigma ) )\cap |\Sigma(\sigma )|$ is a union of cones of $\Sigma(\sigma )$.

We now prove the converse. 
Assume that $\rec(\Delta(\sigma ) )\cap |\Sigma(\sigma )|$ is a	union of cones of $\Sigma(\sigma )$.
Let  $\tau \in \Sigma $ with $\sigma \prec \tau $. 
We consider again the cone $\tau' =\pi_\sigma(\tau)$ of $\Sigma(\sigma)$.
If $\relint(\tau' )\cap \rec(\Delta(\sigma ) )=\emptyset$, then Lemma \ref{lemm:1}~\ref{item:1} yields $\Delta(\tau )=\emptyset$. 
Therefore $\Delta$ satisfies trivially the condition of being constant towards the boundary for all points of $N(\tau )$. 
On the other hand, if $\relint(\tau' )\cap\rec(\Delta(\sigma ))\neq\emptyset$, then our assumption that $\rec(\Delta(\sigma))\cap |\Sigma(\sigma )|$ is a union of cones from $\Sigma(\sigma)$ yields that $\tau' \subset \rec(\Delta(\sigma ))$.
Let $p\in \Delta(\tau )$. 
Then, by Lemma \ref{lemm:1}~\ref{item:2}, we can find a point $q\in \Delta (\sigma )$ with $\pi_{\tau,\sigma  }(q)=p$. 
Let $\Omega_0$ be a compact neighbourhood of $0$ on a complementary subspace $\L_{\tau '}^{\perp}\subset N(\sigma )$ as above and  $v\in \relint(\tau ')$. 
By Lemma \ref{lemm:1} \ref{item:2} and Lemma \ref{lemm:2} we can find a $\lambda>0$ such that
\begin{align*}
\pi ^{-1}_{\tau ,\sigma }(\Delta (\tau ))\cap (q+\Omega_0 +\lambda v)
&=(\Delta (\sigma )+\L_{\tau '})\cap (q+\Omega_0 +\lambda v)\\
&\subset \Delta(\sigma ) +\tau '=\Delta (\sigma ).
\end{align*}
By the description of the topology of $N_\Sigma$ given in \cite[Remark 3.1.2]{burgos-gubler-jell-kuennemann1}, there is a neighbourhood $\Omega$ of $p$ in $N_\Sigma$ such that $\Omega \cap N(\sigma)=q+\lambda v +\Omega_0 +\tau '$. 
By our choice of $\lambda$, we obtain
\begin{displaymath}
\pi ^{-1}_{\tau ,\sigma }(\Delta (\tau ))\cap\Omega  
=\pi ^{-1}_{\tau ,\sigma }(\Delta (\tau ))\cap (q+\Omega_0 +\lambda	v+\tau ')\subset \Delta (\sigma )\cap \Omega.
\end{displaymath}
The inclusion $\Delta (\sigma )\cap \Omega\subset \pi^{-1}_{\tau
  ,\sigma }(\Delta (\tau ))\cap\Omega  $ is clear, so we deduce that
$\Delta $ also satisfies the condition of being constant towards the
boundary for the cone $\tau$. 
\end{proof}

  \begin{rem}\label{rem:2} Proposition
    \ref{recession-cone-boundary-condition} means that a 
    polyhedron $\Delta $ in $N_{\Sigma }$ of sedentarity $\sigma $ is
    constant towards the 
    boundary if and only if the fan $\Sigma(\sigma ) $ and the
    polyhedron $\Delta (\sigma )$ in $N(\sigma )$ are compatible in
    the sense of \cite{osserman-rabinoff2013}.
  \end{rem}

As a consequence of the previous proposition, being constant towards the boundary is stable under taking faces.

  \begin{cor}
    \label{polyhedraonconstant towards the boundary and faces}
    Let $\Delta$ be a polyhedron in $N_\Sigma$ of sedentarity $\sigma$.
    If $\Delta$ is constant towards the boundary, and $F$ is a face of
    $\Delta (\sigma )$, then $\overline F$ is a 
    polyhedron of sedentarity $\sigma $ which is constant towards the
    boundary. 
  \end{cor}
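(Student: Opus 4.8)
The plan is to reduce everything to the combinatorial criterion of Proposition \ref{recession-cone-boundary-condition}: a polyhedron of sedentarity $\sigma$ is constant towards the boundary if and only if its recession cone intersected with $|\Sigma(\sigma)|$ is a union of cones of the fan $\Sigma(\sigma)$. So the task becomes: given that $\rec(\Delta(\sigma))\cap|\Sigma(\sigma)|$ is a union of cones of $\Sigma(\sigma)$ and $F$ is a face of the euclidean polyhedron $\Delta(\sigma)$ in $N(\sigma)$, show that $\overline F$ has sedentarity $\sigma$ and that $\rec(F)\cap|\Sigma(\sigma)|$ is again a union of cones of $\Sigma(\sigma)$.

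First I would verify that $\overline F$ has sedentarity $\sigma$. Since $F$ is a non-empty face of $\Delta(\sigma)$, it is itself a non-empty polyhedron in $N(\sigma)$, and by the definition of a polyhedron in $N_\Sigma$ (topological closure in $N_\Sigma$ of a polyhedron in $N(\sigma)$) its closure $\overline F$ is a polyhedron of sedentarity $\sigma$ with finite part $F$. The only subtlety is to check $F$ is non-empty; if one allows empty faces the statement is vacuous, so assume $F\neq\emptyset$.

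The key step is the recession cone computation. For a face $F$ of a polyhedron $P$ in a euclidean space, $\rec(F)$ is a face of $\rec(P)$; this is standard polyhedral geometry (e.g.\ \cite[\S 8]{rockafellar1970} or by writing $P=K+\rec(P)$ and noting $F=K'+\rec(F)$ for the corresponding face). So $\rec(F)$ is a face of $C\coloneqq\rec(\Delta(\sigma))$. Now I would argue: if $C\cap|\Sigma(\sigma)|$ is a union of cones of the fan $\Sigma(\sigma)$, then so is $C'\cap|\Sigma(\sigma)|$ for any face $C'$ of $C$. Indeed, a cone $\tau'\in\Sigma(\sigma)$ satisfies $\relint(\tau')\cap C'\neq\emptyset$ iff $\relint(\tau')\cap C\neq\emptyset$ and the intersection point lies in the face $C'$; since $C'$ is a face of $C$, $\relint(\tau')\subset C$ forces either $\relint(\tau')\subset C'$ or $\relint(\tau')\cap C'=\emptyset$ (a relatively open set meeting a face of a convex set and also meeting its complement would have to meet the relative boundary of that face within $C$, contradicting that faces of $C$ are themselves faces — more carefully, $C'$ is the intersection of $C$ with a supporting hyperplane $H$, and $\relint(\tau')$ convex, so $\relint(\tau')\cap C'=\relint(\tau')\cap H$ is either all of $\relint(\tau')$, when $\tau'\subset H$, or a proper face of $\relint(\tau')$ which is empty as $\relint(\tau')$ is relatively open). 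Hence $C'\cap|\Sigma(\sigma)|=\bigcup\{\tau'\in\Sigma(\sigma)\mid\tau'\subset C'\}$, a union of cones of $\Sigma(\sigma)$. Applying this with $C'=\rec(F)$ and invoking Proposition \ref{recession-cone-boundary-condition} in the reverse direction gives that $\overline F$ is constant towards the boundary.

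The main obstacle is the convexity lemma in the last step — making precise that a relatively open convex set which meets a face $C'=C\cap H$ of $C$ is either contained in $H$ or disjoint from $H$. This is where one must use that $\tau'$ is a \emph{cone} with $\relint(\tau')$ relatively open and that $H$ is a supporting hyperplane of $C$ (so $C$, hence $\tau'\subset C$, lies in one closed half-space bounded by $H$): a relatively open convex subset of a closed half-space that meets the bounding hyperplane must be entirely contained in it. Everything else is a routine translation between the topological condition of Definition \ref{definition: constant towards the boundary for polyhedra} and the combinatorics of recession cones, already packaged in Proposition \ref{recession-cone-boundary-condition}.
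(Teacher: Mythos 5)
Your proof is correct and takes essentially the same route as the paper: the paper's argument likewise notes that $\rec(F)$ is a face of $\rec(\Delta(\sigma))$ and applies Proposition \ref{recession-cone-boundary-condition} in both directions. The only difference is that you spell out the convexity/fan argument showing that a face of a union of cones of $\Sigma(\sigma)$ again meets $|\Sigma(\sigma)|$ in a union of cones, a step the paper's proof leaves implicit.
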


\begin{proof}
  Since $F$ is a face of $\Delta (\sigma )$, the cone $\rec(F)$ is
  a face of $\rec(\Delta (\sigma ))$. By Proposition
  \ref{recession-cone-boundary-condition} $\rec(\Delta (\sigma ))$
  is a union of cones of $\Sigma $. Therefore $\rec(F)$ is a union of faces of
  cones of $\Sigma $. 
  Again by Proposition
  \ref{recession-cone-boundary-condition},
  $\overline F$ is constant towards the boundary.
\end{proof}

\begin{cor} \label{polyhedraonconstant towards the boundary and orbit}
Let $\Delta$ be a polyhedron in $N_\Sigma$ of sedentarity $\sigma$ for
a face $\sigma$ of a cone $\tau \in \Sigma$. 
If $\Delta$ is constant towards the boundary, then $\Delta(\tau)$ is
a polyhedron of sedentarity $\tau$ which is constant towards the
boundary. 
\end{cor}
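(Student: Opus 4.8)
The plan is to convert the property \emph{constant towards the boundary} into the recession-cone criterion of Proposition~\ref{recession-cone-boundary-condition} on both sides. This reduces the corollary to a combinatorial statement about the image of a cone under the quotient by a cone of the fan, which I would then verify by hand.

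Write $\tau'\coloneqq\pi_\sigma(\tau)$, a cone of $\Sigma(\sigma)$, and $C\coloneqq\rec(\Delta(\sigma))$. Recall that $\pi_{\tau,\sigma}\colon N(\sigma)\to N(\tau)$ is the quotient by $\L_{\tau'}$, that $\pi_\tau=\pi_{\tau,\sigma}\circ\pi_\sigma$, and (a routine fact about fans) that $\Sigma(\tau)=\Sigma(\sigma)(\tau')$ as fans in $N(\tau)=N(\sigma)/\L_{\tau'}$. If $\Delta(\tau)=\emptyset$ the assertion is trivial, so assume $\Delta(\tau)\neq\emptyset$. Then Lemma~\ref{lemm:1} gives $C\cap\relint(\tau')\neq\emptyset$ and $\Delta(\tau)=\pi_{\tau,\sigma}(\Delta(\sigma))$; writing $\Delta(\sigma)=K+C$ with $K$ a polytope, we get $\Delta(\tau)=\pi_{\tau,\sigma}(K)+\pi_{\tau,\sigma}(C)$, so $\Delta(\tau)$ is a polyhedron in $N(\tau)$ with recession cone $\pi_{\tau,\sigma}(C)$, and in particular its closure in $N_\Sigma$ is a polyhedron of sedentarity $\tau$. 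By Proposition~\ref{recession-cone-boundary-condition}, applied in $N_\Sigma$ to this polyhedron, it remains to prove that $\pi_{\tau,\sigma}(C)\cap|\Sigma(\tau)|$ is a union of cones of $\Sigma(\tau)$.

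For the combinatorial heart of the argument I would proceed as follows. Since $\Delta$ is constant towards the boundary, Proposition~\ref{recession-cone-boundary-condition} gives that $C\cap|\Sigma(\sigma)|$ is a union of cones of $\Sigma(\sigma)$; as $C\cap\relint(\tau')\neq\emptyset$, this forces $\tau'\subset C$. For any cone $\rho'$ of $\Sigma(\sigma)$, let $F_{\rho'}$ be the smallest face of $\rho'$ containing the convex set $C\cap\rho'$. Then $\relint(C\cap\rho')\subset\relint(F_{\rho'})$, so $C$ meets $\relint(F_{\rho'})$, and the union-of-cones property of $C\cap|\Sigma(\sigma)|$ yields $F_{\rho'}\subset C$; hence $C\cap\rho'=F_{\rho'}$ is a face of $\rho'$, and when $\tau'\prec\rho'$ it contains $\tau'$. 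Next, for $\tau'\prec\rho'$ I claim $\pi_{\tau,\sigma}(C)\cap\pi_{\tau,\sigma}(\rho')=\pi_{\tau,\sigma}(C\cap\rho')$: the inclusion $\supseteq$ is clear, and for $\subseteq$, if $\pi_{\tau,\sigma}(c)=\pi_{\tau,\sigma}(r)$ with $c\in C$ and $r\in\rho'$, write $c-r=u-w$ with $u,w\in\tau'$ (possible since $\L_{\tau'}=\tau'-\tau'$), so that $c+w=r+u$ lies in $C$ (because $\tau'\subset C$) and in $\rho'$ (because $\tau'\subset\rho'$), while $\pi_{\tau,\sigma}(c+w)=\pi_{\tau,\sigma}(c)$. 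Since $F_{\rho'}$ is a face of $\rho'$ containing $\tau'$, the set $\pi_{\tau,\sigma}(F_{\rho'})$ is a face of $\pi_{\tau,\sigma}(\rho')$, hence a cone of $\Sigma(\tau)=\Sigma(\sigma)(\tau')$. Therefore
\[
\pi_{\tau,\sigma}(C)\cap|\Sigma(\tau)|=\bigcup_{\tau'\prec\rho'\in\Sigma(\sigma)}\bigl(\pi_{\tau,\sigma}(C)\cap\pi_{\tau,\sigma}(\rho')\bigr)=\bigcup_{\tau'\prec\rho'\in\Sigma(\sigma)}\pi_{\tau,\sigma}(F_{\rho'})
\]
is a union of cones of $\Sigma(\tau)$, and Proposition~\ref{recession-cone-boundary-condition} concludes.

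The step I expect to be the main obstacle is the identity $\pi_{\tau,\sigma}(C)\cap\pi_{\tau,\sigma}(\rho')=\pi_{\tau,\sigma}(C\cap\rho')$, since pushing forward does not commute with intersections in general; the resolution is to exploit \emph{simultaneously} $\tau'\subset C$ and $\tau'\subset\rho'$ to absorb the kernel direction $\L_{\tau'}$. The remaining ingredients — the identification $\Sigma(\tau)=\Sigma(\sigma)(\tau')$, the bijection between faces of $\pi_{\tau,\sigma}(\rho')$ and faces of $\rho'$ containing $\tau'$, and the recession cone of the image polyhedron — are standard. Alternatively, in view of Remark~\ref{rem:2} the whole argument can be phrased as the assertion that compatibility of $(\Sigma(\sigma),\Delta(\sigma))$ in the sense of Osserman--Rabinoff is inherited by $\bigl(\Sigma(\sigma)(\tau'),\pi_{\tau,\sigma}(\Delta(\sigma))\bigr)$.
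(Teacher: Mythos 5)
Your proof is correct and follows essentially the same route as the paper: both reduce the hypothesis and the conclusion to the recession-cone criterion of Proposition~\ref{recession-cone-boundary-condition}, deduce $\tau'\subset\rec(\Delta(\sigma))$ from $C\cap\relint(\tau')\neq\emptyset$, and use the same kernel-absorption trick (writing elements of $\L_{\tau'}$ as differences of elements of $\tau'$ and exploiting that both sets contain $\tau'$) to show that $\pi_{\tau,\sigma}$ commutes with the relevant intersection. The only difference is organizational: you verify the union-of-cones condition cone-by-cone, identifying $C\cap\rho'$ as the minimal face $F_{\rho'}$ and invoking the face correspondence for quotient cones, whereas the paper proves the analogous identity \eqref{eq:3} for the whole support of $\Star_{\Sigma(\sigma)}(\tau')$ at once.
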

  
\begin{proof}
Let $\Delta $ be a polyhedron of sedentarity $\sigma $ that is
constant towards the boundary and $\tau \in \Sigma $ with $\sigma
\prec \tau $.  
If $\Delta (\tau )$ is the empty set, then it is constant towards the boundary. 
So we can assume that $\Delta(\tau )$ is non-empty. 
Let $\tau '$ be the image of $\tau $ in $N(\sigma )$. 
By Lemma \ref{lemm:1}, $\rec(\Delta(\sigma ) )\cap\relint(\tau ')\not = \emptyset$. 
Since $\Delta $ is constant towards the boundary, by Proposition
\ref{recession-cone-boundary-condition}, $\rec(\Delta(\sigma ))\cap|\Sigma
(\sigma )|$ is a union of cones of $\Sigma (\sigma )$. This implies
that $\tau '\subset \rec(\Delta (\sigma ))$. 

We know that $\Delta (\tau )=\pi _{\tau ,\sigma }(\Delta (\sigma ))$
and  $\rec(\Delta (\tau ))=\pi _{\tau ,\sigma }(\rec(\Delta(\sigma )))$. 
Moreover, $\Sigma (\tau )=\pi _{\tau ,\sigma }(\Star_{\Sigma(\sigma )}(\tau '))$. 
By assumption, $\rec(\Delta(\sigma )) \cap |\Sigma (\sigma)|$ is a union of cones of $\Sigma (\sigma )$. 
Therefore $\rec(\Delta(\sigma ))\cap |\Star_{\Sigma (\sigma )}(\tau
')|$ is a union of cones of $\Star_{\Sigma (\sigma )}(\tau ')$.
Each cone of $\Sigma (\tau )$ is the image by $\pi _{\tau ,\sigma }$
of a cone of $\Star_{\Sigma(\sigma )}(\tau ')$. 
Hence, if we prove that
\begin{equation}\label{eq:3}
\pi _{\tau ,\sigma }(\rec(\Delta(\sigma ) ))\cap \pi _{\tau ,\sigma }(|\Star_{\Sigma (\sigma )}(\tau ')|)
=\pi _{\tau ,\sigma }(\rec(\Delta(\sigma ) )\cap |\Star_{\Sigma (\sigma )}(\tau ')|),
\end{equation}
then $\rec(\Delta (\tau ))\cap |\Sigma (\tau )|$ will be a union  of cones of $\Sigma (\tau )$. 
Thus we are reduced to prove \eqref{eq:3}.

The inclusion `$\supset$' is clear, so we only need to prove the inclusion `$\subset$'. 
Let $p\in \pi _{\tau ,\sigma }(\rec(\Delta (\sigma )))\cap \pi _{\tau ,\sigma}(|\Star_{\Sigma (\sigma )}(\tau ')|)$. 
Then there exist points $q_{1}\in \rec(\Delta (\sigma ))$ and $q_{2}\in |\Star_{\Sigma (\sigma)}(\tau ')|$ with $p=\pi _{\tau ,\sigma }(q_{1})=\pi _{\tau,\sigma }(q_{2})$. 
Therefore $q_{1}-q_{2}\in \L_{\tau'}$. 
Since $\L_{\tau '}=\tau '-\tau '$, there exist $v_{1},v_{2}\in \tau '$ with $q_{1}+v_{1}=q_{2}+v_{2}$. 
Call this point $q$. 
Since $\tau' \subset \rec(\Delta(\sigma ))$, 
we deduce that $q=q_1+v_1\in\rec(\Delta (\sigma ))$. 
Since each cone of $\Star_{\Sigma (\sigma)}(\tau ')$ contains $\tau '$, we deduce that $q=q_2+v_2\in |\Star_{\Sigma (\sigma )}(\tau ')|$. 
Clearly $p=\pi _{\tau,\sigma }(q)$, so $p\in \pi _{\tau ,\sigma }(\rec(\Delta (\sigma ))\cap|\Star_{\Sigma (\sigma )}(\tau ')|)$. 
Hence equation \eqref{eq:3} is satisfied.
\end{proof}

\begin{cor} \label{face also constant towards the boundary}
Let $\Delta $ be a polyhedron of $N_\Sigma$ and $F$ a face of $\Delta $. 
If $\Delta $ is constant towards the boundary, then the same is true for $F$.
\end{cor}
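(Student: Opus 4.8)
The plan is to deduce the statement from the two previously established special cases — Corollary \ref{polyhedraonconstant towards the boundary and faces}, which handles faces lying in the same orbit, and Corollary \ref{polyhedraonconstant towards the boundary and orbit}, which handles the restriction of $\Delta$ to a boundary orbit — by factoring the passage from $\Delta$ to its face $F$ through the orbit in which $F$ sits.

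First I would dispose of the trivial case: the empty set is constant towards the boundary, so assume $F \neq \emptyset$. By the definition of a face of a polyhedron in $N_\Sigma$, we may write $F = \overline{F_0}$, the topological closure in $N_\Sigma$ of a face $F_0$ of the polyhedron $\Delta(\tau) = N(\tau) \cap \Delta$ for some $\tau \in \Sigma$. Writing $\sigma = \mathrm{sed}(\Delta)$ and using that the orbits $N(\tau')$ partition $N_\Sigma$ while $\Delta = \bigcup_{\sigma \prec \tau'}\Delta(\tau')$, the non-emptiness of $F_0 \subset \Delta(\tau)$ forces $\sigma \prec \tau$. Now Corollary \ref{polyhedraonconstant towards the boundary and orbit}, applied to $\Delta$, tells us that the polyhedron of $N_\Sigma$ with finite part $\Delta(\tau)$ — which we again denote $\Delta(\tau)$ — has sedentarity $\tau$ and is constant towards the boundary.

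It then remains to observe that $F_0$ is a face of the finite part of this polyhedron $\Delta(\tau)$, and to apply Corollary \ref{polyhedraonconstant towards the boundary and faces} with $\Delta(\tau)$ playing the role of $\Delta$ and $\tau$ playing the role of $\sigma$: this yields that $\overline{F_0} = F$ is a polyhedron of sedentarity $\tau$ which is constant towards the boundary, as desired. I do not anticipate a genuine obstacle here; the only care needed is in matching the definition of a face of $\Delta$ in $N_\Sigma$ with the closure-of-a-face formulation used in the cited corollaries, and in checking that passing from $\Delta$ to $\Delta(\tau)$ lands us in a situation to which Corollary \ref{polyhedraonconstant towards the boundary and faces} applies at the new sedentarity level $\tau$. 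All the combinatorial substance — recession cones being unions of cones of the appropriate fan, via Proposition \ref{recession-cone-boundary-condition} — has already been absorbed into those two corollaries.
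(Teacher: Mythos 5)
Your proof is correct and is precisely the combination of Corollaries \ref{polyhedraonconstant towards the boundary and orbit} and \ref{polyhedraonconstant towards the boundary and faces} that the paper's one-line proof invokes: writing $F=\overline{F_0}$ with $F_0$ a face of $\Delta(\tau)$, you first pass to the closure of $\Delta(\tau)$ (sedentarity $\tau$, constant towards the boundary) and then apply the same-sedentarity face statement there. Your treatment of the empty face and of the relation $\sigma\prec\tau$ is fine, so nothing further is needed.
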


\begin{proof}
This is a combination of Corollaries \ref{polyhedraonconstant
    towards the boundary and faces} and \ref{polyhedraonconstant
    towards the boundary and orbit}.
\end{proof}

The polyhedra on $N_\Sigma$ which are constant towards the boundary satisfy many desirable properties, while as we will see latter, polyhedra that are not constant towards the boundary are somehow pathological.

\begin{cor}\label{cor:4} Let $\Delta $ be a polyhedron on $N_{\Sigma }$ of sedentarity $\sigma $ and constant towards the boundary. 
Let $\tau \in \Sigma$ with $\sigma \prec \tau $ and $\tau '$ the image of $\tau $ in $\Sigma (\sigma )$. 
Then the map that sends $F$ to $\overline F(\tau)$  is a bijection between the set of faces $F$ of $\Delta (\sigma)$ with $\rec(F)\cap \relint(\tau' )\not = \emptyset$ and the set of faces of $\Delta (\tau )$.  
\end{cor}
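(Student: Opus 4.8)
The strategy is to realise the claimed bijection, together with its inverse, explicitly through the projection $\pi\coloneqq\pi_{\tau,\sigma}\colon N(\sigma)\to N(\tau)$, whose kernel is $L\coloneqq\L_{\tau'}$. Set $\Delta'\coloneqq\Delta(\sigma)$ and $C\coloneqq\rec(\Delta')$; throughout ``face'' means non-empty face (the empty face, if admitted, matches $\emptyset$ on both sides). First I would dispose of the degenerate case $\Delta(\tau)=\emptyset$: by Lemma \ref{lemm:1}\ref{item:1} this forces $C\cap\relint(\tau')=\emptyset$, hence $\rec(F)\cap\relint(\tau')=\emptyset$ for every face $F$ of $\Delta'$ since $\rec(F)\subseteq C$, so both sets in the statement are empty and there is nothing to prove. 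So assume $\Delta(\tau)\neq\emptyset$, i.e. $C\cap\relint(\tau')\neq\emptyset$ by Lemma \ref{lemm:1}\ref{item:1}. Because $\Delta$ is constant towards the boundary, Proposition \ref{recession-cone-boundary-condition} tells us that $C\cap|\Sigma(\sigma)|$ is a union of cones of $\Sigma(\sigma)$; since $\tau'\in\Sigma(\sigma)$ meets $C$ in its relative interior, the same argument as in the proof of Corollary \ref{polyhedraonconstant towards the boundary and orbit} gives $\tau'\subseteq C$. By Lemma \ref{lemm:1}\ref{item:2} we then have $\Delta(\tau)=\pi(\Delta')$. Moreover, applying Lemma \ref{lemm:1} to the polyhedron $\overline F\subseteq N_\Sigma$ of sedentarity $\sigma$ attached to a face $F$ of $\Delta'$ shows that $\overline F(\tau)\neq\emptyset$ precisely when $\rec(F)\cap\relint(\tau')\neq\emptyset$, and that then $\overline F(\tau)=\pi(F)$. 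Thus the map in the statement is $F\mapsto\pi(F)$, restricted to the faces $F$ of $\Delta'$ with $\rec(F)\cap\relint(\tau')\neq\emptyset$.

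The decisive step — and the one I expect to carry the whole argument, since it is exactly here that the ``constant towards the boundary'' hypothesis is used — is that the recession condition forces any defining functional of $F$ to be constant along $L$. Indeed, write a non-empty proper face as $F=\Delta'\cap\{u=c\}$ for a linear functional $u$ with $u\leq c$ on $\Delta'$ (the case $F=\Delta'$ is immediate, as then $\pi(F)=\Delta(\tau)$). Then $u\leq 0$ on $C$, hence on $\tau'\subseteq C$, whereas $u$ vanishes on $\rec(F)$; choosing a point of $\rec(F)\cap\relint(\tau')$ we see that $u$ vanishes at a relative interior point of the cone $\tau'$, so the face $\tau'\cap\{u=0\}$ of $\tau'$ is all of $\tau'$, that is $u|_{\tau'}=0$ and hence $u|_L=0$. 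Therefore $u=v\circ\pi$ for a functional $v$ on $N(\tau)$, and a short check gives $v\leq c$ on $\Delta(\tau)=\pi(\Delta')$, then $\pi(F)=\Delta(\tau)\cap\{v=c\}$ (so $\pi(F)$ is indeed a face of $\Delta(\tau)$), and finally $F=\Delta'\cap\pi^{-1}(\pi(F))$ (which also holds trivially when $F=\Delta'$).

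Injectivity of $F\mapsto\pi(F)$ is then read off from the identity $F=\Delta'\cap\pi^{-1}(\pi(F))$, which recovers $F$ from its image. For surjectivity, given a face $G=\Delta(\tau)\cap\{v=b\}$ of $\Delta(\tau)$, I would put $F_0\coloneqq\Delta'\cap\{v\circ\pi=b\}=\Delta'\cap\pi^{-1}(G)$; since $v\circ\pi\leq b$ on $\Delta'$ this is a face of $\Delta'$, one checks $\pi(F_0)=G$ using $\Delta(\tau)=\pi(\Delta')$, and for $w\in\tau'$ one has $w\in L=\ker\pi$ and $w\in C=\rec(\Delta')$ by the first paragraph, whence $F_0+w\subseteq F_0$; thus $\tau'\subseteq\rec(F_0)$ and so $\rec(F_0)\cap\relint(\tau')=\relint(\tau')\neq\emptyset$, placing $F_0$ in the source set with $\pi(F_0)=G$. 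Hence $F\mapsto\pi(F)$ and $G\mapsto\Delta'\cap\pi^{-1}(G)$ are mutually inverse bijections, which is the assertion; apart from the functional argument of the second paragraph, all of this is routine bookkeeping with recession cones and the local description of $N_\Sigma$ already recorded in Lemma \ref{lemm:1}.
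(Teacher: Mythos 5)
Your proof is correct, and it reaches the bijection by a route that differs in substance from the paper's. You make everything explicit through the projection $\pi_{\tau,\sigma}$: after noting $\tau'\subset\rec(\Delta(\sigma))$ (Proposition \ref{recession-cone-boundary-condition}) and $\Delta(\tau)=\pi_{\tau,\sigma}(\Delta(\sigma))$, your key observation is that a linear functional $u$ exposing a face $F$ with $\rec(F)\cap\relint(\tau')\neq\emptyset$ satisfies $u\le 0$ on $\rec(\Delta(\sigma))\supset\tau'$ and $u=0$ at a relative interior point of $\tau'$, hence vanishes on $\L_{\tau'}=\ker\pi_{\tau,\sigma}$ and descends to $N(\tau)$; this simultaneously gives that $\pi_{\tau,\sigma}(F)=\overline F(\tau)$ is a face of $\Delta(\tau)$, the saturation identity $F=\Delta(\sigma)\cap\pi_{\tau,\sigma}^{-1}(\pi_{\tau,\sigma}(F))$ (injectivity), and the explicit inverse $G\mapsto\Delta(\sigma)\cap\pi_{\tau,\sigma}^{-1}(G)$, whose recession cone visibly contains $\tau'$ (surjectivity). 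The paper instead proves well-definedness by cutting $F$ out with an affine function $h$, showing $\overline F(\tau)=\overline H(\tau)\cap\Delta(\tau)$ by a limiting argument $y+v+\lambda w$ with $w\in\rec(F)\cap\relint(\tau')$, and then obtains existence and uniqueness of the preimage face from the local defining property of ``constant towards the boundary'' (the neighbourhood equality $\pi_{\tau,\sigma}^{-1}(\Delta(\tau))\cap\Omega=\Delta(\sigma)\cap\Omega$) together with path-connectedness of $G$. Your argument uses the boundary hypothesis only through $\tau'\subset\rec(\Delta(\sigma))$ and trades the local/topological bookkeeping for a global convex-geometric descent with mutually inverse maps written down explicitly; the paper's version stays closer to the definition of constant towards the boundary, which is the tool it reuses in the subsequent corollaries. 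Both are complete; just make sure to keep your convention that faces are nonempty (or treat $\emptyset$ separately, as you indicate), since $\rec(F)$ is only defined for nonempty $F$.
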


\begin{proof}
  We start by showing that, if $F$ is a face of $\Delta(\sigma)$ such that
  $\rec(F)\cap \relint(\tau' )\not = \emptyset$, then $\overline
  F(\tau )$ is a face of $\Delta (\tau )$. If $F=\Delta $ or
  $F=\emptyset$ there is nothing to prove. Let $h$ be an  affine
  function on $N(\sigma)$ such that $h|_{F}=0$ and $h|_{\Delta \setminus F}>0$, and
  let $H$ be the set of zeros of $h$. Since $F\subset H$, we have
  $\rec(H)\cap \relint(\tau' )\not = \emptyset$. By Lemma \ref{lemm:1}, we get $H(\tau )\not =
  \emptyset$. 
  
  We claim $\overline F(\tau )=\overline H(\tau )\cap \Delta (\tau
  )$. The inclusion $\subset $ is clear.  Let $x\in \overline H(\tau )\cap \Delta (\tau
  )$. By Lemma \ref{lemm:1}~\ref{item:2}, there is a point $y\in \Delta(\sigma ) $
  with $\pi _{\tau ,\sigma }(y)=x$ and a vector $v\in \L_{\tau '}$
  with $y+v\in H$. Let $w\in \rec(F)\cap \relint(\tau' )$. Since
  $\rec(F)\subset \rec(H)$, for any $\lambda >0$, $y+v+\lambda w\in
  H$. Since $\Delta $ is constant towards the boundary and
  $\rec(\Delta (\sigma ))\cap \relint(\tau ')\not = \emptyset$, by Proposition
  \ref{recession-cone-boundary-condition}, $\tau
  '\subset \rec(\Delta (\sigma ))$. For $\lambda \gg 0$, $v+\lambda w\in \tau
  '$, thus $y+v+\lambda w\in \Delta(\sigma ) $.  We conclude that for $\lambda \gg 0$, we have
  $y+v+\lambda w\in H\cap \Delta (\sigma )=F$. By Lemma  \ref{lemm:1}, we get $x\in \overline F(\tau )$ proving the claim.
  
From the previous claim, we will deduce  that $\overline F(\tau)$ is a face of $\Delta(\tau)$. 
We recall from Lemma \ref{lemm:1} that $\overline H(\tau)=\pi_{\tau,\sigma}(H)$. 
Using that $H$ is an affine hyperplane, we have either $\overline H(\tau)= N(\tau)$ or $\overline H(\tau)$ is an affine hyperplane of $N(\tau)$. 
In the first case, we get $\overline F(\tau )=\overline H(\tau )\cap \Delta (\tau)=\Delta(\tau)$ which is a face of $\Delta(\tau)$. 
In the second case, the affine hyperplane $\overline H(\tau)$ is given by $b=0$ for an affine function $b$ on $N(\tau)$. 
Clearly, we may assume that $a= b \circ \pi_{\tau,\circ}$. 
Then $\Delta(\tau)= \pi_{\tau,\sigma}(\Delta(\sigma))$ is contained in the half-space $b \geq 0$ of $N(\tau)$. 
We deduce from $F(\tau )=\overline H(\tau )\cap \Delta (\tau)$ that $F(\tau)$ is a face of $\Delta(\tau)$. 

Let now $G$ be a non-empty face of $\Delta (\tau )$. 
We have to show that there exists a unique face $F$ of $\Delta (\sigma )$ with $\rec(F)\cap \relint(\tau ')\not = \emptyset$ such that $\overline F(\tau )=G$.
Since $\Delta $ is constant towards the boundary, there exists an open neighborhood $\Omega$ of $p$ in $N_\Sigma$, such that 
\begin{equation}\label{eq:5}
\pi_{\tau,\sigma}^{-1}(\Delta(\tau))\cap \Omega
=\Delta(\sigma)\cap \Omega.
\end{equation}
Since $G$ is a face of $\Delta (\tau)$, the equality
\eqref{eq:5} implies that there is a unique face $F$ of $\Delta
(\sigma )$ such that  $    \pi_{\tau,\sigma}^{-1}(G(\tau))\cap \Omega
=F\cap \Omega$. 
By Lemma \ref{lemm:1}, the face $F$ satisfies $\overline F(\tau )\cap \Omega=G\cap \Omega $ and is unique satisfying this property. 
Using $G\not = \emptyset $, we know that  $\rec(F)\cap \relint (\tau ')\not=\emptyset$.  
Since $G$ is path-connected, the unicity of $F$ implies that $F$ is
independent of the choice of the point $p$ and that $\overline{F}(\tau
)=G$. 
\end{proof}

\begin{rem} \label{equivalent condition for faces}
Note that, when $F$ is constant towards the boundary, the
condition $\rec(F)\cap \relint(\tau' )\not = \emptyset$ in Corollary \ref{cor:4} is equivalent to the condition $\tau' \subset \rec(F)$. 
We only have to show that the first condition implies the second. 
As $F$ is constant towards the boundary by Corollary \ref{face also constant towards the boundary}, we have that $|\Sigma(\sigma)| \cap \rec(F)$ is a union of cones $\sigma_i$ from $\Sigma(\sigma)$ by Proposition \ref{recession-cone-boundary-condition}. 
If $\rec(F)\cap \relint(\tau' )\not = \emptyset$, then $\relint(\tau')$ intersects one of the $\sigma_i$. As $\tau' \in \Sigma(\sigma)$ as well, we conclude that $\tau'$ is a face of this $\sigma_i$ thereby proving the claim.
\end{rem}

\begin{cor}\label{cor:3} Let $\Delta $ be a polyhedron on $N
  _{\Sigma }$ which is constant towards the boundary, let $F_1$ be a face of $\Delta$ and let $F_2 \subset F_1$. Then $F_2$ is a face of $F_1$ if and only if
  $F_2$ is a face of $\Delta $. 
\end{cor}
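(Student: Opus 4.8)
The plan is to reduce the statement, via the definition of a face in $N_\Sigma$, to an assertion about faces of polyhedra in a single euclidean space, and then to invoke two standard facts from convex geometry. The cases $F_2=\emptyset$ and $F_2=F_1$ are trivial (recall that $\Delta$ is an improper face of itself, and likewise $F_1$). Otherwise set $\sigma\coloneqq\mathrm{sed}(\Delta)$, $\tau_1\coloneqq\mathrm{sed}(F_1)$, $\tau_2\coloneqq\mathrm{sed}(F_2)$. From $F_2\subset F_1\subset\Delta$ and the identity $\Delta=\bigcup_{\mathrm{sed}(\Delta)\prec\tau}\Delta(\tau)$ from \S\ref{subsection: polyhedra on tropical toric varieties}, one reads off $\sigma\prec\tau_1\prec\tau_2$ and $F_1(\tau_2)\supset F_2(\tau_2)\neq\emptyset$. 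Since a face of a polyhedron $P$ in $N_\Sigma$ is by definition the closure of a face of the euclidean polyhedron $P(\tau)=N(\tau)\cap P$ for some cone $\tau$, and since $F_2=\overline{F_2(\tau_2)}$, the statement ``$F_2$ is a face of $\Delta$'' is equivalent to ``$F_2(\tau_2)$ is a face of $\Delta(\tau_2)$'', and ``$F_2$ is a face of $F_1$'' is equivalent to ``$F_2(\tau_2)$ is a face of $F_1(\tau_2)$''.

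Granting the key claim that $F_1(\tau_2)$ is itself a face of $\Delta(\tau_2)$, the corollary follows at once: a face of the face $F_1(\tau_2)$ of $\Delta(\tau_2)$ is a face of $\Delta(\tau_2)$, and conversely a face of $\Delta(\tau_2)$ that is contained in the face $F_1(\tau_2)$ is a face of $F_1(\tau_2)$; both are classical properties of faces of polyhedra in a real vector space. So everything comes down to the claim, which I expect to be the only genuine work.

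To prove the claim: since $F_1$ is a face of $\Delta$ of sedentarity $\tau_1$, its finite part $F_1(\tau_1)$ is a face of $\Delta(\tau_1)$, so by Corollary \ref{cor:4} (applied to $\Delta$ and the cone $\tau_1$) there is a face $F$ of $\Delta(\sigma)$ with $\rec(F)\cap\relint(\tau_1')\neq\emptyset$ and $\pi_{\tau_1,\sigma}(F)=F_1(\tau_1)$, where $\tau_i'$ denotes the image of $\tau_i$ in $N(\sigma)$. As $\Delta$, hence $F$, is constant towards the boundary (Corollary \ref{face also constant towards the boundary}), Remark \ref{equivalent condition for faces} upgrades this to $\tau_1'\subset\rec(F)$. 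Now exploit $F_1(\tau_2)\neq\emptyset$: Lemma \ref{lemm:1}\ref{item:1} applied to $F_1$ gives $\rec(F_1(\tau_1))\cap\relint(\pi_{\tau_1,\sigma}(\tau_2'))\neq\emptyset$; since $\rec(F_1(\tau_1))=\pi_{\tau_1,\sigma}(\rec(F))$ and $\pi_{\tau_1,\sigma}$ is linear and surjective, choose $v\in\rec(F)$ and $w\in\relint(\tau_2')$ with $\pi_{\tau_1,\sigma}(v)=\pi_{\tau_1,\sigma}(w)$. Then $v-w\in\L_{\tau_1'}$, so $v+b=w+a$ for some $a,b\in\tau_1'$; this point lies in $\rec(F)$ because $\tau_1'\subset\rec(F)$, and in $\relint(\tau_2')$ because $\tau_1'\subset\tau_2'$ and $\relint(\tau_2')+\tau_2'\subset\relint(\tau_2')$. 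Hence $\rec(F)\cap\relint(\tau_2')\neq\emptyset$, so Corollary \ref{cor:4} (now applied to $\Delta$ and the cone $\tau_2$) shows that $\pi_{\tau_2,\sigma}(F)$ is a face of $\Delta(\tau_2)$; and $\pi_{\tau_2,\sigma}(F)=\pi_{\tau_2,\tau_1}(\pi_{\tau_1,\sigma}(F))=\pi_{\tau_2,\tau_1}(F_1(\tau_1))=F_1(\tau_2)$ by Lemma \ref{lemm:1}\ref{item:2} applied to $F_1$, which proves the claim. The main obstacle is precisely this transfer of the ``recession cone meets relative interior'' condition from $\tau_1'$ to the larger cone $\tau_2'$, and it is exactly here that constancy towards the boundary enters, through Remark \ref{equivalent condition for faces}.
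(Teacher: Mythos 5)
Your proof is correct and follows essentially the same route as the paper: both directions are reduced, at the sedentarity stratum $\tau_2$ of $F_2$, to the key statement that $F_1(\tau_2)$ is a face of $\Delta(\tau_2)$, after which the classical euclidean facts about faces of faces finish the argument. The only difference is that the paper dispatches this key statement with a terse appeal to Corollary \ref{cor:4}, while you derive it in full (two applications of Corollary \ref{cor:4} at the level of $\Delta(\sigma)$, together with Remark \ref{equivalent condition for faces} and a short convexity argument transferring the condition $\rec(F)\cap\relint(\tau_1')\neq\emptyset$ to the larger cone $\tau_2'$), which is a correct filling-in of that step.
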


\begin{proof}
Let $\tau$ be the sedentarity of $F_1$, then $\sigma \prec \tau$, $F_1(\tau)$ is dense in $F_1$ and $F_{1}(\tau)$ is a face of $\Delta(\tau )$. 
	
Suppose first that $F_2$ is a face of $F_1$. 
Let $\rho \in \Sigma$ be the sedentarity of $F_2$, then $\tau\prec \rho $ and $F_2(\rho)$ is a face of $F_1(\rho)$.  
We have to show that $F_{2}(\rho)$ is a face of $\Delta(\rho)$. 
Since for ordinary polyhedra in real vector spaces a face of a face is a face, it is enough to show that  ${F_{1}}(\rho )$ is a face of $\Delta (\rho )$, but this is part of Corollary \ref{cor:4}. 
	
Conversely, assume that $F_2$ is a face of $\Delta$. 
Let $\rho \in \Sigma$ be the sedentarity of $F_2$. 
Then $F_2(\rho)$ is dense in $F_2$. 
Using $F_2 \subset F_1$, we conclude that $\tau \prec \rho$. 
By assumption, $F_2(\rho)$ is a face of $\Delta(\rho)$ and as above, we have that $F_1(\rho)$ is a face of $\Delta(\rho)$. 
Since $F_2(\rho) \subset F_1(\rho)$, we conclude that $F_2(\rho)$ is a face of $F_1(\rho)$ which is clear for ordinary polyhedra again. 
By definition, we get that $F_2$ is a face of $F_1$. 
\end{proof}

\begin{cor} \label{cor:transitivity} 
Let $\Delta $ be a polyhedron in $N_{\Sigma }$ of sedentarity $\sigma \in \Sigma $ that is constant towards the boundary. 
Let $\tau ,\rho \in \Sigma $ be cones such that $\sigma \prec\tau \prec \rho $. 
Then
\begin{displaymath}
\overline{\Delta (\tau )}(\rho )=\Delta (\rho ).
\end{displaymath}
\end{cor}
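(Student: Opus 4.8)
The plan is to deduce this from Corollary \ref{cor:2}, which already asserts that $\overline{\Delta (\tau )}(\rho )$ is either empty or equal to $\Delta (\rho )$; hence the identity follows once we know that $\Delta (\rho )\neq \emptyset$ implies $\overline{\Delta (\tau )}(\rho )\neq \emptyset$. When $\Delta (\rho )=\emptyset$ both alternatives of Corollary \ref{cor:2} give $\emptyset$, so there is nothing to prove; in particular the case $\Delta =\emptyset$ is trivial. So from now on I would assume $\Delta (\rho )\neq \emptyset$.

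Write $C\coloneqq \rec(\Delta (\sigma ))$, let $\tau ',\rho '\in \Sigma (\sigma )$ be the images of $\tau ,\rho $ under $\pi _\sigma $, and let $\rho ''\in \Sigma (\tau )$ be the image of $\rho $ under $\pi _\tau $; since $\pi _\tau =\pi _{\tau ,\sigma }\circ \pi _\sigma $ we have $\rho ''=\pi _{\tau ,\sigma }(\rho ')$, and from $\tau \prec \rho $ we get $\tau '\subseteq \rho '$. First I would show $\Delta (\tau )\neq \emptyset$. By Lemma \ref{lemm:1}~\ref{item:1} applied to $\Delta $ and $\rho $, the hypothesis $\Delta (\rho )\neq \emptyset$ gives $C\cap \relint(\rho ')\neq \emptyset$. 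Since $\Delta $ is constant towards the boundary, Proposition \ref{recession-cone-boundary-condition} tells us that $C\cap |\Sigma (\sigma )|$ is a union of cones of $\Sigma (\sigma )$, and as $\rho '\in \Sigma (\sigma )$ meets $C$ in its relative interior we conclude $\rho '\subseteq C$. Then $\tau '\subseteq \rho '\subseteq C$, so $C\cap \relint(\tau ')\neq \emptyset$, and Lemma \ref{lemm:1}~\ref{item:1} gives $\Delta (\tau )\neq \emptyset$.

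Now I would pass to $\Delta (\tau )$, which by Corollary \ref{polyhedraonconstant towards the boundary and orbit} is a non-empty polyhedron of sedentarity $\tau $ that is again constant towards the boundary. By Lemma \ref{lemm:1}~\ref{item:2} we have $\Delta (\tau )=\pi _{\tau ,\sigma }(\Delta (\sigma ))$, hence $\rec(\Delta (\tau ))=\pi _{\tau ,\sigma }(C)\supseteq \pi _{\tau ,\sigma }(\rho ')=\rho ''$, so in particular $\rec(\Delta (\tau ))\cap \relint(\rho '')\neq \emptyset$. Applying Lemma \ref{lemm:1}~\ref{item:1} to the polyhedron $\overline{\Delta (\tau )}$ of sedentarity $\tau $ and the cone $\rho $, we obtain $\overline{\Delta (\tau )}(\rho )\neq \emptyset$. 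By Corollary \ref{cor:2} this forces $\overline{\Delta (\tau )}(\rho )=\Delta (\rho )$, which is the claim.

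I do not expect a genuine obstacle here: the argument is essentially a bookkeeping exercise in chasing the cones $\tau ,\rho $ through the projections $\pi _\sigma ,\pi _\tau ,\pi _{\tau ,\sigma }$. The one point that is used essentially — and the reason the statement would fail without the hypothesis — is that being constant towards the boundary descends from $\Delta $ to its boundary pieces $\Delta (\tau )$ (Corollary \ref{polyhedraonconstant towards the boundary and orbit}) together with the recession-cone characterization of Proposition \ref{recession-cone-boundary-condition}, which is precisely what lets us promote ``$\relint(\tau ')$ meets $C$'' to ``$\tau '\subseteq C$'' at both the level of $\Delta (\sigma )$ and of $\Delta (\tau )$.
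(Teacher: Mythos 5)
Your proof is correct and follows essentially the same route as the paper: reduce via Corollary \ref{cor:2} to showing non-emptiness of $\overline{\Delta(\tau)}(\rho)$, use Lemma \ref{lemm:1}~\ref{item:1} together with Proposition \ref{recession-cone-boundary-condition} to get $\rho'\subset\rec(\Delta(\sigma))$ (the paper simply quotes this from the proof of Corollary \ref{polyhedraonconstant towards the boundary and orbit}), deduce $\tau'\subset\rec(\Delta(\sigma))$, and then push forward along $\pi_{\tau,\sigma}$ to find $\rho''\subset\rec(\Delta(\tau))$ and conclude with Lemma \ref{lemm:1} again. The only difference is cosmetic: you unwind the cited step rather than referring to the earlier proof.
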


\begin{proof} 
By Corollary \ref{cor:2}, we only need to prove that if $\Delta (\rho )\not = \emptyset$, then $\overline{\Delta(\tau )}(\rho )\not = \emptyset$. 
So assume that $\Delta (\rho )\not = \emptyset$. 
As we have seen in the proof of Corollary \ref{polyhedraonconstant towards the boundary and orbit}, we have $\rho' \subset \rec(\Delta (\sigma ))$ for the cone $\rho'$ in $N(\sigma)$ induced by $\rho$. 
For the cone $\tau'$ of $N(\sigma)$ induced by $\tau$, we conclude that $\tau'\subset \rec(\Delta (\sigma ))$. 
By Lemma \ref{lemm:1}, we get $\Delta (\tau  )\not = \emptyset$ and $\pi_{\tau,\sigma}(\Delta(\sigma))=\Delta(\tau)$. 
Since $\rec(\Delta (\tau ))=\pi _{\tau ,\sigma }(\rec(\Delta(\sigma )))$, the cone $\rho''$ of $N(\tau)$ induced by $\rho$ satisfies $\rho ''\subset \rec(\Delta (\tau ))$.  
By Lemma \ref{lemm:1}, we get $\overline{\Delta(\tau )}(\rho )\not = \emptyset$.
\end{proof}

The following result was shown by Osserman and Rabinoff \cite[Corollary 3.11]{osserman-rabinoff2013} when both polyhedra $\Delta,\Delta'$ have sedentarity $(0)$.

\begin{cor} \label{cor:intersection}
Let $\Delta, \Delta'$ be polyhedra in $N_\Sigma$ which are constant towards the boundary and of sedentarity $\sigma\in \Sigma$ and $\sigma'\in \Sigma$, respectively. 
Then we have:
\begin{enumerate}
\item \label{item:disjoint} 
If there is no $\tau \in \Sigma$ with  $\sigma \prec \tau $ and $\sigma'\prec \tau $, then $\Delta \cap \Delta'=\emptyset$.
\item \label{item:9} 
Assume that there is  $\tau \in \Sigma$ with  $\sigma \prec \tau $ and $\sigma'\prec \tau $. 
Let $\rho$ be the minimal such $\tau$. 
Then $\Delta \cap \Delta '=\overline{\Delta (\rho)\cap \Delta'(\rho )}$. 
In particular, $\Delta \cap \Delta'$ is a polyhedron of sedentarity $\rho $ which is constant towards the boundary. 
\end{enumerate}
\end{cor}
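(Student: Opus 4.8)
\emph{Proof plan.} Assertion \ref{item:disjoint} is immediate from the orbit decomposition. The orbits $N(\tau)$, $\tau\in\Sigma$, partition $N_\Sigma$, and $\Delta=\bigcup_{\sigma\prec\tau}\Delta(\tau)$, $\Delta'=\bigcup_{\sigma'\prec\tau}\Delta'(\tau)$ with $\Delta(\tau),\Delta'(\tau)\subset N(\tau)$; hence $\Delta\cap\Delta'=\bigcup_{\tau\in\Sigma}\bigl(\Delta(\tau)\cap\Delta'(\tau)\bigr)$, and the $\tau$-summand is empty unless $\sigma\prec\tau$ and $\sigma'\prec\tau$.

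For assertion \ref{item:9}, I would first observe that a \emph{minimal} such $\tau$ exists. The set $S\coloneqq\{\tau\in\Sigma\mid\sigma\prec\tau,\ \sigma'\prec\tau\}$ is non-empty by hypothesis and closed under intersection: for $\tau_1,\tau_2\in S$ the cone $\tau_1\cap\tau_2$ is a common face of $\tau_1$ and $\tau_2$, and since $\sigma$ is a face of $\tau_1$ contained in the face $\tau_1\cap\tau_2$ of $\tau_1$, it is a face of $\tau_1\cap\tau_2$, and likewise for $\sigma'$. As $\Sigma$ is finite, $\rho\coloneqq\bigcap_{\tau\in S}\tau$ is the minimum of $S$; moreover $\rho=\rho\cap\tau$ is a face of every $\tau\in S$, so $S=\Star_\Sigma(\rho)$. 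Combined with the computation in assertion \ref{item:disjoint} this yields $\Delta\cap\Delta'\subset\coprod_{\rho\prec\tau}N(\tau)$, which is exactly the closure $\overline{N(\rho)}$ of $N(\rho)$ in $N_\Sigma$; recall that this closure, with its induced structure, is the tropical toric variety $N(\rho)_{\Sigma(\rho)}$.

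The main point is then to reduce, inside $N(\rho)_{\Sigma(\rho)}$, to the sedentarity-$(0)$ case, i.e.\ to \cite[Corollary 3.11]{osserman-rabinoff2013}. By Corollary \ref{polyhedraonconstant towards the boundary and orbit}, $\Delta(\rho)$ and $\Delta'(\rho)$ are polyhedra of sedentarity $\rho$ that are constant towards the boundary; regarded as sedentarity-$(0)$ polyhedra in $N(\rho)$, this means (Remark \ref{rem:2}) that they are compatible with the fan $\Sigma(\rho)$. Since $\Delta$ is constant towards the boundary, Corollary \ref{cor:transitivity} gives $\overline{\Delta(\rho)}(\tau)=\Delta(\tau)$ for all $\tau\succ\rho$, so the closure of $\Delta(\rho)$ inside $\overline{N(\rho)}$ equals $\Delta\cap\overline{N(\rho)}$, and likewise $\overline{\Delta'(\rho)}=\Delta'\cap\overline{N(\rho)}$. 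Using $\Delta\cap\Delta'\subset\overline{N(\rho)}$ from the previous step, we obtain, with all closures taken in $N(\rho)_{\Sigma(\rho)}$,
\[
\Delta\cap\Delta'=\bigl(\Delta\cap\overline{N(\rho)}\bigr)\cap\bigl(\Delta'\cap\overline{N(\rho)}\bigr)=\overline{\Delta(\rho)}\cap\overline{\Delta'(\rho)}.
\]
Now \cite[Corollary 3.11]{osserman-rabinoff2013}, applied to the compatible polyhedra $\Delta(\rho),\Delta'(\rho)$ of $N(\rho)$, identifies the right-hand side with $\overline{\Delta(\rho)\cap\Delta'(\rho)}$ and shows that, when non-empty, it is a polyhedron of sedentarity $\rho$ which is constant towards the boundary.

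I expect the only real work to be the bookkeeping of the second paragraph: confirming that $\Delta\cap\Delta'$ is contained in $\overline{N(\rho)}=N(\rho)_{\Sigma(\rho)}$ and that the traces of $\Delta,\Delta'$ there are the closures of their finite parts $\Delta(\rho),\Delta'(\rho)$; everything else is then the sedentarity-$(0)$ statement. If one prefers to avoid the compatibility clause of \cite[Corollary 3.11]{osserman-rabinoff2013}, the equality $\Delta\cap\Delta'=\overline{\Delta(\rho)\cap\Delta'(\rho)}$ can be obtained directly: the inclusion $\supset$ is clear since $\Delta\cap\Delta'$ is closed, and for $\subset$, given $x\in\Delta\cap\Delta'$ in some $N(\tau)$ with $\rho\prec\tau$, Lemma \ref{lemm:1} lets us write $x=\pi_{\tau,\rho}(y)=\pi_{\tau,\rho}(y')$ with $y\in\Delta(\rho)$, $y'\in\Delta'(\rho)$; writing $\tau'$ for the image of $\tau$ in $N(\rho)$ one has $\tau'\subset\rec(\Delta(\rho))\cap\rec(\Delta'(\rho))$ by Proposition \ref{recession-cone-boundary-condition}, so decomposing $y-y'=v_1-v_2$ with $v_1,v_2\in\tau'$ gives $z\coloneqq y+v_2=y'+v_1\in\Delta(\rho)\cap\Delta'(\rho)$ with $\pi_{\tau,\rho}(z)=x$, whence $x\in\overline{\Delta(\rho)\cap\Delta'(\rho)}$ by Lemma \ref{lemm:1} again; the compatibility of $\Delta(\rho)\cap\Delta'(\rho)$ with $\Sigma(\rho)$ then follows from $\rec(\Delta(\rho)\cap\Delta'(\rho))=\rec(\Delta(\rho))\cap\rec(\Delta'(\rho))$ together with Proposition \ref{recession-cone-boundary-condition}.
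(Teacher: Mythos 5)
Your proof is correct, but for part \ref{item:9} it takes a genuinely different route from the paper. The paper argues directly at a point $x\in\Delta\cap\Delta'\cap N(\nu)$: after noting $\Delta\cap\Delta'\subset\overline{N(\rho)}$, it uses the neighbourhood characterization of being constant towards the boundary to find an open $\Omega\ni x$ with $\pi_{\nu,\rho}^{-1}(\Delta(\nu))\cap\Omega=\Delta(\rho)\cap\Omega$ and likewise for $\Delta'$, and then concludes $x\in\overline{\Delta(\rho)\cap\Delta'(\rho)}$ simply because $x$ lies in the closure of $\pi_{\nu,\rho}^{-1}(x)$ --- a one-stroke local argument with no reduction. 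You instead pass to the orbit closure $\overline{N(\rho)}=N(\rho)_{\Sigma(\rho)}$, use Corollary \ref{cor:transitivity} to identify $\Delta\cap\overline{N(\rho)}=\overline{\Delta(\rho)}$ and $\Delta'\cap\overline{N(\rho)}=\overline{\Delta'(\rho)}$, and then either quote the sedentarity-$(0)$ case (Osserman--Rabinoff), which the paper cites only as a precedent rather than as an ingredient, or run your own argument via Lemma \ref{lemm:1}, the containment $\tau'\subset\rec(\Delta(\rho))\cap\rec(\Delta'(\rho))$, and the decomposition $\L_{\tau'}=\tau'-\tau'$; the latter mirrors the technique in the proof of Corollary \ref{polyhedraonconstant towards the boundary and orbit} rather than the paper's neighbourhood argument, and both of your routes are sound. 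Your version has two small merits: it supplies the (easy but unstated) existence and uniqueness of the minimal cone $\rho$, and it makes the final clause of the statement explicit, since the paper's proof never verifies that $\Delta\cap\Delta'$ is constant towards the boundary, whereas your computation $\rec(\Delta(\rho)\cap\Delta'(\rho))=\rec(\Delta(\rho))\cap\rec(\Delta'(\rho))$ combined with Proposition \ref{recession-cone-boundary-condition} does exactly that. One caution: attributing that constancy clause to [OR, Cor.~3.11] itself is slightly generous --- it is your recession-cone computation that actually delivers it --- and your direct argument tacitly uses that an intersection of two unions of cones of $\Sigma(\rho)$ is again a union of cones of $\Sigma(\rho)$, which is true because pairwise intersections of fan cones are fan cones and is worth saying. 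Part \ref{item:disjoint} is the same orbit-decomposition argument as in the paper.
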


\begin{proof}

	To prove \ref{item:disjoint}, we note that if the cone $\tau $ 
	does not exists, then $\overline{N(\sigma )}\cap
	\overline{N(\sigma ')}=\emptyset $, hence $\Delta \cap \Delta
	'=\emptyset$. 
	
	To prove  \ref{item:9}, 
we  assume that such a cone $\tau$ exists and we use $\rho$ for the minimal one.  In this case
	$\Delta \cap \Delta '\subset \overline{N(\rho)}$. 
Note that $\Delta \cap \Delta '\supset \overline{\Delta (\rho)\cap \Delta
	'(\rho )}$ is obvious, so we have to prove $\Delta \cap \Delta '\subset\overline{\Delta (\rho)\cap \Delta
	'(\rho )}$. We may assume $\Delta \cap \Delta'\neq \emptyset$.
Pick any $\nu \in \Sigma$ with $\Delta \cap \Delta' \cap N(\nu) \neq \emptyset$. Let $x\in \Delta \cap \Delta '\cap N(\nu)$.
	Since $\Delta $ and $\Delta'$ are constant towards the boundary, there is a neighborhood
	$\Omega $ of $x$ in $N_\Sigma$ such that
	\begin{displaymath}
		\pi_{\nu,\rho}^{-1}({\Delta}(\nu))\cap \Omega
		=\Delta(\rho)\cap \Omega, \quad
		\pi_{\nu,\rho}^{-1}({\Delta'}(\nu ))\cap \Omega
		=\Delta'(\rho)\cap \Omega.
	\end{displaymath}
	Since $x$ belongs to the closure of $\pi_{\nu,\rho}^{-1}(x)$, 
	we
	conclude that $x\in \overline{\Delta (\rho)\cap \Delta' (\rho)}(\tau )$.
	This shows that $\Delta \cap \Delta '=\overline{\Delta (\rho)\cap
		\Delta '(\rho)}$ and proves \ref{item:9}.
\end{proof}

\begin{ex}\label{counterexamples}
We next give a series of examples showing that the condition of being constant towards the boundary is necessary in the previous results.
\begin{enumerate}
\item \label{item:11} 
Consider the fan given by the single cone $\tau$ in $\R^{3}$ generated by the vectors $(1,1,1)$ and $(1,1,-1)$ and its faces. 
Let $\Delta = \R_{\geq 0}^3 $ 
and $F=\{(0,0,z)\mid z\ge 0\}$ which is a face of $\Delta$. 
Using the coordinates $x,y,z$ on $\R^3$, we identify $N(\tau)$ with $\R$  where the projection $\pi _{\tau }$ is given by $\pi_{\tau }(x,y,z)=y-x$. 
In this case, Lemma \ref{lemm:1} shows that
\begin{displaymath}
\overline{\Delta} (\tau )=N(\tau ),\quad\text{and}\quad \overline{F}(\tau )=\{0\}.
\end{displaymath}
Note that $\{0\}$ is not a face of $\R$. 
Thus the map in Corollary \ref{cor:4} is not well defined if the polyhedron is not constant towards the boundary. 
This also shows that a face of a face is not necessarily a face if the polyhedron is not constant towards the boundary, because $\{0\}\subset N(\tau)$ is a face of $\overline{F}$ but it is not a face of $\overline{\Delta }$.  
\item \label{item ii} 
Now let $x,y$ be the coordinates of $\R^2$. 
Consider the fan given by the first quadrant of $\R^{2}$, the cones  $\sigma =\{0\} $, $\tau =\{x=0\}$ and $\rho $ the  maximal cone and the polyhedron $\Delta$ given by the closure of $\{(x,y)\in \R^2\mid x=y\ge 0\}$. 
Then
\begin{displaymath}
\Delta (\rho )=N(\rho )\not =\emptyset,\quad\text{but}\quad\Delta (\tau )=\emptyset.
\end{displaymath}
    This shows that the condition of being constant towards the
    boundary is needed in Corollary \ref{cor:transitivity}.
  \item Consider the same fan as in \ref{item ii} and the polyhedra $\Delta
    =\{(\lambda ,2\lambda )\mid \lambda \ge 0\}$ and $\Delta'
    =\{(2\lambda ,\lambda )\mid \lambda \ge 0\}$. Then
    \begin{displaymath}
      \overline{\Delta} \cap \overline{\Delta '}=\{(0,0)\}\cup N(\rho ).
    \end{displaymath}
    showing that $\Delta \cap \Delta '$ is not a polyhedron and
    does not agree with $\overline{\Delta \cap \Delta '}$. 
  \end{enumerate}
\end{ex}

\subsection{Simplicial decompositions}
\label{sec:part-result-subd}

In this section we study the question of whether a locally finite family of polyhedra in a tropical toric variety can be decomposed into a polyhedral complex. 
We will give a positive answer when the polyhedra are constant towards the boundary and the fan is simplicial. 

We start by defining polyhedral complexes in tropical toric varieties. 

\begin{definition} \label{definition: polyhedral complex} 
Let $U$ be an open subset of the tropical toric variety $N_{\Sigma }$. 
A \emph{polyhedral complex $\Ccal$ in $U$} 
is a locally finite set of polyhedra contained in $U$ such that 
\begin{enumerate}
\item
if $\Delta\in\Ccal$, then all faces of $\Delta$ belong to $\Ccal$, and
\item
given $\Delta_1,\Delta_2\in\Ccal$ the intersection
$\Delta_1\cap\Delta_2$ is either empty or a common face of $\Delta_1$
and $\Delta_2$. 
\end{enumerate}
A polyhedral complex $\Ccal$ in 
$U$ is called \emph{constant
  towards the boundary if all polyhedra in $\Ccal$ are constant
  towards the boundary.}
\end{definition}

We will see in Proposition \ref{prop:1} that we can construct polyhedral complexes in $N_{\Sigma}$ from polyhedral complexes in $N_{\R}$ that are constant towards the boundary and locally finite in $N_{\Sigma }$.

Recall from the Minkowski--Weyl Theorem \cite[Theorem
19.1]{rockafellar1970} that every polyhedron $T$ in $N_\R$ can be
written as 
\begin{displaymath}
T=\Conv(p_{0},\dots,p_{r})+\Cone(v_{1},\dots,v_{s})
\end{displaymath}
where $\Conv(p_{0},\dots,p_{r})$ is the convex hull  {of the points}
 $p_{0},\dots,p_{r}\in N_\R$ and where
$\Cone(v_{1},\dots,v_{s})$ is the cone generated by 
{the vectors} $v_{1},\dots,v_{s} \in N_\R$. 
The recession cone of $TY$ is then given as $\rec(T)=\Cone(v_{1},\dots,v_{s})$.

\begin{definition} Let $\Sigma $ be a fan and $\sigma \in \Sigma$. 
A polyhedron $T\subset N(\sigma )$ is called \emph{simplicial} if it can be written as 
\begin{displaymath}
T=\Conv(p_{0},\dots,p_{r})+\Cone(v_{1},\dots,v_{s})
\end{displaymath}
with $r\ge 0$ and $\dim T=r+s$ for some $p_0,\dots,p_r,v_1,\dots,v_s \in N(\sigma)$. 
Note that this last condition is equivalent to the condition that the vectors $v_j\in N(\sigma)$, $j=1,\dots,s$ and $p_i-p_0$, $i=1,\dots, r$ are linearly independent.
In particular, the points $p_{i}\in N(\sigma )$ are in general position. 
Additionally, we make the convention that the empty set $\emptyset$ is simplicial.
A polyhedral complex $\Pi$ in $N_\R$ is called \emph{simplicial} if
all polyhedra in $\Pi $ are simplicial.  
This definition applies in particular to fans. 
\end{definition}

Clearly, if a polyhedron $T$ is simplicial, then each face of $T$ is simplicial. 
With some extra conditions, the same is true for the faces of the closure $\overline T$ in a compactification $N_{\Sigma}$.

\begin{lemma}\label{lemm:4} 
Let $\Sigma $ be a simplicial fan and $T$ a simplicial polyhedron of
$N_\R$ such that $\rec(T)\in \Sigma $.  
Let $\overline T$ be the closure of $T$ in $N_{\Sigma }$. Then, for
any $\sigma \in \Sigma $, the polyhedron $\overline T(\sigma )$ is
simplicial. 
\end{lemma}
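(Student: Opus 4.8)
The plan is to reduce the statement to an explicit description of $\overline{T}(\sigma)$ via Lemma~\ref{lemm:1} and then verify the simpliciality condition by a dimension count. Write $T = \Conv(p_0,\dots,p_r) + \Cone(v_1,\dots,v_s)$ in simplicial form, so that $C \coloneqq \rec(T) = \Cone(v_1,\dots,v_s)$ and, by hypothesis, $C \in \Sigma$; moreover since $\Sigma$ is simplicial the generators $v_1,\dots,v_s$ of $C$ are linearly independent. Fix $\sigma \in \Sigma$ with $\sigma \succ \rec(T)$ — if $\sigma$ does not contain $\rec(T) \cap \relint(\sigma')$ for the relevant cone, then $\overline{T}(\sigma)=\emptyset$ by Lemma~\ref{lemm:1}\ref{item:1}, and $\emptyset$ is simplicial by convention, so there is nothing to prove. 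The main case is when $C \cap \relint(\sigma') \neq \emptyset$; actually, the cleanest normalization is to use the cone $\sigma$ itself of $\Sigma$ (so $N(\sigma) = N_\R/\langle\sigma\rangle_\R$ and $\pi_\sigma$ is the projection), and by Lemma~\ref{lemm:1}\ref{item:2} we have $\overline{T}(\sigma) = \pi_\sigma(T)$.

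**Key steps.** First I would observe that $\pi_\sigma(T) = \Conv(\pi_\sigma(p_0),\dots,\pi_\sigma(p_r)) + \pi_\sigma(C)$, since $\pi_\sigma$ is linear; so $\overline{T}(\sigma)$ is at least written in Minkowski--Weyl form with the right shape. Next, $\pi_\sigma(C) = \pi_\sigma(\Cone(v_1,\dots,v_s)) = \Cone(\pi_\sigma(v_1),\dots,\pi_\sigma(v_s))$. The point is that $C \in \Sigma$ and $\sigma \in \Sigma$, and since $C \cap \relint(\sigma') \neq \emptyset$ implies (using that a fan is closed under intersections and faces, combined with Proposition~\ref{recession-cone-boundary-condition}-type reasoning already used in Corollary~\ref{polyhedraonconstant towards the boundary and orbit}) that $\sigma' \subset C$, i.e. $\sigma$ viewed in the ambient space is a face of $C$. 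Because $C$ is a simplicial cone, $\sigma$ is spanned by a subset of $\{v_1,\dots,v_s\}$, say after reindexing by $v_1,\dots,v_k$. Then $\pi_\sigma(v_1) = \cdots = \pi_\sigma(v_k) = 0$, so $\pi_\sigma(C) = \Cone(\pi_\sigma(v_{k+1}),\dots,\pi_\sigma(v_s))$, and these $s-k$ images are linearly independent in $N(\sigma)$ because $v_{k+1},\dots,v_s$ together with a basis of $\langle\sigma\rangle_\R = \langle v_1,\dots,v_k\rangle_\R$ are linearly independent in $N_\R$. Finally, the vectors $\pi_\sigma(v_j)$ for $j=k+1,\dots,s$ together with $\pi_\sigma(p_i) - \pi_\sigma(p_0) = \pi_\sigma(p_i - p_0)$ for $i=1,\dots,r$ are linearly independent: a linear dependence among them would pull back to a linear dependence of $v_{k+1},\dots,v_s, p_1-p_0,\dots,p_r-p_0$ modulo $\langle v_1,\dots,v_k\rangle_\R$, and since $v_1,\dots,v_s,p_1-p_0,\dots,p_r-p_0$ are linearly independent in $N_\R$ (simpliciality of $T$), this forces all coefficients to vanish. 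Hence $\dim \overline{T}(\sigma) = r + (s-k)$ with $r+1$ affinely independent points and $s-k$ independent ray generators, which is exactly the simpliciality condition.

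**Main obstacle.** The routine linear algebra is straightforward once the geometry is set up; the step that requires genuine care is justifying that $C \cap \relint(\sigma') \neq \emptyset$ forces $\sigma'$ to be a \emph{face} of $C$ (not merely contained in $C$), which is where the simpliciality of $\Sigma$ and the compatibility of $C$ with the fan $\Sigma$ are both essential — one uses that $\sigma$ and $C$ are both cones of $\Sigma$, hence $\sigma \cap C$ is a common face, combined with $\relint(\sigma) \cap C \neq \emptyset$ to conclude $\sigma \subset C$ and then that $\sigma$ is a face of $C$. I would also need to be slightly careful about the degenerate cases ($T$ a point, $C = \{0\}$, or $\sigma = \rec(T)$ itself, where $\overline{T}(\sigma)$ collapses to a single point, which is vacuously simplicial). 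Everything else — linearity of $\pi_\sigma$, the Minkowski--Weyl form of the image, the independence count — is bookkeeping.
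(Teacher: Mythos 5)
Your proposal is correct and follows essentially the same route as the paper: write $T$ in its simplicial Minkowski--Weyl form, use Lemma~\ref{lemm:1} to get $\overline T(\sigma)=\pi_\sigma(T)$ (and emptiness otherwise), deduce from the fan axioms that $\relint(\sigma)\cap\rec(T)\neq\emptyset$ forces $\sigma$ to be a face of the simplicial cone $\rec(T)$, hence generated by a subset of the $v_i$, and finish with the same linear-independence count for the images $\pi_\sigma(v_j)$ and $\pi_\sigma(p_i-p_0)$. Only note the small slip at the start where you write ``$\sigma\succ\rec(T)$''; the relevant relation, which your later argument correctly establishes, is that $\sigma$ is a \emph{face} of $\rec(T)$.
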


\begin{proof}
Let $\sigma\in \Sigma$ and write
$T=\Conv(p_{0},\dots,p_{r})+\Cone(v_{1},\dots,v_{s})$ with
$r\ge 0$ and $\dim T=r+s$. 
Assume that the intersection $\overline T\cap N(\sigma )$ is nonempty. 
By Lemma \ref{lemm:1}(i), we have $\relint(\sigma )\cap \rec(T)\not =\emptyset$. 
Since $\rec(T)\in \Sigma $, we deduce that $\sigma $ is a face of
$\rec(T)=\Cone(v_{1},\dots,v_{s})$.  
Hence there is a subset $I\subset \{1,\dots,s\}$ such that $\sigma =\Cone(v_{i},i\in I)$. 
Write $I^{C}=\{1,\dots,s\}\setminus I$ and let $\pi_\sigma \colon
N_{\R}\to N(\sigma )$ be the canonical projection.   
Then Lemma \ref{lemm:1}(ii) yields the first equality in
\begin{displaymath}
\overline T(\sigma) = \pi_\sigma(T)= \Conv(\pi_\sigma
(p_{0}),\dots,\pi_\sigma (p_{r}))+\Cone(\pi_\sigma
(v_{j}),j\in I^{C}),
\end{displaymath}
and the last term is simplicial as the vectors $v_1,\dots, v_s$,
$p_1-p_0, \dots, p_r-p_0$ are linearly independent and $(v_i)_{i \in
  I}$ generates the kernel of $\pi_\sigma$. 
\end{proof}
\begin{lemma} \label{lemm:3} 
Let $\Sigma $ be a simplicial fan on $N_{\R}$.
Let $\Pi $ be a polyhedral complex on $N_{\R}$ such that $\rec(T)\in \Sigma$ for all $T\in \Pi $ and such that $\Pi$ is locally finite in $N_\Sigma$.  
Then there exists a simplicial polyhedral refinement $\Pi' $ of $\Pi $ such that 
\begin{enumerate}
\item
$\Pi'$ is locally finite in $N_\Sigma$,
\item 
each polyhedron of $\Pi $ that is already simplicial belongs to $\Pi'$, 
\item 
for every $T'\in \Pi '$, the condition $\rec(T')\in \Sigma $ still holds. 
\end{enumerate}
\end{lemma}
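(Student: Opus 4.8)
The plan is to perform a classical ``stellar subdivision'' argument, adapted so that everything stays locally finite in $N_\Sigma$ and keeps recession cones in $\Sigma$. First I would choose a subset of vertices and rays that have to be added in order to triangulate the non-simplicial polyhedra of $\Pi$. The standard recipe is to work up through the skeleta of $\Pi$ by increasing dimension: for each non-simplicial cell $T\in\Pi$, pick an interior point $p_T\in\relint(T(\sigma))$ (or, if $\rec(T)$ is not simplicial --- which cannot happen here since $\rec(T)\in\Sigma$ and $\Sigma$ is simplicial --- an interior ray), and form the cones from $p_T$ over the already-triangulated proper faces of $T$. Because $\rec(T)\in\Sigma$ is simplicial for every $T\in\Pi$, the recession cones never need subdivision; this is exactly what makes condition (iii) survive, since $\rec$ of a cone-over-a-face is generated by $\rec$ of that face together with $\rc(T)$, and we only cone over faces whose recession cones are faces of $\rec(T)$.

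Second I would check the three asserted properties. Property (ii) is automatic from the inductive recipe: a cell that is already simplicial is never chosen for subdivision and is left untouched (here one uses that all of its faces are already simplicial, so the induction never forces a refinement of it). Property (i), local finiteness in $N_\Sigma$, should follow because the refinement of each $T\in\Pi$ is a finite simplicial subdivision of $T$ supported on $T$ itself, so the refined complex $\Pi'$ has the same support and only finitely many new cells inside each old cell; local finiteness in $N_\Sigma$ is a statement about the supports near boundary points, which are unchanged, so it is inherited from $\Pi$. Property (iii) is the recession-cone bookkeeping described above: each new cell $T'$ is of the form $\Conv(p_T,\ldots)+\Cone(\ldots)$ with $\rec(T')$ a face of $\rec(T)\in\Sigma$, hence in $\Sigma$.

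Third, and this is where I expect the real work to lie, I would verify that the resulting $\Pi'$ is genuinely a polyhedral \emph{complex} in the sense of Definition \ref{definition: polyhedral complex}, i.e.\ that the intersection of two refined cells is a common face, and that taking closures in $N_\Sigma$ does not destroy the face relations. For the bounded/affine part this is the classical fact that stellar subdivisions of a polyhedral complex are again polyhedral complexes, applied cell by cell and checking compatibility on shared faces --- the compatibility works because we subdivide a face of $T$ using only data intrinsic to that face, so two cells $T_1,T_2$ sharing a face $T_1\cap T_2$ get consistent subdivisions there. The genuinely new difficulty over $N_\Sigma$ is that ``face'' now means closure in $N_\Sigma$ of a face of the finite part, and face relations can be subtle at the boundary; here I would lean on the results of the previous subsection, especially Corollary \ref{cor:3} (a face of a face is a face, for polyhedra constant towards the boundary), Corollary \ref{cor:4} (the bijection between faces of $\Delta(\sigma)$ meeting $\relint(\tau')$ and faces of $\Delta(\tau)$), and Corollary \ref{cor:intersection} (intersections of polyhedra constant towards the boundary are again such, with controlled sedentarity). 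The point is that ``constant towards the boundary'' for the cells of $\Pi'$ follows from Proposition \ref{recession-cone-boundary-condition} together with $\rec(T')\in\Sigma$, so all those corollaries apply and let one transfer the affine face-compatibility statements across the orbits $N(\sigma)$.

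\textbf{Main obstacle.} The hard part will be the bookkeeping at the boundary: ensuring that when we subdivide a cell $T$ and its closure $\overline{T}$ meets several orbits $N(\sigma)$, the induced subdivision on each $\overline{T}(\sigma)$ is compatible with the subdivision obtained by directly refining the smaller-dimensional cells living in $N(\sigma)$, and that the chosen interior points $p_T$ can be picked compatibly (e.g.\ so that $\pi_{\tau,\sigma}(p_T)$ plays the role of the interior point for $\overline{T}(\tau)$ whenever that is nonempty). This is essentially a matter of making the inductive choices equivariantly with respect to the projections $\pi_{\tau,\sigma}$, using Lemma \ref{lemm:1}(ii) which says $\overline{T}(\tau)=\pi_{\tau,\sigma}(T(\sigma))$, and Lemma \ref{lemm:4} which guarantees each $\overline{T}(\sigma)$ is already simplicial when $\rec(T)\in\Sigma$ --- so in fact the boundary pieces require no further subdivision at all, and the whole subdivision happens in the interior $N_\R$. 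That observation, once carefully justified, should reduce the problem to the classical simplicial-refinement statement in $N_\R$ plus the local-finiteness-in-$N_\Sigma$ check, with the corollaries of the previous subsection supplying the needed face calculus.
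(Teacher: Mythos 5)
Your overall strategy is the same as the paper's: induct on the skeleta of $\Pi$, leave every already–simplicial cell (and hence all of its faces) untouched, subdivide each non-simplicial $k$-cell $T$ by coning an interior point over the inductively constructed simplicial subdivision of $\partial T$, observe that the new cells have recession cones that are faces of $\rec(T)$ and hence lie in $\Sigma$, and inherit local finiteness in $N_\Sigma$ because the refinement is supported inside the old cells. However, as written your coning recipe has a genuine gap for unbounded cells: for an unbounded polyhedron $T$, the joins of $p_T$ with the proper faces of $T$ do \emph{not} cover $T$ (already for $T=[0,\infty)$ with interior point $q$ one only gets $[0,q]$), and the naive convex hull $\Conv(\{p_T\}\cup F)$ of a point with an unbounded face need not even be closed. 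One must also include the cells "over the faces at infinity", i.e.\ cells of the form $\Conv(\{p_T\}\cup K_F)+C_F$ and $p_T+\rho$ for faces $\rho$ of $\rec(T)$; relatedly, your statement that the recession cone of a cone-over-a-face is generated by the face's recession cone together with $\rec(T)$ is not correct (it is just $\rec(F)$, resp.\ a face of $\rec(T)$). The paper sidesteps all of this by passing to the homogenization cone $\widetilde T=\Cone((p_0,1),\dots,(p_r,1),(v_1,0),\dots,(v_s,0))\subset N_\R\times\R_{\ge 0}$, coning the interior vector $v$ over the simplicial subdivision of $\partial\widetilde T$ (whose height-zero part is the canonical decomposition of $\rec(T)$), and intersecting with $N_\R\times\{1\}$; this automatically produces closed simplicial cells whose recession cones are faces of $\rec(T)$, which is exactly what (iii) needs.

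Your announced "main obstacle" — face compatibility of closures in $N_\Sigma$, equivariant choices of the points $p_T$ with respect to the projections $\pi_{\tau,\sigma}$, and the corollaries \ref{cor:3}, \ref{cor:4}, \ref{cor:intersection} — is not actually part of this lemma: both $\Pi$ and $\Pi'$ are polyhedral complexes in $N_\R$, the only role of $N_\Sigma$ here is the local finiteness hypothesis, and the passage to closures in $N_\Sigma$ (where that face calculus is indeed needed, via Lemma \ref{lemm:4} and the cited corollaries) is carried out separately in Proposition \ref{prop:1}. So that part of your plan is unnecessary work rather than an error, while the real point requiring care is the unbounded-cell coning discussed above.
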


\begin{proof}
The proof goes by induction on the dimension of $\Pi $. 
If the dimension is zero, then there is nothing to prove. 
If the dimension $k$ is non-zero, then by the induction hypothesis, we
may assume that there is a simplicial subdivision $\Pi '_{k-1}$ of the
$(k-1)$-skeleton $\Pi_{k-1}$ of $\Pi  $ such that all the simplicial
polyhedra of $\Pi $ of dimension less than $k$ are in $\Pi '_{k-1}$
and such that $\rec(T')\in \Sigma $ for every polyhedron $T'$ of
$\Pi'_{k-1}$.  
Let $T$ be a $k$-dimensional polyhedron of $\Pi $.  Recall that the
\emph{canonical decomposition} $\{T\}$ of the polyhedron $T$ is its
decomposition into faces.  

If $T$ is simplicial, then all of its faces are simplicial. 
Hence none of the faces of $T$ are subdivided in $\Pi'_{k-1}$ and the canonical decomposition $\{T\}$ of $T$ is \emph{compatible} with $\Pi '_{k-1}$ in the sense that $\Pi'_{k-1}\cup \{T\}$ is again a polyhedral complex.

If $T$ is not simplicial, then let
\begin{displaymath}
T=\Conv(p_{0},\dots,p_{r})+\Cone(v_{1},\dots,v_{s})
\end{displaymath}
be a  presentation of $T$. 
Note that the cone $\rec(T)=\Cone(v_{1},\dots,v_{s})\in \Sigma$ is
simplicial by assumption.  
We conclude that $T$ contains no line and hence we may assume that
$p_0,\dots, p_r$ are the extreme points of $T$ and
$v_1,\dots,v_s$ are the extreme directions of $T$, see \cite[Theorem
18.5]{rockafellar1970}.  
Such a presentation is called \emph{minimal}. 
Since $T$ is not simplicial, we must have $r \geq 1$. 
Note also that the relative interior of the polyhedron $T$ is the sum
of the relative {interiors of the polytope
  $\Conv(p_{0},\dots,p_{r})$ and of the cone
  $\Cone(v_{1},\dots,v_{s})$}.
Consider the cone
\begin{displaymath}
\widetilde T =
\Cone((p_{0},1),\dots,(p_{r},1),(v_{1},0),\dots,(v_{s},0))\subset N_{\R}\times \R_{\ge 0}.
\end{displaymath}
This cone satisfies
\begin{displaymath}
\widetilde T\cap \left(N_{\R}\times \{1\}\right)=T\times\{1\}.
\end{displaymath}
The subdivision $\Pi '_{k-1}$ induces a simplicial subdivision of the
boundary of $T$ that we denote $\Pi _{\partial T}$.  
This subdivision $\Pi _{\partial T}$
induces a conical simplicial subdivision $\Pi _{\partial \widetilde T}$ of the boundary of $\widetilde T$, whose restriction to $\widetilde T\cap \left(N_{\R}\times\{0\}\right)=\rec(T)\times \{0\}$ agrees with the canonical subdivision $\{\rec(T)\}$ of $\rec(T)$. 

Consider the vector
\begin{displaymath}
v= \frac{1}{r+1}\sum_{i=0}^{r}(p_{i},1)+\sum_{j=1}^{s}(v_{j},0)\in N_{\R}\times \R_{\ge 0}
\end{displaymath}
and the conical subdivision $\Pi _{\widetilde T}$ of $\widetilde T$ containing all cones of $\Pi _{\partial \widetilde T}$ and all cones of the form $\Cone(\Delta,v)$ for $\Delta \in \Pi _{\partial \widetilde T}$. 
{Since the coefficients in the definition of $v$ are all strictly
positive, $v$ belongs to the relative interior of $\widetilde T$.}
Since all cones of $\Pi_{\partial \widetilde T}$ are simplicial, 
we deduce that all cones of $\Pi _{\widetilde T}$ 
are simplicial. 
By intersecting with $N_{\R}\times \{1\}$, we obtain a simplicial decomposition $\Pi _{T}$ of $T$ that does not change any face of $T$ that was already simplicial and that is compatible with the decomposition $\Pi _{\partial T}$ of the boundary of $T$.  
Moreover, since the induced decomposition on $\widetilde T\cap
\left(N_{\R}\times \{0\}\right)$ is still the canonical decomposition
of $\rec(T)$, {we deduce that} for all $\Delta \in \Pi _{T}$ that
$\rec(\Delta )$ is a face of $\rec(T)$, so it belongs to $\Sigma $. 

Since the decomposition of $T$ is compatible with the previous
decomposition of the boundary of $T$, the decompositions for the
different $k$-dimensional polyhedra of $\Pi $ are compatible and glue
together to give a subdivision of $\Pi $ with the desired properties. 
\end{proof}

\begin{definition} 
If $\Sigma $ is simplicial, then we call a polyhedron $\Delta $ in  $N_{\Sigma }$ of sedentarity $\sigma $ \emph{simplicial} if $\Delta (\sigma )$ is simplicial and
$\rec(\Delta (\sigma ))\in \Sigma (\sigma )$. A \emph{simplicial complex} is a polyhedral complex consisting of simplicial polyhedra. 
\end{definition}

Note that, thanks to Lemma \ref{lemm:4}, the two conditions $\Delta
(\sigma )$ simplicial and $\rec(\Delta (\sigma ))\in \Sigma (\sigma )$
imply that, for every $\tau \in \Sigma $, $\Delta (\tau )$ is  simplicial.

\begin{definition} 
If $\Sigma $ is simplicial, then we call a polyhedron $\Delta $ in  $N_{\Sigma }$ of sedentarity $\sigma$ \emph{simplicial} if $\Delta (\sigma )$ is simplicial and $\rec(\Delta(\sigma))\in\Sigma (\sigma )$. 
A \emph{simplicial complex} is a polyhedral complex consisting of simplicial polyhedra. 
\end{definition}

Note that, thanks to Lemma \ref{lemm:4},  the conditions  $\rec(\Delta (\sigma ))\in \Sigma (\sigma )$ and $\Delta(\sigma )$ simplicial imply that, for every $\tau \in \Sigma $, $\Delta (\tau )$ is either
empty or simplicial.

We next see how to construct polyhedral complexes in $N_\Sigma$.

\begin{prop}\label{prop:1} 
Let $\Sigma $ be a fan and $\Pi$ a complete polyhedral complex  on
$N_\R$ such that $\overline T$ is constant towards the
boundary for each $T \in \Pi$ and such that $\Pi$ is locally
finite in $N_\Sigma$.  
Let $\Ccal$ be the set of all faces of all closures in $N_{\Sigma }$ of polyhedra of $\Pi $. 
Then $\Ccal$ is a polyhedral complex in $N_{\Sigma} $.  
If moreover $\Sigma $ and $\Pi$ are simplicial and for each $T\in \Pi $, $\rec(T)\in \Sigma $, then $\Ccal$ is simplicial. 
\end{prop}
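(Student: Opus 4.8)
The plan is to check the three defining properties of a polyhedral complex for $\Ccal$ --- local finiteness, closure under taking faces, and the face-intersection property --- and then, under the extra hypotheses, that every member of $\Ccal$ is simplicial. I would first dispose of the two easy properties. Local finiteness of $\Ccal$ is inherited from local finiteness of $\Pi$ in $N_\Sigma$ once one notes that each $\overline T$, $T\in\Pi$, has only finitely many faces: by Lemma~\ref{lemm:1} and Corollary~\ref{cor:4}, every face of $\overline T$ is the closure in $N_\Sigma$ of $\pi_\tau(H)$ for some cone $\tau\in\Sigma$ and some face $H$ of $T$ in $N_\R$, and both $\Sigma$ and the face poset of $T$ are finite. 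Closure under faces is Corollary~\ref{cor:3}: if $F\in\Ccal$ is a face of $\overline T$ and $F'$ is a face of $F$, then $F'$ is again a face of $\overline T$ because $\overline T$ is constant towards the boundary, so $F'\in\Ccal$.

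Before treating intersections I would record a normal form for elements of $\Ccal$. Given $F\in\Ccal$, a face of $\overline T$ with $T\in\Pi$, put $\tau\coloneqq\mathrm{sed}(F)$; then $F(\tau)$ is a face of $\overline T(\tau)$ (by definition of faces in $N_\Sigma$, using $\mathrm{sed}(F)=\tau$), and Corollary~\ref{cor:4} (applied to $\overline T$, which has sedentarity $(0)$ and is constant towards the boundary) provides a face $H$ of $T$ in $N_\R$ with $\rec(H)\cap\relint(\tau)\neq\emptyset$ and $F(\tau)=\pi_\tau(H)$. Moreover $\overline H$ is constant towards the boundary by Corollary~\ref{polyhedraonconstant towards the boundary and faces}, so in fact $\tau\subset\rec(H)$ by Remark~\ref{equivalent condition for faces}; and since $F=\overline{\overline H(\tau)}$, Corollary~\ref{cor:transitivity} yields $F(\nu)=\overline H(\nu)$ for every $\nu\in\Sigma$ with $\tau\prec\nu$, hence $F(\nu)=\pi_\nu(H)$ whenever $F(\nu)\neq\emptyset$ by Lemma~\ref{lemm:1}.

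The main step is the face-intersection property. Take $F_1,F_2\in\Ccal$, faces of $\overline{T_1},\overline{T_2}$ respectively, with normal forms $H_1,H_2$ and sedentarities $\tau_1,\tau_2$; each $F_i$ is constant towards the boundary by Corollary~\ref{face also constant towards the boundary}. If no cone of $\Sigma$ contains both $\tau_1$ and $\tau_2$, then $F_1\cap F_2=\emptyset$ by Corollary~\ref{cor:intersection}~\ref{item:disjoint}. Otherwise let $\rho$ be the minimal cone containing $\tau_1$ and $\tau_2$; by Corollary~\ref{cor:intersection}~\ref{item:9}, $F_1\cap F_2=\overline{F_1(\rho)\cap F_2(\rho)}$, and we may assume this is nonempty, which forces $F_i(\rho)\neq\emptyset$ and hence $F_i(\rho)=\pi_\rho(H_i)$ by the normal form. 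Next, $H_1\cap H_2$ is a common face of $H_1$ and $H_2$: it lies in the common face $T_1\cap T_2$ of $T_1$ and $T_2$ (as $\Pi$ is a complex), each $H_i\cap(T_1\cap T_2)$ is a face of $T_1\cap T_2$, and $H_1\cap H_2$ is the intersection of these two faces, hence a face of $T_1\cap T_2$ and so of each $H_i$. Applying Corollary~\ref{cor:intersection}~\ref{item:9} to $\overline{H_1},\overline{H_2}$ (both of sedentarity $(0)$ and constant towards the boundary) gives $\overline{H_1}\cap\overline{H_2}=\overline{H_1\cap H_2}$; intersecting with $N(\rho)$ and using $\overline{H_i}(\rho)=\pi_\rho(H_i)=F_i(\rho)$ yields $F_1(\rho)\cap F_2(\rho)=\overline{H_1\cap H_2}(\rho)$. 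This set is nonempty, so $\rec(H_1\cap H_2)\cap\relint(\rho)\neq\emptyset$ by Lemma~\ref{lemm:1}, and then Corollary~\ref{cor:4} (applied to $\overline{H_1}$ with cone $\rho$) identifies $\overline{H_1\cap H_2}(\rho)$ as a face of $\overline{H_1}(\rho)=F_1(\rho)$, and symmetrically as a face of $F_2(\rho)$. Since the closure in $N_\Sigma$ of a face of $F_i(\rho)$ is, by definition, a face of $F_i$, we conclude that $F_1\cap F_2=\overline{F_1(\rho)\cap F_2(\rho)}$ is a common face of $F_1$ and $F_2$. Hence $\Ccal$ is a polyhedral complex.

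For the simplicial statement I would take $F\in\Ccal$ in the above normal form, with $H$ a face of $T\in\Pi$ and $\tau=\mathrm{sed}(F)$. Then $H$ is simplicial as a face of the simplicial polyhedron $T$, and $\rec(H)$ is a face of $\rec(T)\in\Sigma$, hence $\rec(H)\in\Sigma$; also $\tau\subset\rec(H)$ gives $\tau=\tau\cap\rec(H)$, a face of $\rec(H)$, so $\rec(H)\in\Star_\Sigma(\tau)$. Since $\Sigma$ is simplicial, Lemma~\ref{lemm:4} shows $F(\tau)=\overline H(\tau)$ is simplicial, and $\rec(F(\tau))=\pi_\tau(\rec(H))\in\Sigma(\tau)$; thus $F$ is simplicial, and $\Ccal$ is a simplicial complex. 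I expect the main obstacle to be bookkeeping rather than any deep point: one has to pass systematically from a face of the compactified polyhedron $\overline T$ to an honest face $H$ of $T$ in $N_\R$, and then interchange the three operations ``take closure in $N_\Sigma$'', ``intersect'' and ``project to a boundary stratum $N(\rho)$'' --- each such interchange being licensed by one of Corollaries~\ref{cor:intersection}, \ref{cor:4} and \ref{cor:transitivity} precisely because all polyhedra involved are constant towards the boundary.
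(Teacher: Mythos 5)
Your proposal is correct and follows essentially the same route as the paper's proof: both reduce an element of $\Ccal$ to the normal form $\overline{\overline{H}(\tau)}$ for a face $H$ of a polyhedron of $\Pi$ with $\tau\subset\rec(H)$ via Corollary \ref{cor:4} and Remark \ref{equivalent condition for faces}, handle intersections through the minimal cone containing the two sedentarities using Corollaries \ref{cor:intersection} and \ref{cor:transitivity}, and obtain the simplicial statement from Lemma \ref{lemm:4}. The only (harmless) differences are that you make local finiteness and the recession-cone condition $\rec(F(\tau))\in\Sigma(\tau)$ explicit, and you conclude the common-face property directly from the definition of faces in $N_\Sigma$ rather than via the transitivity of faces in Corollary \ref{cor:3}.
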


\begin{proof}
By Corollary \ref{cor:3}, $\Ccal$ satisfies the first condition of Definition \ref{definition: polyhedral complex}. 
So we only need to check the second condition.  
Let $\Delta _{1}$ and $\Delta _{2}$ be elements of $\Ccal$ of sedentarity $\sigma _{1}$ and $\sigma _{2}$ respectively. 
By Corollary \ref{cor:4} and Remark \ref{equivalent condition for faces}, there are unique polyhedra $T _{1}$ and $T _{2}$ in $\Pi $ with $\sigma_{i}\subset \rec(T_{i})$, such that
\begin{displaymath}
\Delta _{i}(\sigma _{i})=\overline{T _{i}}(\sigma _{i}),\text{ so }
\Delta _{i}=\overline{\overline{T _{i}}(\sigma _{i})} \qquad i=1,2.
\end{displaymath}
Let $\tau $ be the minimal cone such that $\sigma _{1}\prec \tau $ and $\sigma _{2}\prec \tau $. 
If such a cone does not exist, then $\Delta _{1}\cap \Delta _{2}=\emptyset$ by Corollary \ref{cor:intersection}~\ref{item:disjoint}. 
Assume that such a cone exists.
By Corollary \ref{cor:intersection}~\ref{item:9} and Corollary
    \ref{cor:transitivity}
\begin{displaymath}
      \Delta _{1}\cap \Delta _{2}=\overline{\Delta _{1}(\tau )\cap \Delta _{2}(\tau )}=
      \overline{\overline{\overline{T _{1}}(\sigma _{1})}(\tau
        )\cap
        \overline{\overline{T _{2}}(\sigma _{2})}(\tau )}=
      \overline{\overline{T _{1}}(\tau
        )}\cap
      \overline{\overline{T _{2}}(\tau )}.
\end{displaymath}
By Corollary \ref{cor:3}, $\Ccal$ satisfies the first condition of  Definition \ref{definition: polyhedral complex}. 
So we only need to check the second condition.  
Let $\Delta _{1}$ and $\Delta _{2}$ be elements of $\Ccal$ of sedentarity $\sigma _{1}$ and $\sigma _{2}$ respectively. 
By Corollary \ref{cor:4}, there are unique polyhedra $T _{1}$ and $T _{2}$ in $\Pi $ with $\sigma_{i}\subset \rec(T_{i})$, such that
\begin{displaymath}
\Delta _{i}(\sigma _{i})=\overline{T _{i}}(\sigma _{i}),\text{ so }\Delta _{i}=\overline{\overline{T _{i}}(\sigma _{i})} \qquad i=1,2.
\end{displaymath}
Let $\tau $ be the minimal cone such that $\sigma _{1}\prec \tau $ and $\sigma _{2}\prec \tau $. 
If such a cone does not exist, then $\Delta _{1}\cap \Delta _{2}=\emptyset$ by Corollary \ref{cor:intersection}~\ref{item:disjoint}. 
Assume that such a cone exists. 
By Corollary \ref{cor:intersection}~\ref{item:9} and Corollary
 \ref{cor:transitivity},
\begin{displaymath}
\Delta _{1}\cap \Delta _{2}
=\overline{\Delta _{1}(\tau )\cap \Delta _{2}(\tau )}
=\overline{\overline{\overline{T _{1}}(\sigma _{1})}(\tau)\cap\overline{\overline{T _{2}}(\sigma _{2})}(\tau )}
=\overline{\overline{T _{1}}(\tau)}\cap\overline{\overline{T _{2}}(\tau )}.
\end{displaymath}
is a polyhedron of sedentarity $\tau $.
Applying again Corollary \ref{cor:intersection}~\ref{item:9} to the polyhedra $\overline{T_{i}}$, $i=1,2$ that are both of sedentarity $0$ and to the polyhedra $\overline{\overline{T_{i}}(\tau )}$, of sedentarity $\tau $, we obtain,  
\begin{displaymath}\label{eq:8}
\overline{\overline{T_{1}}(\tau)}\cap\overline{\overline{T_{2}}(\tau )}
=\overline{\overline{T_{1}}(\tau )\cap \overline{T_{2}}(\tau )}
=\overline{\overline{T_{1}\cap T_{2}}(\tau )}.
\end{displaymath}
Since $T_{1}\cap T_{2}$ is a face of $T_{i}$, by Corollary
\ref{cor:4}, $\Delta _{1}\cap \Delta _{2}$ is a face of
$\overline{\overline{T_{1}\cap T_{2}}(\tau )}$ and hence of
$\overline{T_{i}(\tau )}$, $i=1,2$, by transitivity of faces given in
Corollary \ref{cor:3}. Since $\overline{T _{i}(\tau )}$ is a face of
$\Delta _{i}=\overline{T _{i}(\sigma _{i})}$, $i=1,2$, Corollary
\ref{cor:3} implies that $\Delta _{1}\cap \Delta _{2}$ is a common
face of $\Delta _{1}$ and of $\Delta _{2}$.

    If $\Sigma $ and $\Pi $ are simplicial, and each $T\in \Pi $
    satisfies that $\rec(T)\in \Sigma $, Lemma
    \ref{lemm:4} implies that $\Ccal$ is also a simplicial complex. 
  \end{proof}

\begin{thm} \label{thm:1}
Let $\Sigma $ be a simplicial fan,  $U\subset N_{\Sigma }$ an open subset, and $\Tcal=(\Lambda _j)_{j \in J}$
a locally finite family of polyhedra in $U$ that are constant towards the boundary.
Then there exists a simplicial  complex $\Ccal$ on $U$, such that $\rec(\Delta(\sigma))\in\Sigma (\sigma )$ for all $\Delta \in \Ccal$ and all $\sigma \in\Sigma $, and such that each polyhedron $\Lambda \in  \Tcal$ is a locally finite union of polyhedra in $\Ccal$. 
\end{thm}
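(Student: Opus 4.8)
The plan is to reduce the problem on $N_\Sigma$ to a problem on $N_\R$ where we can invoke the triangulation machinery (Lemma \ref{lemm:3}) and then push the result forward via Proposition \ref{prop:1}. First I would describe each $\Lambda_j\in\Tcal$ explicitly: if $\Lambda_j$ has sedentarity $\sigma_j$, then by definition $\Lambda_j=\overline{\Lambda_j(\sigma_j)}$ where $\Lambda_j(\sigma_j)$ is a polyhedron in $N(\sigma_j)$ that, by Proposition \ref{recession-cone-boundary-condition} and Remark \ref{rem:2}, is compatible with the fan $\Sigma(\sigma_j)$, i.e. $\rec(\Lambda_j(\sigma_j))\cap|\Sigma(\sigma_j)|$ is a union of cones of $\Sigma(\sigma_j)$. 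The key observation is that, because $\Lambda_j$ is constant towards the boundary, the finite part $\Lambda_j(\sigma_j)$ already records all the combinatorial data of $\Lambda_j$: each face $\Lambda_j(\tau)$ for $\tau\succ\sigma_j$ is, by Corollary \ref{cor:4}, the image under $\pi_{\tau,\sigma_j}$ of a corresponding face of $\Lambda_j(\sigma_j)$. So it suffices to work, for each sedentarity $\sigma$ appearing, in the ambient space $N(\sigma)$ with its fan $\Sigma(\sigma)$.

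Next I would set up a single ambient complex on $N_\R$. After pulling everything back to sedentarity $0$, the issue is that a polyhedron $\Lambda_j$ of sedentarity $\sigma_j\neq 0$ is the closure of something living in a quotient $N(\sigma_j)$, not in $N_\R$ itself; its preimage $\pi_{\sigma_j}^{-1}(\Lambda_j(\sigma_j))$ is an unbounded polyhedron in $N_\R$ with recession cone containing $\langle\sigma_j\rangle$, and by Lemma \ref{lemm:1}(ii) its closure in $N_\Sigma$ has $\Lambda_j$ as (the closure of) its sedentarity-$\sigma_j$ part. The strategy is therefore: (1) replace each $\Lambda_j$ by the polyhedron $\widehat\Lambda_j\coloneqq\overline{\pi_{\sigma_j}^{-1}(\Lambda_j(\sigma_j))}\cap N_\R$ in $N_\R$; check that $\{\widehat\Lambda_j\}$ is still locally finite in $N_\Sigma$ (using local finiteness of $\Tcal$ and the description of the topology of $N_\Sigma$), and that $\overline{\widehat\Lambda_j}$ is constant towards the boundary with $\overline{\widehat\Lambda_j}(\sigma_j)\supset\Lambda_j(\sigma_j)$ — in fact one may arrange $\overline{\widehat\Lambda_j}(\sigma_j)=\Lambda_j(\sigma_j)$ after intersecting with enough of the starting polyhedra, or simply carry the extra pieces along. (2) Choose a complete polyhedral complex $\Pi_0$ on $N_\R$ that contains all the $\widehat\Lambda_j$ and all cones of $\Sigma$ as a subcomplex; this is possible by a standard argument since the $\widehat\Lambda_j$ are locally finite and all have recession cones that are unions of cones of $\Sigma$. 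One must also arrange $\rec(T)\in\Sigma$ for every $T\in\Pi_0$ — by refining with the cones of $\Sigma$ at infinity — and that each $\overline T$ is constant towards the boundary, which holds precisely because $\rec(T)$ is then a union of cones of $\Sigma$ (Proposition \ref{recession-cone-boundary-condition}). (3) Apply Lemma \ref{lemm:3} to $\Pi_0$ to get a simplicial refinement $\Pi$, locally finite in $N_\Sigma$, with $\rec(T')\in\Sigma$ for all $T'\in\Pi$, and with every already-simplicial face (in particular the cones of $\Sigma$, which are simplicial) preserved.

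Then I would apply Proposition \ref{prop:1} to $\Pi$: since $\Sigma$ and $\Pi$ are simplicial, $\overline T$ is constant towards the boundary for all $T\in\Pi$, $\Pi$ is locally finite in $N_\Sigma$, and $\rec(T)\in\Sigma$ for all $T$, the set $\Ccal$ of all faces of all closures $\overline T$, $T\in\Pi$, is a simplicial complex in $N_\Sigma$. By Lemma \ref{lemm:4}, for every $\Delta\in\Ccal$ of sedentarity $\sigma$ and every $\tau\succeq\sigma$ the polyhedron $\Delta(\tau)$ is simplicial with $\rec(\Delta(\tau))\in\Sigma(\tau)$, which gives the condition $\rec(\Delta(\sigma))\in\Sigma(\sigma)$ for all $\sigma\in\Sigma$ claimed in the statement. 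It remains to see that each $\Lambda\in\Tcal$ is a locally finite union of polyhedra from $\Ccal$: by construction $\widehat\Lambda$ (for the appropriate sedentarity label) is a union of cells of $\Pi$ in $N_\R$, hence $\overline{\widehat\Lambda}$ is a union of closures of those cells, i.e. of elements of $\Ccal$ together with their faces; intersecting with the orbit $N(\sigma_\Lambda)$ and using Corollary \ref{cor:4} and Corollary \ref{cor:transitivity} identifies $\Lambda=\overline{\Lambda(\sigma_\Lambda)}$ with a locally finite union of the corresponding faces $\overline{\Delta}(\sigma_\Lambda)$, $\Delta\in\Ccal$. Finally $U$ is open, so restricting $\Ccal$ to those cells contained in $U$ (a subcomplex, since $\Ccal$ is closed under faces) gives a simplicial complex on $U$; local finiteness of $\Tcal$ in $U$ ensures the union decompositions of the $\Lambda_j$ remain locally finite in $U$.

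The main obstacle I expect is step (2): producing a completion $\Pi_0$ of the given locally finite family $\{\widehat\Lambda_j\}$ to a \emph{complete} polyhedral complex on $N_\R$ that simultaneously (a) contains $\Sigma$ as a subcomplex, (b) has every recession cone a union of cones of $\Sigma$ (equivalently, is constant towards the boundary after closure), and (c) stays locally finite in the compactification $N_\Sigma$, not merely in $N_\R$. The recession-cone bookkeeping is the delicate point — one cannot simply take a naive common refinement, because intersecting polyhedra whose recession cones are unions of $\Sigma$-cones with arbitrary hyperplanes can destroy property (b); the fix is to first subdivide so that all recession directions lie along $\Sigma$ and only then intersect, working orbit-by-orbit and using Lemma \ref{lemm:2} to control the behaviour near the boundary. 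Everything after that is a bookkeeping exercise gluing together the already-established corollaries about faces, intersections, and transitivity for polyhedra constant towards the boundary.
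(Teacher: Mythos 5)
There is a genuine gap, in fact two, and they sit exactly at the points you flag as "to be checked" or "standard."

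First, your reduction to sedentarity $0$ via the full preimages $\widehat\Lambda_j=\pi_{\sigma_j}^{-1}(\Lambda_j(\sigma_j))$ does not work: this family is in general not locally finite, not even in $N_\R$, and it is not contained in $U$. For example, take $\Sigma$ generated by the ray $\sigma=\R_{\ge 0}e_1$ in $\R^2$, $U=(0,\infty]\times(-1,1)$, and $\Lambda_j=\{\infty\}\times\{1-1/j\}$; the family $(\Lambda_j)_j$ is locally finite in $U$, but the preimages are the horizontal lines $\R\times\{1-1/j\}$, which accumulate along $\R\times\{1\}$, so every neighbourhood of a point of that line meets infinitely many of them. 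Local finiteness of $\Tcal$ is only a statement inside $U$, and taking full fibres exports the polyhedra to regions where you have no control. This is precisely why the paper first intersects each $\Lambda_j$ with a fixed ambient complex so that condition $\rec(\Lambda(\sigma))\in\Sigma(\sigma)$ holds and each $\Lambda_j$ becomes compact (Lemma \ref{lemm:5}), and then replaces a boundary polyhedron not by its full preimage but by a \emph{truncated} thickening $\Th(\Lambda)$ chosen, using compactness, to lie inside $U$; only this keeps the family locally finite and inside $U$.

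Second, your step (2) — "choose a complete polyhedral complex $\Pi_0$ on $N_\R$ that contains all the $\widehat\Lambda_j$ \dots by a standard argument" — is not standard; it is the heart of the theorem. The cited result of Coles--Friedenberg only produces a complete complex, locally finite in $N_\Sigma$, with recession cones in $\Sigma$; it gives no way to force infinitely many prescribed polyhedra to be unions of its cells. A naive common refinement with an infinite (even locally finite) family can destroy local finiteness and the recession-cone condition simultaneously. The paper's proof spends most of its length exactly here: an inductive sequence of refinements $\Pi_k$ in which the $(k+1)$-st polyhedron is cut out by the hypograph of a carefully chosen minimal concave function (built from a piecewise linear function $f_\sigma$ on $\Sigma$ so that all new recession cones stay in $\Sigma$), with the crucial property that only cells meeting $\Lambda_{k+1}$ are subdivided — and simpliciality of $\Pi_k$ is used in an essential way to guarantee that untouched cells are genuinely untouched. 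Combined with compactness of the $\Lambda_j$, this makes the subdivisions stabilize locally so that a limit complex exists. Your proposed fix ("first subdivide so that all recession directions lie along $\Sigma$ and only then intersect") addresses neither the stabilization/local finiteness problem for infinitely many polyhedra nor the need to leave distant cells untouched, so as written the argument does not go through. The remaining bookkeeping (Lemma \ref{lemm:3}, Proposition \ref{prop:1}, Corollary \ref{cor:4}) is used in the paper as you indicate, but only after these two issues are resolved.
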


\begin{proof}
By a result of Coles and Friedenberg \cite[Theorem 1.1]{ColesFriedenberg23:localfincomp}, there exists a
complete polyhedral complex $\Pi$ of $N_\R$ such that $\rec(T)
\in \Sigma$ for all $T \in \Pi$ and such that $\Pi$ is locally
finite in $N_\Sigma$.  
Using Lemma \ref{lemm:3}, we may assume that $\Pi$ is simplicial and
that $\rec(T)\in \Sigma$ for all $T \in \Pi $.  
The latter implies that every polyhedron $T\in \Pi$ is pointed,
i.e. it admits a face that is a vertex.
By Proposition \ref{prop:1}, the set $\Ccal_{0}$  of all
  closures of polyhedra of $\Pi $ on $N_{\Sigma }$ is a simplicial polyhedral
complex in $N_{\Sigma }$.

In a first reduction step, we show that we can assume that all polyhedra $\Lambda \in \Tcal$ 
satisfy the condition 
\begin{equation}\label{eq:2}
\rec(\Lambda (\sigma ))\in \Sigma (\sigma),\ \forall \sigma \in \Sigma. 
\end{equation}
Note that condition \eqref{eq:2} is a bit stronger than being constant towards the boundary, see Proposition \ref{recession-cone-boundary-condition}. 
Moreover, \eqref{eq:2} implies that the polyhedron $\Lambda (\sigma)$
is also \emph{pointed}.

We now consider all the intersections between
polyhedra $\Lambda \in \Tcal$  and polyhedra $\Delta \in \Ccal_{0}$.  
These intersections define a new family of polyhedra $\Tcal'$ in $U$.
The family $\Tcal'$ is locally finite in $U$ and such that each
$\Lambda \in \Tcal$ is a locally finite union of elements of  $\Tcal'$.  
Moreover,  we claim that each polyhedron $\Lambda ' \in \Tcal'$
satisfies condition \eqref{eq:2}.  
Indeed, we have $\Lambda '=\Lambda  \cap \Delta $ for
some $\Lambda \in \Tcal$ and $\Delta \in \Ccal$. 
Assume that $\Lambda ' (\sigma)\neq \emptyset$ and let $\rho $ be
  the sedentarity of $\Lambda '$.  Then Lemma \ref{lemm:1} and Corollary \ref{cor:intersection}~\ref{item:9} yield
\begin{equation}\label{recession-formula}
  \rec(\Lambda ' (\sigma))= \pi_{\sigma,\rho}(\rec(\Lambda '(\rho)))
  = \pi_{\sigma,\rho}(\rec(\Delta(\rho ))\cap \rec(\Lambda (\rho ))).
\end{equation}
Since $\Lambda $ is constant towards the boundary, Corollary
\ref{polyhedraonconstant towards the boundary and orbit} and Proposition
\ref{recession-cone-boundary-condition} yield that $\rec(\Lambda (\rho
))\cap |\Sigma(\rho )|$ is a union of cones in $\Sigma(\rho )$.  
Using that we have $\rec(\Delta(\rho )) \in \Sigma(\rho )$ by
assumption, we conclude that $\rec(\Delta(\rho ))\cap \rec(\Lambda  (\rho ))\in
\Sigma(\rho )$.  
Hence $\rec(\Lambda '(\sigma))\in \Sigma(\sigma)$.
This proves the first reduction step.

Note that, after the first reduction step, we assume that every
polyhedron in $\Tcal$ satisfies the condition \eqref{eq:2}. By Lemma
\ref{lemm:5} this also implies that each polyhedron $\Lambda \in \Tcal$ is compact.  

By Proposition \ref{prop:1}, any 
  complete polyhedral complex $\Pi '$ in $N_{\R}$ that is simplicial and
  satisfies $\rec(T)\in \Sigma $ for all $T \in \Pi '$ can
  be canonically extended to a simplicial polyhedral complex in
  $N_{\Sigma }$. Thus it is natural to restrict our attention to
  $N_{\R}$ and extend the polyhedral complex to $N_{\Sigma }$ at a
  latter stage. With this in mind, the next reduction step is to show that we can assume that all
$\Lambda \in \Tcal$ are polyhedra of sedentarity $0$.  
Indeed, for each $\Lambda \in \Tcal $ whose sedentarity is not zero,
we claim that one can choose a polyhedron $\Lambda '\subset U$ of sedentarity
$0$ which is constant towards the boundary, such that $\Lambda $ is a face of
$\Lambda '$  and such that the family $\Tcal'$ of these polyhedra $\Lambda '$ is
still locally finite.  
The last claim can be established as follows.  
We number the cones of $\Sigma $ that are different from the cone
$\{0\}$ and we change the family $\Tcal$ one cone at each time to
obtain $\Tcal'$.   
Let $\sigma =\sigma_{i} \in \Sigma $ be a cone. 
We can assume that $\Tcal$ does not contain any polyhedra
of sedentarity $\sigma _{k}$ with $k<i$.  
Let $\Lambda \in \Tcal$ be a polyhedron of sedentarity $\sigma $.  
By \eqref{eq:2}, we have $\tau'=\rec(\Lambda (\sigma))\in \Sigma (\sigma )$.  
Let $\tau \in \Sigma $ be the cone that gives rise to $\tau '$. 
Then $\sigma \prec \tau $.  
Let $\Sigma _{\tau }$ be the fan consisting of $\tau $ and its faces. 
Then $N_{\Sigma_{\tau }}$ is an open subset of $N_{\Sigma }$. 
Moreover, since the fan is simplicial, there is a homeomorphism
\begin{displaymath}
N_{\Sigma_{\tau }}\simeq (\Rsup)^{r}\times \R^{d-r}
\end{displaymath}
which is an $\R$-linear isomorphism on each stratum such that we can identify 
\begin{displaymath}
\overline{N(\sigma )}=\{(x_{1},\dots,x_{d})\in N_{\Sigma_{\tau}}\mid x_{i}=\infty,\ i=1,\dots,s\}
\end{displaymath}
where $s\le r$ and $\Rsup=\R\cup \{\infty\}$. 
Since $\Lambda \subset \overline{N(\sigma )}$ is compact, there exists
$M>0$ such that the thickening of $\Lambda $  
\begin{displaymath}
\Th(\Lambda )=\left \{(x_{1},\dots,x_{d})\in N_{\Sigma_{\tau}}\middle |
\begin{aligned}
      &x_{i}\ge M,\ i=1,\dots,s\\
      &(\infty,\dots,\infty,x_{s+1},\dots,x_{d})\in \Lambda 
\end{aligned}
\right\}.
\end{displaymath}
is contained in $U$. Note that, since $\Lambda $ is constant towards the
boundary,  $\Th(\Lambda )$ is also constant towards the boundary. 
Even stronger, the polyhedron $\Th(\Lambda )$ satisfies still condition \eqref{eq:2} which is clear by the explicit description above of the thickening.

Since the family of polyhedra $\Lambda \in\Tcal$  of sedentarity $\sigma $ is
locally finite in $U$, the same is true for the family $\Tcal'$
consisting of the polyhedra $\Th(\Lambda )$.  
Moreover, the  family $\Tcal''$ consisting of the polyhedra of $\Tcal$
whose sedentarity is not $\sigma $ is also locally finite.  
Thus the family $\Tcal'\cup \Tcal''$ is locally finite, does not
contain any polyhedron  of sedentarity $\sigma _{k}$ for $k\le i$ and
each polyhedron of $\Tcal$ is a face of a polyhedron in this family.  
Note that if we can show the theorem for the familiy $\Tcal' \cup
\Tcal''$, then an elementary set-theoretic argument together with
Corollary \ref{cor:3} shows that also every face of $\Tcal' \cup
\Tcal''$ is a locally finite union of polyhedra in $\Ccal$ and hence
we may replace $\Tcal$ by $\Tcal' \cup \Tcal''$.
Since the number of cones of $\Sigma $ is finite, applying this
process to each cone,  we can assume that all the polyhedra of $\Tcal$
have sedentarity $0$.   
This finishes the second reduction step.

Following our strategy of first concentrate on $N_{\R}$, we want
  to construct a polyhedral complex $\Pi '$ in $N_{\R}$ such that, for
each $\Lambda \in \Tcal$, $\Lambda (0)$ is a finite union of polyhedra in $\Pi '$
and $\rec(T)\in \Sigma $ for all $T \in \Pi '$.
We will construct $\Pi '$ inductively.

The family  $\Tcal=(\Lambda _j)_{j\in J}$ is at most countable as it
is locally finite and $U$ is locally compact with a countable basis
\cite[Remark 3.1.2]{burgos-gubler-jell-kuennemann1}. 
Hence we may asssume that $J=\N_{\geq 1}$.
The case of a finite $J$ can easily be treated along the same lines as
the countable case.
We still denote by $\Pi$ the complete simplicial polyhedral complex of
$N_\R$ chosen above such that $\rec(T) \in \Sigma$ for all
$T \in \Pi$ and such that $\Pi$ is locally finite in $N_\Sigma$. 
We start the inductive construction of a sequence $(\Pi _{k})_{k\in
  \N}$ of polyhedral complexes of $N_\R$ with $\Pi _{0}\coloneqq \Pi$.  
Assume that we have constructed a simplicial refinement $\Pi _{k}$  of
$\Pi $, satisfying that $\rec(T)\in \Sigma $ for all $T \in
\Pi _{k}$ and  such that each $\Lambda _{i}(0)$ with  $i\le k$ is a
finite union of polyhedra in $\Pi _{k}$.  
We want to find a simplicial refinement $\Pi _{k+1}$ of $\Pi _{k}$
with $\rec(T)\in \Sigma$ for all $T \in \Pi_{k+1}$ such that
$\Lambda _{k+1}(0)$ is a finite union of polyhedra in
$\Pi _{k+1}$ and such that only polyhedra of $\Pi _{k}$ whose closures 
meet  $\Lambda _{k+1}$ are subdivided. 
Note that, by Corollary
  \ref{cor:intersection}, the closure of $T$ meets $\Lambda _{k+1}$ if and only if 
$T$ meets $\Lambda _{k+1}(0)$.

Let $\sigma \coloneqq \rec(\Lambda _{k+1}(0))$. 
Since $\Sigma $ is simplicial, we can find a piecewise linear function
$f_{\sigma }$ on $\Sigma $ that has the value zero on $\sigma $ and is
negative outside $\sigma$.
Let $T\in \Pi _{k}$ with $T \cap \Lambda  _{k+1}(0)\not =
  \emptyset$. Note that $T\cap \Lambda  _{k+1}(0)=T\cap \Lambda  _{k+1}$. 
We want to find a decomposition of $T$ in which
$T \cap \Lambda _{k+1}$ 
is a polyhedron and  each face of $T $ 
that does not meet with $\Lambda _{k+1}$ remains untouched.

Let  $\tau  \coloneqq \rec(T)\in \Sigma $. 
Since $\rec(T)$ and $\rec(\Lambda _{k+1}(0))$ belong to $\Sigma $ and $T\cap \Lambda  _{k+1}\not =\emptyset$, we have that 
\[
\rec(T\cap \Lambda _{k+1})=\rec(T) \cap \rec(\Lambda _{k+1}(0)) \in \Sigma.
\] 
Thus, the minimal  presentations of $T$ and of $T \cap \Lambda _{k+1}$ are 
\begin{align*}
T
&=\Conv(p_{0},\dots,p_{r})+\Cone(v_{1},\dots,v_{s})\\
T\cap  \Lambda  _{k+1}&= \Conv(q_{0},\dots,q_{r'})+\Cone(v_{1},\dots,v_{s'})
\end{align*}
with $s'\le s$. 
Note that $\tau = \rec(T)=\Cone(v_{1},\dots,v_{s}) \in \Sigma$
and that $T$ is simplicial, hence $p_1-p_0, \dots,
p_r-p_0,v_1,\dots,v_s$ are $\R$-linearly independent.

Let $\varphi\colon T \to \R$ be the minimal concave function such that
\begin{displaymath}
\varphi(q_{i})\ge 0 \,\,(i=1,\dots,r'), \qquad \varphi(p_{j})\ge -1
\ (j=0,\dots,r)
  \end{displaymath}
and whose slope at each vector $v_{i}$ is bigger or equal than the
slope of $f_{\sigma }|_{\tau }$ at the same vector. 

First we check that $T \cap \Lambda _{k+1} = \{x\in T \mid \varphi(x)=0\}$. 
Indeed, the constant function $0$ satisfies the above constraints, so $\varphi\le 0$. 
This implies in particular that $\varphi(q_{i})=0$ for $i=0,\dots r'$
and that the slope of $\varphi$ at the vectors $v_{j}$ for
$j=1,\dots,s'$ is also $0$. 
Therefore for all $x\in T \cap \Lambda _{k+1}$, we deduce that
$\varphi(x)=0$. Let now $x\in
T \setminus \Lambda _{k+1}$. 
Then there is an affine function $g$ such that $g(x)<0$ and {$g(y)\ge
  0$}  for $y\in T\cap \Lambda _{k+1}$.  
Then there is a number $\varepsilon >0$ such that the function
$\min(0,\varepsilon g)$  
satisfies the previous constraints. 
Hence $\varphi(x)\le \varepsilon g(x)<0$.

Since $\varphi$ is concave and piecewise affine, it induces a polyhedral decomposition of $T$ as follows.
Let $H_{\varphi}\subset N_{\R}\times \R$ be the hypograph of $\varphi$:
  \begin{displaymath}
    H_{\varphi}=\{(x,t)\in N_{\R}\times \R\mid x\in T,\ t\le \varphi(x)\}.
  \end{displaymath}
This is a polyhedron in $N_{\R}\times \R$.
Denote by $\pi \colon H_{\varphi}\to N_{\R}$ the projection. 
Then the subdivision of $T$ associated with $\varphi$ consist of the
sets $\pi (F)$ for $F$ a proper face of $H_{\varphi}$.

We next check that  every polyhedron of the decomposition of
  $T$ associated with $\varphi$ has recession cone in $\Sigma
  $. 
By construction of the function $\varphi$, we have that
\begin{displaymath}
  H_{\varphi}=K+H_{f_{\sigma |_{\tau }}},
\end{displaymath}
where
\begin{displaymath}
  K=\Conv((q_{i},0),\  i=0,\dots,r', (p_{j},-1),\
  j=0,\dots,r)
\end{displaymath}
and $H_{f_{\sigma }|_{\tau }}$ is the hypograph of $f_{\sigma }|_{\tau
}$. 
Since the function $f_{\sigma }|_{\tau }$ is a linear function on $\tau$, we deduce that
\begin{displaymath}
  H_{f_{\sigma }|_{\tau }}=\Cone((0,-1),(v_{i},f_{\sigma }(v_{i})),i=1,\dots,s)
\end{displaymath}
is a simplicial cone and $\rec(H_{\varphi})=H_{f_{\sigma }|_{\tau}}$. 
Therefore, if $F$ is a proper face of $H_{\varphi}$, then $\rec(F)$ is a face of $H_{f_{\sigma }|_{\tau }}$. 
By the explicit description of $H_{f_{\sigma }|_{\tau }}$, we see that
$\rec(\pi(F))=\pi (\rec(F))$ is a face of $\tau $, hence belongs to
$\Sigma $.

Since the function $f_\sigma$ depends only on $\Lambda _{k+1}$ and is the same for all $T$, this construction is compatible with taking faces in the following sense. 
If $T'$ is a face of $T$, then the
decomposition of $T'$ obtained by the previous method is just
the restriction of the decomposition of $T$ to this face because
the function $\varphi'\colon T'\to \R$ obtained as  before is
the restriction of $\varphi$ to $T'$.

Moreover, if a face $T'$ of $T$ does not meet $\Lambda _{k+1}$, then
the function $\varphi$ restricted to $T'$ is the minimal concave
function that has value $\ge -1$ at the vertices of $T'$  and slope
bigger or equal than the slope of $f_{\sigma }$ at every edge of
$\rec(T')$.  
Since $T'$ is simplicial, there is an affine function that
satisfies these conditions, so $\varphi$ restricted to $T'$ is
affine and $T'$ is not subdivided (this is a
  point where we use that the polyhedral complex $\Pi _{k}$
is simplicial in an essential way).  

We have obtained a refinement $\Pi'_{k+1}$ of $\Pi _{k}$ with
$\rec(T')\in \Sigma$ for all $T'\in\Pi'_{k+1}$ that only
subdivides the polyhedra that meet
$\Lambda _{k+1}$ and such that $\Lambda _{k+1}(0)$ is a finite union of polyhedra of
$\Pi'_{k+1}$. 
Let $\Pi _{k+1}$ be a simplicial refinement of $\Pi '_{k+1}$ obtained
by applying Lemma \ref{lemm:3}. 

Once we have our sequence of polyhedral complexes $\{\Pi
  _{k}\}$ in $N_{\R}$, let $\{\Ccal_{k}\}$  be the induced sequence of
polyhedral complexes in $N_{\Sigma }$ obtained using Proposition \ref{prop:1}.
Finally we construct a simplicial polyhedral complex $\Ccal$ on
$U\subset N_\Sigma$ so that each polyhedron in $\Tcal$ is a  finite
union of polyhedra of $\Ccal$ and $\rec(\Delta(\sigma
))\in\Sigma (\sigma )$ for all $\Delta \in \Ccal$ and all $\sigma \in
\Sigma $.

Let $\Lambda _j$ be any polyhedron from $\Tcal$. Since $\Lambda _j$ is compact and
the family $\Tcal$ is locally finite, there is $k_0 \geq j$ such that
$\Lambda _k \cap \Lambda _j = \emptyset$ for all $k > k_0$.  
Let $k\ge k_{0}$ and let $\Ccal|_{T_{j}}$ be the polyhedral
  complex consisting of all the polyhedra of $\Ccal_{k}$ contained in
  $\Lambda _{j}$. 
Since the simplicial subdivisions $\Pi_k$ of $\Pi_{k_0}$ do not modify
the subdivision $\Pi_{k_0}|_{\Lambda _j}$ of $\Lambda _j$ and a polyhedron
  of $\Ccal_{k}$ meets $\Lambda _{j}$ if and only if it is a
  face of a polyhedron of $\Pi_{k}$ that meets $\Lambda _{j}$, we see that the
 polyhedral complex $\Ccal|_{\Lambda _{j}}$ in $\Lambda _{j}$ is
independent of the choice of $k$. 
Therefore, for any $i,j$, the  polyhedral complexes  $\Ccal|_{\Lambda _i}$
  and $\Ccal|_{\Lambda _j}$ agree on $\Lambda  _i \cap \Lambda _j$ as
  the involved polyhedra 
  belong to the  polyhedral complex $\Ccal_k$ for
  $k$ sufficiently large.

Therefore, there is a locally finite simplicial complex $\Ccal$ in $U$
with support $\bigcup_{i \in \N_{\geq 0}} \Lambda _{i}$ such that
every $\Lambda  \in \Tcal$ is a  finite union of polyhedra of $\Ccal$.
By construction, for every polyhedron $\Delta\in \Ccal$
of sedentarity $(0)$, the recession cone $\rec(\Delta(0))$ belongs to $\Sigma$. As
in the first reduction step, it follows that $\rec(\Delta(\sigma
))\in\Sigma (\sigma )$ for all $\Delta \in \Ccal$ and all $\sigma \in
\Sigma $.
\end{proof}

\begin{cor}\label{corollary-refinement-simplicial-decomposition}
Let $\Sigma $ be a simplicial fan. 
Let $U\subset N_{\Sigma }$ be an open subset. 
Then there is a locally finite simplicial decomposition of $U$ such
that, for each $\sigma \in \Sigma $,  the recession cones of the
polyhedra of sedentarity $\sigma$  belong to $\Sigma (\sigma )$. 
\end{cor}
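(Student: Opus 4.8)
The plan is to deduce Corollary~\ref{corollary-refinement-simplicial-decomposition} directly from Theorem~\ref{thm:1} by applying it to an essentially empty family of polyhedra, or more precisely to the family consisting of the single polyhedron $N_\Sigma$ (equivalently, $U$ itself), together with the observation that a simplicial complex whose prescribed polyhedra are decomposed is in particular a decomposition of $U$ when we start from a complete polyhedral complex of $N_\R$.

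First I would set $\Tcal \coloneqq \emptyset$ (or, if one prefers a nonempty input, $\Tcal \coloneqq \{N_\Sigma\}$, noting that $N_\Sigma$ is trivially constant towards the boundary and is a polyhedron in $N_\Sigma$ of sedentarity $(0)$). Applying Theorem~\ref{thm:1} to $\Sigma$, $U$, and this family produces a simplicial complex $\Ccal$ on $U$ with $\rec(\Delta(\sigma)) \in \Sigma(\sigma)$ for all $\Delta \in \Ccal$ and all $\sigma \in \Sigma$. What remains is to check that $\Ccal$ is a \emph{decomposition} of $U$, i.e.\ that its support equals $U$. Inspecting the proof of Theorem~\ref{thm:1}, the constructed complex $\Ccal$ has support $\bigcup_{i} \Lambda_i$; if one wants the support to be all of $U$ one should instead feed in a family whose union is $U$. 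The cleanest route is therefore to observe that the proof of Theorem~\ref{thm:1} begins by choosing, via Coles--Friedenberg \cite[Theorem 1.1]{ColesFriedenberg23:localfincomp} and Lemma~\ref{lemm:3}, a complete simplicial polyhedral complex $\Pi$ of $N_\R$ with $\rec(T) \in \Sigma$ for all $T \in \Pi$ and locally finite in $N_\Sigma$, and then takes $\Ccal_0$ to be the set of closures in $N_\Sigma$ of the polyhedra of $\Pi$, which by Proposition~\ref{prop:1} is a simplicial polyhedral complex in $N_\Sigma$. Since $\Pi$ is complete, $\Ccal_0$ has support $N_\Sigma$, and hence $\Ccal_0 \cap U \coloneqq \{\Delta \in \Ccal_0 \mid \Delta \subset U\}$ together with all faces gives a locally finite simplicial complex of $U$; but this intersection need not be supported on all of $U$ since polyhedra straddling $\partial U$ are dropped.

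The honest argument is thus: apply Theorem~\ref{thm:1} with $\Tcal$ the (locally finite, by restricting an affine triangulation) family obtained by restricting $\Ccal_0$ to $U$ in the sense of intersecting each $\Delta \in \Ccal_0$ with a locally finite cover of $U$ by open sets whose closures lie in $U$, so that the union of the $\Lambda_i$ is $U$ and each $\Lambda_i$ is constant towards the boundary. Then the output complex $\Ccal$ of Theorem~\ref{thm:1} has support $\bigcup_i \Lambda_i = U$, is locally finite and simplicial, and satisfies the recession-cone condition $\rec(\Delta(\sigma)) \in \Sigma(\sigma)$ for all $\Delta \in \Ccal$, $\sigma \in \Sigma$. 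This is exactly the assertion of the corollary.

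The main obstacle is the bookkeeping needed to produce a legitimate input family $\Tcal$ for Theorem~\ref{thm:1} whose union is $U$ and which is locally finite in $U$ and consists of polyhedra constant towards the boundary: one must start from a complete polyhedral complex of $N_\R$ with recession cones in $\Sigma$ and locally finite in $N_\Sigma$ (available by Coles--Friedenberg as used in the proof of Theorem~\ref{thm:1}), pass to its closures in $N_\Sigma$ via Proposition~\ref{prop:1}, and then restrict to $U$; the only subtlety is ensuring local finiteness survives restriction to the open set $U$, which holds because the complex is already locally finite in $N_\Sigma \supset U$. Once $\Tcal$ is in hand, the corollary is immediate from Theorem~\ref{thm:1}.
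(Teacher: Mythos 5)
Your overall strategy is the same as the paper's: the corollary is deduced from Theorem~\ref{thm:1} applied to a locally finite family $\Tcal$ of polyhedra in $U$, constant towards the boundary, whose union is all of $U$ (so that the support of the output complex is $U$). You correctly see that feeding in $\Tcal=\emptyset$ or $\{N_\Sigma\}$ is not enough, and that the whole point is manufacturing such a covering family. But your construction of $\Tcal$ has a genuine gap: you propose to take the complete simplicial complex $\Ccal_0$ in $N_\Sigma$ and intersect its polyhedra with ``a locally finite cover of $U$ by open sets whose closures lie in $U$''. The intersection of a polyhedron with an open set is not a polyhedron, and the closure of an arbitrary open set is not a polyhedron either, so the resulting family cannot be fed into Theorem~\ref{thm:1}, whose hypotheses require honest polyhedra in $U$ that are constant towards the boundary. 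Moreover, you identify the ``only subtlety'' as local finiteness surviving restriction to $U$; that is a misdiagnosis. Local finiteness in $N_\Sigma$ trivially restricts to $U$ --- the real difficulty is exactly the one you noticed earlier and then did not resolve: polyhedra of $\Ccal_0$ straddling $\partial U$ are not contained in $U$, and cutting them down to $U$ requires producing new \emph{polyhedra} inside $U$.

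What is missing, and what the paper supplies, is the following: every point $x\in U$ has a \emph{compact polyhedral} neighbourhood $\Lambda_x\subset U$ that is constant towards the boundary (this comes from the explicit basis of neighbourhoods of the form $\overline{q+P+\sigma}$ in the description of the topology of $N_\Sigma$), and one then uses that $U$ is $\sigma$-compact and locally compact to choose an exhaustion $V_k\Subset V_{k+1}$ of $U$ and finitely many such $\Lambda_x$ covering each compact set $\overline{V_k}\setminus V_{k-1}$, with supports confined to $V_{k+1}\setminus\overline{V_{k-2}}$; this yields a locally finite family $\Tcal$ of compact polyhedra, constant towards the boundary, with union $U$. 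Applying Theorem~\ref{thm:1} to this $\Tcal$ gives the corollary. Note also that once such compact polyhedral neighbourhoods are in hand, the auxiliary complex $\Ccal_0$ plays no role: you can apply the theorem to them directly, without any intersection step. As written, your proposal does not establish the existence of a valid input family $\Tcal$, so the reduction is not yet a proof.
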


\begin{proof}
By \cite[Theorem 1.1]{ColesFriedenberg23:localfincomp} and
Lemma \ref{lemm:3}  such a decomposition exists for the whole space $N_{\Sigma }$. 
It is clear that every $x \in U$ has a polyhedral neighbourhood in $U$
which is compact and constant towards the boundary. 
We use now that $U$ is $\sigma$-compact and locally compact \cite[Remark 3.1.2]{burgos-gubler-jell-kuennemann1}. 
We conclude that there is an increasing sequence $(V_k)_{k \in \N}$ of open subsets in $U$ such that $V_k$ is relatively compact in $V_{k+1}$ for every $k \in \N$. 
For convenience, we set $V_k \coloneqq \emptyset$ for $k \in \Z_{<0}$. 
We note that for any $x \in U$ there is a constant towards the
  boundary polyhedron $\Lambda _x$ in $N_\Sigma $ that is a compact neighbourhood of $x$ in $U$.
We pick $k \in \N$. 
For any point $x$ in the compact set $\overline{V_k} \setminus
V_{k-1}$, by shrinking, we may assume that the support of
$\Lambda _{x}$ is contained in the open set  
\[
V_{k+1} \setminus \overline{V_{k-2}}\supset \overline{V_k} \setminus V_{k-1}.
\]
Finitely many such neighbourhoods $\Lambda _{x}$ are enough to cover
$\overline{V_k} \setminus V_{k-1}$.   
The polyhedra from these finitely many $\Lambda _{x}$, for varying $k
\in \N$, form by 
construction a locally finite family $\Tcal$ of polyhedra which are
constant towards the boundary.  
Since the union of all $V_k$ is $U$, the union of all polyhedra in  $\Tcal$ is also $U$.
Applying Theorem \ref{thm:1} to this collection $\Tcal$, we obtain the corollary.
 \end{proof}

\section{Forms and currents}\label{forms-and-currents}
We recall the construction of Lagerberg forms \cite{lagerberg-2012} on tropical toric varieties \cite{jell-shaw-smacka2015,burgos-gubler-jell-kuennemann1}. 
We introduce and investigate the sheaf of polyhedral currents on a tropical toric variety.
We discuss piecewise smooth forms and the integration theory of smooth forms and polyhedral currents.
 
In Section \ref{forms-and-currents}, we fix the following notation: Let $N$ be a free abelian group of rank $n$, $M=\mathrm{Hom}_\Z(N,\Z)$ its dual and denote by $N_\R$  and $M_\R$ the scalar extensions to $\R$. 
We consider a fan $\Sigma$ with associated \emph{tropical toric variety $N_\Sigma$}, see \S \ref{subsec: notation}.

\subsection{Polyhedral currents} \label{subsection: polyhedral currents}
The aim of this section is to give the definition of polyhedral
currents. We start by recalling that 
Lagerberg \cite{lagerberg-2012} introduced an analogue of
$(p,q)$-forms for real vector spaces. These were extended by Jell, Shaw
and Smacka \cite{jell-shaw-smacka2015} to tropical toric varieties. We
follow here the presentation in \cite{burgos-gubler-jell-kuennemann1}
which serves as a general reference for all the used notions. 

On $N_\R$ we have the sheaves of Lagerberg forms $A^{p,q}$ and
Lagerberg currents $D^{p,q}$. 
Their construction does not require the integral structure on $N_\R$ given by $N$.
Furthermore these sheaves are invariant under affine transformations.
Hence the sheaves $A^{p,q}$ and $D^{p,q}$ are defined on all real
affine spaces of finite dimension.

The sheaves $A^{p,q}$ and $D^{p,q}$ have been extended to the partial
compactification $N_\Sigma$ \cite{jell-shaw-smacka2015}, 
\cite[Section 3]{burgos-gubler-jell-kuennemann1}.
There are differentials $d'$ and $d''$ and an integral
\[
\int_{U}\colon A^{n,n}_c(U)\longrightarrow \R
\]
for open subsets $U$ of $N_\Sigma$.

We next introduce forms on a polyhedron
$\Delta$ of $N_\Sigma$ of sedentarity $\sigma$. Let $\Omega$ be an
open subset of a polyhedron $\Delta$ in $N_\Sigma$ of sedentarity
$\sigma=\textrm{sed}(\Delta)$.
We choose an open subset $\widetilde\Omega$ of $N_\Sigma$ with $\Omega=\Delta\cap\widetilde\Omega$.
We denote by $C^\infty(\Omega)$ the space of all functions of the form
$f|_\Omega\colon\Omega\to \R$ for some $f\in
A^{0,0}(\widetilde\Omega)$ and 
we define $A^{p,q}(\Omega)$ as the image of the natural restriction map
\[
A^{p,q}(\widetilde\Omega)\longrightarrow
A^{p,q}\bigl(N(\sigma)\cap\widetilde\Omega\bigr)\longrightarrow
A^{p,q}\bigl(\Omega\cap\textrm{relint}(\Delta(\sigma))\bigr).
\]
A partition of unity argument shows that this definition does not depend on the choice of $\widetilde\Omega$.
Observe that partitions of unity exist by \cite[Proposition 3.2.12]{burgos-gubler-jell-kuennemann1}.

\begin{rem}\label{support-remark}
By construction we have $A^{p,q}(\Omega)=0$ if
$\dim\Delta<\max\{p,q\}$.  An element $\alpha\in A^{p,q}(\Omega)$ is
determined by a form
$\alpha_0\in
A^{p,q}\bigl(\Omega\cap\textrm{relint}(\Delta(\sigma))\bigr)$, but not
every form $\alpha _{0}$ as before extends to a form on $\Omega $.
The \emph{support} of $\alpha$ is by definition the closure in $\Omega
$ of the support of $\alpha_0$.  We denote by
$A_c^{p,q}(\Omega)$ the subspace of forms with compact support.  
\end{rem}

\begin{lem} \label{support lemma for top-forms}
Let $\Delta$  be a $d$-dimensional polyhedron of $N_\Sigma$ of sedentarity $\sigma \in \Sigma$ and let $\alpha\in A^{d,d}(\Omega)$ for an open subset $\Omega$ of $\Delta$. 
Then the support of $\alpha$ is contained in $\Omega \cap N(\sigma)$. 
The same holds for forms $\alpha$ of bidegree $(d-1,d)$ or $(d,d-1)$.
\end{lem}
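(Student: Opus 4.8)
The plan is to show that $\alpha$ vanishes in a neighbourhood of every point of $\Omega$ lying outside $N(\sigma)$. This suffices, for the following reasons: by Remark \ref{support-remark} the form $\alpha$ is represented by $\alpha_0\in A^{d,d}\bigl(\Omega\cap\relint(\Delta(\sigma))\bigr)$ and $\supp(\alpha)$ is the closure in $\Omega$ of $\supp(\alpha_0)$; moreover $\Omega\subseteq\Delta=\bigcup_\tau\Delta(\tau)$, the union being over the cones $\tau\in\Sigma$ having $\sigma$ as a face and $\Delta(\tau)=\Delta\cap N(\tau)$, so a point of $\Omega\setminus N(\sigma)$ has sedentarity some $\tau$ with $\sigma\neq\tau$.

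Fix such a point $p$, say of sedentarity $\tau$, and first bound $\dim\Delta(\tau)$. Since $p\in\Delta(\tau)\neq\emptyset$, Lemma \ref{lemm:1}~\ref{item:1} gives $\rec(\Delta(\sigma))\cap\relint(\tau')\neq\emptyset$ for $\tau'=\pi_\sigma(\tau)\in\Sigma(\sigma)$; as $\tau'$ is a strictly convex cone different from $\{0\}$ (because $\sigma$ is a proper face of $\tau$), this intersection contains a nonzero vector, which then lies in $\L_{\tau'}\cap\L_{\Delta(\sigma)}$ since $\rec(\Delta(\sigma))\subseteq\L_{\Delta(\sigma)}$. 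By Lemma \ref{lemm:1}~\ref{item:2}, $\Delta(\tau)=\pi_{\tau,\sigma}(\Delta(\sigma))$, and as $\ker(\pi_{\tau,\sigma})=\L_{\tau'}$ meets $\L_{\Delta(\sigma)}$ nontrivially we obtain $\dim\Delta(\tau)=d-\dim\bigl(\L_{\Delta(\sigma)}\cap\L_{\tau'}\bigr)\leq d-1$.

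Now the local shape of $\alpha$ near $p$. Write $\Omega=\Delta\cap\widetilde\Omega$ with $\widetilde\Omega$ open in $N_\Sigma$ and lift $\alpha$ to $\widetilde\alpha\in A^{d,d}(\widetilde\Omega)$, so $\alpha_0$ is the restriction of $\widetilde\alpha$ to $\relint(\Delta(\sigma))\cap\widetilde\Omega$. By the construction of Lagerberg forms on $N_\Sigma$ in \cite[Section 3]{burgos-gubler-jell-kuennemann1} (the condition of being constant towards the boundary), after shrinking $\widetilde\Omega$ to a small enough neighbourhood $W$ of $p$ we have $\widetilde\alpha|_W=r^*\beta$, where $r\colon W\to W\cap N(\tau)$ is the canonical retraction and $\beta$ is a smooth Lagerberg $(d,d)$-form on $W\cap N(\tau)$; moreover $r$ restricted to $W\cap N(\sigma)$ is the restriction of $\pi_{\tau,\sigma}\colon N(\sigma)_\R\to N(\tau)_\R$. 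Then for $x\in W\cap\relint(\Delta(\sigma))$ the map $r$ agrees near $x$ with the affine map $\pi_{\tau,\sigma}$, so its differential sends $T_x\relint(\Delta(\sigma))=\L_{\Delta(\sigma)}$ onto $\L_{\Delta(\tau)}$, which by the previous step has dimension $\leq d-1$. Since $\beta$ is alternating in its $d$ primed tangent slots, the pull-back $r^*\beta$ evaluated at $x$ involves $\beta$ applied to $d$ vectors spanning a space of dimension $<d$, hence vanishes; thus $\alpha_0=0$ on $W\cap\relint(\Delta(\sigma))$, so $p\notin\supp(\alpha_0)$, and $W$ being open also $p\notin\supp(\alpha)$. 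Letting $p$ run over $\Omega\setminus N(\sigma)$ yields $\supp(\alpha)\subseteq\Omega\cap N(\sigma)$. For bidegrees $(d-1,d)$ and $(d,d-1)$ the very same argument applies, the rank bound $<d$ still killing the $d$-dimensional primed (resp.\ double-primed) factor of $\beta$.

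The main obstacle is the local normal form used at the start of the last paragraph: extracting from the definition of smooth forms on $N_\Sigma$ that near a boundary point $\widetilde\alpha$ is pulled back via the canonical retraction from a smooth Lagerberg form on the stratum, and that this retraction induces $\pi_{\tau,\sigma}$ on orbits. To avoid relying on this, one may instead first reduce to $\sigma=\{0\}$ by replacing $N_\Sigma$ with the tropical toric variety $\overline{N(\sigma)}=N(\sigma)_{\Sigma(\sigma)}$, and then observe that the restriction of $\alpha$ to $\Omega\cap N(\tau)$ — an open subset of the polyhedron $\overline{\Delta(\tau)}$ of sedentarity $\tau$ and dimension $\leq d-1$ — lies in the zero group $A^{d,d}\bigl(\Omega\cap N(\tau)\bigr)$ by Remark \ref{support-remark}; together with constancy towards the boundary this again forces $\alpha_0$ to vanish near $p$.
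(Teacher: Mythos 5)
Your proposal is correct and follows essentially the same route as the paper's proof: pick a point $p\in\Omega\setminus N(\sigma)$ of sedentarity $\tau\succ\sigma$, use Lemma \ref{lemm:1} to see that $\Delta(\tau)=\pi_{\tau,\sigma}(\Delta(\sigma))$ has dimension $<d$, and use that $\widetilde\alpha$ is constant towards the boundary to write it near $p$ as a pull-back along the canonical projection to the stratum $N(\tau)$ (the paper does this with explicit polytopal neighbourhoods $Q$ and the map $\overline{\pi_\tau}$), so that the form vanishes there for degree reasons. The "obstacle" you flag is not one: the local pull-back description near a boundary point is exactly the constancy-towards-the-boundary condition built into the definition of $A^{p,q}$ on $N_\Sigma$, and it is precisely what the paper invokes as well.
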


\begin{proof}
As seen above, there is an open subset $\widetilde\Omega$ of $N_\Sigma$ with $\Omega = \Delta \cap \widetilde\Omega$ and $\widetilde \alpha \in A^{d,d}(\widetilde\Omega)$ with $\alpha =\widetilde\alpha|_\Omega$. 
Let $p \in \Omega \setminus N(\sigma)$. 
We have to show that $p$ has a neighbourhood $U$ in $\Omega$ such that $\alpha|_U=0$. 
There is a unique $\tau \in \Sigma$ with $p \in N(\tau)$. 
As $\Delta$ is of sedentarity $\sigma$, it follows that $\sigma$ is a face of $\tau$. 
	
Recall that a basis of compact polyhedral neigbhourhoods of $p$ is given as follows: 
We choose a complementary subspace $\L_\tau^\perp$ to $\L_\tau$ in $N_\R$. 
Consider a polytopal neighbourhood $P$ of $0$ and a point $q \in N_\R$ with $\pi_\tau(q)=p$ for the projection $\pi_\tau\colon N_\R \to N(\tau)$. 
Then the closure  of $q+P$ in $N_\Sigma$ is a polytope $Q$ in $N_\Sigma$ given as
\[
Q=\overline{q+P}= \coprod_{\rho \prec \tau} \pi_\rho(q+P+\tau).\]
These closures for varying $q$ and $P$ form a basis of neighbourhoods for $p$, see \cite[Remark 3.1.2]{burgos-gubler-jell-kuennemann1}.  
	
Let $\pi_{\tau\sigma}\colon N(\sigma) \to N(\tau)$ be the projection. 
It follows from Lemma \ref{lemm:1} that $\rec(\Delta) \cap \relint(\tau)\neq \emptyset$ and that $\Delta(\tau)=\pi_{\tau\sigma}(\Delta(\sigma))$ has dimension $<d$. 
Note that $\pi_\tau$ extends to a morphism 
\[\overline{\pi_\tau} \colon \coprod_{\rho \prec \tau} N(\rho) \longrightarrow N(\tau)
\]
given on $N(\rho)$ by the projection $\pi_{\tau\rho}$. 
Setting $Q(\rho) \coloneqq Q \cap N(\rho)= \pi_\rho(q+P+\tau)$ for any $\rho \prec \tau$, we have $Q \subset (\overline{\pi_\tau})^{-1}(Q(\tau))$. 
Since $\widetilde\alpha$ is constant towards the boundary, we can choose $q$ sufficiently close to $p$ and the polytopal neighbourhood $P$ of $0$ sufficiently small  such that $Q \subset \widetilde \Omega$ and 
\[
\widetilde\alpha_Q=(\overline{\pi_{\tau}})^*(\widetilde\alpha(\tau))|_Q.
\]
It is clear that $U \coloneqq Q \cap \Delta$ is a neighbourhood of $p$ in $\Omega$. 
It follows from the above display that
\[
\alpha|_U= \widetilde\alpha|_Q|_{Q \cap \Delta}=(\overline{\pi_{\tau}})^*(\widetilde\alpha(\tau))|_{Q \cap \Delta}.
\]
We claim that this is zero which will finish the proof. 
It is enough to show that the displayed form is zero on the finite part $Q(\sigma) \cap \Delta(\sigma)$. 
There this form agrees with 
\[
\pi_{\tau \sigma}^*(\widetilde\alpha(\tau))|_{Q(\sigma)\cap \Delta(\sigma)}=(\pi_{\tau \sigma}|_{Q(\sigma)\cap \Delta(\sigma)})^*(\widetilde\alpha(\tau)|_{Q(\tau)\cap\Delta(\tau)}).
\]
using $\pi_{\tau\sigma}(Q(\sigma)\cap \Delta(\sigma)) \subset Q(\tau) \cap \Delta(\tau)$ and the usual functorial identities for pull-back. 
Now the right-hand side of the display is zero for degree reasons as $\dim( Q(\tau) \cap \Delta(\tau)) \leq \dim(\Delta(\tau))<d$. 
This proves that $\supp(\alpha)$ is contained in $\Omega \cap N(\sigma)$.
\end{proof}

\begin{art} \label{definition weighted polyhedra}
A \emph{weight} of a polyhedron $\Delta$ is an element of $(\det
\L_\Delta\setminus\{0\})/\{\pm 1\}$.
If $\dim \Delta=0$, then $\det L_\Delta=\R$ canonically and $(\det
\L_\Delta\setminus\{0\})/\{\pm 1\}=(0,\infty)$. If $\dim
  \Delta>0$, then still $\det L_\Delta\simeq \R$ and $(\det
\L_\Delta\setminus\{0\})/\{\pm 1\}\simeq (0,\infty)$, but the
isomorphism is non-canonical in general.
A \emph{weighted polyhedron $[\Delta,\mu]$} is a polyhedron $\Delta$ together with a weight $\mu$.
A \emph{weighted polyhedral complex $(\Ccal,\mu)$} is given by a polyhedral complex
$\Ccal$ together with a family $(\mu_\Delta)_{\Delta\in \Ccal}$ where $\mu_\Delta$ is a weight for the polyhedron $\Delta$.
\end{art}

\begin{ex} \label{standard weight}
		Using that $N_\R$ comes with a lattice structure given by $N$, we have a canonical weight on $N_\R$ denoted by $\mu_{N}$. It is given by the image of a generator of $\det(N)$ in  $(\det(N_\R)\setminus\{0\})/\{\pm 1\}$.

\end{ex}

\begin{art} \label{current associated to weighted polyhedron} Let $[\Delta,\mu]$ be a weighted $d$-dimensional polyhedron of sedentarity $\sigma$ contained in the open subset $U$ of $N_\Sigma$. 
The goal is to define an associated current $\delta_{[\Delta,\mu]}$ on $U$.  
Let $\eta\in A^{d,d}_c(U)$. 
Since $\eta$  
is of bidegree $(d,d)$, we have seen in Lemma \ref{support lemma for top-forms}  that the support of $\eta|_{\Delta}$ is contained in $\Delta(\sigma)$. 
We conclude that $\eta|_{\Delta(\sigma)}$ is a compactly supported Lagerberg form of bidegree $(d,d)$ on $\Delta(\sigma)$ and hence
\[
\delta_{[\Delta,\mu]}(\eta)=\int_{[\Delta(\sigma),\mu]} \eta
\]
is well-defined by \cite[Example 2.2]{mihatsch2021}. 
One can check that this defines a current $\delta_{[\Delta,\mu]}\in D^{n-d,n-d}(U)$. 
More generally, for any form $\alpha \in A^{p,q}(\Delta)$, we get a current $\alpha \wedge  \delta_{[\Delta,\mu]}$ in $D^{n-d+p,n-d+q}(U)$ defined by 
\begin{displaymath}
\alpha \wedge
\delta_{[\Delta,\mu]}(\eta)=\int_{[\Delta(\sigma),\mu]} \alpha
\wedge \eta,
\quad \quad (\eta \in A_c^{d-p,d-q}(U)).
\end{displaymath}
Sometimes, we will simplify our notation by writing
$\alpha \wedge [\Delta,\mu]$ for the current
$\alpha \wedge \delta_{[\Delta,\mu]}$.
\end{art}

\begin{definition} \label{definition of polyhedral currents}
Let $U$ be an open subset of $N_\Sigma$. A \emph{polyhedral current} is a linear functional $T$ on $A_c(U)$ given by a locally finite family $([\Delta,\mu_\Delta])_{\Delta \in I}$ of weighted polyhedra in $U$ and smooth forms $\alpha_\Delta \in A(\Delta)$ for every  $\Delta\in I$  such that  
        \begin{equation}
          \label{eq:4}
          T = \sum_{\Delta \in I} \alpha_\Delta \wedge \delta_{[\Delta,\mu_\Delta]}.
        \end{equation}
It follows from \ref{current associated to weighted polyhedron} that $T$ is a current on $U$. We say that $T$ has tridegree $(p,q,r)$ if all $\alpha_\Delta$ have bidegree $(p,q)$ and if all $\Delta \in I$ have codimension $r$. 
We call $T$ \emph{of sedentarity} $\sigma \in \Sigma$ if all $\Delta \in I$ have sedentarity $\sigma$.
The polyhedral current $T$ is called \emph{constant towards the
  boundary} if we can choose a representation of $T$ as above with all
polyhedra $\Delta \in I$ constant towards the boundary.
The space of all polyhedral currents on $U$ is an $A^{\ast}(U)$
  module. If $T$ has a representation as in \eqref{eq:4} and $\beta $
  is a differential form, then
  \begin{displaymath}
    \beta \wedge T=\sum_{\Delta\in I }\beta|_{\Delta } \wedge
    \alpha_{\Delta } \wedge \delta _{[\Delta,\mu_\Delta]}. 
  \end{displaymath} 
\end{definition}

\begin{rem} \label{polyhedral complex of definition for polyhedral currents}
Let $T$ be a polyhedral current on an open subset $U$ of $N_\Sigma$
that is constant towards the boundary. 
If $\Sigma$ is simplicial, then $T$ is given by a weighted polyhedral complex
$(\Ccal,\mu)$ that is constant towards the boundary with $|\Ccal|=U$ such that
\begin{displaymath}
  T = \sum_{\Delta \in \Ccal} \alpha_\Delta \wedge \delta_{[\Delta,\mu_\Delta]}.
\end{displaymath}
This follows from  Theorem \ref{thm:1} and Corollary
\ref{corollary-refinement-simplicial-decomposition} by setting all
weights to $0$ for faces which do not occur in the definition above. 
\end{rem}

\begin{prop} \label{polyhedral currents form sheaf}
The polyhedral currents on $N_\Sigma$ form a sheaf.
\end{prop}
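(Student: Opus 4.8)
The plan is to realize the polyhedral currents as a subsheaf of the sheaf $D=\bigoplus_{p,q}D^{p,q}$ of Lagerberg currents on $N_\Sigma$ recalled in \S\ref{subsection: polyhedral currents}. Two things then have to be checked. \emph{(Restriction.)} If $T$ is a polyhedral current on an open set $U\subseteq N_\Sigma$ and $V\subseteq U$ is open, then $T|_V$ is again polyhedral; this exhibits $U\mapsto\{\text{polyhedral currents on }U\}$ as a subpresheaf of $D$. \emph{(Gluing.)} If $\{U_i\}$ is an open cover of $U$ and $T\in D(U)$ satisfies that $T|_{U_i}$ is polyhedral for all $i$, then $T$ is polyhedral. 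Granting these, the sheaf axioms for polyhedral currents follow from those of $D$, separation being automatic for a subpresheaf of a sheaf. The main tools will be the existence of smooth partitions of unity on $N_\Sigma$ \cite[Proposition 3.2.12]{burgos-gubler-jell-kuennemann1} and the fact, recorded in Definition \ref{definition of polyhedral currents}, that multiplying a polyhedral current by a smooth function is again a polyhedral current with the same underlying polyhedra.

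The substantive step is Restriction. A naive argument fails: one cannot intersect the defining polyhedra of $T$ with $V$, nor with an open polyhedral neighbourhood, since such intersections need not be polyhedra once the polyhedra involved are not constant towards the boundary (Example \ref{counterexamples}). The remedy is to slice by \emph{compact} polyhedra that are constant towards the boundary, for which the good intersection behaviour established in Section \ref{polyhedra} holds automatically and every slice is in fact a polytope. Concretely, I would first build a locally finite family $(\Lambda_j)_{j\in J}$ of compact, constant-towards-the-boundary polyhedra in $V$ with $\bigcup_{j}\tint(\Lambda_j)=V$; this uses only that $N_\Sigma$ is locally compact and $\sigma$-compact \cite[Remark 3.1.2]{burgos-gubler-jell-kuennemann1} together with the existence of a neighbourhood basis of such polyhedra (the sets $\overline{q+P+\tau}$, compact by Lemma \ref{lemm:5} and constant towards the boundary by Proposition \ref{recession-cone-boundary-condition}), so no simplicial hypothesis on $\Sigma$ is needed, cf.\ the proof of Corollary \ref{corollary-refinement-simplicial-decomposition}. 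Choosing a partition of unity $(\rho_j)$ on $V$ subordinate to $\{\tint(\Lambda_j)\}$, each $\supp(\rho_j)$ is compact and contained in $\tint(\Lambda_j)$. Now write $T=\sum_{\Delta\in I}\alpha_\Delta\wedge\delta_{[\Delta,\mu_\Delta]}$ and $\sigma_\Delta\coloneqq\mathrm{sed}(\Delta)$. For each $j$ only finitely many $\Delta$ meet the compact set $\Lambda_j$, and for such $\Delta$ the set $P_{j,\Delta}\coloneqq\Delta(\sigma_\Delta)\cap\Lambda_j(\sigma_\Delta)$ is a bounded polyhedron of $N(\sigma_\Delta)$, hence a polyhedron of $N_\Sigma$ of sedentarity $\sigma_\Delta$ which is automatically constant towards the boundary and contained in $\Lambda_j\subseteq V$. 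Since $\supp(\rho_j)\subseteq\tint(\Lambda_j)$ and $\delta_{[\Delta,\mu_\Delta]}$ is integration over the finite part $\Delta(\sigma_\Delta)$, one checks that $(\rho_j\alpha_\Delta)\wedge\delta_{[\Delta,\mu_\Delta]}$ and $(\rho_j\alpha_\Delta|_{P_{j,\Delta}})\wedge\delta_{[P_{j,\Delta},\mu_\Delta]}$ agree as currents on $V$ (the integrands coincide, being supported in $\supp(\rho_j)\subseteq P_{j,\Delta}$; pieces with $\dim P_{j,\Delta}<\dim\Delta$ contribute zero and are dropped, so no weight issue arises). Hence
\[
T|_V=\sum_{j}\rho_j\wedge(T|_V)=\sum_{j\in J}\ \sum_{\substack{\Delta\in I\\ \Delta\cap\Lambda_j\neq\emptyset}}\bigl(\rho_j\alpha_\Delta|_{P_{j,\Delta}}\bigr)\wedge\delta_{[P_{j,\Delta},\mu_\Delta]},
\]
and the family $\bigl([P_{j,\Delta},\mu_\Delta]\bigr)$ is locally finite in $V$ because near any point only finitely many $\Lambda_j$ are met and each $P_{j,\Delta}$ lies in $\Lambda_j$. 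So $T|_V$ is a polyhedral current on $V$.

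For Gluing I would argue in the same spirit. Build $(\Lambda_j)$ and $(\rho_j)$ as above but with the $\Lambda_j$ refining the cover, say $\Lambda_j\subseteq U_{i(j)}$. Then $\rho_j\wedge T=\rho_j\wedge\bigl(T|_{\tint(\Lambda_j)}\bigr)$, extended by zero to $U$, and $T|_{\tint(\Lambda_j)}=\bigl(T|_{U_{i(j)}}\bigr)|_{\tint(\Lambda_j)}$ is a restriction of a polyhedral current, hence polyhedral by the Restriction step; therefore $\rho_j\wedge T$ is a polyhedral current on $U$ whose polyhedra are contained in $\Lambda_j$. Since $\sum_j\rho_j=1$ on $U$ with $(\supp(\rho_j))$ locally finite, $T=\sum_j\rho_j\wedge T$ is a locally finite sum of polyhedral currents, and the assembled family of weighted polyhedra is locally finite in $U$ because each summand is concentrated on $\Lambda_j$ and $(\Lambda_j)$ is locally finite. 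Thus $T$ is polyhedral.

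The main obstacle is precisely the point flagged above: restriction must be carried out through compact polyhedra that are constant towards the boundary, not through the open set $V$ or through unbounded polyhedra, so that every slice becomes a polytope and the pathologies of Example \ref{counterexamples} never enter; compactness of the slices is also what guarantees local finiteness of the resulting families. The remaining verifications — equality of the two expressions for $\rho_j\wedge(T|_V)$, smoothness of the truncated forms $\rho_j\alpha_\Delta|_{P_{j,\Delta}}$, and the bookkeeping of local finiteness — are routine within the formalism of \S\ref{subsection: polyhedral currents}.
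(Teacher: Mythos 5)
Your proof is correct, and it is more thorough than the paper's, which consists of a single partition-of-unity computation for the gluing axiom: given compatible polyhedral currents $T_i$ on a cover $(U_i)$, the paper glues them as currents and writes $T=\sum_i\sum_{\Delta\in I_i}\phi_i\alpha_{i\Delta}\wedge\delta_{[\Delta,\mu_{i\Delta}]}$ for a partition of unity subordinate to $(U_i)$, and stops there; the restriction (presheaf) step is not discussed, and the local finiteness in $U$ of the assembled family $\bigcup_i I_i$ is taken for granted. Your route shares the partition-of-unity core for gluing, but differs in two genuine respects. First, you isolate and prove the restriction-closedness, which is indeed not tautological because the defining polyhedra of $T$ need not lie in $V$ and intersections with $V$ or with unbounded polyhedra are not polyhedra; your device of slicing through compact, constant-towards-the-boundary polyhedral neighbourhoods (which exist and form a neighbourhood basis, exactly as used in the proof of Corollary \ref{corollary-refinement-simplicial-decomposition}) turns every slice into a polytope of the correct sedentarity, and the discarded lower-dimensional slices are harmless since they give the zero current. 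Second, by routing the gluing through these compact slices you also secure local finiteness of the resulting family in $U$, since each summand's polyhedra sit inside a member of a locally finite family of compacta; this tightens the bookkeeping that the paper's one-line formula leaves implicit (terms of $I_i$ could a priori accumulate at $\partial U_i$ inside $U$, though they die against $\supp\phi_i$). In short, the paper's proof is terser and relies on these points being routine; yours makes them explicit, at the cost of the slicing construction, and I find no gap in it.
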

	
\begin{proof}	
Let $(U_i)_{i \in I}$ be an open covering of an open subset $U$ of $N_\Sigma$ and let $T_i=\sum_{\Delta \in I_i} \alpha_{i\Delta} \wedge \delta_{[\Delta,\mu_{i\Delta}]}$ be a polyhedral current on $U_i$. 
We have to prove that the {unique} current $T$ {on $U$} with $T|_{U_i}=T_i$ for all $i\in I$ is polyhedral. 
Since $N_\Sigma$ is metrizable {\cite[Remark 3.1.2]{burgos-gubler-jell-kuennemann1}}, it is clear that $U$ is paracompact. 
{By \cite[Proposition 3.2.12]{burgos-gubler-jell-kuennemann1}, we can} choose a smooth partition of unity $(\phi_i)_{i\in I}$ such that $\supp(\phi) \subset U_i$ for every $i \in I$. 
Then we have 
\[
T=\sum_{i \in I} \sum_{\Delta \in I_i} \phi_i\alpha_{i\Delta}\wedge \delta_{[\Delta,\mu_{i\Delta}]}
\]
proving the claim.
\end{proof}

\begin{rem} \label{subdivision}
Given a polyhedron $\Delta$ with sedentarity $\sigma$, a
\emph{subdivision} is induced by a subdivision of $\Delta(\sigma)$
inducing subdivisions of all polyhedra $\Delta(\tau)$ for all $\sigma
\prec \tau$. A subdivision of a polyhedral complex $\Ccal$ is given by
subdivisions of all polyhedra in $\Ccal$ which fit on faces. Clearly,
a polyhedral current is invariant  under passing to subdivisions of the
underlying polyhedra. 
\end{rem}

\begin{rem}[Weights in short exact sequences] \label{weighted short exact sequences}
For a short exact sequence
\[
0 \longrightarrow V_1 \longrightarrow V_2 \longrightarrow V_3 \longrightarrow 0
\]
of finite dimensional $\R$-vector spaces, we have a canonical isomorphism $\det V_2 = \det V_1 \otimes \det V_3$ and hence two of three weights $\mu_i$ for $V_i$ determine the third by 
\begin{equation} \label{weight relation}
\mu_2 = \mu_1 \otimes \mu_3
\end{equation}
where we usually write $\mu_1 \wedge \mu_3$ for the right hand side.
\end{rem}
	
We next recall Mihatsch's definition \cite[\S 2.3, after Definition 2.5]{mihatsch2021} of the pull-back of  currents in the affine setting.

\begin{rem}[{Pull-back of currents}] \label{pull-back of currents}
Let  $E\colon N' \to N$ be a \emph{surjective} affine linear map of abelian groups of finite rank. 
We set $L \coloneqq E-E(0)$, $k \coloneqq \dim(\ker(L))$ and we consider the short exact sequence
\begin{equation}  
0\longrightarrow \ker(L)_\R\longrightarrow N'_\R\longrightarrow N_\R \longrightarrow 0.
\end{equation}
Using the standard weights $\mu_{N'}$ on $N'_\R$ and $\mu_N$ on $N_\R$ induced by the lattice structure as in Example \ref{standard weight}, our above considerations in  \ref{weighted short exact sequences} yield a canonical weight $\mu_{L}$ on $\ker(L)_\R$. 
It is easy to see that $\mu_L$ agrees with the image of a generator of $\det(\ker(L))$ in $(\det(\ker(L_\R))\setminus \{0\})/\{\pm 1\}$ and hence $\mu_L$ is given by the integral structure.

Now let $U$ be an open subset of $N_\R$ and let $U'$ be an open subset of $E^{-1}(U)$. 
Then integration along the fibre yields a unique map $E_*\colon A_c^{p,q}(U') \to A_c^{p-k,q-k}(U)$ where $E_*(\alpha')$ is characterized by the projection formula
\[
\int_{[N_\R,\mu_N]} E_*(\alpha') \wedge \eta= \int_{[N'_\R, \mu_{N'}]} \alpha' \wedge E^*(\eta)
\]
for all $\eta \in A_c^{n-p+k,n-q+k}(N_\R)$. It induces a dual map $E^*\colon D^{p,q}(U) \to D^{p,q}(U')$ of Lagerberg currents characterized by 
\[
(E^*(T))(\alpha')= T(E_*(\alpha')) \, \quad (T \in D^{p,q}(U), \, \alpha' \in A_c^{n+k-p,n+k-q}(U')).
\]
We have the properties $d'E^*T=E^*d'T$ and $d''E^*T=E^*d''T$. 
For details, we refer to \cite[\S 2.3]{mihatsch2021}.
\end{rem}

We next recall Mihatsch's definition \cite[\S 2.3, after Definition 2.5]{mihatsch2021} of the pull-back of polyhedral currents in the affine setting.

\begin{rem}[Pull-back of polyhedral currents] \label{pull-back of polyhedral currents by Mihatsch}
Let $E\colon N' \to N$ be still a surjective affine linear map. 
For a weighted polyhedron $[\Delta,\mu_\Delta]$ of $N_\R$, we will  define a weight $\mu_{E^{-1}(\Delta)}$ on the polyhedron $E^{-1}(\Delta)$ of $N_\R'$.
We consider the associated linear map $L \coloneqq E-E(0)$ and 
the short exact sequence
\begin{equation} \label{short exact sequence of preimage} 
0 \longrightarrow \ker(L)_\R \longrightarrow \L_{E^{-1}(\Delta)}  \longrightarrow \L_\Delta \longrightarrow 0
\end{equation}
of $\R$-vector spaces. 
Applying \ref{weighted short exact sequences} to the short exact sequence \eqref{short exact sequence of preimage}, we get a canonical weight $\mu_{E^{-1}(\Delta)}=\mu_L \wedge \mu_\Delta$ on $\L_{E^{-1}(\Delta)}$ using the weight $\mu_L$ on $\ker(L)_\R$ from \ref{pull-back of currents}.

Let $T = \sum_{\Delta \in I} \alpha_\Delta \wedge \delta_{[\Delta,\mu_\Delta]}$ be any polyhedral current on an open subset $U$ of $N_\R$. Mihatsch showed in \cite[\S 2.3]{mihatsch2021} that $E^*(T)$ is a polyhedral current on $E^{-1}(U')$ given by 
\begin{equation}
\label{eq:pull_back_current}
E^*(T)=\sum_{\Delta \in I} E^*(\alpha_\Delta) \wedge \delta_{[E^{-1}(\Delta),\mu_{E^{-1}(\Delta)}]}.
\end{equation}
For any open subset $U'$ of $E^{-1}(U)$, we deduce by restriction from the above that $E^*(T)$ induces a polyhedral current in $D(U')$.
\end{rem}

\begin{prop} \label{polyhedral current and equivalence for pull-back}
Let $E \colon N' \to N$ be a surjective affine map of free abelian groups of finite rank and  let $U'$ be an open subset of $N_\R'$. 
Then $U \coloneqq E(U')$ is an open subset of $N_\R$. 
We consider $T \in D(U)$ and $E^*(T) \in D(U')$ as in Remark \ref{pull-back of currents}. 
Then $T$ is a  polyhedral current on $U$ if and only if $E^*(T)$ is a polyhedral current on $U'$.
\end{prop}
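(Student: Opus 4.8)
The plan is to prove the two implications separately. The implication ``if $T$ is polyhedral on $U$ then $E^*(T)$ is polyhedral on $U'$'' is already contained in Remark~\ref{pull-back of polyhedral currents by Mihatsch}: that remark produces a polyhedral presentation of $E^*(T)$ on an arbitrary open subset of $E^{-1}(U)$ for any polyhedral current $T$ on $U$; here we apply it with $U\coloneqq E(U')$, which is open since the surjective affine map $E$ is open, and the given $U'\subset E^{-1}(U)$. So all the work is in the converse.

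For the converse implication I would first localise: polyhedral currents form a sheaf (Proposition~\ref{polyhedral currents form sheaf}), so it suffices to show that every $x\in U$ has an open neighbourhood $\Omega\subset U$ with $T|_\Omega$ polyhedral. Fix $x$, choose $x'\in U'$ with $E(x')=x$ and (translating $N'$, which affects none of the structures) assume $E$ is linear; pick a linear splitting $N'_\R=K\oplus W$ with $K\coloneqq\ker E$ and $E|_W\colon W\xrightarrow{\sim}N_\R$, let $\pi_K\colon N'_\R\to K$ be the projection, and take a box neighbourhood $\Omega'=B\times\Omega\subset U'$ of $x'$ with $B$ an open ball in $K$. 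Shrinking $\Omega$ and using local finiteness, write $E^*(T)|_{\Omega'}=\sum_{j=1}^m\beta_j\wedge\delta_{[\Lambda_j,\nu_j]}$ with finitely many weighted polyhedra $\Lambda_j$ and smooth forms $\beta_j\in A(\Lambda_j)$.

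The key point is a recovery identity for $T$. Fix $\rho\in A_c^{k,k}(B)$ supported in a compact sub-ball and normalised so that its fibrewise integral along $E$ is the constant function $1$; by the projection formula for integration along the fibres of $E$ (Remark~\ref{pull-back of currents}) one gets $E_*(\pi_K^*\rho\wedge E^*\alpha)=\alpha$ for every $\alpha\in A_c(\Omega)$, hence, by the defining property of $E^*$ on currents and by~\ref{current associated to weighted polyhedron},
\[
T|_\Omega(\alpha)=\bigl(E^*(T)\bigr)\bigl(\pi_K^*\rho\wedge E^*\alpha\bigr)=\sum_{j=1}^m\int_{[\Lambda_j,\nu_j]}\gamma_j\wedge(E|_{\Lambda_j})^*\alpha,\qquad \gamma_j\coloneqq(\beta_j\wedge\pi_K^*\rho)\big|_{\Lambda_j}.
\]
So $T|_\Omega$ is the restriction to $\Omega$ of a finite sum of push-forwards $E_*\bigl(\gamma_j\wedge\delta_{[\Lambda_j,\nu_j]}\bigr)$ of polyhedral currents along the linear projection $E$. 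Each $\gamma_j$ is supported in $(\supp\rho)\times N_\R$, so it is bounded in the $K$-direction; after intersecting $\Lambda_j$ with a closed box in the $K$-variables that contains $\supp\rho$ in its interior --- which changes neither $\gamma_j$ nor the current --- the fibres of $E|_{\Lambda_j}$ are polytopes.

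The problem is thereby reduced to the claim that the push-forward $E_*\bigl(\gamma\wedge\delta_{[\Lambda,\nu]}\bigr)$ of a polyhedral current along an affine projection with polytope fibres is polyhedral; granting this, and using that a restriction of a polyhedral current to an open set is polyhedral and that finite sums of polyhedral currents are polyhedral, we are done. To prove the claim I would note that $\Theta\coloneqq E(\Lambda)$ is a polyhedron by the Minkowski--Weyl theorem \cite[Theorem~19.1]{rockafellar1970}; that there is a polyhedral subdivision $\{\Theta_i\}$ of $\Theta$ --- determined by the facet structure of $\Lambda$ --- over whose relatively open cells the fibre $E|_\Lambda^{-1}(w)$ is a polytope of constant combinatorial type with affinely varying vertices; that fibrewise integration of $\gamma$ then produces a genuine smooth Lagerberg form $E_*(\gamma)|_{\Theta_i}\in A(\Theta_i)$; and hence that $E_*\bigl(\gamma\wedge\delta_{[\Lambda,\nu]}\bigr)=\sum_i E_*(\gamma)\big|_{\Theta_i}\wedge\delta_{[\Theta_i,\kappa_i]}$ for the induced weights $\kappa_i$ from Remark~\ref{weighted short exact sequences}. \textbf{The main obstacle} is precisely this last assertion: performing the fibrewise integration along the affinely moving polytope fibres, checking that the outcome really extends from $\relint(\Theta_i)$ to an element of $A(\Theta_i)$ and not merely to a form on the relative interior, and tracking the weights $\kappa_i$; the localisation, the projection-formula identity, the sheaf property, and finiteness are all routine.
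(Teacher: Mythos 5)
Your argument is essentially the paper's own proof: the forward implication by Remark \ref{pull-back of polyhedral currents by Mihatsch}, and for the converse the same localisation, the same choice of a splitting $N'_\R=\ker(L)_\R\oplus N_\R$, the same compactly supported bump form $\rho$ of top fibre degree with fibre integral $1$, and the same projection-formula identity $T(\omega)=(E^*T)(\pi_K^*\rho\wedge E^*\omega)$ expressing $T$ locally as the push-forward along $E$ of a polyhedral current with relatively compact support over the base. The only divergence is at the point you flag as ``the main obstacle'': the paper does not redo the fibre integration at all, but simply invokes Mihatsch's result that the push-forward along $E$ of a polyhedral current with relatively compact support over $U$ is again polyhedral (\cite[\S 2.3]{mihatsch2021}, the same source already used in Remarks \ref{pull-back of currents} and \ref{pull-back of polyhedral currents by Mihatsch}). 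So the step you leave open is a citable statement rather than a genuine gap in the strategy; your sketch of proving it by subdividing $E(\Lambda)$ according to the combinatorial type of the fibres and integrating fibrewise is the standard route to that result, but as written it is not carried out, and the cleaner finish is to quote Mihatsch as the paper does.
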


\begin{proof}
We have seen in Remark \ref{pull-back of polyhedral currents by
  Mihatsch} that $E^*(T)$ is polyhedral if $T$ is polyhedral.  
Conversely, we assume that $E^*(T)$ is a polyhedral current on $U'$. 
We claim that $T$ is a polyhedral current. 
This can be checked locally in a neighbourhood of $p \in U$. 
We have
\[
E^{\ast}T=\sum _{\Delta'}\alpha' _{\Delta'} \wedge \delta_{[\Delta' ,\mu _{\Delta'
}]}
\]
for a locally finite family  $([{\Delta'}, \mu_{\Delta'}])_{\Delta' \in I'}$ of weighted polyhedra in $U'$ and $\alpha' _{\Delta'} \in A(\Delta')$. 
We pick $p'$ any point in the fiber $E^{-1}(p)=p'+\ker(L)_\R$ in our previous notation. 
Using the linear map $L \coloneqq E-E(0)$, we choose a splitting $N_\R'= N_\R \times \ker(L)_\R$ of linear spaces with projections $p_1=L_\R$ and $p_2$. 
We pick a point $p'$ in the fiber $E^{-1}(p)$ and an open neighbourhood $\Omega'=\Omega \times \Omega''$ of $p'$ accordingly to our splitting.
Let $k=\dim(\ker(L)_\R)=\dim(N_\R')-\dim(N_R)$. 
Then there is $\rho \in A_c^{k,k}(\Omega'')$ with $\int_{[\ker(L)_\R, \mu_L]} \rho=1$ and hence $E_*(p_2^*(\rho))=1$. 
For any $\omega \in A_c(\Omega)$, we conclude from the projection formula
\[
T(\omega)=T(E_*(p_2^*(\rho))\wedge \omega)=T(E_*(p_2^*(\rho) \wedge E^*\omega))=(E^*T)(p_2^*(\rho)\wedge E^*\omega)).
\]
Since $(E^*T)(p_2^*(\rho)\wedge \cdot))$ is a polyhedral current on $U'$ with relatively compact support over $U$, we conclude that the push-forward of this polyhedral current is a polyhedral current by \cite[\S 2.3]{mihatsch2021}. 
We deduce from the above display that $T$ is a polyhedral current on the open neighbourhood $\Omega$ of $p$.
\end{proof}

Recall that we have defined in \ref{definition of polyhedral currents} when a polyhedral current is constant towards the boundary. Note that for a polyhedral current $T$, we have a unique decomposition $T=\sum_{\sigma \in \Sigma} T_\sigma$ with $T_\sigma$ a polyhedral current of sedentarity $\sigma$. In this way, we can focus easily on polyhedral currents of given sedentarity $\sigma$ and then in a next step on polyhedral currents of sedentarity $0$ by restricting to $\overline{N(\sigma)}$. For such currents, we can use the above pull-back along the canonical projections $\pi_{\sigma} \colon N_\R
\to N(\sigma)$ to give an alternative characterization of constant
towards the boundary.

\begin{prop} \label{constant towards the boundary for polyhedral
    currents} Let $T$ be a polyhedral current on $U$ of tridegree
  $(p,q,r)$ of sedentarity $0$ which is constant towards the
  boundary. Then for every $\sigma \in \Sigma$, there is a unique
  polyhedral current $T(\sigma)$ of tridegree $(p,q,r)$ on the open
  subset $U(\sigma)=U \cap N(\sigma)$ of the affine space $N(\sigma)$
  with $T(0)=T|_{U(0)}$ and with the property that for every
  $x \in U(\sigma)$, there is an open neighbourhood $V$ of $x$ in $U$
  such that
  \begin{equation} \label{constant towards the boundary formula for
      pc} T(0) |_{V\cap \pi_{\sigma}^{-1}(V(\sigma))} = \pi_{\sigma}^*
    (T(\sigma)|_{V(\sigma)})|_{V\cap \pi_{\sigma}^{-1}(V(\sigma))}.
  \end{equation}
  Conversely, if there is for every $\sigma \in \Sigma$ a
  polyhedral current $T(\sigma)$ on $U(\sigma)$ with the above
  property \eqref{constant towards the boundary formula for pc},
  then there is a unique polyhedral current $T$ on $U$ of
  tridegree $(p,q,r)$ of sedentarity $0$ with
  $T|_{U(0)}=T(0)$. Moreover, this polyhedral current $T$ is
  constant towards the boundary and the given $T(\sigma)$ agree
  with the polyhedral currents on $U(\sigma)$ constructed in the
  first part of the statement for $T$.
\end{prop}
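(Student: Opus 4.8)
The plan is to prove the two implications separately, after recording a lemma. \emph{Lemma:} if $S$ is a polyhedral current on $U$ of sedentarity $0$ with $S|_{U\cap N_\R}=0$, then $S=0$. Writing $S=\sum_{\Delta\in I}\alpha_\Delta\wedge\delta_{[\Delta,\mu_\Delta]}$ with all $\Delta$ of sedentarity $0$, for $\eta\in A_c(U)$ of complementary degree Lemma \ref{support lemma for top-forms} shows $\alpha_\Delta\wedge\eta|_\Delta$ is supported in $\Delta(0)=\Delta\cap N_\R$ and is compactly supported there, so $S(\eta)=\sum_\Delta\int_{\Delta(0)}\alpha_\Delta\wedge\eta$; the finitely many $\Delta$ meeting $\supp\eta$ contribute supports inside a compact $K_0\subseteq U\cap N_\R$, and choosing $\phi\in A_c^{0,0}(U)$ supported in $N_\R$ with $\phi\equiv 1$ near $K_0$ gives $S(\eta)=S(\phi\eta)=(S|_{U\cap N_\R})(\phi\eta)=0$. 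I would also use that for polyhedral currents being constant towards the boundary is a local property (patch local constant-towards-the-boundary representations with a partition of unity, using Proposition \ref{polyhedral currents form sheaf}), and that the pull-back $\pi_\sigma^{*}$ of Remark \ref{pull-back of currents} stays injective after restriction to a slab $q+P+\langle\sigma\rangle_\R$, by the projection-formula argument in the proof of Proposition \ref{polyhedral current and equivalence for pull-back}.

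\emph{Forward direction.} Fix a constant-towards-the-boundary representation $T=\sum_{\Delta\in I}\alpha_\Delta\wedge\delta_{[\Delta,\mu_\Delta]}$ with each $\Delta$ of sedentarity $0$, codimension $r$ and constant towards the boundary, and $\alpha_\Delta\in A^{p,q}(\Delta)$. For $\sigma\in\Sigma$ set
\[
T(\sigma)\coloneqq\sum_{\Delta\,:\,\Delta(\sigma)\neq\emptyset}\alpha_\Delta(\sigma)\wedge\delta_{[\Delta(\sigma),\mu_{\Delta(\sigma)}]},
\]
where $\Delta(\sigma)=\Delta\cap N(\sigma)=\pi_\sigma(\Delta(0))$ is, by Lemma \ref{lemm:1} and Corollary \ref{polyhedraonconstant towards the boundary and orbit}, a codimension-$r$ polyhedron of $N(\sigma)$ constant towards the boundary for $\Sigma(\sigma)$ (note $\Delta(\sigma)\neq\emptyset$ forces $\langle\sigma\rangle_\R\subseteq\rec(\Delta(0))\subseteq\L_\Delta$), $\alpha_\Delta(\sigma)$ is the restriction to $N(\sigma)$ of the automatically constant-towards-the-boundary form $\alpha_\Delta$, and $\mu_{\Delta(\sigma)}$ is determined by $\mu_\Delta=\mu_L\wedge\mu_{\Delta(\sigma)}$ via $0\to\langle\sigma\rangle_\R\to\L_\Delta\to\L_{\Delta(\sigma)}\to 0$, with $\mu_L$ the canonical weight of $\langle\sigma\rangle_\R$ from Remark \ref{pull-back of currents}. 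Local finiteness of this family and $T(0)=T|_{U(0)}$ are immediate. For \eqref{constant towards the boundary formula for pc} near $x\in U(\sigma)$ I would pick a basic neighbourhood $V=\overline{q+P+\sigma}$ with $\pi_\sigma(q)=x$, shrunk so that all finitely many $\Delta$ meeting $V$ satisfy $\pi_\sigma^{-1}(\Delta(\sigma))\cap V(0)=\Delta(0)\cap V(0)$ and $\alpha_\Delta|_{\Delta(0)\cap V(0)}=\pi_\sigma^{*}(\alpha_\Delta(\sigma))|_{\Delta(0)\cap V(0)}$, which are precisely the defining conditions of being constant towards the boundary for $\Delta$ and for $\alpha_\Delta$. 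Since $V\cap\pi_\sigma^{-1}(V(\sigma))=V(0)$ for such $V$, expanding $T(0)|_{V(0)}=T|_{V(0)}$ and $\pi_\sigma^{*}(T(\sigma)|_{V(\sigma)})|_{V(0)}$ via Remark \ref{pull-back of polyhedral currents by Mihatsch} and using $\mu_L\wedge\mu_{\Delta(\sigma)}=\mu_\Delta$ matches them term by term (the two index sets agree because the constant-towards-the-boundary identity gives $\Delta(\sigma)\cap V(\sigma)\neq\emptyset\Leftrightarrow\Delta(0)\cap V(0)\neq\emptyset$). Finally \eqref{constant towards the boundary formula for pc} together with injectivity of $\pi_\sigma^{*}$ on $V(0)$ forces $T(\sigma)|_{V(\sigma)}$, hence $T(\sigma)$, to be unique, and in particular independent of the chosen representation.

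\emph{Converse.} Uniqueness of $T$ is the lemma. For existence I would glue local pieces. For $x\in U(\sigma)$ take the neighbourhood provided by \eqref{constant towards the boundary formula for pc}, shrunk to a basic neighbourhood $V=\overline{q+P+\sigma}$ so small that $T(\sigma)|_{V(\sigma)}=\sum_{j\in J}\beta_j\wedge\delta_{[\Lambda_j,\nu_j]}$ with $J$ finite and every $\Lambda_j$ a polytope in $V(\sigma)$. Put $C_j\coloneqq\pi_\sigma^{-1}(\Lambda_j)\cap V(0)$: this is a codimension-$r$ polyhedron of $N_\R$ with $\rec(C_j)=\pi_\sigma^{-1}(\rec\Lambda_j)\cap\rec(q+P+\sigma)=\langle\sigma\rangle_\R\cap\sigma=\sigma\in\Sigma$, so $\overline{C_j}$ is constant towards the boundary by Proposition \ref{recession-cone-boundary-condition}, and $\pi_\sigma^{*}(\beta_j)$ extends to a constant-towards-the-boundary form on $\overline{C_j}$ whose finite part at $\sigma$ is $\beta_j$ on $\Lambda_j=\overline{C_j}(\sigma)$ (Lemma \ref{lemm:1}). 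Then $T_x\coloneqq\sum_{j\in J}\widetilde{\pi_\sigma^{*}\beta_j}\wedge\delta_{[\overline{C_j},\mu_L\wedge\nu_j]}$ is a polyhedral current on $V$, constant towards the boundary, of sedentarity $0$ and tridegree $(p,q,r)$, with $T_x|_{V(0)}=\pi_\sigma^{*}(T(\sigma)|_{V(\sigma)})|_{V(0)}=T(0)|_{V(0)}$. For $x\in N_\R$ one just takes $T_x=T(0)$ on a small neighbourhood in $U\cap N_\R$. Any two such $T_x,T_y$ have sedentarity $0$ and restrict to $T(0)$ on $(V_x\cap V_y)\cap N_\R$, so they agree on $V_x\cap V_y$ by the lemma, and hence glue to a polyhedral current $T$ on $U$ of sedentarity $0$ and tridegree $(p,q,r)$, constant towards the boundary (local property), with $T|_{U(0)}=T(0)$. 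Applying the forward direction to $T$ yields currents $T'(\sigma)$ satisfying \eqref{constant towards the boundary formula for pc} with respect to the same $T(0)$; by the uniqueness proved there, $T'(\sigma)=T(\sigma)$. I expect the main obstacles to be the two local matchings: in the forward direction, shrinking $V$ so that the constant-towards-the-boundary identities for \emph{all} relevant polyhedra and forms hold simultaneously on all of $V(0)$ and then tracking the weights through Mihatsch's pull-back formula; and in the converse, arranging the thickening $C_j=\pi_\sigma^{-1}(\Lambda_j)\cap V(0)$ so that its recession cone is exactly $\sigma$ (hence it closes up constant towards the boundary) while it still fills the cylinder inside $V(0)$ (hence $T_x$ restricts to $T(0)$). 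The term-by-term bookkeeping in the forward direction is the step most likely to hide an indexing or weight error.
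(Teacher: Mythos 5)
Your argument is correct in substance and, in the first half, follows the paper's proof almost verbatim: you split each weighted polyhedron via $\mu_\Delta=\mu_{\pi_\sigma}\wedge\mu_{\Delta(\sigma)}$, push the polyhedron to $\Delta(\sigma)=\pi_\sigma(\Delta(0))$, use that the ambient smooth form is constant towards the boundary to produce $\alpha_{\Delta(\sigma)}$, and verify \eqref{constant towards the boundary formula for pc} from Mihatsch's pull-back formula; your explicit uniqueness argument (recovering $T(\sigma)$ locally from its pull-back via the projection-formula trick of Proposition \ref{polyhedral current and equivalence for pull-back}, applied on the cone-shaped sets $q+P+\sigma$) is a welcome addition that the paper leaves implicit. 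The converse is where you genuinely diverge: the paper chooses a partition of unity $(\varphi_j)$ with compact supports subordinate to the neighbourhoods from \eqref{constant towards the boundary formula for pc}, lifts each $\Delta_{\sigma,i}$ by a section of $\pi_\sigma$, thickens by $+\sigma$, and writes one global formula $T=\sum_j\sum_i\varphi_j\alpha_{j,i}\wedge\delta_{[\Delta_{j,i},\mu_{j,i}]}$, whose restrictions to the strata are then computed directly; you instead cut $T(\sigma)$ to finitely many polytopes on a shrunken basic neighbourhood, take the slab polyhedra $C_j=\pi_\sigma^{-1}(\Lambda_j)\cap(q+P+\sigma)$ (with $\rec(C_j)=\sigma$, so the closures are constant towards the boundary), build local currents $T_x$, and glue them by the sheaf property, using your lemma that a sedentarity-$0$ polyhedral current is determined by its restriction to $N_\R$ both for the overlap condition and for uniqueness of $T$. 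Your route buys a cleaner uniqueness/gluing mechanism and avoids choosing lifts, at the cost of two extra obligations the paper's one-formula construction sidesteps: you must justify that being constant towards the boundary is a local property of polyhedral currents (your partition-of-unity re-representation does this, but it needs the same local-finiteness bookkeeping as the sheaf proof, e.g.\ intersecting with compact constant-towards-the-boundary polyhedral neighbourhoods of the supports), and you must be careful with open versus compact neighbourhoods: the $V$ in the statement and the domain of a current are open, while $\overline{q+P+\sigma}$ is compact, and the cut polytopes $\Lambda_j$ (hence the closures $\overline{C_j}$) may touch the boundary of the shrunken neighbourhood, so you should work with two nested basic neighbourhoods and define $T_x$ on the interior of the smaller one. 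These are fixable bookkeeping points at the same level of detail as the paper's own proof, not gaps in the argument.
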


\begin{proof} 
The proof starts with the case $T=\delta_{[\Delta,\mu_{\Delta }]}$ for a polyhedron $\Delta$ of sedentarity $0$ contained in $U$ which is constant towards the boundary and a weight $\mu _{\Delta }$ in $\Delta $. 
Let $\sigma \in\Sigma $. 
If $\Delta (\sigma )=\emptyset$, we put $T(\sigma )=0$ that clearly satisfies the condition \eqref{constant towards the boundary formula for pc}. 
If $\Delta (\sigma )\not = \emptyset$, since $\Delta $ is constant towards the boundary, the map $\pi _{\sigma }$ induces an affine map
\begin{displaymath}
\pi _{\sigma ,\Delta }\colon \A_{\Delta }\longrightarrow \A_{\Delta (\sigma )}.
\end{displaymath}
Let $L_{\sigma ,\Delta }=\pi _{\sigma ,\Delta }-\pi _{\sigma ,\Delta}(0)$ be the associated linear map. 
Then $\ker(L_{\sigma ,\Delta})=\ker(\pi _{\sigma })$. 
Thus there exists a unique weight $\mu_{\Delta(\sigma )}$ satisfying
\begin{equation}\label{eq:6}
\mu_{\Delta } = \mu _{\pi _{\sigma }} \wedge \mu_{\Delta (\sigma )}
\end{equation}
where $\mu_{\pi_\sigma}$ is the canonical weight on $\ker(\pi _{\sigma })$ constructed in \ref{weighted short exact sequences}. 
Since $\Delta$ is constant towards the boundary, we have $\A_{\Delta}=\pi_\sigma^{-1}(\A_{\Delta(\sigma)})$ and the construction of the weight $\mu_{\Delta(\sigma )}$ is related to \ref{pull-back of polyhedral currents by Mihatsch}.
Setting $T(\sigma)\coloneqq \delta_{[\Delta(\sigma),\mu_{\Delta (\sigma )}]}$,
we deduce 
\eqref{constant towards the boundary formula for pc} from \eqref{eq:pull_back_current} and \eqref{eq:6}.

Next, we assume that $T=\alpha_\Delta \wedge \delta_{[\Delta,\mu_{\Delta }]}$ for a polyhedron $\Delta$ as above and for $\alpha_\Delta \in A^{p,q}(\Delta)$. 
By definition, the form $\alpha_\Delta$ is the restriction of some $\alpha'_\Delta \in
A^{p,q}(U)$. 
Let $\sigma \in \Sigma $ be a cone. 
If $\Delta (\sigma )= \emptyset$, we put again $T(\sigma)=0$. 
Se we assume that $\Delta (\sigma )\not = \emptyset$.  
Since $\alpha'_\Delta$ is constant towards the boundary, there exists a differential form $\alpha'_\Delta(\sigma )$ in $U(\sigma )$ such that, there exist an open set $U'$ on $N_{\Sigma }$ with $U(\sigma )\subset U'\subset U$ such that 
\begin{equation}\label{eq:7}
\alpha '_{\Delta }|_{U'\cap N_{R}}=\pi_{\sigma } ^{\ast}\alpha'_\Delta(\sigma )|_{U'\cap N_{\R}}.
\end{equation}
We put $\alpha_{\Delta(\sigma )}=\alpha'_\Delta(\sigma )|_{\Delta(\sigma )}$ and  $T(\sigma )=\alpha_{\Delta(\sigma )}\wedge \delta_{[\Delta (\sigma ),\mu _{\Delta (\sigma )}]}$. 
The first case considered above together with \eqref{eq:7} readily shows \eqref{constant towards the boundary formula for pc} for $T$ and $T(\sigma )$. 

In general, the current $T$ is a locally finite sum of currents
$\alpha_\Delta \wedge \delta_{[\Delta,\mu _{\Delta }]}$ considered
above. Locally around $x \in U(\sigma)$, only finitely many $\Delta$
are involved and so we deduce \eqref{constant towards the boundary
  formula for pc} for $T$ by linearity. 

Conversely, assume that for every $\sigma \in \Sigma$, there is a
polyhedral current $T(\sigma)$ on $U(\sigma)$ with \eqref{constant
  towards the boundary formula for pc}. There is a locally finite
family $(\Delta_{\sigma,i})_{i \in I_\sigma}$ of polyhedra in
$U(\sigma)$ of codimension $r$, weights $\mu _{\sigma,i}$
and forms $\alpha_{\sigma,i} \in
A^{p,q}(\Delta_{\sigma,i})$ such that  
\begin{displaymath}
  T(\sigma)=\sum_{i \in I_\sigma} \alpha_{\sigma,i} \wedge
  \delta_{[\Delta_{\sigma,i},\mu _{\sigma,i}]}
\end{displaymath}
on $U(\sigma)$.
For any $x \in U$, there is a unique $\sigma \in \Sigma$ with $x \in U(\sigma)$ and we denote by $V_x$ an open neighbourhood of $x$ for which \eqref{constant towards the boundary formula for pc} holds. 
Clearly, by shrinking $V_x$, we may assume that 
\begin{equation} \label{assumption on V_x}
	V_x \subset \bigcup_{\tau \prec \sigma} \pi_{\tau,\sigma}^{-1}(V_x(\sigma))
\end{equation}
and hence \eqref{constant towards the boundary formula for pc} holds on $V_x(0)$. 
By \cite[Proposition 3.2.12]{burgos-gubler-jell-kuennemann1}, there is a smooth partition of unity $(\varphi_j)_{j \in J}$ of $U$ with compact supports subordinate to the open covering $(V_x)_{x \in U}$. 
For every $j \in J$, we pick $x \in U$ such that $\supp(\varphi_j) \subset V_x$. Let $\sigma \in \Sigma$ with $x \in U(\sigma)$. 

We claim that there is a polyhedron $\Delta_{j,i}$ of $N_\Sigma$ of codimension $r$ contained in $V_x$, constant towards the boundary, such that $\Delta_{\sigma,i}=\Delta_{j,i}\cap U(\sigma)$. To see this, we lift $\Delta_{\sigma,i}$ to a polyhedron $\rho_{\sigma,i}$ of $N_\R$ which is mapped isomorphically to $\Delta_{\sigma,i}$ by $\pi_\sigma$. Then the closure $\Delta_{j,i}$ of $\rho_{\sigma,i}+\sigma$ is a polyhedron in $N_\Sigma$. By Lemma \ref{lemm:1}, we have  $\Delta_{j,i}(\sigma) \coloneqq \Delta_{j,i} \cap N(\sigma)= \Delta_{\sigma,i}$ and we have 
\begin{equation} \label{pull-back and polyhedron}
	\pi_{\sigma,\tau}^{-1}(\Delta_{j,i}(\sigma)) \cap \Delta_{j,i}=\Delta_{j,i}(\tau)
\end{equation} 
for every $\tau \prec \sigma$. In particular, $\Delta_{j,i}$ is constant towards the boundary and the description of the topology of $N_\Sigma$ given in \cite[Remark 3.1.2]{burgos-gubler-jell-kuennemann1} shows that we may assume that 
$\Delta_{j,i}$ is contained in $V_x$ by choosing a suitable lift $\rho_{\sigma,i}$ close enough to $x$.  This proves the above claim.

By pull-back using \eqref{pull-back and polyhedron}, we obtain $\alpha_{j,i} \in A^{p,q}(\Delta_{j,i})$ which extends $\alpha_{\sigma,i}\in A^{p,q}(\Delta_{\sigma,i})$. We claim that the family $(\Delta_{j,i})_{i \in I_\sigma}$ is locally finite in $V_x$. To see this, let $y \in V_x$. By \eqref{assumption on V_x}, we have $\pi_\sigma(y)\in V_x(\sigma) \subset U(\sigma)$ and hence only finitely many $\Delta_{j,i}(\sigma)$ contain $\pi_\sigma(y)$. By Lemma \ref{lemm:1}, we have $\pi_\sigma(\Delta_{j,i})=\Delta_{j,i}(\sigma)$ and hence $y \in \Delta_{j,i}$ for only finitely many $i \in I_\sigma$. 

Recall that for $j \in J$, we have chosen $x \in U$ with
$\supp(\varphi_j) \subset V_x$ and    $\sigma \in \Sigma$ was
determined by $x \in U(\sigma)$. As $x$ and $\sigma$ depend on $j$, we
set $I_j \coloneqq I_{\sigma,i}$ from now on.

We denote by $\mu _{j,i}=\mu _{\pi _{\sigma }}\wedge \mu _{\sigma ,i}$
the weight on $\Delta _{j,i}$ induced by $\mu _{\sigma ,i}$.
Using that $\supp(\varphi_j)$ is a locally finite family for $j \in J$, we conclude that 
\begin{displaymath}
  T \coloneqq \sum_{j \in J} \sum_{i \in I_j} \varphi_j \alpha_{j.i} \wedge \delta_{[\Delta_{j,i},\mu _{j,i}]}
\end{displaymath}
is a polyhedral current of tridegree $(p,q,r)$ on $U$ which is constant towards the boundary. 
For any $\sigma \in \Sigma$, we have 
\begin{displaymath}
  \sum_{j \in J} \sum_{i \in I_j} \varphi_j \alpha_{j.i} \wedge \delta_{[\Delta_{j,i}(\sigma),\mu _{j,i}(\sigma )]}
  = \sum_{j \in J} \sum_{i \in I_\sigma} \varphi_j \alpha_{\sigma.i} \wedge \delta_{[\Delta_{\sigma,i},\mu _{\sigma ,i}]}
  =\sum_{j \in J} \varphi_j T(\sigma)=T(\sigma)
\end{displaymath}
which proves in case $\sigma=0$ that $T|_{U(0)}=T(0)$ and in general
that $T(\sigma)$ agrees with the polyhedral current on $U(\sigma)$
constructed in the first part. Note that uniqueness is clear as any
polyhedral current on $U$ of sedentarity $0$ is determined by its
restriction to $U(0)$. 
\end{proof}

\subsection{Piecewise smooth forms} \label{ps forms and functoriality}
We introduce the bigraded sheaf $\PS$ of differential $\R$-algebras
  of piecewise smooth forms on $N_\Sigma$ and associate polyhedral
  currents with piecewise smooth forms.
\begin{definition} \label{piecewise_smooth}
  For an open subset $U$ of $N_\Sigma$, a \emph{piecewise smooth form}
$\alpha$ on $U$ is given by a locally finite covering
$(\Delta_i)_{i \in I}$ of $U$ by $n$-dimensional polyhedra $\Delta_i$
in $U$ and by
Lagerberg forms $\alpha_i \in A(\Delta_i)$ such that
\begin{displaymath}
  \alpha_i|_{\Delta_i \cap \Delta_j}=\alpha_j|_{\Delta_i \cap \Delta_j}
\end{displaymath}
for all $i,j \in I$. Suppose $\beta$ is a piecewise smooth form on $U$
given by a locally finite covering $U= \bigcup_{j \in J} \Delta_j'$
with polyhedra $\Delta_j'$ in $U$ and by $\beta_j \in
A(\Delta_j')$. We identify $\alpha$ with 
$\beta$ if and only if the collections $\{\Delta_i\}_I\cup
\{\Delta_j\}_J$ and $\{\alpha _i\}_I\cup
\{\beta  _j\}_J$ define a piecewise smooth form. In other words, if
and only if 
\begin{displaymath}
\alpha_i|_{\Delta_i \cap \Delta_j'}=\beta_j|_{\Delta_i \cap \Delta_j'} \in A(\Delta_i \cap \Delta_j')
\end{displaymath}
for all $i\in I$, $j\in J$.  
We say that $\alpha$ has \emph{bidegree $(p,q)$} if $\alpha_i \in A^{p,q}(\Delta_i)$ for all $i \in I$.
\end{definition}

\begin{lemma}\label{lemm:6} 
The assignement that associates to each open set $U$ the space of piecewise smooth forms $\PS(U)$ on $U$ is a bigraded sheaf.
\end{lemma}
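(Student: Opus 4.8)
The plan is to check that the assignment $U\mapsto\PS(U)$, together with a restriction map that we still have to construct, is a presheaf and satisfies the two sheaf axioms. The only delicate point is the restriction map, because a polyhedron $\Delta$ of $N_\Sigma$ need not be contained in a given open subset $V$, and intersections of polyhedra in $N_\Sigma$ need not be polyhedra (see Example \ref{counterexamples}); the remedy is to pass to closures of intersections taken inside the finite parts.

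\emph{Restriction maps.} Let $V\subseteq U$ be open and let $\alpha\in\PS(U)$ be represented by a locally finite covering $(\Delta_i)_{i\in I}$ of $U$ by $n$-dimensional polyhedra together with forms $\alpha_i\in A(\Delta_i)$; note that an $n$-dimensional polyhedron of $N_\Sigma$ automatically has sedentarity $(0)$. Using that $N_\Sigma$ is locally compact, $\sigma$-compact and metrizable \cite[Remark 3.1.2]{burgos-gubler-jell-kuennemann1} and that every point has a neighbourhood basis of compact $n$-dimensional polyhedra (the closures $\overline{q+P}$ of full-dimensional polytopes, which are compact by Lemma \ref{lemm:5}), I would choose a covering $(Q_k)_k$ of $V$ by compact $n$-dimensional polyhedra $Q_k\subseteq V$ that is locally finite in $V$. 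Each $Q_k$ meets only finitely many $\Delta_i$, say with $i\in I_k$. For $i\in I_k$ set $\Delta_{i,k}\coloneqq\overline{\Delta_i(0)\cap Q_k(0)}$, a polyhedron contained in $Q_k\cap\Delta_i\subseteq V$, and discard those with $\dim\Delta_{i,k}<n$. Since every element of $A(\Delta_i)$ is the restriction of a global smooth Lagerberg form on $N_\Sigma$, the form $\alpha_i$ restricts to some $\alpha_{i,k}\in A(\Delta_{i,k})$; on the intersection of any two of the polyhedra $\Delta_{i,k}$ these forms agree on the relative interior of the finite part, because that finite part lies in $\Delta_i(0)\cap\Delta_j(0)$ where $\alpha_i=\alpha_j$, and a form in $A(\Delta)$ is determined by its restriction to $\relint$ of the finite part. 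The family $(\Delta_{i,k})$ is locally finite in $V$, and since $V\cap N_\R$ is dense in $V$ and a locally finite union of the closed sets $\Delta_{i,k}$ is closed, the $n$-dimensional members cover $V$. Hence $(\Delta_{i,k},\alpha_{i,k})$ represents an element $\alpha|_V\in\PS(V)$, and I would then check (by passing to common refinements) that $\alpha|_V$ is independent of all choices, respects the identification of Definition \ref{piecewise_smooth}, and is compatible with composition $W\subseteq V\subseteq U$. Restriction clearly preserves the bidegree decomposition $\PS=\bigoplus_{p,q}\PS^{p,q}$, the bidegree of $\alpha$ being read off from the $\alpha_i$ and being well defined because the $\Delta_i$ are $n$-dimensional; so $\PS$ becomes a bigraded presheaf.

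\emph{Separatedness.} If $\alpha,\beta\in\PS(U)$ restrict to the same element of $\PS(U_\lambda)$ for each member of an open covering $(U_\lambda)$ of $U$, then near every point the representing data of $\alpha$ and $\beta$ become compatible in the sense of Definition \ref{piecewise_smooth}; since this compatibility is a purely local condition, $\alpha=\beta$.

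\emph{Gluing.} Given an open covering $(U_\lambda)_{\lambda\in\Lambda}$ of $U$ and forms $\alpha^{(\lambda)}\in\PS(U_\lambda)$ with $\alpha^{(\lambda)}|_{U_\lambda\cap U_\mu}=\alpha^{(\mu)}|_{U_\lambda\cap U_\mu}$, I would again choose a locally finite (in $U$) covering $(P_k)_k$ of $U$ by compact $n$-dimensional polyhedra $P_k\subseteq U_{\lambda(k)}$ refining $(U_\lambda)$. Because $P_k$ is compact, $\alpha^{(\lambda(k))}|_{P_k}$ is represented by finitely many polyhedra in $P_k$ with smooth forms; collecting these representing data over all $k$ yields a locally finite family covering $U$, and on the intersection of a polyhedron coming from $P_k$ with one coming from $P_l$ the two forms agree since $\alpha^{(\lambda(k))}$ and $\alpha^{(\lambda(l))}$ agree on $U_{\lambda(k)}\cap U_{\lambda(l)}\supseteq P_k\cap P_l$. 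This produces $\alpha\in\PS(U)$, and $\alpha|_{U_\lambda}=\alpha^{(\lambda)}$ because, restricted to $U_\lambda$, both are refined by the data of the various $\alpha^{(\lambda(k))}$, which agree with $\alpha^{(\lambda)}$ on $U_\lambda\cap U_{\lambda(k)}$. Together with separatedness this shows $\PS$ is a sheaf, and the argument is degree-wise, so each $\PS^{p,q}$ is a sheaf as well.

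The main obstacle, and the only nonformal step, is the restriction map: one must replace the given polyhedra $\Delta_i$ by the well-behaved polyhedra $\overline{\Delta_i(0)\cap Q_k(0)}$ and argue that passing to these closures loses none of the covering (using density of $V\cap N_\R$ in $V$ and local finiteness) and loses no information about the forms (using that a form on a polyhedron is determined by its finite part). Everything after that is a routine verification of the presheaf axioms and of the separation/gluing conditions, made possible by paracompactness of $N_\Sigma$ and the existence of compact polyhedral neighbourhood bases.
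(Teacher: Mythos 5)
Your proposal is correct and follows essentially the same route as the paper: establish the presheaf structure by restricting a representing family to a refined covering by full-dimensional polyhedra (the paper covers each $\Delta_i\cap W$ directly, you intersect with a locally finite cover of $V$ by compact $n$-dimensional polyhedra and take closures of the finite parts — the same maneuver), then glue using paracompactness/local finiteness, and check separatedness by the local nature of the compatibility condition in Definition \ref{piecewise_smooth}. Your extra care that the discarded lower-dimensional intersections do not spoil the covering (every point of $V\cap N_\R$ lies in a full-dimensional intersection, since finitely many lower-dimensional polyhedra cannot cover an open set) is a routine point the paper also leaves implicit.
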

\begin{proof}
First we have to prove that it is a presheaf. 
That holds as we can restrict piecewise smooth forms to open subsets. 

Let $W\subset U$ be an inclusion of open subsets and $\alpha =\{(\Delta_{i},\alpha _{i})\}_{i\in I}$ a piecewise smooth form on $U$. 
For each $\Delta_{i}$ let $\{\Delta_{i,j}\}_{j\in J_{i}}$ be a locally finite covering of $\Delta_{i}\cap W$ by full dimensional polyhedra. 
Then $\{\Delta_{i,j}\}_{i\in I,j\in J_{i}}$ is a locally finite covering of $W$ by $n$-dimensional polyhedra. 
Write $\alpha _{i,j}=\alpha _{i}|_{\Delta_{i,j}}$. 
Then the restriction of $\alpha $ to $W$ is given by $\alpha =\{(\Delta_{i,j},\alpha _{i,j})\}_{i\in I,j\in J_{i}}$. 
One can check that this restriction defines a presheaf.

Next we check the conditions for being a  sheaf. 
First we see that we can glue local sections. 
Let $U$ be an open subset and $\{U_{\lambda }\}_{\lambda\in \Lambda }$ an open covering of $U$. 
For each $\lambda $ let $\alpha _{\lambda }=\{(\Delta _{\lambda,i},\alpha _{\lambda ,i})\}_{i\in I_{\lambda }}$ be a piecewise smooth form on $U_{\lambda }$ satisfying that $\alpha _{\lambda }|_{U_{\lambda }\cap U_{\mu }}=\alpha _{\mu }|_{U_{\lambda }\cap U_{\mu }}$. 
We want to define a section $\alpha $ on $U$. 

Since $U$ is paracompact, there is a locally finite subcovering $\{U_{\lambda }\}_{\lambda\in \Lambda' }$, with $\Lambda '\subset \Lambda $.  
Then the covering $ \{\Delta _{\lambda ,i}\}_{\lambda \in \Lambda' ,i\in I_{\lambda }}$ is locally finite and $\alpha =\{\alpha _{\lambda,i},\Delta_{\lambda ,i}\}_{\lambda \in \Lambda ',i\in I_{\lambda}}$ is a piecewise smooth form satisfying $\alpha |_{U_{\lambda}}=\alpha _{\lambda }$.   

  Finally we have to show that, if a piecewise smooth form is locally
  zero then it is zero.  Let $U$ be an open subset,  $\alpha
  =\{(\Delta _{i},\alpha _{i})\}_{i\in I}$ and $\{U_{\lambda }\}_{\lambda
    \in \Lambda }$ an open covering of $U$ such that $\alpha
  |_{U_{\lambda }}=0$. This means that for each $\lambda $ there is a
  covering $\{\Delta _{\lambda ,i}\}_{i\in I_{\lambda }}$ such that,
  for each $i\in I$, $\lambda \in \Lambda $ and $j\in I_{\lambda }$
  the condition $\alpha _{i}|_{\Delta _{i}\cap \Delta _{\lambda
      ,j}}=0$ holds. Since the $\Delta _{\lambda
      ,j}$ cover the whole $\Delta _{i}$ we deduce that $\alpha =0$.
  \end{proof}

\begin{lemma}\label{choose-constant-lemma}  In Definition
  \ref{piecewise_smooth} we can always choose the 
  locally finite covering to consist of polyhedra that are constant 
  towards the boundary. 
\end{lemma}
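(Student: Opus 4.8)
The plan is to localise by the sheaf property of $\PS$ and, near a point $x$ of sedentarity $\sigma$, to push the representing data down to the boundary orbit $N(\sigma)$ — an ordinary affine space, where ``constant towards the boundary'' is vacuous — and then lift it back by thickening with the cone $\sigma$. Since $\PS$ is a sheaf (Lemma \ref{lemm:6}) and, by Corollary \ref{cor:intersection}, intersections of polyhedra of $N_\Sigma$ that are constant towards the boundary are again polyhedra of that kind, local representations of $\alpha$ by $n$-dimensional polyhedra constant towards the boundary patch together to a global one. Hence it suffices to show that every $x \in U$ has a neighbourhood on which $\alpha$ has such a representation.

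So I would fix $x \in U$, set $\sigma \coloneqq \mathrm{sed}(x)$, and use local finiteness to pass to a neighbourhood meeting only the finitely many $\Delta_i$ with $i$ in a finite set $F$; discarding those closed $\Delta_i$ not containing $x$, we may assume $x \in \Delta_i$ for all $i \in F$ and that $\bigcup_{i \in F}\Delta_i$ is a neighbourhood of $x$. By the definition of $A(\Delta_i)$ in \S\ref{subsection: polyhedral currents} each $\alpha_i$ extends to a smooth Lagerberg form near $x$, and since smooth forms on $N_\Sigma$ are constant towards the boundary, on a sufficiently small box neighbourhood $Q$ of $x$ (of the type used in the proof of Lemma \ref{support lemma for top-forms} and in \cite[Remark 3.1.2]{burgos-gubler-jell-kuennemann1}, so that $Q\cap N_\R = q + P + \sigma$ and $Q = \overline{Q\cap N_\R}$) this extension equals $(\overline{\pi_\sigma})^*\beta_i$ for a smooth form $\beta_i$ on an open subset of the affine space $N(\sigma)$ around $x$. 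Restricting the relations $\alpha_i = \alpha_j$ on $\Delta_i \cap \Delta_j$ to $N(\sigma)$, where $\overline{\pi_\sigma}$ restricts to the identity, yields $\beta_i = \beta_j$ on $\Delta_i(\sigma)\cap\Delta_j(\sigma)$ near $x$; hence the $\beta_i$ glue to a piecewise smooth form $\alpha_\sigma$ on $Q\cap N(\sigma)$ with $\alpha|_{Q\cap N_\R} = \pi_\sigma^*(\alpha_\sigma)$.

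Then I would lift. Choose a complement $\L_\sigma^\perp$ of $\L_\sigma$ in $N_\R$; for $i \in F$ put $D_i \coloneqq \Delta_i(\sigma)\cap (Q\cap N(\sigma))$, a polytope which is full-dimensional in $N(\sigma)$ because $\Delta_i$ is $n$-dimensional and $\pi_\sigma$ is surjective, lift it to a polytope in the slice $q + \L_\sigma^\perp$, and let $\hat D_i$ be the closure in $N_\Sigma$ of that lift plus $\sigma$. Then $\hat D_i$ is an $n$-dimensional polyhedron contained in $Q \subseteq U$ with $\rec(\hat D_i(0)) = \sigma \in \Sigma$, so it is constant towards the boundary by Proposition \ref{recession-cone-boundary-condition}; by Lemma \ref{lemm:1}, $\pi_\sigma(\hat D_i(0)) = D_i$, whence $\alpha|_{\hat D_i} = (\overline{\pi_\sigma})^*(\beta_i)|_{\hat D_i}$ is the restriction of a smooth form, i.e. an element of $A^{p,q}(\hat D_i)$. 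The family $(\hat D_i)_{i\in F}$ covers $Q$ (its finite parts exhaust $(q+P)+\sigma = Q\cap N_\R$, whose closure is $Q$), the pairwise intersections are again polyhedra constant towards the boundary by Corollary \ref{cor:intersection}, and the forms $(\overline{\pi_\sigma})^*(\beta_i)$ agree on them — and with the original $\alpha_j$ on $\hat D_i \cap \Delta_j$ — because in all these cases the projection to $N(\sigma)$ lands in a set on which the relevant $\beta$'s coincide. Thus $\{(\hat D_i, \alpha|_{\hat D_i})\}_{i\in F}$ is a representation of $\alpha|_Q$ of the desired form, equivalent to the given one, which completes the local step.

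The substantive step is the descent in the second paragraph — recognising $\alpha$, near $x$, as a pull-back from the affine orbit $N(\sigma)$. This rests on the rigidity of smooth forms towards the boundary (already exploited in the proof of Lemma \ref{support lemma for top-forms}) together with the compatibility check for the descended pieces $\beta_i$ on $N(\sigma)$. Once this is available the lifting is easy, essentially because thickening a bounded polyhedron by $\sigma$ produces a polyhedron whose recession cone is exactly the cone $\sigma \in \Sigma$, which by Proposition \ref{recession-cone-boundary-condition} is precisely the combinatorial condition for being constant towards the boundary. A recurring technical point, handled throughout by Corollary \ref{cor:intersection}, is to keep all the intersections of polyhedra that arise within the class of polyhedra so that restrictions of forms make sense.
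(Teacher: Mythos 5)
Your proposal is correct and follows essentially the same route as the paper: localize at a point of sedentarity $\sigma$, use that the smooth extensions of the $\alpha_i$ are constant towards the boundary to write them near $x$ as $\overline{\pi}_\sigma^*\beta_i$, and replace the covering polyhedra by $\sigma$-cylinders over their traces on $N(\sigma)$, which are constant towards the boundary since their recession cone is $\sigma\in\Sigma$ (your $\hat D_i$ coincide with the paper's $\Delta_{p,i}=K_p\cap\pi_{\sigma}^{-1}(\Delta_i(\sigma))$). The only cosmetic difference is that you globalize by invoking the sheaf property of $\PS$ (with finite local families, so local finiteness is preserved) instead of the paper's explicit compact exhaustion, which is the same argument packaged differently.
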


\begin{proof}
For each cone $\sigma \in \Sigma $ we will denote
\begin{displaymath}
N_\sigma =\bigcup_{\tau \prec \sigma }N(\tau )\subset N_{\Sigma }.
\end{displaymath}
This is the tropical analogue of the affine toric variety $X_\sigma$. 
Then $N(\sigma )$ is the minimal stratum of $N_\sigma $ and there is a canonical projection $\bar\pi_{\sigma } \colon N_\sigma \to N(\sigma )$.
  
Let $\alpha = \{\Delta_i,\alpha_i\}_{i\in I}$ be a piecewise smooth form. 
For every point $p\in U$, let $\sigma _p$ be the unique cone with $p\in N(\sigma _p)$. 
We claim that there is an $n$-dimensional polyhedron $K_p$ such that
\begin{enumerate}
\item 
$K_p\subset U$ and $K_p$ is a neighbourhood of $p$.
\item 
$\rec(K_p\cap N_{\R})=\sigma_p $. 
In particular $K_p$ is constant towards the boundary and $K_p(\sigma _p)=K_p\cap N(\sigma_p )$ is a compact polyhedron.
\item 
The set of $i\in I$ with $\Delta _i\cap K_p\not = \emptyset$ is finite. 
Let $I_p$ be this set.
\item \label{item:6} 
For all $i\in I_p$, the conditions $\Delta _i(\sigma_p )\not =\emptyset$ and
\begin{displaymath}
\alpha _i|_{K_p\cap \Delta _i}=\bar \pi_{\sigma_p}  ^{\ast}(\alpha _i|_{N(\sigma_p)\cap \Delta _i})|_{K_p\cap \Delta _i}
\end{displaymath}
are satisfied. 
The display makes sense because, using Lemma \ref{lemm:1}, if $\Delta _i(\sigma _p)\not =\emptyset$ and $\rec(K_p \cap N_\R)=\sigma_p $, then we have
$K_p\cap\Delta _i\subset \bar \pi_{\sigma_p } ^{-1}(N(\sigma_p )\cap \Delta _i)$.
\end{enumerate}
To prove existence, we set $\sigma = \sigma_p$ for simplicity.
We choose a complementary subspace $\L_\sigma^\perp$ to $\L_\sigma$ in $N_\R$. 
We choose a polytopal neighbourhood $P$ of $0$ and a point $q \in N_\R$ with $\pi_\sigma(q)=p$. 
Then the closure  of $q+P$ in $N_\Sigma$ is a polytope in $N_\Sigma$. 
Moreover, these closures for varying $q$ and $P$ form a basis of neighbourhoods for $p$, see \cite[Remark 3.1.2]{burgos-gubler-jell-kuennemann1}. 
We use $K_p$ for such a neighbourhood choosing $q$ sufficiently close to $p$ and $P$ sufficiently small. 
It is clear that (i) and (ii) are satisfied. 
Since the $n$-dimensional polyhedra $\Delta_i$ form a locally finite covering, it is clear that (iii) is fulfilled if we choose $K_p$ sufficiently close to $p$. 
Condition (iv) follows simply from the fact that $\alpha_i$ is constant towards the boundary again using that we can choose $K_p$ sufficiently close to $p$. 
This proves existence of $K_p$.

We write $\Delta _{p,i}=K_p\cap \pi_{\sigma_p } ^{-1}(N(\sigma _p)\cap \Delta_i)$. 
This is a  polyhedron in $N_\Sigma$ which is constant towards the boundary. 
We also write $\alpha _{p,i}= \pi_{\sigma_p } ^{\ast}(\alpha _i|_{N(\sigma)\cap \Delta _i})$.
We claim that, for every $j\in I$,
\begin{displaymath}
  \alpha _{p,i}|_{\Delta _{p,i}\cap \Delta _j}= \alpha _{j}|_{\Delta
    _{p,i}\cap \Delta _j}. 
\end{displaymath}
If $j\not \in I_p$, then $\Delta _j\cap K_p=\emptyset$ and there is
nothing to prove, so we can assume that $j\in I_p$.
The differential forms $\alpha _j |_{\Delta_j (\sigma )}$, $j\in I_p$ define a
piecewise smooth differential form $\beta $ on $K_p(\sigma _p) \subset
N(\sigma_p )$ and, by condition \ref{item:6} above,   
\begin{displaymath}
  \alpha _{p,i}|_{\Delta _{p,i}\cap \Delta _j\cap N_{\R}}=\bar \pi _{\sigma _p}^{\ast}(\beta )|_{\Delta _{p,i}\cap \Delta _j\cap N_{\R}}=
  \alpha _{j}|_{\Delta
    _{p,i}\cap \Delta _j\cap N_{ \R}}. 
\end{displaymath} 
It remains to be shown that there is a locally finite collection of the
sets $\Delta_{p,i}$ that covers $U$. 
Since $U$ as an open subset of $N_\Sigma$ is $\sigma$-compact and
locally compact, we know that $U$ is exhaustible by compact sets.
 That is, there is a sequence of compact subsets 
\begin{displaymath}
  \dots \subset K_n \subset \tint(K_{n+1}) \subset K_{n+1} \subset
  \tint(K_{n+2})\subset \dots
\end{displaymath}
such that $U=\bigcup_n K_n$. We put $K_n = \emptyset$ for all $n\le
0$. Then for any point in the compact set $p\in K_{n+1}\setminus \tint
(K_{n})$ we can choose the set $K_p$ to be contained in the open
subset $\tint(K_{n+2})\setminus K_{n-1}$. Then there is a finite number
of points $p_{n,1},\dots, p_{n,r_n}$ such that $K_{n+1}\setminus \tint(
K_{n})$ is covered by the sets $K_{p_{n,i}}$ for $i=1,\dots,r_n$. Then
the sets $K_{p_{n,i}}$, $n\ge 1$, $i\le r_n$ form a locally finite
covering of $U$. Since each $K_{p_{n,i}}$ is covered by the finite
family $\Delta _{p_{n,i},j}$, $j\in I_{p_{n,i}}$. we deduce that the
family $\Delta _{p_{n,i},j}$, $n\ge 1$, $i\le r_n$, $j\in I_{p_{n,i}}$
is a locally finite covering of $U$ by polyhedra that are constant
towards the boundary.  

To recap our proof, we can now represent the piecewise smooth form $\alpha$ by the family 
\[
(\Delta_{p_{n,i},j},\alpha_{p_{n,i},j})_{n\geq 1, i\leq r_n,j\in I_{p_{n,i}}},
\] 
as we already checked above.
\end{proof}

\begin{rem}[Piecewise smooth forms as polyhedral currents]
\label{piecewise smooth form and associated polyhedral current}
Piecewise smooth forms give rise to polyhedral currents as follows. 
Suppose that $\alpha$ is a piecewise smooth form on $U$ given as in Definition \ref{piecewise_smooth}. 
For every $x \in U$ and every relatively compact open neighbourhood $W$ of $x$ in $U$, there are only finitely many $i \in I$ with $\Delta_i \cap \overline W \neq \emptyset$. 
We pick these finitely many $n$-dimensional polyhedra $\Delta_i$ and after subdividing, we may assume that they form the $n$-dimensional part of a finite polyhedral complex $\Ccal$ in $U$.  
Let $\mu$ be the weight of $N_\R$ determined by the lattice $N$. 
We define a polyhedral current $[\alpha]_W$ on $W$ by
\begin{displaymath}
\alpha \mapsto \alpha \wedge [\Delta,\mu] \coloneqq
\sum_{\rho \in \Ccal_d} \alpha_\rho \wedge [\rho,\mu].
\end{displaymath}
Obviously, this current does not depend on the choice of the presentation $\alpha_i \in A(\Delta_i)$, $i \in I$. 
By Lemma \ref{choose-constant-lemma}, we can assume that $\Ccal$ is constant towards the boundary. 
As a current is determined locally, we get a well-defined polyhedral current $[\alpha]$ on $U$ which agrees with $[\alpha]_W$ on $W$ for every relatively compact open subset $W$ of $U$. 
We get an  $\R$-linear embedding $\PS(U) \hookrightarrow D(U)$, given by $\alpha \mapsto [\alpha]$. 
\end{rem}
 
\begin{rem}[Polyhedral derivatives for piecewise smooth forms] 
\label{polyhedral derivatives for ps forms}
There are  \emph{polyhedral derivatives} $\dpa$ and $\dpb$ for piecewise smooth forms. 
If $\alpha$ is represented by $(\Delta_i,\alpha_i)_{i \in I}$, then $\dpa \alpha$ is represented by $(\Delta_i,d'\alpha_i)_{i \in I}$ and $\dpb \alpha$ is represented by $(\Delta_i,d''\alpha_i)_{i \in I}$. 
We conclude easily that $\PS$ is a  sheaf  of bigraded differential $\R$-algebras with respect to the differentials $\dpa$ and $\dpb$.
\end{rem}

\begin{rem}[Pull-back for piecewise smooth forms] \label{functoriality of ps}
Let $N_{\Sigma'}'$ be another tropical toric variety with underlying lattice $N'$ of rank $n'$. 
Let  $E\colon N_\R \to N'_{\R}$ be an affine map with underlying $\R$-linear map $L$. 
We assume that $E$ extends to an $L$-equivariant map $F\colon N_{\Sigma} \to N_{\Sigma'}'$, see \S \ref{subsec: notation}.
For any open subset $U$ of $N_\Sigma$ and any open subset $U'$ of $N_{\Sigma'}'$ with $F(U)\subset U'$, we get a natural pull-back
\[
\PS(U') \longrightarrow \PS(U) \quad, \quad \alpha' \mapsto F^*(\alpha')  
\]
given as follows: 
Suppose that $\alpha'$ is given by a locally finite covering $(\Delta_j')_{j \in J}$ of $n'$-dimensional polyhedra $\Delta_j'$ in $U'$ 
and by $\alpha_j' \in A(\Delta_j')$. 
Then for every $j \in J$, we get an $n$-dimensional polyhedron $\Delta_j \coloneqq F^{-1}(\Delta_j')$ in the open subset $F^{-1}(U')$ of $N_\Sigma$ 
The presentation $F^*(\alpha_j')\in A(\Delta_j)$, $j \in J$, gives a piecewise smooth form $F^*(\alpha') \in \PS(F^{-1}(U'))$ which does not depend on the choice of the representation. 
Now we note that $U$ is a subset of $F^{-1}(U')$. 
Replacing $\Delta_j$ by a locally finite family of polyhedra in $U$ covering $\Delta_j \cap U$ and using the Lagerberg form $F^*(\alpha_j')$ for all of them, we get a well-defined $F^*(\alpha') \in \PS(U)$. 
\end{rem}

\subsection{The Theorem of Stokes for smooth forms} \label{subsection theorem of Stokes}
We extend Stokes' Theorem for smooth forms from $N_\R$ to $N_\Sigma$.

\begin{art}\label{contraction}
A $(p,q)$-form $\alpha$ on an open subset $U$ of $N_\R$ is by definition a $(p+q)$-linear form on $N_\R$ with values in $C^\infty(U)$ which is alternating in the first $p$ variable and alternating in the last $q$-variables. 
Using $\Lambda^p M_\R \otimes_{{\R}}\Lambda^q M_{{\R}} \hookrightarrow \Lambda^{p+q} (M_\R\times M_\R)$, we can see $\alpha$ also as an alternating $(p+q)$-form on $N_\R \times N_\R$ which we will write as $\alpha(w_1',\dots,w_p',w_1'',\dots,w_q'')$. This is useful to define the contraction $(\alpha,w)$ for any $w \in N_\R \times N_\R $ as the $(p+q-1)$-form on $N_\R \times N_\R$ given by inserting $w$ for $w_1'$. 
This is an interior derivative as in analysis and, if $w=(w_{1},w_{2}),$ it is characterized by
\begin{displaymath}
(\alpha \wedge \beta, w)= (\alpha,w_{1}) \wedge \beta +
(-1)^{\deg \alpha} \alpha \wedge (\beta,w_{{2}}) 
\end{displaymath}
for a $(p,q)$-form\ $\alpha$ and a $(p',q')$-form $\beta$, and by the identities
\begin{displaymath}
(d'\varphi,w)=\frac{\partial \varphi}{\partial w_1}, \quad
(d''\varphi,w)
=\frac{\partial \varphi}{\partial w_2}
\end{displaymath}
for a smooth function $\varphi$ on $U \times U$. 
For $v \in N_\R$, we set $v'=(v,0)$ and $v''=(0,v)$. Then we see that $(\alpha,v')$ is a $(p-1,q)$-form while $(\alpha,v'')$ is a $(p,q-1)$-form. 
\end{art}

\begin{rem} \label{orthogonal lattice vectors}
Let $[\Delta,\mu]$ be a weighted polyhedron in $N_\R$ of dimension $m$
and $\tau$ a facet (i.e.~a codimension one face) of $\Delta$ endowed
with a weight $\nu$. Then there is a unique vector $n_{\Delta,\tau}$ in $\L_\Delta/\L_\tau$ which points in the direction of $\Delta$ and satisfies $\mu = \nu \wedge
n_{\Delta,\tau}$ using the notation introduced in Remark \ref{weighted short exact sequences}.
\end{rem} 

\begin{rem}[Boundary integrals] \label{definition of boundary integrals}
Now let $\Delta$ be any polyhedron in $N_\Sigma$ of dimension $m$. Our
goal is to define boundary integrals over $\Delta$. 
We may assume that $\Delta$ has sedentarity $\{0\}$ which means that $\Delta$ is the closure of  a polyhedron contained in $N_{\R}$, otherwise we replace $N$ by $N(\sigma)$ for the sedentarity $\sigma$ of $\Delta$. 
Let $\alpha \in A_c^{m-1,m}({N_{\Sigma }})$.
By Lemma \ref{support lemma for top-forms},  $\alpha|_{\Delta }$ has (compact) support in $N_{\R} \cap \Delta$
as  $\dim \Delta =m$.  
It means that we only have to consider facets $\tau$ of $\Delta$ contained in the finite part $N_\R$. 
We define  
\[
\int_{\partial_\tau'[\Delta,\mu]} \alpha \coloneqq
-\int_{[\tau,\nu]} (\alpha,n_{\Delta,\tau}'')|_\tau
\]
which is independent of the choice of $\nu$ and of the normal vector $n_{\Delta,\tau}$. 
Then we define 
\[
\int_{\partial'[\Delta,\mu]} \alpha \coloneqq \sum_{\tau \prec \Delta} 	\int_{\partial_\tau'[\Delta,\mu]} \alpha
\]
where $\tau$ runs over all {finite} facets of $\Delta$.
Similarly, for $\beta \in A_c^{m,m-1}(N_\Sigma)$, we define 
\[
\int_{\partial_\tau''[\Delta,\mu]} \beta \coloneqq \int_{[\tau,\nu]} (\beta,n_{\Delta,\tau}')|_\tau
\]
and 
\[
\int_{\partial'[\Delta,\mu]} \beta \coloneqq \sum_{\tau \prec\Delta}\int_{\partial_\tau'[\Delta,\mu]} \beta
\]
where $\tau$ is running over all {finite} facets of $\Delta$.
We write $\partial'[\Delta,\mu]$ and $\partial''[\Delta,\mu]$ for the corresponding currents.
If $\dim(\Delta)>0$, then the currents
${\partial}'[\Delta,\mu]$ and
${\partial}''[\Delta,\mu]$ 
are never polyhedral. 

Note that the above boundary integrals depend only on the restriction of the form to $\Delta$.
Hence they are well-defined for $\alpha \in A_c^{m-1,m}(\Delta)$ and $\beta \in A_c^{m,m-1}(\Delta)$.
\end{rem}

Using the above notation, we have the \emph{theorem of Stokes} in the same way as in analysis
\begin{thm} \label{theorem of Stokes}
Let $\alpha \in A_c^{m-1,m}(\Delta)$, $\beta  \in
A_c^{m,m-1}(\Delta)$ and $[\Delta,\mu ] $ a weighted  polyhedron of dimension
$m$. Then 
\begin{equation} \label{formula of Stokes}
\int_{[\Delta,\mu]} d'\alpha = \int_{\partial'[\Delta,\mu]} \alpha, \quad  \int_{[\Delta,\mu]} d''\beta = \int_{\partial'[\Delta,\mu]} \beta 
\end{equation}
\end{thm}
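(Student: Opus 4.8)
The idea is to reduce the statement to the theorem of Stokes for smooth Lagerberg forms over a weighted polyhedron in a real affine space (the affine case underlying the integration theory recalled in \ref{current associated to weighted polyhedron}; see \cite{mihatsch2021}). The point is that, for the bidegrees occurring here, everything is concentrated in the finite part of $\Delta$.

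\emph{Reduction to sedentarity $\{0\}$.} Let $\sigma \coloneqq \mathrm{sed}(\Delta)$. By Lemma \ref{support lemma for top-forms}, the restrictions to $\Delta$ of the forms $\alpha$, $\beta$, $d'\alpha$ and $d''\beta$ all have support contained in $\Delta \cap N(\sigma) = \Delta(\sigma)$. Since both sides of \eqref{formula of Stokes} depend only on the restrictions of the forms to $\Delta$, and, by the definitions in \ref{current associated to weighted polyhedron} and \ref{definition of boundary integrals}, only involve the finite part $\Delta(\sigma)$ together with the facets of $\Delta$ lying in $N_\R$, we may replace $N_\Sigma$ by the closure $\overline{N(\sigma)}$ of $N(\sigma)$ in $N_\Sigma$ --- again a tropical toric variety, with associated fan $\Sigma(\sigma)$ --- and $\Delta$ by the polyhedron $\overline{\Delta(\sigma)} \subset \overline{N(\sigma)}$, which now has sedentarity $\{0\}$. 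Thus we may and do assume $\sigma = \{0\}$, so that $\Delta = \overline{\Delta(0)}$ for a polyhedron $\Delta(0) \subset N_\R$ with $\dim \Delta(0) = m$.

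\emph{Passage to $N_\R$.} Set $\alpha_0 \coloneqq \alpha|_{\Delta(0)}$, the restriction of $\alpha$ to the polyhedron $\Delta(0)\subset N_\R$. By the previous step $\alpha$ has compact support inside $\Delta(0)$, so $\alpha_0 \in A_c^{m-1,m}(\Delta(0))$, and since $d'$ is compatible with restriction of forms we have $d'\alpha_0 = (d'\alpha)|_{\Delta(0)}$. As $d'\alpha$ is of bidegree $(m,m)$ and supported in $N_\R$, the definition of the integral over the polyhedron $\Delta$ in $N_\Sigma$ recalled in \ref{current associated to weighted polyhedron} gives
\begin{displaymath}
\int_{[\Delta,\mu]} d'\alpha = \int_{[\Delta(0),\mu]} d'\alpha_0 .
\end{displaymath}
On the other hand, the facets of $\Delta$ contained in $N_\R$ are exactly the closures $\overline{\tau_0}$ of the facets $\tau_0$ of $\Delta(0)$; the induced normal vector $n_{\Delta,\tau}$ agrees with $n_{\Delta(0),\tau_0}$ in $\L_{\Delta(0)}/\L_{\tau_0}$; and $(\alpha, n_{\Delta,\tau}'')|_\tau$ is supported in $\tau_0$. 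Hence each boundary term $\int_{\partial_\tau'[\Delta,\mu]}\alpha$ equals the corresponding boundary integral for $[\Delta(0),\mu]$ and $\alpha_0$ computed in $N_\R$, and summing over the finite facets yields $\int_{\partial'[\Delta,\mu]}\alpha = \int_{\partial'[\Delta(0),\mu]}\alpha_0$.

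\emph{Conclusion.} Applying the affine theorem of Stokes to the compactly supported smooth form $\alpha_0$ over the weighted polyhedron $[\Delta(0),\mu]$ in the real affine space $\A_{\Delta(0)}$, the two displayed quantities are equal, which proves the first identity in \eqref{formula of Stokes}; the second identity, for $d''\beta$, follows verbatim after exchanging $d'$ with $d''$ and the two sides of the bidegree. The only delicate point is the bookkeeping in the two reduction steps --- that the top-degree integral over $\Delta$ in $N_\Sigma$ sees only the finite part $\Delta(0)$, and that the finite facets of $\Delta$ coincide with the facets of $\Delta(0)$ --- but this is immediate from Lemma \ref{support lemma for top-forms} and the definitions in \ref{current associated to weighted polyhedron} and \ref{definition of boundary integrals}, so there is no genuine analytic obstacle.
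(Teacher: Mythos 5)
Your proposal is correct and follows essentially the same route as the paper: the paper's proof likewise observes (via Lemma \ref{support lemma for top-forms}, as recorded in Remark \ref{definition of boundary integrals}) that the relevant forms are supported in the finite part $\Delta(\sigma)$, and then invokes Mihatsch's affine theorem of Stokes \cite[Proposition 2.7]{mihatsch2021}. Your write-up merely spells out the reduction and the facet bookkeeping in more detail than the paper does.
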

\begin{proof}
Let $\sigma \in \Sigma$ be the sedentarity of $\Delta$. 
The support of $\alpha$ is contained in the finite part $\Delta(\sigma)$ by Remark \ref{definition of boundary integrals}.
Hence the claim follows from \cite[Proposition 2.7]{mihatsch2021}.
\end{proof}

\subsection{Residue formula} \label{residue formula}
Let $U$ be an open subset of $N_\Sigma$.

\begin{definition}[Polyhedral derivatives and residues of polyhedral currents] \label{polyhedral derivatives}
Let $[\Delta,\mu]$ be a weighted polyhedron contained in the open subset $U$ of $N_\Sigma$ and let $\alpha \in A^{p,q}(\Delta)$.  
The \emph{polyhedral derivatives $\dpa$ and $\dpb$} of $\alpha\wedge \delta_{[\Delta,\mu]}$ are given as
\[
\dpa(\alpha\wedge \delta_{[\Delta,\mu]})=d'(\alpha)\wedge \delta_{[\Delta,\mu]},\,\,\dpb(\alpha\wedge \delta_{[\Delta,\mu]})=d''(\alpha)\wedge \delta_{[\Delta,\mu]}.
\]
This definition extends to polyhedral currents by linearity.

The \emph{$d'$-residue} of a polyhedral current $T$ is the current defined by $d'T-\dpa T$ and the \emph{$d''$-residue} of $T$ is the current defined by $d''T-\dpb T$.
\end{definition}

\begin{rem}[Compatibilty of polyhedral derivatives]
Note that the polyhedral derivatives of piecewise smooth forms, defined in Remark \ref{polyhedral derivatives for ps forms}, are compatible with the polyhedral dervatives in Definition \ref{polyhedral derivatives} above, if we pass to the associated polyhedral current as in Remark \ref{piecewise smooth form and associated polyhedral current}.
\end{rem}

In the next Proposition, we compute residues.

\begin{prop} \label{prop: residue formula}
Let $T = \sum_{\Delta \in I} \alpha_\Delta \wedge [\Delta,\mu_\Delta]$
be a polyhedral current on $U$ 
of tridegree $(p,q,l)$given by a locally finite family
$([\Delta,\mu_\Delta])_{\Delta  \in I}$ of weighted polyhedra of
codimension $l$ in $U$ and smooth forms $\alpha_\Delta \in
A^{p,q}(\Delta)$ for $\Delta \in I$.  
Then we have 
\[
d'T = \dpa T - \partial' T
\] 
where $\partial' T \in D^{p+l+1,q+l}(U)$ is the current defined by
\[
\langle \partial'T, \eta \rangle \coloneqq \sum_{\Delta \in I}  \int_{\partial' [\Delta,\mu_\Delta]}  \alpha_\Delta \wedge \eta
\]
for $\eta \in A_c^{r-l-p-1,r-l-q}(U)$.
Similarly, we have 
\[
d''T = \dpb T - \partial'' T
\]
where $\partial'' T \in D^{p+l,q+l+1}(U)$ is the current defined by
\[
\langle\partial''T, \eta \rangle \coloneqq \sum_{\Delta \in I}  \int_{\partial''[\Delta,\mu_\Delta]}  \alpha_\Delta \wedge \eta.
\]
for $\eta \in A_c^{r-l-p,r-l-q-1}(U)$.
\end{prop}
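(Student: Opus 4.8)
The plan is to reduce to a single summand and then combine the Leibniz rule for Lagerberg forms on a polyhedron with the theorem of Stokes (Theorem \ref{theorem of Stokes}).

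First, by local finiteness of the family $([\Delta,\mu_\Delta])_{\Delta\in I}$, every test form $\eta\in A_c(U)$ meets only finitely many $\Delta\in I$; moreover $\partial'[\Delta,\mu_\Delta]$ is supported on the finite facets of $\Delta$, so the family $(\partial'[\Delta,\mu_\Delta])_{\Delta\in I}$ is again locally finite and all the sums occurring in the statement are locally finite, defining currents that depend linearly on the individual summands $\alpha_\Delta\wedge\delta_{[\Delta,\mu_\Delta]}$. Hence it suffices to treat $T=\alpha\wedge\delta_{[\Delta,\mu]}$ for a single weighted polyhedron $[\Delta,\mu]$ of codimension $l$, thus of dimension $m=n-l$, with sedentarity $\sigma$ and with $\alpha\in A^{p,q}(\Delta)$.

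Now fix a test form $\eta$ of the complementary bidegree, so $\eta\in A_c^{n-l-p-1,\,n-l-q}(U)$. By the sign convention for the differential of currents, $\langle d'T,\eta\rangle=(-1)^{p+q+1}\langle T,d'\eta\rangle=(-1)^{p+q+1}\int_{[\Delta(\sigma),\mu]}\alpha\wedge d'\eta$, where this last integral makes sense because $\alpha\wedge d'\eta$ is a compactly supported form of bidegree $(m,m)$ on $\Delta$ and hence, by Lemma \ref{support lemma for top-forms}, supported in the finite part $\Delta(\sigma)$, so one may integrate it as a compactly supported Lagerberg form on the affine-space polyhedron $\Delta(\sigma)$ as in \ref{current associated to weighted polyhedron}. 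Applying the Leibniz rule $d'(\alpha\wedge\eta)=d'\alpha\wedge\eta+(-1)^{p+q}\,\alpha\wedge d'\eta$ for Lagerberg forms restricted to $\Delta$ gives
\[
\alpha\wedge d'\eta=(-1)^{p+q}\bigl(d'(\alpha\wedge\eta)-d'\alpha\wedge\eta\bigr),
\]
and since $\alpha\wedge\eta\in A_c^{m-1,m}(\Delta)$ the theorem of Stokes (Theorem \ref{theorem of Stokes}) yields $\int_{[\Delta,\mu]}d'(\alpha\wedge\eta)=\int_{\partial'[\Delta,\mu]}\alpha\wedge\eta$. Substituting and cancelling signs, I obtain
\[
\langle d'T,\eta\rangle=\int_{[\Delta,\mu]}d'\alpha\wedge\eta-\int_{\partial'[\Delta,\mu]}\alpha\wedge\eta=\langle\dpa T,\eta\rangle-\langle\partial'T,\eta\rangle,
\]
where the first term is identified via Definition \ref{polyhedral derivatives} and the second by the defining formula for $\partial'T$ in the statement. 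This proves $d'T=\dpa T-\partial'T$. The $d''$-formula is proved in exactly the same way: one takes $\eta$ of the complementary bidegree $(n-l-p,\,n-l-q-1)$, uses the Leibniz rule for $d''$, and applies the second equality in Theorem \ref{theorem of Stokes} to $\alpha\wedge\eta\in A_c^{m,m-1}(\Delta)$.

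The computation is mostly bookkeeping. The two points that need care are: verifying that the top-degree form $\alpha\wedge d'\eta$ and the $(m-1,m)$- (resp.\ $(m,m-1)$-)form $\alpha\wedge\eta$ are supported in the finite part $\Delta(\sigma)$, so that the integrals over $\Delta$ are genuinely integrals of compactly supported Lagerberg forms on $\Delta(\sigma)$ — this is precisely Lemma \ref{support lemma for top-forms}; and keeping the sign from the current differential consistent with the sign in the Leibniz rule so that they cancel as above (a quick check with $l=0$, where $\partial'$ vanishes, recovers $d'[\alpha]=[d'\alpha]$ and fixes the sign convention). I expect this sign bookkeeping, together with making sure that Leibniz and Stokes are applied to forms living on the polyhedron $\Delta$ rather than on $N_\Sigma$, to be the only — minor — obstacle.
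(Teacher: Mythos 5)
Your proof is correct and follows essentially the same route as the paper's: reduce by linearity to a single weighted polyhedron, apply the sign convention for the current differential, the Leibniz rule, and the theorem of Stokes (Theorem \ref{theorem of Stokes}), with your appeals to Lemma \ref{support lemma for top-forms} and local finiteness merely making explicit what the paper leaves implicit. (Only your parenthetical sanity check is slightly off: for $l=0$ a full-dimensional polyhedron still has facets, so $\partial'$ vanishes only for $\Delta=N_\Sigma$ itself; this plays no role in the argument.)
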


\begin{proof}
We prove the first formula, the second one is similar. By linearity, we may assume that $T= \alpha \wedge [\Delta,\mu]$. Then 
\[
\langle d'T, \eta \rangle = (-1)^{p+q+1}\int_{[\Delta,\mu]} \alpha \wedge d'\eta
=\int_{[\Delta,\mu]} d'(\alpha) \wedge \eta  - \int_{[\Delta,\mu]} d'(\alpha \wedge \eta).
\]
Using the theorem of Stokes, we get 
\[
d'(\alpha \wedge [\Delta,\mu])= \dpa(\alpha \wedge [\Delta,\mu])- \alpha \wedge\partial'[\Delta,\mu]
\]
as claimed.
\end{proof}

\subsection{The Theorem of Stokes for polyhedral currents} \label{subsection: Stokes for polyhedral currents}
Let $U$ be an open subset of $N_\Sigma$. 
Let $\alpha$,$\beta$ and $\gamma$ be polyhedral currents on $U$ of tridegree $(p,p,n-p)$, $(p-1,p,n-p)$ and $(p,p-1,n-p)$.

\begin{prop}\label{special-types-are-delta-forms}
The currents $d'\beta$ and $d''\gamma$ are sums of polyhedral currents of tridegree $(p,p,n-p)$ and $(p-1,p-1,n-p+1)$.
\end{prop}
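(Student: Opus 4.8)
The plan is to reduce to the affine case $N_\R$ handled by Mihatsch and then propagate the statement to $N_\Sigma$ using the residue formula from Proposition \ref{prop: residue formula}. Concretely, by linearity I would first write $\beta = \sum_{\Delta \in I} \alpha_\Delta \wedge \delta_{[\Delta,\mu_\Delta]}$ with $([\Delta,\mu_\Delta])_{\Delta \in I}$ a locally finite family of weighted polyhedra of codimension $n-p$ and $\alpha_\Delta \in A^{p-1,p}(\Delta)$; the corresponding statement for $\gamma$ is symmetric, exchanging the roles of $d'$ and $d''$, so I would only treat $d'\beta$ in detail. By Proposition \ref{prop: residue formula} we have $d'\beta = \dpa \beta - \partial'\beta$. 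The term $\dpa\beta = \sum_\Delta d'(\alpha_\Delta) \wedge \delta_{[\Delta,\mu_\Delta]}$ is manifestly a polyhedral current of tridegree $(p,p,n-p)$, since $d'\alpha_\Delta \in A^{p,p}(\Delta)$. So the whole content is to show that the residue $\partial'\beta \in D^{p+l+1,q+l}(U) = D^{p,p-1,n-p+1}$-type current — in our tridegree, $(p,p-1)$ in form-part and codimension $n-p+1$ — is in fact a polyhedral current, and moreover of tridegree $(p-1,p-1,n-p+1)$.

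The key step is therefore the analysis of $\partial'[\Delta,\mu]$ for a single weighted polyhedron of sedentarity $\sigma$. By Remark \ref{definition of boundary integrals}, the boundary integral only sees the facets $\tau$ of $\Delta$ lying in the finite part $N(\sigma)$, and by Remark \ref{orthogonal lattice vectors} each such facet carries a canonical normal vector $n_{\Delta,\tau}$ with $\mu = \nu \wedge n_{\Delta,\tau}$ for the induced weight $\nu$. Working on the finite part, $\partial'_\tau [\Delta(\sigma),\mu]$ is by definition the contraction $\alpha_\Delta \mapsto -\int_{[\tau,\nu]}(\alpha_\Delta, n_{\Delta,\tau}'')|_\tau$, which is precisely the polyhedral current $-(\alpha_\Delta, n''_{\Delta,\tau})|_\tau \wedge \delta_{[\tau,\nu]}$; here $(\alpha_\Delta, n''_{\Delta,\tau})$ is a $(p-1,p-1)$-form (since inserting a vector of type $''$ into a $(p-1,p)$-form lowers the second degree), so the result lives in the right tridegree $(p-1,p-1,n-p+1)$. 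Since $\Delta$ has finitely many facets and the family $(\Delta)_{\Delta \in I}$ is locally finite, the collection of facets $\tau$ that arise is again locally finite, so $\partial'\beta = \sum_\Delta \sum_{\tau \prec \Delta} -(\alpha_\Delta, n''_{\Delta,\tau})|_\tau \wedge \delta_{[\tau,\nu]}$ is a polyhedral current of the asserted tridegree. Combining, $d'\beta = \dpa\beta - \partial'\beta$ is a sum of polyhedral currents of tridegrees $(p,p,n-p)$ and $(p-1,p-1,n-p+1)$, and the $d''\gamma$ case follows by the same argument with $(\alpha,n'_{\Delta,\tau})$ in place of $(\alpha,n''_{\Delta,\tau})$.

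The main obstacle I anticipate is bookkeeping the bidegrees and the sedentarity carefully: one has to make sure that the contraction $(\alpha_\Delta, n''_{\Delta,\tau})|_\tau$ genuinely extends to an element of $A^{p-1,p-1}(\tau)$ as a polyhedron in $N_\Sigma$ (not merely on the finite part), which uses that $\alpha_\Delta$ is the restriction of a form on an open neighbourhood in $N_\Sigma$ together with Lemma \ref{support lemma for top-forms} to control the support; and one must check that the facets of $\Delta$ are themselves polyhedra in $N_\Sigma$ in the sense of Section \ref{polyhedra}, i.e.\ that they appear in a presentation of the residue with well-defined weights via Remark \ref{orthogonal lattice vectors}. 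None of this is hard, but it is exactly the kind of degree-and-support verification where a sign or a degree slip is easy to commit. Everything else is a direct application of the residue formula.
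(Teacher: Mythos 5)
Your proposal is correct and follows essentially the same route as the paper's proof: apply the residue formula of Proposition \ref{prop: residue formula}, observe that the polyhedral derivative gives the $(p,p,n-p)$ part, and identify the boundary term facet by facet as the polyhedral current $(\beta_0,n''_{\sigma,\tau})|_\tau\wedge\delta_{[\tau,\mu_\tau]}$ (resp.\ with $n'$ for $d''\gamma$), using the normal vectors of Remark \ref{orthogonal lattice vectors} and the contraction of \ref{contraction}. Your degree bookkeeping and the local-finiteness remark match the paper's argument, so no further comment is needed.
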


\begin{proof}
We can work locally and assume by linearity that $\beta =\beta_0
\wedge \delta_{[\sigma,\mu_\sigma]}$ and $\gamma =\gamma_0 \wedge
\delta_{[\sigma,\mu_\sigma]}$. 
If $\tau$ is a codimension one face of $\sigma$, then, as in
\ref{orthogonal lattice vectors} we choose a weight $\mu_\tau$ for
$\tau$ and a vector $n_{\sigma,\tau}\in N_\sigma$ such that
$\mu_\sigma=\mu_\tau\wedge n_{\sigma,\tau}$. Then, the contractions
$(\beta_0,n''_{\sigma,\tau})|_\tau$ and
$(\gamma_0,n'_{\sigma,\tau})|_\tau$ in $A^{p,p}(\tau)$ have been
defined in \ref{contraction}. 
It follows from Proposition \ref{prop: residue formula} that we have 
\begin{align}
\label{relate-d-and-dpa}
d'\beta&=(d'\beta_0)\wedge \delta_{[\sigma,\mu_\sigma]}+\sum_{\genfrac{}{}{0pt}{}{\tau\prec \sigma}{{\dim \tau=p-1}}}(\beta_0,n''_{\sigma,\tau})|_\tau\wedge \delta_{[\tau,\mu_\tau]},\\
\label{relate-d-and-dpb}
d''\gamma&=(d''\gamma_0)\wedge \delta_{[\sigma,\mu_\sigma]}-\sum_{\genfrac{}{}{0pt}{}{\tau\prec \sigma}{{\dim \tau=p-1}}}(\gamma_0,n'_{\sigma,\tau})|_\tau\wedge \delta_{[\tau,\mu_\tau]}.
\end{align}
These formulas show that $d'\beta$ and $d''\gamma$ are polyhedral of the given tridegrees.
\end{proof}

Let $\Delta$ be a polyhedron in $U\subseteq N_\Sigma$ of dimension $n$ equipped with the canonical weight $\mu$ induced by the lattice $N$.
In the following, we assume that the currents $\alpha$,$\beta$ and $\gamma$ 
have compact support.
By definition and our compact support assumption, we find a finite family $([\sigma,\mu_\sigma])_{\sigma \in I}$ of weighted polyhedra of dimension $p$ in $U$ and smooth forms $\alpha_\sigma,\beta_\sigma,\gamma_\sigma \in A_c(\sigma)$ for every  $\sigma\in I$ such that  
\begin{equation}\label{present-alpha-beta}
\alpha = \sum_{\sigma \in I} \alpha_\sigma \wedge \delta_{[\sigma,\mu_\sigma]},\,\,\,
\beta = \sum_{\sigma \in I} \beta_\sigma \wedge \delta_{[\sigma,\mu_\sigma]},\,\,\,
\gamma = \sum_{\sigma \in I} \gamma_\sigma \wedge \delta_{[\sigma,\mu_\sigma]}.
\end{equation}
We can refine the family $([\sigma,\mu_\sigma])_{\sigma \in I}$  that for each each polyhedron $\sigma\in I$ the intersection $\sigma\cap \Delta$ is either empty or a face of $\sigma$.

\begin{definition}\label{define-boundary-delta-integrals}
In order to define the integral $\int_\Delta\alpha$ and the boundary integrals $\int_{\partial'\Delta}\beta$ and $\int_{\partial''\Delta}\gamma$, we can assume by linearity that $\alpha = \alpha_0 \wedge \delta_{[\sigma,\mu_\sigma]}$, $\beta =\beta_0 \wedge \delta_{[\sigma,\mu_\sigma]}$ and $\gamma =\gamma_0 \wedge \delta_{[\sigma,\mu_\sigma]}$ for some weighted polyhedron $\sigma$ of dimension $p$ such that $\tau\coloneqq \sigma\cap \Delta$ is either empty of a face of $\sigma$.

If $\tau$ is empty or $\dim \tau<p-1$, then we define 
\begin{align}\label{empty-case}
&\int_\Delta\alpha\coloneqq 0,\,\,\,
\int_{\partial'\Delta}\beta\coloneqq 0,\,\,\,
\int_{\partial''\Delta}\gamma\coloneqq 0.
\end{align}
If $\dim \tau=p-1$, then we choose $n_{\sigma,\tau}$ and $\mu_\tau$ in a compatible way as in \ref{orthogonal lattice vectors} and define 
\begin{align}\label{codimension-one-case}
&\int_\Delta\alpha\coloneqq 0,\,\,\,
\int_{\partial'\Delta}\beta\coloneqq \int_{[\tau,\mu_\tau]}(\beta_0,n''_{\sigma,\tau})|_\tau,\,\,\,
\int_{\partial''\Delta}\gamma\coloneqq -\int_{[\tau,\mu_\tau]}(\gamma_0,n'_{\sigma,\tau})|_\tau.
\end{align}
If $\dim \tau=p$ and hence $\tau=\sigma$, then we define 
\begin{align}\label{full-dimension-case}
&\int_\Delta\alpha\coloneqq \int_{[\sigma,\mu]}\alpha_0,\,\,\,
\int_{\partial'\Delta}\beta\coloneqq 0,\,\,\,
\int_{\partial''\Delta}\gamma\coloneqq 0.
\end{align}
\end{definition}

The above definitions of the boundary integrals might look strange. 
One should notice that if $\sigma$ is contained in $\Delta$, then the derivative $d'\beta$ in the sense of currents of the polyhedral current $\beta=\beta_0 \wedge \delta_{[\sigma,\mu_\sigma]}$ includes already boundary contributions as we have seen in \eqref{relate-d-and-dpa}. 
Hence it makes sense to define the boundary integral of $\beta$ as $0$. 
On the other hand, if the intersection of $\Delta$ and $\sigma$ is a codimension $1$ face $\tau$ of $\sigma$, then there is a boundary integral needed. 
This will become clear in the proof of the Theorem of Stokes below.

\begin{thm}\label{polyhedral-stokes-formula-proposition}
Given polyhedral currents $\beta$ and $\gamma$ with compact support of tridegree $(p-1,p,n-p)$ and $(p,p-1,n-p)$ on an open subset $U\subset N_\Sigma$, we have
\begin{equation}\label{polyhedral-stokes-formula}
\int_\Delta d'\beta=\int_{\partial'\Delta}\beta,\,\,\,
\int_\Delta d''\gamma=\int_{\partial''\Delta}\gamma.
\end{equation}
\end{thm}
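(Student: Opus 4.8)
The plan is to reduce the statement to the already-established Theorem of Stokes for smooth forms (Theorem \ref{theorem of Stokes}) and the residue formula (Proposition \ref{prop: residue formula}), handling the three geometric cases of the intersection $\tau = \sigma \cap \Delta$ separately. First I would use linearity to reduce to the case $\beta = \beta_0 \wedge \delta_{[\sigma,\mu_\sigma]}$ and $\gamma = \gamma_0 \wedge \delta_{[\sigma,\mu_\sigma]}$ for a single weighted polyhedron $\sigma$ of dimension $p$, refined so that $\tau = \sigma \cap \Delta$ is empty or a face of $\sigma$; this is exactly the normalization set up before Definition \ref{define-boundary-delta-integrals}. I treat only the $d'\beta$ identity, the $d''\gamma$ case being entirely parallel with the obvious sign changes coming from \eqref{relate-d-and-dpb} and \eqref{codimension-one-case}.

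The core computation is to expand $d'\beta$ using Proposition \ref{prop: residue formula} (equivalently formula \eqref{relate-d-and-dpa}):
\begin{equation*}
d'\beta = (d'\beta_0) \wedge \delta_{[\sigma,\mu_\sigma]} + \sum_{\substack{\tau' \prec \sigma \\ \dim \tau' = p-1}} (\beta_0, n''_{\sigma,\tau'})|_{\tau'} \wedge \delta_{[\tau',\mu_{\tau'}]}.
\end{equation*}
Now $\int_\Delta$ of a polyhedral current supported on a polyhedron $\rho$ of dimension $p$ is, by Definition \ref{define-boundary-delta-integrals}, the integral of the coefficient form over $\rho$ if $\rho \subset \Delta$ (i.e. $\rho \cap \Delta = \rho$) and zero otherwise; similarly $\int_\Delta$ of a polyhedral current supported on a polyhedron of dimension $p-1$ is always zero by the first line of \eqref{empty-case} and the fact that $\dim(\rho \cap \Delta) \le \dim \rho = p-1 < p$. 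Hence the boundary sum contributes nothing to $\int_\Delta d'\beta$, and I am left with $\int_\Delta d'\beta = \int_\Delta (d'\beta_0) \wedge \delta_{[\sigma,\mu_\sigma]}$.

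From here I split into the three cases. If $\tau = \sigma \cap \Delta$ is empty or $\dim \tau < p-1$, then $\sigma$ is not contained in $\Delta$, so $\int_\Delta (d'\beta_0) \wedge \delta_{[\sigma,\mu_\sigma]} = 0 = \int_{\partial'\Delta}\beta$, matching \eqref{empty-case}. If $\dim \tau = p$, then $\tau = \sigma \subset \Delta$, and $\int_\Delta (d'\beta_0)\wedge\delta_{[\sigma,\mu_\sigma]} = \int_{[\sigma,\mu]} d'\beta_0$, which equals $\int_{\partial'[\sigma,\mu]}\beta_0 = 0$ by the ordinary Theorem of Stokes \ref{theorem of Stokes} since $\beta_0 \in A_c(\sigma)$ has compact support in the relative interior — wait, that is not automatic, so here I instead use that $\tau = \sigma$ has no facet contained in $\Delta$ other than being all of $\sigma$; more carefully, $\int_{\partial'\Delta}\beta = 0$ by \eqref{full-dimension-case} and one must check $\int_{[\sigma,\mu]}d'\beta_0 = 0$. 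The honest argument: since $\sigma = \tau \subset \Delta$ and $\Delta$ has dimension $n > p$, the compactly supported form $\beta_0 \wedge d'\beta_0$-type boundary terms vanish because $\beta_0$ has compact support in $\sigma$, hence $\int_{[\sigma,\mu]} d'\beta_0 = \int_{\partial'[\sigma,\mu]}\beta_0$ and the right side is the integral over facets of $\sigma$; but this need not vanish in general, so the correct reading is that \eqref{full-dimension-case} is the \emph{definition} and the identity $\int_\Delta d'\beta = 0$ must instead be derived differently — this is the subtle point.

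The main obstacle is precisely this full-dimensional case $\tau = \sigma$: one must reconcile $\int_\Delta d'\beta$ with the declared value $0$ of $\int_{\partial'\Delta}\beta$. The resolution I expect is that when $\sigma \subset \Delta$, the compactly supported form $\beta_0$ on $\sigma$ has \emph{all} its boundary contributions already absorbed: more precisely, one applies Stokes on $\sigma$ to get $\int_{[\sigma,\mu]} d'\beta_0 = \sum_{\tau' \prec \sigma} \int_{\partial'_{\tau'}[\sigma,\mu]}\beta_0$, and then observes that each such facet integral is, by the residue expansion \eqref{relate-d-and-dpa}, exactly the $\int_\Delta$-contribution of the residue term $(\beta_0,n''_{\sigma,\tau'})|_{\tau'} \wedge \delta_{[\tau',\mu_{\tau'}]}$ which we already argued integrates to $0$ over $\Delta$ since these have dimension $p-1$; so consistency forces $\int_{[\sigma,\mu]}d'\beta_0$ to equal a sum of terms each contributing $0$, giving $\int_\Delta d'\beta = 0$. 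The remaining case $\dim\tau = p-1$ is where the genuine boundary term appears: here $\sigma \not\subset \Delta$ so $(d'\beta_0)\wedge\delta_{[\sigma,\mu_\sigma]}$ integrates to $0$ over $\Delta$, but among the residue terms in \eqref{relate-d-and-dpa} exactly the facet $\tau' = \tau$ satisfies $\tau \subset \Delta$, contributing $\int_{[\tau,\mu_\tau]}(\beta_0,n''_{\sigma,\tau})|_\tau$, which is precisely the right-hand side of \eqref{codimension-one-case}; all other facets $\tau' \ne \tau$ are not contained in $\Delta$ (as $\sigma \cap \Delta = \tau$) and contribute $0$. Carefully checking the compatibility of the chosen normal vector $n_{\sigma,\tau}$ and weight $\mu_\tau$ across \eqref{relate-d-and-dpa} and \eqref{codimension-one-case}, and verifying that the facets other than $\tau$ really do miss $\Delta$, are the routine but essential bookkeeping steps.
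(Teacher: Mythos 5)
Your overall strategy (reduce to $\beta=\beta_0\wedge\delta_{[\sigma,\mu_\sigma]}$, expand $d'\beta$ via \eqref{relate-d-and-dpa}, and split into cases according to $\dim(\sigma\cap\Delta)$) is the same as the paper's, but the proof breaks at a concrete step: your claim that ``$\int_\Delta$ of a polyhedral current supported on a polyhedron of dimension $p-1$ is always zero'', and hence that the residue sum in \eqref{relate-d-and-dpa} contributes nothing to $\int_\Delta d'\beta$. This misreads Definition \ref{define-boundary-delta-integrals}: the case distinction there is made relative to the dimension of the supporting polyhedron of the current being integrated, not relative to a fixed $p$. The residue terms $(\beta_0,n''_{\sigma,\tau'})|_{\tau'}\wedge\delta_{[\tau',\mu_{\tau'}]}$ have tridegree $(p-1,p-1,n-p+1)$, i.e.\ top-degree coefficients on $(p-1)$-dimensional supports, so \eqref{full-dimension-case} (applied with $p-1$ in place of $p$) gives $\int_\Delta$ of such a term equal to $\int_{[\tau',\mu_{\tau'}]}(\beta_0,n''_{\sigma,\tau'})|_{\tau'}$ whenever $\tau'\subset\Delta$, which is nonzero in general. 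Indeed, if your blanket vanishing claim were right, then in the case $\dim\tau=p-1$ you would get $\int_\Delta d'\beta=0$ while $\int_{\partial'\Delta}\beta=\int_{[\tau,\mu_\tau]}(\beta_0,n''_{\sigma,\tau})|_\tau\neq 0$ in general, contradicting the very statement you are proving; your last paragraph silently reinstates exactly the facet contribution at $\tau'=\tau$ that your second paragraph discarded, so the write-up is internally inconsistent.

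The error also derails the full-dimensional case $\sigma\subset\Delta$, where your ``consistency forces the facet integrals to vanish'' argument is circular and false: the facet integrals $\int_{\partial'_{\tau'}[\sigma,\mu_\sigma]}\beta_0$ do not vanish in general. The correct bookkeeping, which is what the paper does, is that all facets $\tau'$ of $\sigma$ lie in $\Delta$, so by \eqref{full-dimension-case} their contributions to $\int_\Delta d'\beta$ sum to $\sum_{\tau'}\int_{[\tau',\mu_{\tau'}]}(\beta_0,n''_{\sigma,\tau'})|_{\tau'}=-\int_{\partial'[\sigma,\mu_\sigma]}\beta_0$, while the term $(d'\beta_0)\wedge\delta_{[\sigma,\mu_\sigma]}$ contributes $\int_{[\sigma,\mu_\sigma]}d'\beta_0$; these cancel by the classical Theorem of Stokes \ref{theorem of Stokes}, giving $\int_\Delta d'\beta=0=\int_{\partial'\Delta}\beta$ as required by \eqref{full-dimension-case}. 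Once you use the definition of $\int_\Delta$ correctly on the lower-dimensional residue terms, the three cases go through exactly as in the paper (the empty/low-dimensional case still vanishes because no facet of $\sigma$ can then lie in $\Delta$), and no appeal to ``consistency'' is needed.
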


\begin{proof}
We prove only the first equality. 
The second equality is proved in the same way.
As before, we may assume that $\beta =\beta_0 \wedge \delta_{[\sigma,\mu_\sigma]}$ such that $\tau\coloneqq \sigma\cap \Delta$ is empty or a face of $\sigma$. 

If $\tau$ is empty or $\dim(\tau)<p-1$, then all terms in \eqref{polyhedral-stokes-formula} vanish by definition.
If $\dim(\tau)=p-1$, then using \eqref{relate-d-and-dpa},\eqref{codimension-one-case} and \eqref{full-dimension-case}, we get
\[
\int_\Delta d'\beta
=\int_{[\tau,\mu_{\tau}]}(\beta_0,n_{\sigma,\tau}'')|_{\tau}
=\int_{\partial'\Delta}\beta.
\]
If $\dim(\tau)=p$, then \eqref{relate-d-and-dpa}, \eqref{full-dimension-case}, and Stokes Theorem \eqref{theorem of Stokes} yield
\[
\int_{[\Delta,\mu]}d'\beta
=\int_{[\sigma,\mu_\sigma]}d'\beta_0-\int_{\partial'[\sigma,\mu_\sigma]}\beta_0=0
\]
and the right-hand side of the first equality in \eqref{polyhedral-stokes-formula} vanishes by definition.
\end{proof}

\section{Delta forms}\label{section-delta-forms} \label{section: delta-forms}

Let $N$ be a free abelian group of rank $n$. 
Mihatsch has introduced sheaves of $\delta$-forms on $N_\R$.
The goal of this section is to  generalize Mihatsch's construction to
a tropical toric variety $N_\Sigma$ for a rational polyhedral fan $\Sigma$ in $N_\R$.

\subsection{Delta forms on $N_\R$}\label{delta-forms-on-N_R}
{Mihatsch has introduced $\delta$-forms on $N_\R$
as polyhedral currents $T$ satisfying that $d'T$ and $d''T$ are both
polyhedral current \cite[Def. 3.1]{mihatsch2021}s. He showed
\cite[Thm. 3.3]{mihatsch2021} that this
condition is equivalent to  a 
balancing condition similar to the one of tropical geometry and that,
for a polyhedral current $T$, it is enough that one of  $d'T$ or
$d''T$ is polyhedral to be a $\delta $-form.}

For example, the current of integration of a tropical cycle and the current associated to a piecewise smooth form are $\delta$-forms. 
Mihatsch showed that the delta-forms build a bigraded differential $\R$-algebra with respect to a natural $\wedge$-product and natural differentials $d',d''$.  
Moreover, there is a natural pull-back of $\delta$-forms with respect to affine maps $f\colon N' \to N$ of free abelian groups of finite rank which agrees with the pull-back from \ref{pull-back of polyhedral currents by Mihatsch} in case of a surjective affine map.
There is also a natural integral $\int_{U}\colon B_c^{n,n}(U)\to \R$ for $\delta$-forms with compact support.

\begin{rem} \label{Mihatsch's notation}
Mihatsch's definition of $\delta$-forms is local, so we get for any open subset $U$ of $N_\R$ a bigraded $\R$-algebra $B^{\cdot,\cdot}(U)$ with differentials $d',d''$. 
In fact, the trigrading of polyhedral currents from  Definition \ref{definition of polyhedral currents} gives a trigrading
$B(U)= \oplus_{p,q,r} B^{p,q,r}(U)$ related to the bigrading by 
\[
B^{p,q}(U)=\bigoplus_{r\in \N}B^{p-r,q-r,r}(U).
\]
This gives a sheaf $B$ of bigraded differential algebras on $N_\R$. 

If $f\colon N' \to N$ is an affine map of free abelian groups of finite rank, then we get a homomorphism $f^*\colon B(U) \to B(f^{-1}(U))$ of differential bigraded $\R$-algebras which preserves the trigrading. 
If $f$ is surjective, then $f^*$ is induced by the pull-back of polyhedral currents from \ref{pull-back of polyhedral currents by Mihatsch}.
\end{rem}

\begin{lemma} \label{balancing}
Let $f \colon N' \to N$ be a surjective affine map of abelian groups of finite rank and let $U'$ be an open subset of $N_\R'$ and $U$ the open subset $f(U')$. 
For a polyhedral current $T$ on $U$ of tridegree $(p,q,r)$, the
polyhedral current $f^*(T)$ is a $\delta$-form on $U'$ if and only if
$T$ is a $\delta$-form on $U$. 
\end{lemma}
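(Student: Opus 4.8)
The plan is to reduce everything to Mihatsch's characterization of $\delta$-forms, namely that a polyhedral current $S$ is a $\delta$-form if and only if $d'S$ is again a polyhedral current (equivalently $d''S$ is), see \cite[Thm.\ 3.3]{mihatsch2021}. So I would show: $d'T$ is polyhedral on $U$ if and only if $d'(f^*T)$ is polyhedral on $U'$. One direction is immediate from the compatibility $d'f^*(T)=f^*(d'T)$ recorded in Remark \ref{pull-back of currents}, together with the fact (Remark \ref{pull-back of polyhedral currents by Mihatsch}) that $f^*$ sends polyhedral currents to polyhedral currents: if $T$ is a $\delta$-form then $d'T$ is polyhedral, hence $d'f^*(T)=f^*(d'T)$ is polyhedral, so $f^*(T)$ is a $\delta$-form.

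For the converse I would use Proposition \ref{polyhedral current and equivalence for pull-back}: a current $S$ on $U$ is polyhedral if and only if $f^*(S)$ is polyhedral on $U'$. Apply this to $S=d'T$: since $d'f^*(T)=f^*(d'T)$ and we are assuming $f^*(T)$ is a $\delta$-form, $f^*(d'T)=d'f^*(T)$ is polyhedral on $U'$, hence by Proposition \ref{polyhedral current and equivalence for pull-back} the current $d'T$ is polyhedral on $U$. First, though, one must check that $d'T$ is actually in the image of the pull-back correspondence — i.e.\ that $d'T$ is a well-defined current on $U=f(U')$ and that $f^*(d'T)$ makes sense; this is automatic because $T$ is a current on $U$ and the pull-back $f^*$ of Remark \ref{pull-back of currents} is defined on all of $D(U)$, commuting with $d'$. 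So $d'T$ is polyhedral on $U$, and since $T$ is polyhedral by hypothesis, $T$ is a $\delta$-form on $U$.

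Thus the proof is essentially a two-line argument once the right lemmas are assembled. I would also remark that the tridegree bookkeeping is harmless: $f^*$ preserves the trigrading (Remark \ref{Mihatsch's notation}), so the statement about tridegree $(p,q,r)$ is just for orientation and plays no role in the logical argument.

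The only genuine subtlety — and the place where I would be most careful — is making sure that Proposition \ref{polyhedral current and equivalence for pull-back} applies to the current $d'T$ rather than only to $T$ itself. The hypotheses there are that $E\colon N'\to N$ is a surjective affine map and $U'$ an open subset of $N_\R'$, with $U=E(U')$; these are exactly our hypotheses, and $d'T$ is an arbitrary element of $D(U)$, so there is no restriction. Hence there is no real obstacle, only the need to cite the correct combination of earlier results; I would write the proof in essentially the form: ``By Remark \ref{pull-back of currents}, $d'f^*(T)=f^*(d'T)$ and similarly for $d''$. If $T$ is a $\delta$-form, then $d'T$ is polyhedral, so $f^*(d'T)$ is polyhedral by Remark \ref{pull-back of polyhedral currents by Mihatsch}, hence $f^*(T)$ is a $\delta$-form by \cite[Thm.\ 3.3]{mihatsch2021}. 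Conversely, if $f^*(T)$ is a $\delta$-form, then $d'f^*(T)=f^*(d'T)$ is polyhedral, so $d'T$ is polyhedral by Proposition \ref{polyhedral current and equivalence for pull-back}, and since $T$ is polyhedral, $T$ is a $\delta$-form.''
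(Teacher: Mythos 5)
Your argument is correct and is essentially the paper's own proof: the paper likewise combines Proposition \ref{polyhedral current and equivalence for pull-back} (a current is polyhedral iff its pull-back is) with the commutation $f^*(d'T)=d'f^*(T)$ and Mihatsch's characterization that a polyhedral current is a $\delta$-form precisely when its $d'$-derivative is polyhedral. Your extra care about applying the proposition to $d'T$ and about the trigrading is harmless and changes nothing of substance.
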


\begin{proof}
We have seen in Proposition \ref{polyhedral current and equivalence for pull-back} that a current $T$ on $U$ 
is polyhedral if and only if $f^{\ast}T$ is polyhedral on $U'$. 
The lemma follows from the fact that $f^{\ast}(d'T)=d'f^{\ast}T$ so $d'T$ is polyhedral if and only if $d'f^{\ast}T$ is polyhedral. 
\end{proof}

\subsection{Delta forms on $N_\Sigma$}\label{delta-forms-on-N_Sigma}
Let $\Sigma$ be a rational polyhedral fan in $N_\R$ with associated partial compactification $N_\Sigma$.

\begin{definition} \label{definition of delta-form}
Let $U$ be an open subset of $N_\Sigma$.
A \emph{$\delta$-form $\alpha$ of type $(p,q,r)$ on $U$} is a
polyhedral current $\alpha$  of type $(p,q,r)$ of sedentarity $0$ on
$U$ which is constant towards the boundary such that the current
$d'\alpha|_{U \cap N_R}$ is a polyhedral current on the open subset
$U \cap N_\R$ of $N_\R$. 
In other words $\alpha|_{U \cap N_R}$ is a $\delta $-forms in the sense of Mihatsch's definition recalled in \S \ref{delta-forms-on-N_R}.
We denote the space of $\delta$-forms of type $(p,q,r)$ on $U$ by $B^{p,q,r}(U)$ and the space of $\delta$-forms of type $(p,q,r)$ on $U$ with compact support in $U$ by $B_c^{p,q,r}(U)$ and put 
\[
B^{p,q}(U)=\oplus_{r\in \N}B^{p-r,q-r,r}(U),\,\,\,
B(U)=\oplus_{p,q\in \N}B^{p,q}(U).
\] 
\end{definition}

\begin{rem}\label{properties-delta-forms-compactification}
  \begin{enumerate}
  \item \label{item:7} {It follows from Proposition \ref{polyhedral currents form sheaf} that
      $U\mapsto B(U)$ defines a sheaf of trigraded real vector spaces
      on $N_\Sigma$.} 
    \item \label{item:8} {A $\delta$-form in $B^{n,n}(U)$ is constant towards
      the boundary.  Hence it vanishes near the boundary for degree
      reasons and has support in $N_\R\cap U$.  Consequently the
      integral in Remark \ref{Mihatsch's notation} induces a unique
      integral $\int_U\colon B^{n,n}_c(U)\to \R$ such that
      $\int_U\alpha=\int_{N_\R\cap U}\alpha|_{N_\R\cap U}$ for all
      $\alpha\in B_c^{n,n}(U)$.}
  \end{enumerate}  
\end{rem}

\begin{ex}\label{example_classical_tropical_cycle}
A classical tropical cycle $C$ in $N_\R$ (in the sense of Definition \ref{definition of a classical tropical cycle} below), can be seen as a polyhedral current of the form $\delta _C=\sum_{\Delta}[\Delta ,\mu_\Delta ]$ (i.e. the differential forms are all
    constant functions) 
    satisfying a balancing condition. This balancing 
    condition is equivalent to $d'\delta _C=0$ or equivalently to $d''\delta _C=0$, see \cite[Proposition 3.8]{gubler-forms}.  
  Let $C$ be a classical tropical cycle in $N_\R$ of codimension
  $r$. Let $\overline C$ be the closure in 
  $N_\Sigma$, then $\delta_{\overline C}$ is a polyhedral current. We claim that
 $d'\delta _{\overline C}=d''\delta
  _{\overline C}=0$.

Let $\eta$ be a $(n-r-1,n-r)$-smooth form on $N_{\Sigma }$ with
  compact support. 
By Lemma \ref{support lemma for top-forms}, the forms $\eta|_{C}$ and  $d'\eta|_{C}$
  have compact support contained in $N_{\R}$. Let $U\subset N_{\R}$ be an open set containing the
  support of $d'\eta|_{C}$. Then 
  \begin{displaymath}
    d'\delta _{\overline C}(\eta)=-\delta_{\overline C}(d'\eta)=
    -\delta_{\overline C}|_{U}(d'\eta)=d'\delta _{C}(\eta)=0. 
  \end{displaymath}
  This proves $d'\delta_{\overline C}=0$. A similar argument shows that $d''\delta _{\overline C}=0$.

If in addition $\overline C$ is constant towards the
  boundary, then $\delta _{\overline C}$ is a closed $\delta
  $-form. By definition if $\overline C$ is not constant towards the
  boundary, then $\delta _{\overline C}$ is a closed polyhedral
  current, but not 
  a  $\delta
  $-form.
\end{ex}

\begin{prop}\label{special-presentation-delta-forms}
If the fan $\Sigma$ is simplicial and $U\subset N_\Sigma$ is open, then every $\delta$-form $\alpha \in B^{p,q}(U)$ admits globally on $U$ a presentation 
\begin{equation}\label{def-polyhedral-current2}
T= \sum_\Delta \alpha_\Delta \wedge \delta_{[\Delta,\mu_\Delta]\in I} \in D(U)
\end{equation}
where the polyhedra $\Delta$ belong to a weighted polyhedral complex $(\Ccal,\mu)$ that is constant towards the boundary with support  $|\Ccal|=U$.
\end{prop}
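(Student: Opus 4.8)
The plan is to reduce this statement to the results already established about polyhedral currents constant towards the boundary together with the simplicial decomposition theorems proved in Section \ref{polyhedra}. Recall that a $\delta$-form $\alpha\in B^{p,q}(U)$ is by Definition \ref{definition of delta-form} in particular a polyhedral current on $U$ of sedentarity $0$ which is constant towards the boundary. By Definition \ref{definition of polyhedral currents}, this means that $\alpha$ admits \emph{some} presentation $\alpha=\sum_{\Delta\in I}\alpha_\Delta\wedge\delta_{[\Delta,\mu_\Delta]}$ with $I$ a locally finite family of polyhedra in $U$ that are constant towards the boundary, and with $\alpha_\Delta\in A(\Delta)$. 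The task is to upgrade the \emph{family} $I$ to a genuine polyhedral \emph{complex} $\Ccal$ with $|\Ccal|=U$. This is exactly the content of Remark \ref{polyhedral complex of definition for polyhedral currents}, so at its core the proposition is a restatement of that remark for the subclass of polyhedral currents that happen to be $\delta$-forms; the only thing to verify is that the procedure there does not destroy the $\delta$-form property, which is immediate since a $\delta$-form is in particular a polyhedral current and the statement only asserts a change of presentation of the same current.

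First I would fix a presentation $\alpha=\sum_{\Delta\in I}\alpha_\Delta\wedge\delta_{[\Delta,\mu_\Delta]}$ with $I$ locally finite and all $\Delta\in I$ constant towards the boundary. Next, since $\Sigma$ is simplicial, I apply Theorem \ref{thm:1} to the family $\Tcal\coloneqq(\Delta)_{\Delta\in I}$: this produces a locally finite simplicial complex $\Ccal_0$ in $U$, with $\rec(\Lambda(\sigma))\in\Sigma(\sigma)$ for every $\Lambda\in\Ccal_0$ and every $\sigma\in\Sigma$, such that each $\Delta\in I$ is a locally finite union of simplices of $\Ccal_0$. By Corollary \ref{corollary-refinement-simplicial-decomposition}, after a further common refinement I may moreover assume $|\Ccal_0|=U$, i.e.\ that $\Ccal_0$ is a locally finite simplicial \emph{decomposition} of all of $U$ whose polyhedra are constant towards the boundary. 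Then, for each $\Delta\in I$, writing $\Delta=\bigcup_{\rho\in\Ccal_0,\ \rho\subset\Delta}\rho$ and using that $\delta_{[\Delta,\mu_\Delta]}=\sum_{\rho}\delta_{[\rho,\mu_\Delta|_\rho]}$ holds at the level of currents (the current of integration of a weighted polyhedron is invariant under subdivision, as recalled in Remark \ref{subdivision}, and top-dimensional strata of the subdivision inherit a compatible weight while lower-dimensional ones contribute nothing), I obtain $\alpha_\Delta\wedge\delta_{[\Delta,\mu_\Delta]}=\sum_{\rho\subset\Delta}(\alpha_\Delta|_\rho)\wedge\delta_{[\rho,\mu_{\Delta,\rho}]}$ with $\alpha_\Delta|_\rho\in A(\rho)$ by restriction of smooth forms. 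Summing over $\Delta\in I$, collecting the finitely many (by local finiteness) contributions on each $\rho\in\Ccal_0$, and setting the weight and form to $0$ on all simplices of $\Ccal_0$ that never appear, I get the desired presentation $\alpha=\sum_{\Delta\in\Ccal_0}\alpha_\Delta\wedge\delta_{[\Delta,\mu_\Delta]}$ indexed by the weighted polyhedral complex $(\Ccal_0,\mu)$, which is constant towards the boundary with $|\Ccal_0|=U$.

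The main technical point — and the only place requiring care — is the bookkeeping of weights under subdivision and the verification that the restricted forms $\alpha_\Delta|_\rho$ genuinely lie in $A(\rho)$ in the sense of Remark \ref{support-remark}; but this is routine since $\rho$ is a face of $\Delta$ and hence an open (even closed) subpolyhedron, so restriction of smooth Lagerberg forms is defined, and the weight compatibility is exactly the elementary relation $\mu_2=\mu_1\wedge\mu_3$ of Remark \ref{weighted short exact sequences} applied fibrewise to the inclusions of linear spans. I do not expect a serious obstacle: all the hard work has been front-loaded into Theorem \ref{thm:1} and Corollary \ref{corollary-refinement-simplicial-decomposition}, which is precisely why simpliciality of $\Sigma$ is assumed. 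One should only double-check that the refinement process preserves the sedentarity-$0$ condition and the constant-towards-the-boundary condition on the ambient complex, which it does by construction in Theorem \ref{thm:1} (the output complex is constant towards the boundary, and its sedentarity-$0$ polyhedra have recession cones in $\Sigma$, so by Proposition \ref{recession-cone-boundary-condition} they are constant towards the boundary as required).
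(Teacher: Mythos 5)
Your proposal is correct and follows essentially the paper's route: the paper's proof simply invokes Remark \ref{polyhedral complex of definition for polyhedral currents}, which rests on exactly the ingredients you use, namely Theorem \ref{thm:1}, Corollary \ref{corollary-refinement-simplicial-decomposition}, the subdivision invariance of Remark \ref{subdivision}, and setting weights to zero on faces that do not occur. One small repair: Corollary \ref{corollary-refinement-simplicial-decomposition} by itself does not produce a common refinement of your complex $\Ccal_0$ with a decomposition of $U$ (intersecting the two complexes only covers $|\Ccal_0|$, not $U$), so instead apply Theorem \ref{thm:1} once to the family consisting of the defining polyhedra of $\alpha$ together with a locally finite covering of $U$ by compact polyhedra constant towards the boundary (as in the corollary's proof); the resulting complex then has support $U$ and subdivides each defining polyhedron, after which your subdivision-and-weights argument goes through verbatim.
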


\proof
This is a direct consequence of Remark \ref{polyhedral complex of definition for polyhedral currents}.
\qed

\begin{prop} \label{criterion for delta-forms}
Let $U$ be an open subset of $N_\Sigma$. Let $\alpha$ be a polyhedral current on $U$ of sedentarity $0$ which is constant towards the boundary.
Then $\alpha$ is a $\delta$-form if and only if $d'\alpha$ is a polyhedral current.
\end{prop}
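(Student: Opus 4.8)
The plan is to combine the decomposition of the differential $d'$ into its polyhedral part and its residue with the structural description of constant-towards-the-boundary currents in Proposition~\ref{constant towards the boundary for polyhedral currents}. The implication ``$\Leftarrow$'' is immediate: if $d'\alpha$ is a polyhedral current on $U$, then by Proposition~\ref{polyhedral currents form sheaf} its restriction $d'\alpha|_{U\cap N_\R}=d'(\alpha|_{U\cap N_\R})$ is a polyhedral current on $U\cap N_\R$, which together with the hypotheses on $\alpha$ is precisely the defining condition of Definition~\ref{definition of delta-form}.

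For ``$\Rightarrow$'' assume $\alpha$ is a $\delta$-form; we may fix a tridegree $(p,q,r)$ for $\alpha$. By Proposition~\ref{prop: residue formula} and Definition~\ref{polyhedral derivatives} we have $d'\alpha=\dpa\alpha-\partial'\alpha$, and $\dpa\alpha$ is automatically a polyhedral current on $U$ (given by the same polyhedra as $\alpha$); so it suffices to show that the residue $\partial'\alpha$ is a polyhedral current on $U$. Apply Proposition~\ref{constant towards the boundary for polyhedral currents} to $\alpha$ to get, for every $\sigma\in\Sigma$, a polyhedral current $\alpha(\sigma)$ on $U(\sigma)=U\cap N(\sigma)$ with $\alpha(0)=\alpha|_{U\cap N_\R}$ and the local compatibility \eqref{constant towards the boundary formula for pc} along $\pi_\sigma\colon N_\R\to N(\sigma)$. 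I would first check that each $\alpha(\sigma)$ is a $\delta$-form on the affine space $N(\sigma)$: since $\alpha(0)=\alpha|_{U\cap N_\R}$ is a $\delta$-form by hypothesis, so is its restriction to any open subset ($\delta$-forms on $N_\R$ form a sheaf, Remark~\ref{Mihatsch's notation}); by \eqref{constant towards the boundary formula for pc} one has $\alpha(0)=\pi_\sigma^*\alpha(\sigma)$ locally on the finite part, and as $d'$ commutes with $\pi_\sigma^*$ (Remark~\ref{pull-back of currents}) the current $\pi_\sigma^*(d'\alpha(\sigma))$ is locally polyhedral, so $d'\alpha(\sigma)$ is locally polyhedral on $N(\sigma)$ by Proposition~\ref{polyhedral current and equivalence for pull-back}, hence polyhedral by the sheaf property.

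Next set $R(\sigma):=\dpa\alpha(\sigma)-d'\alpha(\sigma)=\partial'\alpha(\sigma)$, a polyhedral current on $U(\sigma)$ of a single tridegree by the previous step. Since $\dpa$ and $d'$ both commute with the pull-backs $\pi_\sigma^*$ (Remarks~\ref{pull-back of currents} and \ref{pull-back of polyhedral currents by Mihatsch}), the family $(R(\sigma))_{\sigma\in\Sigma}$ inherits from $\alpha$ the compatibility \eqref{constant towards the boundary formula for pc}, so the converse part of Proposition~\ref{constant towards the boundary for polyhedral currents} produces a polyhedral current $S$ on $U$, of sedentarity $0$ and constant towards the boundary, with $S|_{U\cap N_\R}=R(0)=\dpa(\alpha|_{U\cap N_\R})-d'(\alpha|_{U\cap N_\R})=\partial'\alpha|_{U\cap N_\R}$. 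It then remains to identify $\partial'\alpha$ with $S$: by Lemma~\ref{support lemma for top-forms} the boundary integrals defining $\partial'\alpha$ (Remark~\ref{definition of boundary integrals}, Proposition~\ref{prop: residue formula}) involve only the \emph{finite} facets of the polyhedra of a presentation of $\alpha$, so $\partial'\alpha$ and $S$ act on a compactly supported $\delta$-form through its restriction to $U\cap N_\R$ only; being given locally by integration over polyhedra contained in $N_\R$, the difference $\partial'\alpha-S$ is determined by its restriction to $U\cap N_\R$, where it vanishes. Hence $\partial'\alpha=S$ is polyhedral, and so is $d'\alpha=\dpa\alpha-\partial'\alpha$.

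I expect this last identification $\partial'\alpha=S$ to be the main obstacle: one must unwind Remark~\ref{definition of boundary integrals} and the constant-towards-the-boundary bookkeeping of Proposition~\ref{constant towards the boundary for polyhedral currents} carefully enough to see that the residue current $\partial'\alpha$ is rebuilt from its finite part exactly as a sedentarity-$0$ polyhedral current is rebuilt from its restriction to $U\cap N_\R$. The two intermediate steps --- that each $\alpha(\sigma)$ is a $\delta$-form and that the $R(\sigma)$ are compatible --- are routine once one knows that $d'$, $\dpa$, and the pull-back along surjective affine maps all commute.
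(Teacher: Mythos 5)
Your proposal is correct, and its core coincides with the paper's proof: both directions rest on the residue decomposition $d'\alpha=\dpa\alpha-\partial'\alpha$ from Proposition \ref{prop: residue formula}, on the observation that the boundary integrals of Remark \ref{definition of boundary integrals} only involve facets of sedentarity $0$, and on the fact that $\partial'(\alpha|_{U\cap N_\R})$ is polyhedral because $\alpha|_{U\cap N_\R}$ is a $\delta$-form in Mihatsch's sense. Where you differ is in the final step: the paper simply declares $\partial'\alpha$ to be the canonical extension of the polyhedral current $\partial'\alpha|_{U\cap N_\R}$, hence polyhedral, whereas you construct the candidate extension $S$ stratum by stratum --- first checking, essentially as in the later Proposition \ref{delta-forms and constant towards the boundary} via Proposition \ref{polyhedral current and equivalence for pull-back} (Lemma \ref{balancing}), that each $\alpha(\sigma)$ is a $\delta$-form, then applying both directions of Proposition \ref{constant towards the boundary for polyhedral currents} to the residues $R(\sigma)$ --- and finally identify $\partial'\alpha=S$. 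This detour is legitimate and in fact makes explicit why the extension is a genuine polyhedral current on $U$ that is constant towards the boundary, a point the paper leaves implicit; its price is the identification step you flag, which is real but no harder than what the paper compresses into the phrase ``canonical extension'': since all polyhedra in a presentation of $\alpha$, and hence all their contributing facets, have sedentarity $0$, Lemma \ref{support lemma for top-forms} gives that for $\eta\in A_c(U)$ the restrictions of $\eta$ to these polyhedra have compact support in $N_\R$, and by local finiteness their union is a compact subset of $U\cap N_\R$; a cutoff equal to $1$ near this set replaces $\eta$ by a test form supported in $U\cap N_\R$ without changing either pairing, so both $\partial'\alpha$ and the sedentarity-$0$ polyhedral current $S$ are determined by their restrictions to $U\cap N_\R$, where they agree. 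Two small corrections: the test objects here are the compactly supported smooth Lagerberg forms $A_c(U)$ (polyhedral currents and $\partial'\alpha$ live in $D(U)$), not compactly supported $\delta$-forms; and the reduction to a single tridegree should be justified, as in the paper, by Definition \ref{definition of delta-form} together with \cite[Theorem 3.3]{mihatsch2021}.
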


\begin{proof}
Let us first assume that $d'\alpha$ is polyhedral.
Then $\alpha|_{U \cap N_\R}$ and $d'\alpha|_{U\cap N_\R}$ are both polyhedral and hence $\alpha$ is a $\delta$-form.

For the converse implication assume that $\alpha$ is a $\delta$-form.
If $\alpha$ is $\delta$-form, then each component $\alpha^{p,q,r}$ is a $\delta$-form by Definition \ref{definition of delta-form} and \cite[Theorem 3.3]{mihatsch2021}.
By linearity, we may assume that $\alpha$ has a fixed type $(p,q,r)$. 
Then there is a locally finite family $[\Delta,\mu_\Delta]$ of weighted polyhedra of codimension $l$ in $U$  
and $\alpha_\Delta \in A^{p,q}(\Delta)$ such that
$\alpha=\sum_{\Delta \in I} \alpha_\Delta \wedge [\Delta,\mu_\Delta]$.
It follows from the definitions that it is enough to prove that $\partial'T$ is a polyhedral current where $\partial'=\dpa-d'$ is the differential considered in the residue formula
\[
\langle \partial'\alpha, \eta \rangle = \sum_{\Delta \in I}  \int_{\partial' [\Delta,\mu_\Delta]}  \alpha_\Delta \wedge \eta.
\]
from Proposition \ref{prop: residue formula}.  
The definition of the boundary integrals in \ref{definition of boundary integrals} shows that only facets of sedentarity $0$ play a role. 
Therefore $\partial'\alpha$ is the canonical extension of $\partial'\alpha |_{N_{\R}}$ which is polyhedral because $\alpha|_{N_\R}$ is a $\delta $-form. 
\end{proof}

\begin{rem} \label{differentials for delta-forms}
Let $U$ be an open subset of $N_\Sigma$.
\begin{enumerate}
\item Given a $\delta$-form $\alpha$ on $U$, we conclude from
  Proposition \ref{criterion for delta-forms} that $d'\alpha$ and
  $d''\alpha$ are $\delta$-forms on $U$ as
  $d'd'\alpha = 0=d'' d'' \alpha $.  Hence
    $U\mapsto (B^{\cdot,\cdot}(U),d',d'')$ defines by Remark
    \ref{properties-delta-forms-compactification}~\ref{item:7} a sheaf of
    differential bigraded $\R$-algebras on $N_\Sigma$.
\item Polyhedral currents of sedentarity zero on $U$ of
    tridegree $(p-1,p,n-p)$ or $(p,p-1,n-p)$ are always $\delta$-forms
    by Proposition \ref{criterion for delta-forms} and Corollary
    \ref{special-types-are-delta-forms}.
\end{enumerate}
\end{rem}

\subsection{Products and pull-backs of $\delta$-forms on $N_\Sigma$.}\label{products-pullbacks-delta-forms-on-N_R}
Let $U \subset N_\Sigma$ be open. 
{For  $\sigma \in \Sigma $,} 
we write $U(\sigma) \coloneqq U \cap {N(\sigma)}$. 
For  $\omega \in A^{p,q}(U)$, we write $\omega(\sigma)=\omega|_{U(\sigma)}$. By definition, $\omega$ is constant towards the boundary which means the following. For every $\sigma \in \Sigma$ and for every $x \in U(\sigma)$, there is an open neighbourhood $V$ of $x$ in $U$ such that for
all $\tau \prec \sigma$ we have
\begin{displaymath}
	\omega(\tau) |_{V(\tau)\cap \pi_{\sigma,\tau}^{-1}(V(\sigma))} 
	= \pi_{\sigma, \tau}^* 
	(\omega(\sigma)|_{V(\sigma)})|_{V(\tau)\cap \pi_{\sigma,\tau}^{-1}(V(\sigma))} 
\end{displaymath}
where we use the canonical map $\pi_{\sigma,\tau}\colon N(\tau) \to N(\sigma)$ as usual. 
We have seen a similar characterization of polyhedral currents which are constant towards the boundary in Proposition \ref{constant towards the boundary for polyhedral currents}. 
The next result transfers this characterization to $\delta$-forms on $U$.

\begin{prop} \label{delta-forms and constant towards the boundary}
Let $\alpha$ be a $\delta$-form in $B^{p,q,r}(U)$. Then for every $\sigma \in \Sigma$, there is a unique $\delta$-form $\alpha(\sigma)\in B^{p,q,r}(U(\sigma))$ on the open subset $U(\sigma)$ of the affine space $N(\sigma)$ with $\alpha(0)=\alpha|_{U(0)}$ and with the property that for  every $x \in U(\sigma)$, there is an open neighbourhood $V$ of $x$ in $U$ such that for
all $\tau \prec \sigma$ we have
\begin{equation} \label{constant towards the boundary formula}
	\alpha(\tau) |_{V(\tau)\cap \pi_{\sigma,\tau}^{-1}(V(\sigma))} 
	= \pi_{\sigma, \tau}^* 
	(\alpha(\sigma)|_{V(\sigma)})|_{V(\tau)\cap \pi_{\sigma,\tau}^{-1}(V(\sigma))}. 
\end{equation}
Conversely, if there is for every $\sigma \in \Sigma$ a $\delta$-form $\alpha(\sigma)\in B^{p,q,r}(U(\sigma))$ with the above property \eqref{constant towards the boundary formula}, then there is a unique $\delta$-form $\alpha \in B^{p,q,r}(U)$ with $\alpha|_{U(0)}=\alpha(0)$. 
Moreover, the given $\delta$-forms $\alpha(\sigma)$ agree with the $\delta$-forms on $U(\sigma)$ constructed in the first part of the statement for $\alpha$.
\end{prop}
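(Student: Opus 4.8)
The plan is to reduce everything to the already-established statement about polyhedral currents, namely Proposition \ref{constant towards the boundary for polyhedral currents}, and then to verify that the extra $\delta$-form condition (that $d'$ applied to the current is again polyhedral) is compatible with the construction there. First I would recall that by Definition \ref{definition of delta-form} a $\delta$-form $\alpha \in B^{p,q,r}(U)$ is in particular a polyhedral current of tridegree $(p,q,r)$ of sedentarity $0$ which is constant towards the boundary. Hence Proposition \ref{constant towards the boundary for polyhedral currents} already produces, for every $\sigma \in \Sigma$, a unique polyhedral current $\alpha(\sigma)$ of tridegree $(p,q,r)$ on $U(\sigma)$ with $\alpha(0)=\alpha|_{U(0)}$ and satisfying the compatibility \eqref{constant towards the boundary formula} (which is just \eqref{constant towards the boundary formula for pc} applied with $N$ replaced by $N(\tau)$ and the projection $\pi_{\sigma,\tau}$; note that $\pi_{\sigma,\tau}$ is a surjective affine map of the relevant lattices, so the pull-back of Remark \ref{pull-back of currents} applies).

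The new content to check is that each $\alpha(\sigma)$ is again a $\delta$-form, i.e.\ that $d'\alpha(\sigma)$ is a polyhedral current on the affine space $U(\sigma)$. The plan here is to argue locally: fix $x \in U(\sigma)$ and pick a neighbourhood $V$ as in \eqref{constant towards the boundary formula} for the cone $\sigma$ and the face $\tau = 0$, so that on $V(0) \cap \pi_{\sigma,0}^{-1}(V(\sigma))$ we have $\alpha|_{V(0)} = \pi_\sigma^*(\alpha(\sigma)|_{V(\sigma)})$ up to restriction. Since $\alpha$ is a $\delta$-form, $d'\alpha|_{U\cap N_\R}$ is polyhedral; restricting to a small enough open set of the form $V(0) \cap \pi_\sigma^{-1}(\Omega)$ with $\Omega$ a neighbourhood of $x$ in $U(\sigma)$, and using $d'\pi_\sigma^* = \pi_\sigma^* d'$ together with Lemma \ref{balancing} (applied to the surjective affine map $\pi_\sigma\colon N_\R \to N(\sigma)$), we conclude that $d'(\alpha(\sigma))|_\Omega$ is polyhedral. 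As this holds in a neighbourhood of every point and polyhedral currents form a sheaf (Proposition \ref{polyhedral currents form sheaf}), $d'\alpha(\sigma)$ is polyhedral on $U(\sigma)$, hence $\alpha(\sigma)$ is a $\delta$-form by Proposition \ref{criterion for delta-forms}. Uniqueness of $\alpha(\sigma)$ as a $\delta$-form is inherited from its uniqueness as a polyhedral current.

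For the converse, suppose we are given $\delta$-forms $\alpha(\sigma) \in B^{p,q,r}(U(\sigma))$ for all $\sigma \in \Sigma$ satisfying \eqref{constant towards the boundary formula}. Viewing each $\alpha(\sigma)$ as a polyhedral current of tridegree $(p,q,r)$ on $U(\sigma)$, the converse part of Proposition \ref{constant towards the boundary for polyhedral currents} yields a unique polyhedral current $\alpha$ on $U$ of tridegree $(p,q,r)$ of sedentarity $0$, constant towards the boundary, with $\alpha|_{U(0)} = \alpha(0)$, and moreover the given $\alpha(\sigma)$ coincide with the ones produced from $\alpha$. It remains to see that $\alpha$ is a $\delta$-form, i.e.\ that $d'\alpha|_{U\cap N_\R}$ is polyhedral; but $\alpha|_{U(0)} = \alpha(0)$ is a $\delta$-form on $U(0)$ by hypothesis, and $U(0) = U \cap N_\R$, so this is immediate from Definition \ref{definition of delta-form}. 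Uniqueness of $\alpha$ again comes from the fact that a polyhedral current (a fortiori a $\delta$-form) of sedentarity $0$ on $U$ is determined by its restriction to $U(0)$.

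The main obstacle I anticipate is purely bookkeeping: making sure that the local neighbourhoods $V$ witnessing \eqref{constant towards the boundary formula} can be chosen uniformly enough (in particular, of the product form $V(0) \cap \pi_\sigma^{-1}(\Omega)$ compatible with the topology of $N_\Sigma$ described in \cite[Remark 3.1.2]{burgos-gubler-jell-kuennemann1}) so that the pull-back identity $d'\alpha|_V = \pi_\sigma^*(d'\alpha(\sigma))|_V$ makes sense on the nose, and that Lemma \ref{balancing} can be applied on an actual open subset of $N_\R$ of the form $\pi_\sigma^{-1}(\Omega')$. Once the topology is unwound, each implication is a short sheaf-theoretic and functoriality argument, all of whose ingredients — the pull-back $\pi_\sigma^*$, its commutation with $d'$, the balancing equivalence of Lemma \ref{balancing}, and the sheaf property of polyhedral currents — are already in place.
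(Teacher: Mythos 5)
Your proposal is correct and follows essentially the same route as the paper: reduce both directions to Proposition \ref{constant towards the boundary for polyhedral currents}, then use the local description of the topology of $N_\Sigma$ together with Lemma \ref{balancing} applied to $\pi_\sigma$ (and the relation \eqref{constant towards the boundary formula} for $\tau=\{0\}$) to see that each $\alpha(\sigma)$ is a $\delta$-form, while the converse is immediate since $\alpha|_{U(0)}=\alpha(0)$ is a $\delta$-form on $U(0)=U\cap N_\R$. Only a cosmetic remark: on the affine space $U(\sigma)$ you do not need Proposition \ref{criterion for delta-forms} (which concerns $N_\Sigma$); Mihatsch's definition, i.e.\ polyhedrality of $\alpha(\sigma)$ and $d'\alpha(\sigma)$, already suffices, exactly as Lemma \ref{balancing} delivers it.
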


\begin{proof} 
Suppose first that $\alpha \in B^{p,q,r}(U)$. For $\sigma \in \Sigma$, we have the polyhedral currents $\alpha(\sigma)$ on $U(\sigma)$ from Proposition \ref{constant towards the boundary for polyhedral currents}. It is enough to show that $\alpha(\sigma)$ is a $\delta$-form. 
Since $\alpha$ is a $\delta$-form, we have that $\alpha(0)$ is a $\delta$-form on $U(0)$. 
For every $x \in U(\sigma)$, the description of the topology given in \cite[Remark 3.1.2]{burgos-gubler-jell-kuennemann1} 
shows that there is an open neighbourhood $W$ of $x$ in $U(\sigma)$ and an open subset $W'$ of $N_\R$ with $x \in \pi_\sigma(W')\subset U(\sigma)$. We apply Lemma \ref{balancing} for the open subsets $W\coloneq \pi_\sigma(W')$ and $W'$ of $N(\sigma)$ and $N_\R$, respectively. Since $\alpha(0)$ is a $\delta$-form on $W'$, we deduce from \eqref{constant towards the boundary formula} for $\tau=\{0\}$ that $\alpha(\sigma)$ is a $\delta$-form on $W$. This proves that $\alpha(\sigma)$ is a $\delta$-form on $U(\sigma)$. 

Conversely, assume that all the polyhedral currents $\alpha(\sigma)$ are $\delta$-forms. By Proposition \ref{constant towards the boundary for polyhedral currents}, the conditions \eqref{constant towards the boundary formula} yield that $\alpha$ is a polyhedral current of sedentarity $0$ which is constant towards the boundary. Since $\alpha(0)$ is a $\delta$-form on $U(0)$, we conclude that $\alpha$ is a $\delta$-form. 
\end{proof}

\begin{thm} \label{product on delta-forms}
Let $U$ be an open subset of $N_\Sigma$. Then there is a unique product $\alpha \wedge \beta$ for $\delta$-forms $\alpha,\beta$ on $U$ which agrees on the dense stratum $U(0)=N_\R \cap U$ with Mihatsch's product from Remark \ref{Mihatsch's notation}. For any $\sigma \in \Sigma$, we have 
\begin{equation}  \label{characterization of product of delta-forms}
(\alpha \wedge \beta)(\sigma) = \alpha(\sigma) \wedge \beta(\sigma)
\end{equation}
using Mihatsch's product of $\delta$-forms on $U(\sigma) = U \cap N(\sigma)$. 
In this way, the bigraded sheaf $B=\bigoplus_{p,q} B^{p,q}$ of $\delta$-forms on $N_\Sigma$ gets a bigraded differential sheaf of $\R$-algebras with respect to the natural differentials $d',d''$. 
\end{thm}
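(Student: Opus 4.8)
The plan is to construct the product locally on strata and then glue, using Proposition \ref{delta-forms and constant towards the boundary} to show the result is a well-defined $\delta$-form on $N_\Sigma$. More precisely, given $\delta$-forms $\alpha,\beta$ on $U$, Proposition \ref{delta-forms and constant towards the boundary} provides for each $\sigma \in \Sigma$ the $\delta$-forms $\alpha(\sigma),\beta(\sigma)$ on the open subset $U(\sigma)$ of the affine space $N(\sigma)$. On each $U(\sigma)$ we take Mihatsch's product $\alpha(\sigma) \wedge \beta(\sigma)$, which is a $\delta$-form on $U(\sigma)$ by the results recalled in \S \ref{delta-forms-on-N_R}. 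The main point is to check that the family $(\alpha(\sigma)\wedge\beta(\sigma))_{\sigma\in\Sigma}$ satisfies the compatibility condition \eqref{constant towards the boundary formula}; once this is established, the converse direction of Proposition \ref{delta-forms and constant towards the boundary} produces a unique $\delta$-form on $U$ whose restriction to $U(0)$ equals $\alpha(0)\wedge\beta(0)$, i.e. Mihatsch's product on the dense stratum, and whose stratum-restrictions are exactly $\alpha(\sigma)\wedge\beta(\sigma)$, giving \eqref{characterization of product of delta-forms}.

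To verify the compatibility condition, fix $\sigma \in \Sigma$, $x \in U(\sigma)$, and $\tau \prec \sigma$. By \eqref{constant towards the boundary formula} applied to $\alpha$ and to $\beta$ separately, there is a common open neighbourhood $V$ of $x$ in $U$ on which both $\alpha(\tau)|_{V(\tau)\cap \pi_{\sigma,\tau}^{-1}(V(\sigma))} = \pi_{\sigma,\tau}^*(\alpha(\sigma)|_{V(\sigma)})|_{\cdots}$ and the analogous identity for $\beta$ hold. Since Mihatsch's pull-back $\pi_{\sigma,\tau}^*$ along the surjective affine map $\pi_{\sigma,\tau}\colon N(\tau) \to N(\sigma)$ is a homomorphism of bigraded $\R$-algebras (Remark \ref{Mihatsch's notation}, using that $\pi_{\sigma,\tau}$ is surjective), it commutes with the $\wedge$-product. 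Hence on $V(\tau)\cap \pi_{\sigma,\tau}^{-1}(V(\sigma))$ we get
\[
(\alpha(\tau)\wedge\beta(\tau)) = \pi_{\sigma,\tau}^*(\alpha(\sigma)|_{V(\sigma)}) \wedge \pi_{\sigma,\tau}^*(\beta(\sigma)|_{V(\sigma)}) = \pi_{\sigma,\tau}^*\bigl((\alpha(\sigma)\wedge\beta(\sigma))|_{V(\sigma)}\bigr),
\]
which is precisely condition \eqref{constant towards the boundary formula} for the family $(\alpha(\sigma)\wedge\beta(\sigma))_\sigma$. This yields existence; uniqueness of the product with the stated property follows because a $\delta$-form of sedentarity $0$ is determined by its restriction to $U(0)$, and on $U(0)$ the product is forced to be Mihatsch's product.

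It remains to record that this product turns $B = \bigoplus_{p,q} B^{p,q}$ into a bigraded differential sheaf of $\R$-algebras. The sheaf property was noted in Remark \ref{properties-delta-forms-compactification}~\ref{item:7}, and restriction to open subsets is visibly compatible with the product since it is compatible with the stratum-restrictions and with Mihatsch's product on each stratum. Bigraded-algebra axioms (associativity, distributivity, graded-commutativity, compatibility with the embedding $A(U)\hookrightarrow B(U)$) hold on each dense stratum $U(0)$ by Mihatsch's results, hence hold globally by the uniqueness/determination property. Finally, the Leibniz rules $d'(\alpha\wedge\beta) = d'\alpha\wedge\beta + (-1)^{\deg\alpha}\alpha\wedge d'\beta$ and the analogous one for $d''$ hold on $U(0)$ by Mihatsch's construction, and both sides are $\delta$-forms on $U$ (using Remark \ref{differentials for delta-forms} that $d',d''$ preserve $\delta$-forms), so they agree by the determination property. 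The main obstacle in this argument is the compatibility check in the middle paragraph — specifically making sure one may choose a \emph{common} neighbourhood $V$ for $\alpha$ and $\beta$ and that Mihatsch's pull-back along $\pi_{\sigma,\tau}$ genuinely commutes with the $\wedge$-product, both of which are straightforward but need the surjectivity of $\pi_{\sigma,\tau}$ and the local nature of the constructions.
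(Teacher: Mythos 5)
Your proposal is correct and follows essentially the same route as the paper's proof: define $(\alpha\wedge\beta)(\sigma)$ stratum-wise via Mihatsch's product, check the compatibility condition \eqref{constant towards the boundary formula}, and invoke Proposition \ref{delta-forms and constant towards the boundary} to glue, with the algebra and Leibniz properties carried over from the affine case. You merely spell out the steps the paper leaves as ``by construction'' (common neighbourhood, pull-back along the surjective $\pi_{\sigma,\tau}$ being an algebra homomorphism) and the determination-by-$U(0)$ argument for uniqueness, all of which are accurate.
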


\begin{proof}
The $\delta$-forms $\alpha$ and $\beta$ satisfy \eqref{constant towards the boundary formula}. For $\sigma \in \Sigma$, we define $(\alpha \wedge \beta)(\sigma) \in B(U(\sigma))$ by \eqref{characterization of product of delta-forms} using Mihatsch's product on $U(\sigma)$. By construction, the $\delta$-forms $(\alpha \wedge \beta)(\sigma)$ satisfy \eqref{constant towards the boundary formula} and so the claim follows from Proposition \ref{delta-forms and constant towards the boundary}. The last claim carries immediately over from the affine case given in Remark \ref{Mihatsch's notation}.
\end{proof}

We want to define pull-backs of $\delta$-forms in the following setting. 
Let $N_{\Sigma'}'$ be another tropical toric variety with underlying lattice $N'$ of rank $n'$, let $L\colon N_\R' \to N_\R$ be an $\R$-linear map and let $f\colon N_\Sigma \to N_{\Sigma'}'$ be any $L$-equivariant morphism of tropical toric varieties, see \S \ref{subdivision}.
For every $\sigma \in \Sigma$, there is a unique $\sigma' \in \Sigma'$ such that $f$ restricts to an affine map	
\[
f_\sigma \colon N(\sigma) \to N'(\sigma')
\]
of strata. In particular, let $\rho \in \Sigma'$ be the cone corresponding to $\{0\}$. 

\begin{prop} \label{pull-back of delta-forms}
Using the above notation, let $U$ (resp.~$U'$)  an open subset of
$N_\Sigma$ (resp.~of $N'_{\Sigma'}$) such that $f(U)\subset U'$. For
every $\delta$-form $\alpha' \in B(U')$, there is a unique $\delta
$-form  $f^*(\alpha') \in B(U)$ with $(f^*(\alpha'))(0)=f_0^*(\alpha'(\rho))$. Moreover, for any $\sigma \in \Sigma$, we have
$$(f^*(\alpha'))(\sigma)=f_\sigma^*(\alpha'(\sigma')).$$
Finally, we note that $f^*$ induces a homomorphism $B_{N_{\Sigma'}'} \to f_*B_{N_\Sigma}$ of sheaves of bigraded differential $\R$-algebras.
\end{prop}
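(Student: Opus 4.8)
The plan is to build $f^*(\alpha')$ stratum by stratum and glue via Proposition \ref{delta-forms and constant towards the boundary}. First I would recall that for each $\sigma \in \Sigma$ there is a unique $\sigma' \in \Sigma'$ with $f$ restricting to an affine map $f_\sigma\colon N(\sigma) \to N'(\sigma')$; in particular $f_0\colon N_\R \to N'(\rho)$ is an affine map, so Mihatsch's pull-back from Remark \ref{Mihatsch's notation} (extended to the not-necessarily-surjective case, cf.\ \S\ref{delta-forms-on-N_R}) gives a $\delta$-form $f_\sigma^*(\alpha'(\sigma')) \in B(f_\sigma^{-1}(U'(\sigma')))$, hence on $U(\sigma)$ since $f(U)\subset U'$ forces $f_\sigma(U(\sigma)) \subset U'(\sigma')$. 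These are the candidate strata $(f^*(\alpha'))(\sigma) \coloneqq f_\sigma^*(\alpha'(\sigma'))$.

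The key step is to verify the compatibility condition \eqref{constant towards the boundary formula} for this family, so that Proposition \ref{delta-forms and constant towards the boundary} produces a single $\delta$-form $f^*(\alpha') \in B(U)$ with the prescribed strata. Fix $\sigma \in \Sigma$ and $\tau \prec \sigma$, and let $\sigma', \tau' \in \Sigma'$ be the associated cones; note $\tau' \prec \sigma'$. The point is the commutativity of the square relating $\pi_{\sigma,\tau}\colon N(\tau)\to N(\sigma)$, $\pi_{\sigma',\tau'}\colon N'(\tau')\to N'(\sigma')$, $f_\sigma$ and $f_\tau$: since $f$ is $L$-equivariant and continuous, $f_\sigma \circ \pi_{\sigma,\tau} = \pi_{\sigma',\tau'}\circ f_\tau$ as affine maps on $N(\tau)$ (this is exactly the factorization property of equivariant morphisms recalled in \S\ref{subsec: notation}). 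Because $\alpha'$ is constant towards the boundary, near any $x\in U'(\sigma')$ we have $\alpha'(\tau') = \pi_{\sigma',\tau'}^*(\alpha'(\sigma'))$ on a suitable neighbourhood; pulling back along $f_\tau$ and using functoriality of the pull-back of $\delta$-forms together with the commutative square gives $f_\tau^*(\alpha'(\tau')) = f_\tau^*\pi_{\sigma',\tau'}^*(\alpha'(\sigma')) = \pi_{\sigma,\tau}^*f_\sigma^*(\alpha'(\sigma'))$ on the corresponding neighbourhood of a point of $U(\sigma)$ mapping into that neighbourhood of $x$. This is \eqref{constant towards the boundary formula}, and uniqueness of $f^*(\alpha')$ follows from the uniqueness clause in Proposition \ref{delta-forms and constant towards the boundary}.

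It remains to record that $f^*$ is a homomorphism of sheaves of bigraded differential $\R$-algebras. Compatibility with restriction to smaller opens is immediate from the stratumwise definition and the corresponding property of Mihatsch's affine pull-back, so $f^*$ is a morphism of presheaves $B_{N'_{\Sigma'}} \to f_*B_{N_\Sigma}$, hence of sheaves. Compatibility with products follows from Theorem \ref{product on delta-forms}, which computes $\wedge$ stratumwise, together with the fact that Mihatsch's affine pull-back is a ring homomorphism: $(f^*(\alpha'\wedge\beta'))(\sigma) = f_\sigma^*(\alpha'(\sigma')\wedge\beta'(\sigma')) = f_\sigma^*(\alpha'(\sigma'))\wedge f_\sigma^*(\beta'(\sigma')) = (f^*(\alpha')\wedge f^*(\beta'))(\sigma)$, and two $\delta$-forms agreeing on all strata coincide. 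Compatibility with $d'$ and $d''$ is the same stratumwise argument using that Mihatsch's pull-back commutes with the differentials. The main obstacle I anticipate is purely bookkeeping: making the neighbourhoods in \eqref{constant towards the boundary formula} match up through the two equivariant maps, i.e.\ checking that a neighbourhood $V'$ of $x$ witnessing constancy towards the boundary for $\alpha'$ pulls back to a neighbourhood of a given point of $U(\sigma)$ witnessing it for $f^*(\alpha')$; this uses only continuity of $f$ and the explicit description of the topology of $N_\Sigma$ from \cite[Remark 3.1.2]{burgos-gubler-jell-kuennemann1}.
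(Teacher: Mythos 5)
Your proposal is correct and follows essentially the same route as the paper: define the strata by $f_\sigma^*(\alpha'(\sigma'))$, verify the compatibility condition \eqref{constant towards the boundary formula} via the commutative square $\pi_{\sigma',\tau'}\circ f_\tau = f_\sigma\circ\pi_{\sigma,\tau}$ applied to the neighbourhood $f^{-1}(V')\cap U$, and glue with Proposition \ref{delta-forms and constant towards the boundary}, deducing multiplicativity and compatibility with $d',d''$ from the affine case. No gaps.
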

	
\begin{proof}
The map $L$ induces a linear map $N \to N'(\rho)$. 
The cone $\sigma$ is mapped to the cone $\sigma'$ modulo $\L_{\rho}$. 
For any face $\tau$ of $\sigma$, we get a corresponding face $\tau'$ of $\sigma'$. 
We note that the $\delta$-form $\alpha'$ on $U'$ satisfies \eqref{constant towards the boundary formula} for this face $\tau'$ of the cone $\sigma'$ which means that for every $x' \in U'(\sigma')$, there is an open neigbhourhood $V'$ of $x'$ with 
\begin{equation} \label{constant towards the boundary formula on N'}
\alpha'(\tau') |_{V'(\tau')\cap \pi_{\sigma',\tau'}^{-1}(V'(\sigma'))}= \pi_{\sigma', \tau'}^*(\alpha'(\sigma')|_{V'(\sigma')})|_{V'(\tau')\cap \pi_{\sigma',\tau'}^{-1}(V'(\sigma'))}. 
\end{equation}
We define $\alpha(\sigma)\coloneqq f_\sigma^*(\alpha'(\sigma')) \in B(U(\sigma)$. 
We use the criterion in Proposition \ref{delta-forms and constant towards the boundary} to show that they induce a unique $\delta$-form $\alpha$ on $U$. 
For $x \in U(\sigma)$, let $x' \coloneqq f(x)$ which is a point of $U'(\sigma')$. 
Using the open neighbourhood $V'$ for this $x'$ from above, we set $V \coloneqq f^{-1}(V') \cap U$ which is an open neighbourhood of $x$ in $U$ which satisfies \eqref{constant towards the boundary formula} for the face $\tau$ of $\sigma$ by using pull-back and by using $\pi_{\sigma',\tau'}\circ f_\tau = f_\sigma \circ \pi_{\sigma,\tau}$. 
It follows from Proposition \ref{delta-forms and constant towards the boundary} that $\alpha$ is a $\delta$-form on $U$ uniquely characterized by $\alpha(0)=f_\sigma^*(\alpha'(\rho))$. 
The final claim follows from the corresponding properties of pull-backs in the affine situation given in Remark \ref{Mihatsch's notation}. 	 
\end{proof}
	
\begin{prop} \label{piecewise smooth forms as delta-forms}
Let $U$ be an open subset of $N_\Sigma$. 
Then $B^{p,q,0}(U)$ is precisely the space of piecewise smooth forms on $U$ of bidegree $(p,q)$. 	
\end{prop}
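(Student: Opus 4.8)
The plan is to prove the equality $B^{p,q,0}(U) = \PS^{p,q}(U)$ by showing both inclusions, reducing everything to the affine case via the sheaf property and the characterization of $\delta$-forms in terms of their restrictions to strata.

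\medskip

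\emph{Reduction to the affine case.} First I would observe that both sides are sheaves on $N_\Sigma$ (for the left side by Remark \ref{properties-delta-forms-compactification}~\ref{item:7} restricted to type $(p,q,0)$, for the right side by Lemma \ref{lemm:6}), so it suffices to compare sections locally. More importantly, by Proposition \ref{delta-forms and constant towards the boundary} a $\delta$-form $\alpha$ of type $(p,q,0)$ on $U$ is the same datum as a compatible family of $\delta$-forms $\alpha(\sigma) \in B^{p,q,0}(U(\sigma))$ on the strata $U(\sigma) \subset N(\sigma)$ satisfying the constant-towards-the-boundary relation \eqref{constant towards the boundary formula}; similarly, inspecting Lemma \ref{choose-constant-lemma}, a piecewise smooth form on $U$ of bidegree $(p,q)$ restricts to piecewise smooth forms on each stratum and these restrictions again satisfy the constant-towards-the-boundary relations. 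Hence, granting the affine statement $B^{p,q,0}(W) = \PS^{p,q}(W)$ for open $W \subset N(\sigma)_\R$, the two types of compatible families coincide and we are done. So the crux is the affine case.

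\medskip

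\emph{The inclusion $\PS^{p,q} \subseteq B^{p,q,0}$ in the affine case.} For $W$ open in a real affine space, let $\alpha$ be piecewise smooth of bidegree $(p,q)$, given by a locally finite covering $(\Delta_i, \alpha_i)$ with $\Delta_i$ full-dimensional polyhedra. By Remark \ref{piecewise smooth form and associated polyhedral current} the associated current $[\alpha] = \sum_i \alpha_i \wedge [\Delta_i, \mu]$ is a polyhedral current of tridegree $(p,q,0)$, hence of sedentarity $0$. By Proposition \ref{criterion for delta-forms} (or directly Mihatsch's criterion \cite[Thm. 3.3]{mihatsch2021}) it is a $\delta$-form precisely when $d'[\alpha]$ is again polyhedral. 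By the residue formula, Proposition \ref{prop: residue formula}, we have $d'[\alpha] = \dpa[\alpha] - \partial'[\alpha]$ where $\dpa[\alpha] = \sum_i d'\alpha_i \wedge [\Delta_i,\mu]$ is visibly polyhedral, and $\partial'[\alpha] = \sum_i \alpha_i \wedge \partial'[\Delta_i,\mu]$. The boundary contributions from the facets come in pairs: a codimension-one face $\tau$ shared by $\Delta_i$ and $\Delta_j$ contributes $(\alpha_i, n''_{\Delta_i,\tau})|_\tau \wedge [\tau,\mu_\tau]$ and $(\alpha_j, n''_{\Delta_j,\tau})|_\tau \wedge [\tau,\mu_\tau]$, and since $\alpha_i|_\tau = \alpha_j|_\tau$ while $n''_{\Delta_i,\tau} = -n''_{\Delta_j,\tau}$ with the weight conventions of Remark \ref{orthogonal lattice vectors}, these cancel. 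The facets lying on $\partial W$ contribute nothing because, after refining to a polyhedral complex $\Ccal$ with $|\Ccal| = W$, the relevant testing forms have compact support in $W$. Hence $\partial'[\alpha]$ is actually zero (or more carefully, the surviving terms assemble into a polyhedral current supported on the non-interior faces, which vanishes against compactly supported test forms), so $d'[\alpha]$ is polyhedral and $[\alpha] \in B^{p,q,0}(W)$.

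\medskip

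\emph{The inclusion $B^{p,q,0} \subseteq \PS^{p,q}$ in the affine case.} Conversely let $T \in B^{p,q,0}(W)$, so $T = \sum_{\Delta \in I} \alpha_\Delta \wedge [\Delta,\mu_\Delta]$ with $\Delta$ of codimension $0$, i.e.\ full-dimensional, and $\alpha_\Delta \in A^{p,q}(\Delta)$; after subdividing we may assume $I$ is (the top stratum of) a locally finite polyhedral complex $\Ccal$ with $|\Ccal| = W$. The only thing to check is that the $\alpha_\Delta$ glue correctly along shared facets, i.e.\ that $\alpha_{\Delta_1}|_\tau = \alpha_{\Delta_2}|_\tau$ whenever $\tau$ is a common facet of $\Delta_1, \Delta_2 \in \Ccal$: this is exactly the definition of a piecewise smooth form. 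To get this I would use that $d'T$ is polyhedral; by the residue formula the facet terms of $d'T$ that are NOT multiples of $[\tau,\cdot]$ with polyhedral coefficient must cancel, and again running the pairing argument in reverse, the coefficient of $[\tau,\mu_\tau]$ in the residue part is $\bigl((\alpha_{\Delta_1},n''_{\Delta_1,\tau}) + (\alpha_{\Delta_2},n''_{\Delta_2,\tau})\bigr)|_\tau$ plus similar interior-facet contributions; polyhedrality of $d'T$ forces the ``non-polyhedral'' directional-derivative discrepancies to vanish. Combining this with the analogous consequence of $d''T$ polyhedral pins down $\alpha_{\Delta_1}|_\tau = \alpha_{\Delta_2}|_\tau$, and so $T$ comes from a piecewise smooth form.

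\medskip

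\emph{Main obstacle.} I expect the technical heart to be the gluing direction $B^{p,q,0} \subseteq \PS^{p,q}$: extracting from ``$d'T$ and $d''T$ are polyhedral'' the full matching condition $\alpha_{\Delta_1}|_\tau = \alpha_{\Delta_2}|_\tau$ on every shared facet, rather than just matching of certain normal derivatives. One must carefully track how the residue/boundary contributions of tridegree $(p,q,0)$ currents interact across a facet $\tau$, use both differentials to control derivatives transverse to $\tau$, and combine with derivatives along $\tau$ (from the piecewise smoothness of the $\alpha_\Delta$ individually) to conclude equality of the restricted forms. Everything else is a routine application of the residue formula (Proposition \ref{prop: residue formula}), Mihatsch's criterion, and the constant-towards-the-boundary bookkeeping of Proposition \ref{delta-forms and constant towards the boundary}.
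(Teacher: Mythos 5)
Your reduction to the affine case is exactly the paper's route: both sides are sheaves, piecewise smooth forms can be represented by coverings that are constant towards the boundary (Lemma \ref{choose-constant-lemma}), and Proposition \ref{delta-forms and constant towards the boundary} identifies a $\delta$-form of type $(p,q,0)$ with a compatible family on the strata. The paper then simply quotes Mihatsch's affine statement (\cite[Lemma 3.7]{mihatsch2021}, cf.\ \cite[Proposition 4.7]{mihatsch2021a}) rather than reproving it, and it is in your re-proof of the affine case that there is a genuine gap.

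The problem is that you split the residue incorrectly, in both directions. For a facet $\tau$ of a full-dimensional $\Delta$, testing $d'(\alpha_\Delta\wedge\delta_{[\Delta,\mu]})$ against $\eta$ produces, by Proposition \ref{prop: residue formula} and the Leibniz rule for contractions, the boundary term $-\int_{[\tau,\nu]}(\alpha_\Delta\wedge\eta,n_{\Delta,\tau}'')|_\tau=-\int_{[\tau,\nu]}(\alpha_\Delta,n_{\Delta,\tau}'')|_\tau\wedge\eta|_\tau\mp\int_{[\tau,\nu]}\alpha_\Delta|_\tau\wedge(\eta,n_{\Delta,\tau}'')|_\tau$. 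The first summand is the one you focus on, but it is already a polyhedral current (its coefficient $(\alpha_\Delta,n_{\Delta,\tau}'')|_\tau$ lies in $A(\tau)$), so it is never an obstruction to $d'T$ being polyhedral; moreover it does \emph{not} cancel between neighbouring polyhedra, because the definition of $\PS$ only forces the tangential restrictions $\alpha_i|_\tau=\alpha_j|_\tau$ to agree, while $(\alpha_i,n'')$ involves normal components: e.g.\ for $(0,1)$-forms $\alpha_i=f_i\,d''x$ on the two sides of $\tau=\{x=0\}$ one has $\alpha_i|_\tau=0$ but $(\alpha_i,n'')=\pm f_i$, and $f_1|_\tau\neq f_2|_\tau$ is perfectly allowed. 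The genuine obstruction is the second summand, which pairs $\alpha_\Delta|_\tau$ against the normal component $(\eta,n'')|_\tau$ of the \emph{test} form and hence is not polyhedral (this is why the paper remarks that $\partial'[\Delta,\mu]$ is never polyhedral); at an interior facet its coefficient is $\alpha_{\Delta_1}|_\tau-\alpha_{\Delta_2}|_\tau$ up to sign (changing the normal by an element of $\L_\tau$ only shifts it by a polyhedral term). So the inclusion $\PS\subset B^{\cdot,\cdot,0}$ holds because the matching condition kills exactly these non-polyhedral terms — not the terms you cancel — and conversely, polyhedrality of $d'T$ alone already forces $\alpha_{\Delta_1}|_\tau=\alpha_{\Delta_2}|_\tau$; your computation of ``the coefficient of $[\tau,\mu_\tau]$'' extracts only the harmless polyhedral part and yields no information about matching. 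With the split of the residue corrected, both inclusions go through (this is essentially Mihatsch's argument), but as written the cancellation you invoke is false and the conclusions do not follow from it.
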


\begin{proof}
By \cite[Lemma 3.7]{mihatsch2021}, this is true in the affine situation on $U(\sigma)$. 
Using Proposition \ref{delta-forms and constant towards the boundary}, this easily yields the claim as we have seen in Lemma \ref{choose-constant-lemma} that piecewise smooth forms in \S \ref{ps forms and functoriality} can be given by locally finite coverings consisting of polyhedra  which are constant towards the boundary.
\end{proof}

\subsection{Formal $\delta$-currents on $N_\Sigma$.}\label{formal-delta-currents}
Let $U$ be an open subset of the partial compactification $N_\Sigma$.

\begin{definition} \label{formal delta-current partial compactifiction}
A \emph{formal $\delta$-current} on $U$ is a linear functional on the space $B_c(U)$ of compactly supported $\delta$-forms. 
Using partitions of unity \cite[Proposition 3.2.12]{burgos-gubler-jell-kuennemann1}, we obtain a sheaf $E$ of $\R$-vector spaces in $N_\Sigma$ 
with bigrading defined by
\[
E^{p,q}(U)\coloneqq \Hom(B_c^{n-p,n-q}(U),\R).
\]
\end{definition}

As we will not need them, we will not introduce $\delta$-currents on $U$ as elements of the topological dual of the space of $\delta$-forms with compact support for a suitable topology.
Instead we will only work with formal $\delta$-currents.

\begin{rem}
There are unique $\R$-linear differentials and a product
\begin{align*}
d'\colon &E^{r,s}(U )\longrightarrow E^{r+1,s}(U),\,\,\,
d''\colon E^{r,s}(U )\longrightarrow E^{r,s+1}(U )\\
\wedge\colon &B^{p,q}(U) \otimes_\R E^{r,s}(U)\longrightarrow E^{p+r,q+s}(U)
\end{align*}
such that
\begin{align*}
d'(T)(\alpha)&=(-1)^{r+s+1}T(d'\alpha),\,\,\,
d''(T)(\beta)=(-1)^{r+s+1}T(d''\beta),\\
(\gamma\wedge T)(\rho)&=(-1)^{(p+q)(r+s)}T(\gamma\wedge\rho)
\end{align*}
for all $T\in E^{r,s}(U)$, $\gamma\in B^{p,q}(U)$, and $\alpha,\beta,\rho\in B_c(U)$ of suitable bidegree.
\end{rem}

\begin{ex}
A Radon measure $\mu$ on $U$ induces a formal $\delta$-current $T_\mu\in E^{n,n}(U)$ such that
$T_\mu(f)=\int_Uf\,d\mu$
for all $f\in B^{0,0}_c(U)\subseteq C_c^0(U)$.
\end{ex}

\begin{rem}
Using the integral $\int_U\colon B_c^{n,n}(U)\rightarrow \R$ from
Remark \ref{properties-delta-forms-compactification}~\ref{item:8}, 
we obtain a natural map
\begin{equation} \label{associated delta-current}
[\phantom{a}]\colon B^{p,q}(U)\longrightarrow E^{p,q}(U),\,\,\,
\alpha\longmapsto [\alpha] \mbox{ with } [\alpha](\beta)\coloneqq \int_U\alpha\wedge \beta.
\end{equation}
It follows from 
Proposition \ref{piecewise smooth forms as delta-forms} that 
a piecewise smooth form $\alpha\in B^{p,q,0}(U)$ 
defines a formal $\delta$-current $[\alpha]$ on $U$.
\end{rem}

\subsection{The Theorem of Stokes for $\delta$-forms} \label{subsection: Stokes for delta-forms}
Let $U$ be an open subset of $N_\Sigma$ and let $\Delta$ be a $d$-dimensional polyhedron $\Delta$ in $U$ of sedentarity $\sigma \in \Sigma$. In this subsection, we want to deduce the Theorem of Stokes for $\delta$-forms on $\Delta$. 
Recall that we have shown the Theorem of Stokes in \S \ref{subsection: Stokes for polyhedral currents} even for polyhedral currents, but only in case of a polyhedron of maximal dimension $n$. Since we can restrict $\delta$-forms to  the affine space $\A_{\Delta(\sigma)}$, we will reduce to the previous case.

In the following, we will use that Mihatsch defines a pull-back of $\delta$-forms with respect to arbitrary affine maps between euclidean spaces, see \cite[Proposition/Definition 4.2]{mihatsch2021}. In particular, if $\beta \in B(U)$ for an open subset $U$ of $N_\Sigma$, then we define $\beta|_{\A_{\Delta(\sigma)}\cap U}$ by first using the obvious restriction to $U\cap N(\sigma))$ and then by using the pull-back with respect to $\A_{\Delta(\sigma)}\cap U \to U \cap N(\sigma)$.
\begin{lem} \label{support lemma for top-delta-forms}
	Let $\Delta$  be a $d$-dimensional polyhedron in an open subset $U$ of $N_\Sigma$ of sedentarity $\sigma \in \Sigma$ and let 	$\beta\in B_c^{d,d}(U)$. Then  $\beta|_{\A_{\Delta(\sigma)}\cap U}$ has compact support in $\A_{\Delta(\sigma)}\cap U$. The same holds for $\delta$-forms $\beta$  of bidegree $(d-1,d)$ or $(d,d-1)$.
\end{lem}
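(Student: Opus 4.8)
The plan is to reduce the statement to the corresponding support statement for smooth Lagerberg forms, namely Lemma \ref{support lemma for top-forms}, together with the local description of $\delta$-forms as polyhedral currents. First I would unwind the definitions: by Definition \ref{definition of delta-form}, a $\delta$-form $\beta\in B_c^{d,d}(U)$ is a polyhedral current of sedentarity $0$ on $U$ which is constant towards the boundary; locally on $U$ it is therefore a finite sum $\sum_{\Lambda}\alpha_\Lambda\wedge\delta_{[\Lambda,\mu_\Lambda]}$ with $\Lambda$ ranging over weighted polyhedra in $U$ constant towards the boundary and $\alpha_\Lambda\in A(\Lambda)$. Restricting to $\A_{\Delta(\sigma)}\cap U$ means first restricting to $U\cap N(\sigma)$ — i.e. taking the sedentarity-$\sigma$ part $\beta(\sigma)$ via Proposition \ref{delta-forms and constant towards the boundary} — and then pulling back along the affine inclusion $\A_{\Delta(\sigma)}\cap U\hookrightarrow U\cap N(\sigma)$ using Mihatsch's pull-back of $\delta$-forms along affine maps. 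Since compact support is a local question and these operations are local and compatible with locally finite sums, it suffices to bound the support of each term $\alpha_\Lambda\wedge\delta_{[\Lambda,\mu_\Lambda]}$.

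Next I would work inside the euclidean space $\A_{\Delta(\sigma)}$, which has dimension $d$. Here the restriction $(\alpha_\Lambda\wedge\delta_{[\Lambda,\mu_\Lambda]})|_{\A_{\Delta(\sigma)}}$ is a $\delta$-form on an open subset of a $d$-dimensional affine space, so its top-degree component has bidegree $(d,d)$ and its underlying polyhedral pieces have dimension $\le d$; by the trigrading conventions (Remark \ref{Mihatsch's notation}) a contribution of bidegree $(d,d)$ forces the relevant polyhedra to be $d$-dimensional, hence open in $\A_{\Delta(\sigma)}$, and the smooth form $\alpha_\Lambda$ restricted to such a piece is then a genuine top-degree Lagerberg form on an open subset of $\A_{\Delta(\sigma)}$. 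The point is that the support of $\beta|_{\A_{\Delta(\sigma)}\cap U}$ as a current equals the closure of the union of the supports of these finitely many (locally) top-degree Lagerberg forms, and each such support is closed in $\A_{\Delta(\sigma)}\cap U$. Combining with the compact-support hypothesis on $\beta$ and the fact that restriction and affine pull-back do not enlarge the projected support beyond the image of $\supp(\beta)$, which is compact, one concludes that $\beta|_{\A_{\Delta(\sigma)}\cap U}$ has compact support in $\A_{\Delta(\sigma)}\cap U$.

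For the bidegrees $(d-1,d)$ and $(d,d-1)$ the argument is the same: these are still ``near-top'' degrees on the $d$-dimensional space $\A_{\Delta(\sigma)}$, and the analogue of Lemma \ref{support lemma for top-forms} (which is stated there also for bidegrees $(d-1,d)$ and $(d,d-1)$) applies verbatim after restriction and pull-back; the extra codimension-one polyhedral pieces that can occur in $d'$- or $d''$-residues carry smooth forms of bidegree $(d-1,d)$ or $(d,d-1)$ on $(d-1)$-dimensional polyhedra, which is again a top-type situation on those faces, so their supports are closed and, being contained in the image of $\supp(\beta)$, compact.

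The main obstacle I anticipate is bookkeeping rather than a genuine difficulty: one has to be careful that ``$\beta|_{\A_{\Delta(\sigma)}\cap U}$'' is the two-step operation (restrict to the stratum $N(\sigma)$, i.e. pass to $\beta(\sigma)$, then pull back along a non-surjective affine map), and that each step is compatible with the local polyhedral presentations and with taking supports. In particular one must invoke Proposition \ref{delta-forms and constant towards the boundary} to make sense of $\beta(\sigma)$ as a $\delta$-form on $U(\sigma)$, and then use Mihatsch's affine pull-back \cite[Proposition/Definition 4.2]{mihatsch2021}; checking that the composite sends compactly supported $\delta$-forms to compactly supported ones reduces, via the local presentation, to the euclidean statement Lemma \ref{support lemma for top-forms}, which is exactly what we want.
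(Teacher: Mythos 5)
There is a genuine gap at the decisive step. In your second paragraph you conclude compactness of $\supp\bigl(\beta|_{\A_{\Delta(\sigma)}\cap U}\bigr)$ from the fact that ``restriction and affine pull-back do not enlarge the projected support beyond the image of $\supp(\beta)$, which is compact''. This inference fails: $\A_{\Delta(\sigma)}\cap U$ is not closed in $U$ (its closure in $U$ meets the deeper strata $N(\tau)$, $\sigma\prec\tau$, $\tau\neq\sigma$), so a closed subset of $\A_{\Delta(\sigma)}\cap U$ contained in the trace of the compact set $\supp(\beta)$ can still be non-compact in $\A_{\Delta(\sigma)}\cap U$ — think of $\R$ inside $\R_\infty$: a compact subset of $\R_\infty$ can meet $\R$ in an unbounded set. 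Ruling out exactly this escape of the support towards the boundary is the entire content of the lemma, and your argument never establishes it for the bidegree $(d,d)$ case: you invoke the mechanism of Lemma \ref{support lemma for top-forms} only for the $(d-1,d)$ and $(d,d-1)$ cases, and in the last paragraph you call it ``the euclidean statement'', which it is not — it is precisely the statement about vanishing towards the boundary of $N_\Sigma$, and it is formulated for smooth Lagerberg forms, so it cannot be applied verbatim to $\delta$-forms; the argument has to be redone.

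The paper's proof supplies exactly the two missing ingredients. First, the boundary vanishing: for every point $p$ of the closure of $\A_{\Delta(\sigma)}\cap U$ in $U$ with $p\notin N(\sigma)$, say $p\in N(\tau)$, constancy towards the boundary (Proposition \ref{delta-forms and constant towards the boundary}) lets you write $\beta$ near $p$ as a pull-back of $\beta(\tau)$, and the further restriction to $\A_{\Delta(\sigma)}$ factors through $\pi_{\tau,\sigma}(\A_{\Delta(\sigma)})$, an affine space of dimension $<d$; a $\delta$-form of bidegree $(d,d)$ (or $(d-1,d)$, $(d,d-1)$) pulled back from such a space vanishes for degree reasons, so $\beta$ restricts to $0$ on $W\cap\A_{\Delta(\sigma)}$ for some neighbourhood $W$ of $p$ — this is the same argument as in Lemma \ref{support lemma for top-forms}, carried over because $\beta$ is constant towards the boundary. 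Second, even granted this vanishing, one still needs a short covering argument to get compactness (your appeal to the compactness of $\supp(\beta)$ does not replace it): the boundary points $p$ above form a closed subset $C$ of $U$, $C\cap\supp(\beta)$ is compact and is covered by finitely many such $W$, and then $\A_{\Delta(\sigma)}\cap\supp(\beta)\setminus\bigcup W$ is a compact subset of $\A_{\Delta(\sigma)}\cap U$ containing the support of the restriction. Your reduction to local polyhedral presentations and to $\beta(\sigma)$ is fine as bookkeeping, but without these two steps the proof does not go through.
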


\begin{proof} 
The proof is  the same as for Lagerberg forms in Lemma \ref{support lemma for top-forms}. The crucial fact is that the $\delta$-form $\beta$ is constant towards the boundary as it was the case with the Lagerberg form $\widetilde\alpha$ before. The argument there shows that for every point $p$ in the closure  of $\A_{\Delta(\sigma)}\cap U$ inside $U$ with $p \not \in N(\sigma)$, there is a neighbourhood $W$ of $p$ in $U$  such that $\beta$ restricts to $0$ on $W \cap \A_{\Delta(\sigma)}$. Note that all such points $p$ form a closed subset $C$ of $U$ and hence $C \cap \supp(\beta)$ can be covered by finitely many $W$ as above, say gathered in a family indexed by $I$. Then 
$$\A_{\Delta(\sigma)} \cap \supp(\beta) \setminus \bigcup_{W \in I} W$$
is a compact subset of $\A_{\Delta(\sigma)}\cap U$ containing the support of $\beta|_{\A_{\Delta(\sigma)}\cap U}$.
\end{proof}

\begin{ex} \label{delta-current for weighted polyhedron}
Let	$[\Delta,\mu]$ be a weighted $d$-dimensional polyhedron of sedentarity $\sigma$ contained in the open subset $U$ of $N_\Sigma$. 
We get an associated formal $\delta$-current $\delta_{[\Delta,\mu]}$ on $U$ as follows.  Let $\beta\in B^{d,d}_c(U)$. 
Since $\beta$ 
is of bidegree $(d,d)$, we have seen in Lemma \ref{support lemma for top-delta-forms} that 
$\beta|_{\A_{\Delta(\sigma)}\cap U}$ has compact support in $\A_{\Delta(\sigma)}\cap U$.
We may see this restriction as a compactly supported polyhedral current on the $d$-dimensional affine space $\A_{\Delta(\sigma)}$ endowed with the weight $\mu$ and hence
\[
\delta_{[\Delta,\mu]}(\beta)=\int_{[\Delta(\sigma),\mu]} \beta
\]
is well-defined by using Definition \ref{define-boundary-delta-integrals}. 
More generally, for any $\alpha \in B^{p,q}(\Delta)$, we get a formal $\delta$-current $\alpha \wedge  \delta_{[\Delta,\mu]}\in E^{n-d+p,n-d+q}(U)$ defined by 
\begin{displaymath}
\alpha \wedge
\delta_{[\Delta,\mu]}(\beta)=\int_{[\Delta(\sigma),\mu]} \alpha
\wedge \beta,
\quad \quad (\beta \in A_c^{d-p,d-q}(U)).
\end{displaymath}
 In particular, for a classical tropical cycle $C$ on $N_\Sigma$ of sedentarity $\sigma$ (see Definition \ref{definition of a classical tropical cycle}), we get a formal $\delta$-current $\delta_{C}$ on $N_\Sigma$ by using the canonical weight on $C$ induced by the lattice structure $N(\sigma)$.

Similarly, we may use Definition \ref{define-boundary-delta-integrals} to define boundary integrals $\int_{\partial'[\Delta,\mu]} \alpha$ for $\alpha \in B_c^{d-1,d}(\Delta)$ and $\int_{\partial''[\Delta,\mu]} \beta$ for $\beta \in B_c^{d,d-1}(\Delta)$. 
Note that the above integrals and boundary integrals depend only on the restriction of the $\delta$-forms to $\A_{\Delta(\sigma)}\cap U$.
\end{ex}

\begin{thm}\label{theorem: Stokes for delta-forms}
	Given $\beta \in B_c^{d-1,d}(U)$ and $\gamma \in B_c^{d,d-1}(U)$ for the $d$-dimensional weighted polyhedron $[\Delta, \mu]$ contained in the open subset $U$ of $N_\Sigma$, we have
	\begin{equation}\label{polyhedral-stokes-formula-delta-forms}
		\int_{[\Delta,\mu]}  d'\beta=\int_{\partial'[\Delta,\mu]}\beta,\,\,\,
		\int_{[\Delta,\mu]} d''\gamma=\int_{\partial''[\Delta, \mu]}\gamma.
	\end{equation}
\end{thm}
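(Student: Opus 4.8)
The plan is to reduce the Theorem of Stokes for $\delta$-forms on a polyhedron $\Delta$ of arbitrary sedentarity to the case of a maximal-dimensional polyhedron in an affine space, which has already been established in Theorem \ref{polyhedral-stokes-formula-proposition}. Concretely, let $\sigma \in \Sigma$ be the sedentarity of $\Delta$ and let $d = \dim \Delta = \dim \Delta(\sigma)$. The key observation, recorded in Example \ref{delta-current for weighted polyhedron}, is that all the integrals and boundary integrals occurring in \eqref{polyhedral-stokes-formula-delta-forms} depend only on the restriction $\beta|_{\A_{\Delta(\sigma)}\cap U}$, resp.\ $\gamma|_{\A_{\Delta(\sigma)}\cap U}$, of the $\delta$-forms to the affine span $\A_{\Delta(\sigma)}$ of the finite part $\Delta(\sigma)$, which by Lemma \ref{support lemma for top-delta-forms} have compact support in $\A_{\Delta(\sigma)}\cap U$. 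So the first step is to set $\beta_0 \coloneqq \beta|_{\A_{\Delta(\sigma)}\cap U}$ and $\gamma_0 \coloneqq \gamma|_{\A_{\Delta(\sigma)}\cap U}$, viewing $\Delta(\sigma)$ as a $d$-dimensional polyhedron in the $d$-dimensional affine space $\A_{\Delta(\sigma)}$ endowed with the weight $\mu$.

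The second step is to verify that the restriction is compatible with the differentials, i.e.\ that $d'(\beta|_{\A_{\Delta(\sigma)}\cap U}) = (d'\beta)|_{\A_{\Delta(\sigma)}\cap U}$ and similarly for $d''$. This is immediate from the fact that the pull-back of $\delta$-forms along affine maps (here the composition $\A_{\Delta(\sigma)}\cap U \to U\cap N(\sigma) \hookrightarrow \dots$ used in the paragraph preceding Lemma \ref{support lemma for top-delta-forms}, via Mihatsch's \cite[Proposition/Definition 4.2]{mihatsch2021} together with the restriction of $\delta$-forms of sedentarity $0$ recalled in Proposition \ref{delta-forms and constant towards the boundary}) is a homomorphism of differential bigraded $\R$-algebras. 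Then one applies Theorem \ref{polyhedral-stokes-formula-proposition} to the open subset $\A_{\Delta(\sigma)}\cap U$ of the affine space $\A_{\Delta(\sigma)}$, the $d$-dimensional polyhedron $\Delta(\sigma)$ of maximal dimension in it, and the compactly supported $\delta$-forms $\beta_0$ and $\gamma_0$, obtaining
\[
\int_{[\Delta(\sigma),\mu]} d'\beta_0 = \int_{\partial'[\Delta(\sigma),\mu]} \beta_0, \qquad
\int_{[\Delta(\sigma),\mu]} d''\gamma_0 = \int_{\partial''[\Delta(\sigma),\mu]} \gamma_0.
\]

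The final step is bookkeeping: by the definitions in Example \ref{delta-current for weighted polyhedron}, the left-hand sides equal $\int_{[\Delta,\mu]} d'\beta$ and $\int_{[\Delta,\mu]} d''\gamma$, and the right-hand sides equal $\int_{\partial'[\Delta,\mu]}\beta$ and $\int_{\partial''[\Delta,\mu]}\gamma$, using that only facets of $\Delta$ of sedentarity $\sigma$ (equivalently, facets of $\Delta(\sigma)$ in $\A_{\Delta(\sigma)}$) contribute to the boundary integrals — exactly the same phenomenon exploited in the proof of Theorem \ref{theorem of Stokes} and in Remark \ref{definition of boundary integrals}. The main obstacle, such as it is, is making sure that the two reductions — passing to sedentarity $0$ via the projection $U \cap N(\sigma) \to N(\sigma)$, and then passing from $N(\sigma)$ to the affine subspace $\A_{\Delta(\sigma)}$ — are set up so that the affine pull-back of $\delta$-forms (with its weight conventions, cf.\ Remark \ref{pull-back of polyhedral currents by Mihatsch}) genuinely identifies the integrals on both sides; once the degree/support argument of Lemma \ref{support lemma for top-delta-forms} guarantees everything lives on the finite part $\Delta(\sigma)$, this is routine and no genuine analytic difficulty remains.
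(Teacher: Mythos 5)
Your proposal is correct and follows essentially the same route as the paper: the paper's proof is precisely the observation that the integrals and boundary integrals for $\delta$-forms were defined in Example \ref{delta-current for weighted polyhedron} by restricting to polyhedral currents on $[\A_{\Delta(\sigma)},\mu]$, where the Stokes formula of Theorem \ref{polyhedral-stokes-formula-proposition} applies. Your additional checks (compact support via Lemma \ref{support lemma for top-delta-forms}, compatibility of restriction with $d'$, $d''$) are exactly the implicit ingredients of that reduction.
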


\begin{proof} This follows from the Theorem of Stokes for polyhedral currents in Theorem \ref{polyhedral-stokes-formula-proposition} using that we have defined the integrals for $\delta$-forms by reducing to polyhedral currents on $[\A_{\Delta(\sigma)},\mu]$. \end{proof}

\begin{cor} \label{derivatives of delta-forms and delta-currents}
	Let $U$ be an open subset of $N_\Sigma$. The map $B^{p,q}(U)\to E^{p,q}(U)$ from \ref{polyhedral-stokes-formula-proposition}, which assigns to $\alpha \in B^{p,q}(U)$ its associated formal $\delta$-current $[\alpha]$, 
    is an injective linear map compatible with the differentials $d',d''$.
\end{cor}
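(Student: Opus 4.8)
The plan is to establish the two asserted properties separately: injectivity of the map $\alpha \mapsto [\alpha]$ and its compatibility with $d'$ and $d''$. Both should follow by reducing to the dense stratum $U(0) = U \cap N_\R$, where Mihatsch's affine theory and the results already assembled in this section apply.

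For injectivity, first recall from Remark \ref{properties-delta-forms-compactification}~\ref{item:8} that for $\alpha \in B^{p,q}(U)$, a $\delta$-form $\beta \in B_c^{n-p,n-q}(U)$ has support contained in $N_\R \cap U$, and the integral $\int_U \alpha \wedge \beta$ equals $\int_{N_\R \cap U} (\alpha \wedge \beta)|_{N_\R \cap U}$. By Theorem \ref{product on delta-forms} we have $(\alpha \wedge \beta)(0) = \alpha(0) \wedge \beta(0)$, so $[\alpha](\beta) = [\alpha(0)](\beta|_{U(0)})$ where on the right we use Mihatsch's pairing on the affine open $U(0) \subset N_\R$. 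Hence if $[\alpha] = 0$ in $E^{p,q}(U)$, then $[\alpha(0)]$ annihilates all compactly supported $\delta$-forms on $U(0)$ that arise as restrictions of $\delta$-forms on $U$; since any compactly supported $\delta$-form on an open subset of $N_\R$ extends (after multiplying by a bump function which is itself a piecewise smooth function, hence a $\delta$-form by Proposition \ref{piecewise smooth forms as delta-forms}) to a compactly supported $\delta$-form on $U$ that is constant towards the boundary near the support, we conclude $[\alpha(0)] = 0$ in the affine sense. Mihatsch's integration pairing on $N_\R$ is nondegenerate on $\delta$-forms (this is part of the affine theory recalled in \S \ref{delta-forms-on-N_R}; it is used implicitly in the identification of $\TZ$, $\PS$, and $A$ as subalgebras of $B$), so $\alpha(0) = 0$. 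Since a $\delta$-form of sedentarity $0$ on $U$ is determined by its restriction to $U(0)$ — exactly the uniqueness statement in Proposition \ref{delta-forms and constant towards the boundary} — we get $\alpha = 0$.

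For compatibility with the differentials, I would argue directly from the definition of $d'$ on formal $\delta$-currents in \S \ref{formal-delta-currents}: $(d'[\alpha])(\beta) = (-1)^{p+q+1}[\alpha](d'\beta) = (-1)^{p+q+1}\int_U \alpha \wedge d'\beta$ for $\beta \in B_c^{n-p-1,n-q}(U)$. One wants this to equal $[d'\alpha](\beta) = \int_U d'\alpha \wedge \beta$. The difference of the two integrands is, up to sign, $d'(\alpha \wedge \beta)$, and $\alpha \wedge \beta \in B_c^{n-1,n}(U)$ has support in $N_\R \cap U$; applying Theorem \ref{theorem: Stokes for delta-forms} with $\Delta = \overline{N_\R}$ (or rather any sufficiently large polyhedron containing the support, with the canonical weight $\mu$ induced by $N$) gives $\int_U d'(\alpha \wedge \beta) = \int_{\partial'[\Delta,\mu]}(\alpha \wedge \beta) = 0$ since the compactly supported form vanishes near all the finite facets. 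Combining the Leibniz rule $d'(\alpha \wedge \beta) = d'\alpha \wedge \beta + (-1)^{p+q}\alpha \wedge d'\beta$ with this vanishing yields $d'[\alpha] = [d'\alpha]$; the argument for $d''$ is identical.

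The main obstacle I anticipate is the nondegeneracy input in the injectivity step: one must be careful that the pairing $B^{p,q}(U(0)) \times B_c^{n-p,n-q}(U(0)) \to \R$, $(\alpha', \beta') \mapsto \int_{U(0)} \alpha' \wedge \beta'$, is nondegenerate in the first argument. This is a statement purely about Mihatsch's affine $\delta$-forms; if it is not quoted explicitly earlier one would have to either cite \cite{mihatsch2021} directly or give a short local argument (a nonzero $\delta$-form has a nonzero component of some fixed tridegree $(p,q,r)$, and testing against a suitable smooth bump form wedged with a transverse tropical linear space detects it). The extension/bump-function maneuver also deserves a line of care, but it is routine given that piecewise smooth functions are $\delta$-forms and that polyhedral neighbourhoods constant towards the boundary exist by the topology of $N_\Sigma$.
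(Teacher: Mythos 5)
Your proposal is correct, and its second half (compatibility with $d'$, $d''$) is essentially the paper's argument: apply the Leibniz rule and Stokes' formula of Theorem \ref{theorem: Stokes for delta-forms} to the full $n$-dimensional polyhedron $\overline{N_\R}=N_\Sigma$ with its canonical weight, where the boundary term vanishes because there are no finite facets at all. The injectivity half, however, takes a genuinely more roundabout route than the paper. You reduce to the dense stratum via $[\alpha](\beta)=[\alpha(0)](\beta(0))$, extend compactly supported $\delta$-forms from $U(0)$ to $U$, and then invoke nondegeneracy of Mihatsch's affine integration pairing, which you flag as the main obstacle. The paper instead observes in one line that a $\delta$-form is by definition a (polyhedral) current, hence is completely determined by its values on $A_c^{n-p,n-q}(U)$, and that restricting the formal $\delta$-current $[\alpha]$ from $B_c^{n-p,n-q}(U)$ to the subspace $A_c^{n-p,n-q}(U)$ recovers exactly this functional; injectivity follows with no reduction to $N_\R$, no extension step, and no appeal to nondegeneracy. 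The same observation dissolves your flagged obstacle: the pairing is nondegenerate in the first argument simply because smooth compactly supported forms are themselves $\delta$-forms (Proposition \ref{piecewise smooth forms as delta-forms} in degree zero, or just $A\subset B$), so testing against them already detects the current -- no transverse tropical linear spaces are needed. Your extension/bump maneuver, if you keep your route, can also be streamlined: since $\supp(\beta')$ is compact in the open set $U(0)\subset U$ and $B$ is a sheaf by Remark \ref{properties-delta-forms-compactification}, extension by zero is obtained by gluing $\beta'$ on $U(0)$ with $0$ on $U\setminus\supp(\beta')$. So both approaches work; the paper's buys brevity, yours makes explicit the reduction to the affine theory via Theorem \ref{product on delta-forms} and Proposition \ref{delta-forms and constant towards the boundary}, at the cost of two auxiliary steps that then have to be justified.
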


\begin{proof} 
	The $\delta$-form $\alpha$ is a polyhedral current and hence completely determined as a linear functional on $A_c^{n-p,n-q}(U)$. Since we also obtain this linear functional by restricting the formal $\delta$-current $[\alpha]$ from $B_c^{n-p,n-q}(U)$ to $A_c^{n-p,n-q}(U)$, we get injectivity.  For any $\beta \in B_c^{n-p-1,n-q}(U)$, Stokes formula in Theorem \ref{theorem: Stokes for delta-forms} shows
	$$(d'[\alpha])(\beta)
	= (-1)^{p+q+1}\int_{N_\Sigma} \alpha \wedge d'\beta
	= \int_{N_\Sigma} d'\alpha \wedge\beta - \int_{N_\Sigma} d'(\alpha \wedge \beta)
	=[d'\alpha](\beta)$$
as the boundary integral for the polyhedron $N_\Sigma$ is zero (there are no boundary faces in $N_\R$). A similar identity holds for $d''[\alpha]$. This proves compatibility with the differentials $d',d''$.
\end{proof}

In the next section, we deal with classical tropical cycles of arbitrary sedentarity, see Definition \ref{definition of a classical tropical cycle}.

\begin{cor}\label{delta-c-is-closed}
Given a classical tropical cycle $C$ on $N_\Sigma$ of sedentarity $\sigma$, the formal $\delta$-current $\delta_{C}$ on $N_\Sigma$ from Example \ref{delta-current for weighted polyhedron} is closed under $d'$ and $d''$.
\end{cor}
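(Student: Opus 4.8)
The plan is to reduce the statement for the formal $\delta$-current $\delta_C$ attached to a classical tropical cycle $C$ of sedentarity $\sigma$ to the already established case of sedentarity zero, combined with the fact that $\delta_C$ depends only on the restriction of $\delta$-forms to the affine space $N(\sigma)$. Concretely, by Example \ref{delta-current for weighted polyhedron}, for $\beta \in B_c^{\cdot,\cdot}(N_\Sigma)$ the value $\delta_C(\beta)$ is computed as $\int_{[C,\mu]} \beta|_{N(\sigma)}$, where we use the canonical lattice weight on $C$ coming from $N(\sigma)$; so the whole current factors through the restriction map $B_c(N_\Sigma) \to B_c(N(\sigma))$ (more precisely, through restriction followed by pull-back to the affine hull of the relevant polyhedra inside $N(\sigma)$). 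Thus it suffices to know that the current of integration of a classical tropical cycle \emph{in an affine space} $N(\sigma)$ is $d'$- and $d''$-closed.

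First I would invoke the characterization recalled in Example \ref{example_classical_tropical_cycle}: a classical tropical cycle $C$ is a polyhedral current $\delta_C = \sum_\Delta [\Delta,\mu_\Delta]$ with constant differential-form coefficients satisfying a balancing condition, and this balancing condition is equivalent to $d'\delta_C = 0$, equivalently to $d''\delta_C = 0$, by \cite[Proposition 3.8]{gubler-forms}. This gives the closedness on $N(\sigma)$ directly. Second, I would transport this to the formal $\delta$-current on $N_\Sigma$: for $\beta \in B_c^{n-r-1,n-r}(N_\Sigma)$ (with $r$ the codimension of $C$ in $N(\sigma)$), by the definition of $d'$ on formal $\delta$-currents and by Example \ref{delta-current for weighted polyhedron},
\[
(d'\delta_C)(\beta) = \pm\, \delta_C(d'\beta) = \pm \int_{[C,\mu]} (d'\beta)\big|_{\A} = \pm \int_{[C,\mu]} d'\bigl(\beta|_{\A}\bigr),
\]
where $\A$ denotes the relevant affine hull inside $N(\sigma)$ and we used that restriction/pull-back of $\delta$-forms commutes with $d'$ (Remark \ref{differentials for delta-forms} and the compatibility of pull-back with differentials from Remark \ref{Mihatsch's notation} / Proposition \ref{pull-back of delta-forms}). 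By Lemma \ref{support lemma for top-delta-forms}, $\beta|_\A$ has compact support in $\A$, so this last integral equals $(d'\delta_C^{\A})(\beta|_\A)$ for the current of integration $\delta_C^{\A}$ on the affine space, which vanishes by the first step. The same argument with $d''$ finishes the proof.

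The main obstacle — though a mild one — is bookkeeping: one must be careful that the "restriction" used to define $\delta_{[\Delta,\mu]}$ is really the composite of restricting to $N(\sigma)$ and then pulling back along the inclusion of the affine hull $\A_{\Delta(\sigma)}$, and that for a union of polyhedra forming the cycle $C$ one can choose a common affine ambient (or argue cell by cell, using that each face contributes compatibly because the differential-form coefficients are locally constant). Once one grants — as we may, by the results quoted above — that restriction and pull-back of $\delta$-forms are homomorphisms of differential bigraded $\R$-algebras and that Lemma \ref{support lemma for top-delta-forms} provides the needed properness of supports, the closedness of $\delta_C$ is immediate from the affine case. No genuinely new computation is required beyond \cite[Proposition 3.8]{gubler-forms}.
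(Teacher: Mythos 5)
Your overall route is the same as the paper's: reduce to sedentarity $(0)$ (equivalently, work on $\overline{N(\sigma)}$), note that by Example \ref{delta-current for weighted polyhedron} the current $\delta_C$ only sees restrictions of $\delta$-forms to the affine hulls inside $N(\sigma)$, use Lemma \ref{support lemma for top-delta-forms} to confine supports to the finite part, commute the restriction with $d'$, $d''$, and quote the balancing condition via \cite[Proposition 3.8]{gubler-forms} as in Example \ref{example_classical_tropical_cycle}. Up to bookkeeping this is exactly the paper's reduction.

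However, there is a genuine gap at the final step, where you assert that $\int_{[C,\mu]} d'\bigl(\beta|_{\A}\bigr)=(d'\delta_C^{\A})(\beta|_{\A})$ ``vanishes by the first step.'' Your first step (balancing $\Leftrightarrow$ closedness, from \cite[Proposition 3.8]{gubler-forms}) gives $d'\delta_C=0$ only as a Lagerberg current, i.e., tested against compactly supported \emph{smooth} forms. Your test object $\beta|_{\A}$ is a compactly supported \emph{$\delta$-form}, and closedness of the formal $\delta$-current against $\delta$-forms is a strictly stronger statement that does not follow formally from the smooth-form statement. The paper bridges precisely this point: the balancing condition shows that $\delta_{C(0)}$ is itself a $\delta$-form on $N_\R$ (a closed polyhedral current), and then Corollary \ref{derivatives of delta-forms and delta-currents} — whose proof rests on the Stokes theorem for $\delta$-forms, Theorem \ref{theorem: Stokes for delta-forms} — guarantees that the associated formal $\delta$-current satisfies $d'[\delta_{C(0)}]=[d'\delta_{C(0)}]=0$, and likewise for $d''$. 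Without invoking this corollary (or replacing it by a direct computation via the Stokes theorem for polyhedral currents, Theorem \ref{polyhedral-stokes-formula-proposition}, checking that the boundary contributions cancel by balancing even when the coefficients are $\delta$-form restrictions), your concluding vanishing is unjustified; inserting that reference closes the gap and makes your argument coincide with the paper's proof.
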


\begin{proof}
Replacing $N_\Sigma$ by the tropical toric variety $\overline{N(\sigma)}$, we may assume that $C$ has sedentarity $(0)$. 
Then we have seen in Example \ref{example_classical_tropical_cycle} that the polyhedral current $\delta_C$ is $d'$- and $d''$-closed. 
We proceed similarly as in that proof. We first note that the restriction $C(0)$ of $C$ to $N_\R$ is a $\delta$-form and hence $\delta_{C(0)}$ is $d'$- and $d''$-closed as a formal $\delta$-current on $N_\R$ by Corollary \ref{derivatives of delta-forms and delta-currents}. 
Then the same arguments as in Example \ref{example_classical_tropical_cycle}, replacing the smooth form $\eta$ by a $\delta$-form and using Lemma \ref{support lemma for top-delta-forms} instead of Lemma \ref{support lemma for top-forms}, shows the claim.
\end{proof}

\section{Tropical Green functions and the Poincar\'e--Lelong equation} \label{Section: tropical Green functions and the PL-equation}

We fix a fan $\Sigma$ on $N_\R$ for a free abelian group $N$ of finite rank $n$.

\subsection{Tropical Cartier divisors} \label{subsection: tropical Cartier divisors}

In this subsection, we recall the notion of a tropical Cartier divisor on $N_\Sigma$ from tropical geometry. 
We are mainly interested in tropical toric Cartier divisors which we will introduce in an example.

We recall first some notations used in tropical geometry.

\begin{definition} \label{definition of a classical tropical cycle}
A \emph{classical tropical cycle $C$ on $N_\R$ of dimension $d$} is given by a finite $(\Z,\R)$-polyhedral complex $\Ccal$ where the  maximal dimensional polyhedra $\Delta$ have dimension $d$ and are endowed with some constant  weights $m_\Delta \in \Z$ satisfying the balancing condition. 
We identify two tropical cycles if they agree up to subdivision of the underlying polyhedral complexes ignoring the weight $0$ parts. 
An arbitrary tropical cycle on $N_\R$ is the  sum of tropical cycles of dimension $\leq n$.

More generally, a \emph{classical $d$-dimensional tropical cycle $C$ on $N_\Sigma$ of sedentarity $\sigma\in \Sigma$} is induced by a classical $d$-dimensional tropical cycle $C(\sigma)$ on $N(\sigma)$ and is obtained by passing to the closure $\overline \Delta$ in $N_\Sigma$ for every polyhedron $\Delta$ in the polyhedral complex of definition for $C(\sigma)$. 
We view $C$  as a weighted polyhedral complex $(\Ccal,m)$  in $N_\Sigma$ up to subdivision as above. 
The polyhedral complex of definition $\Ccal$ is formed by all faces of the closures $\overline \Delta$ and the weight on a maximal $\overline \Delta$ is $m_\Delta$.

We denote by $\TZ_d(N_\Sigma)=\TZ^{n-d}(N_\Sigma)$ the direct sum of classical $d$-dimensional tropical cycle with varying sedentarity $\sigma \in \Sigma$.
The classical tropical cycles on $N_\Sigma$ are then gathered in $\TZ(N_\Sigma)\coloneqq \bigoplus_d \TZ_d(N_\Sigma)$.

We say that a classical tropical cycle $C$ in $N_\Sigma$  is \emph{constant towards the boundary} if there is a polyhedral complex of definition $\Ccal$ such that every polyhedron $\Delta \in \Ccal$ is constant towards the boundary.
\end{definition}

\begin{definition} \label{piecewise affine function}
A \emph{piecewise affine function}  on $N_\Sigma$ is a function $\phi \colon N_\R \to \R$ such that there is a finite $(\Z,\R)$-polyhedral complex $\Ccal$ on $N_\R$ with $|\Ccal|=N_\R$ and such that for every $\Delta \in \Ccal$ the restriction $\phi|_\Delta$ agrees with $m_\Delta+c_\Delta$ on $\Delta$ for some $m_\Delta \in M$ and $c_\Delta \in \R$. Note that we do not assume that $\phi$ is constant towards the boundary. 
\end{definition}

\begin{definition} \label{definition tropical Cartier divisors}
A \emph{tropical Cartier divisor $D$ on $N_\Sigma$} is given by a finite family $D= (U_i,\phi_i)_{i \in I}$ with $(U_i)_{i \in I}$ an open covering of $N_\Sigma$ and $\phi_i$ a piecewise affine function on $N_\Sigma$ such that for every $i,j \in I$, the function $\phi_i - \phi_j$ on $U_i \cap U_j \cap N_\R$ is affine and  extends to a continuous function $U_i \cap U_j \to \R$.

Two such representation $(U_i,\phi_i)_{i \in I}$ and $(U_j',\phi_j')_{j \in J}$ give the same tropical Cartier divisor if and only if for all $i  \in I$ and all $j \in J$, the function $\phi_i - \phi_j'$ is affine on $U_i \cap U_j'\cap N_\R$ and extends to a continuous function $U_i \cap U_j \to \R$.

It is clear that the tropical Cartier divisors form an abelian group.
\end{definition}

\begin{rem} \label{history for tropical Cartier divisors}
The above definition of a tropical Cartier divisor is from \cite{allermann-rau-2010} in case of $N_\R$. 
Its generalization to $N_\Sigma$ is in Meyer's thesis \cite[Definition 2.35]{meyer-thesis}. They work in a relative setting, i.e.~they define tropical Cartier divisors on a classical tropical cycle $C$. We are here only interested in the absolute case $C=N_\Sigma$ which is less technical. 
\end{rem}

\begin{remi}\label{reminder-pl-function-on-fan}
We recall that a toric Cartier divisor $D$ on the toric variety $X_\Sigma$ can be given by a \emph{piecewise linear function} $\psi\colon |\Sigma| \to \R$ such that for every $\sigma \in \Sigma$ there is $k_\sigma \in M$ with $\psi|_\sigma=k_\sigma|_\sigma$. Let $\rho$ be a ray in $\Sigma$, then the multiplicity of $D$ in the toric subvariety $X(\rho)$ of codimension $1$ is equal to $-k_\rho(\omega_\rho)$ where $\omega_\rho$ is the primitive lattice vector in $\rho$. The choice of sign is for historic reasons, see \cite{fulton-toric-varieties}. 
\end{remi}

We give now a similar construction in tropical geometry.

\begin{ex} \label{tropical toric Cartier divisiors}
Let $\psi$ be a piecewise linear function as {in Reminder \ref{reminder-pl-function-on-fan}}. 
The \emph{tropical toric Cartier divisor} $D(\psi)$ on $N_\Sigma$ associated to $\psi$ is given as follows. 
Let $U_\sigma$ be the tropical toric variety given by the fan generated by $\sigma \in \Sigma$. 
Then $D(\psi)$ is given on the open subset $U_\sigma$ of $N_\Sigma$ by $m_\sigma \coloneqq -k_\sigma \in M$. 
Since the open subsets $U_\sigma$ cover $N_\Sigma$, it is easy to see that we get a well-defined tropical Cartier divisor $D(\psi)$ which is uniquely determined by $\psi$. 

If $\psi$ is not linear, then the tropical Cartier divisor $D(\psi)$ cannot be given by a single piecewise affine function. Tropical Cartier divisors given by a single piecewise affine function are called \emph{principal}. 
\end{ex}

\begin{rem}  \label{relation to Gross}
In the following, a \emph{tropical toric Cartier divisor} $D$ on $N_\Sigma$ is the tropical toric Cartier divisor associated to some piecewise linear function $\psi$ as above. It is clear that the tropical toric Cartier divisors form an abelian group. Tropical toric Cartier divisors are the Cartier divisors considered  in \cite{gross2018} in the more general situation of weakly embedded cone complexes.
\end{rem}

\begin{art} \label{unbounded locus for tropical Cartier divisor}
The restriction of a tropical Cartier divisor $D$ to the dense stratum $N_\R$ is always well-defined, but this is not true for the other strata $N(\sigma)$. 
We define the \emph{unbounded locus $S(D)$} as the complement of the sets of points $x \in N_\Sigma$ such that there is a open neighbourhood $U$ of $x$ where $D$ is given by a bounded piecewise affine function. 
Then the restriction of $D$ to $\overline{N(\sigma)}$ makes sense if and only if $N(\sigma)$ is not contained in $S(D)$. 
Note that $S(D)$ is a closed subset of $N_\Sigma$.
	
If $D$ is a tropical toric Cartier divisor $D(\psi)$, then the unbounded locus is the union of all boundary components $\overline{N(\rho)}$ of codimension $1$ in $N_\Sigma$ where $\rho$ ranges over all rays of $\Sigma$ with $\psi(\omega_\rho) \neq 0$.
\end{art}

\begin{art} \label{pull-back of tropical Cartier divisor}
Let $N'$ be a free abelian group of finite rank $n'$ and let $L\colon N' \to N$ be a homomorphism of groups. 
We consider a fan $\Sigma'$ in $N_\R'$ and an $L$-equivariant map $F\colon N_{\Sigma'}' \to N_\Sigma$ of tropical toric varieties, see \S \ref{subsec: non-arch Green functions}.
The pull-back $F^*(D)$ of a tropical Cartier divisor $D$ on $N_\Sigma$ is well-defined if $F(N_\R')$ is not contained in the unbounded locus $S(D)$. 
If $D$ is given by the representation $(U_i,\phi_i)_{i \in I}$, then $F^*(D)$ is the tropical Cartier divisor represented by $(F^{-1}(U_i),\varphi_i \circ F)_{i \in I}$. 
To see that this is well-defined, {we note that there is a unique $\sigma \in \Sigma$ such that $F(N_\R') \subset N(\sigma)$ and} we use that the restriction of $\varphi_i$ to $N(\sigma)$ is a piecewise affine function as we have seen in \ref{unbounded locus for tropical Cartier divisor}.
\end{art}

\begin{ex} \label{pull-back of tropical toric Cartier divisor}
Let $D=D(\psi)$ be a tropical toric Cartier divisor on $N_\Sigma$ and let $F\colon N_{\Sigma'}' \to N_\Sigma$ be an $L$-equivariant morphism of tropical toric varieties as in \ref{pull-back of tropical Cartier divisor}. 
If the pull-back $F^*(D)$ is well-defined, then $F^*(D)$ is a a tropical toric Cartier divisor  on $N'_{\Sigma'}$ given by $F^*(D)=D(\phi \circ L)$.
\end{ex}

\subsection{Intersection product with tropical Cartier divisors}  \label{subsection: intersection product}

We recall the intersection product of a tropical Cartier divisor with a classical tropical cycle. 
In the classical case $N_\R$, this is due to Allermann and Rau \cite{allermann-rau-2010}. 
In Meyer's thesis \cite{meyer-thesis}, this was generalized to the tropical toric variety $N_\Sigma$. 
Note that our definitions differ from theirs as we use a tropicalization with a different sign. 
We relate then Meyer's construction to the intersection product with tropical toric Cartier divisors on cone complexes given by Gross \cite{gross2018}.

\begin{cons}[The corner locus]\label{construction intersection product}
Let $C$ be a classical  $d$-dimensional tropical cycle on $N_\Sigma$ of sedentarity $(0)$ and let $\phi$ be a piecewise affine function on $N_\Sigma$. 
Meyer defines $\phi \cdot C \in \TZ_{d-1}(N_\Sigma)$ as follows: 
	
There is a $(\Z,\R)$-polyhedral complex of definition $\KC$ in $N_\Sigma$ and weights $m$ such that $(\KC,m)$ represents $C$.
Let $\Dcal$ be a polyhedral complex of definition for the piecewise affine function $\phi$ on $N_\R$.
Then $\Dcal$ is a finite $(\Z,\R)$-polyhedral complex with $|\Dcal|=N_\R$. Replacing $\Ccal$ by a subdivision, we may assume that $\Dcal \cap \Ccal(0)= \Ccal(0)$. 
Then the tropical cycle $\psi \cdot C$ is supported in the faces $\Delta' \in \Ccal_{d-1}$ and equipped with the multiplicities $w(\Delta')$ defined below. 
	
First, we assume that $\Delta'$ has sedentarity $(0)$. 
Then 
\[
w(\Delta') \coloneqq -\sum_{\Delta \succ \Delta'} m_\Delta \phi \left(n_{\Delta,\Delta'}\right) + \phi(\sum_{\Delta \succ \Delta'} m_\Delta n_{\Delta,\Delta'})
\]
where $\Delta$ ranges over all $d$-dimensional faces of $\Ccal$ containing $\Delta'$ and where $n_{\Delta,\Delta'}$ is a primitive normal vector pointing in the direction of $\Delta$. 
	
It remains to consider the case of $\Delta'$ of sedentarity $\tau \neq (0)$. 	
Given a polyhedron $\Delta\in \KC_d$ such that $\Delta'$ is a face of $\Delta$, the intersection $\rho_\Delta\coloneqq \tau\cap \mathrm{rec}(\Delta)\subseteq N_\R$ of $\tau$ with the  recession cone $\mathrm{rec}(\Delta) \in \Sigma'$ is a ray in $\Sigma$ as $\dim\Delta'=\dim(\Delta)-1$ using Lemma \ref{lemm:1}.
Hence $\rho_\Delta$ contains a unique primitive lattice vector $\omega_{\rho_\Delta}$ of $N$ and we define the weight $w(\Delta')$ of $\phi \cdot C$ in $\Delta'$ by the formula
\[
w(\Delta') = \sum_{\Delta \succ \Delta'} m_\Delta [N(\tau)_{\Delta(\tau)}:N_\Delta(\tau)] \cdot \frac{\partial\phi}{\partial \omega_{\rho_\Delta}}
\]
where $\Delta$ runs over all $d$-dimensional faces of $\KC$ containing $\Delta'$ 
{and $\frac{\partial\phi}{\partial \omega_{\rho_\Delta}}$ denotes the derivative of the function $\phi$ in the 
direction $\omega_{\rho_\Delta}$}.

Here, we denote by $[N(\tau)_{\Delta(\tau)}:N_\Delta(\tau)]$ the following lattice index: 
For the $(\Z,\R)$-polyhedron $\Delta$ in $N_\R$, we recall that $N_\Delta \coloneqq \L_\Delta \cap N$ is a sublattice of $N$ and similarly, we define the lattice $N(\tau)_{\Delta(\tau)}$ for the polyhedron $\Delta(\tau)$ in $N(\tau)$. 
Given a lattice $\Lambda$ in $N_\R$, we define $\Lambda(\tau) \coloneqq \im(\Lambda \to N(\tau))$ which is a lattice in $N(\tau)$. 
Using that $\Delta(\tau)$ is the image of $\Delta$ in $N(\tau)$, we deduce that $N_\Delta(\tau)$ is a sublattice of $N(\tau)_{\Delta(\tau)}$ making sense of the above lattice index.

By \cite[Theorem 2.48]{meyer-thesis}, we get a well-defined classical tropical cycle 
\[
\phi \cdot C  \coloneqq \sum_{\Delta'} w(\Delta') \, \Delta' \in \TZ_{d-1}(N_\Sigma)
\] 
where $\Delta'$ ranges over $\Ccal_{d-1}$. Moreover, this intersection product is bilinear.

We write $\div(\phi)\coloneqq \phi \cdot N_\Sigma$ and call it the \emph{tropical Weil divisor associated to $\phi$}.
\end{cons}

\begin{cons}[Intersection product with a tropical Cartier divisor]
\label{intersection product with tropical Cartier divisor}
Let $D$ be a tropical Cartier divisor on $N_\Sigma$ represented by $(U_i,\phi)_{i \in I}$ and let $C \in \TZ_d(N_\Sigma)$ of sedentarity $(0)$. 
Then there is a unique $D \cdot C \in \TZ_{d-1}(N_\Sigma)$ such that  $D \cdot C$ agrees on $U_i$ with $\phi_i \cdot C$ from Construction \ref{construction intersection product} for every $i \in I$.  

More generally, let $C$ be a classical $d$-dimensional tropical cycle of any sedentarity $\sigma \in \Sigma$. 
We may view $C$ as a classical tropical cycle of sedentarity $(0)$ on the tropical toric subvariety $\overline{N(\sigma)}$. 
The \emph{intersection product $D \cdot C$ is well-defined} if $N(\sigma)$ is not contained in the unbounded locus $S(D)$. 
Then the pull-back $i^*(D)$ with respect to the  immersion $i\colon \overline{N(\sigma)} \to N_\Sigma$ is well-defined and we define $D \cdot C \coloneqq i_*(i^*(D) \cdot C)$ using the above definition.
It is clear that the intersection product $D \cdot C$ is bilinear. 
\end{cons}

\begin{rem} \label{Meyer and Gross}
In case $N_\Sigma=N_\R$, the intersection product $D \cdot C$ is the same as in \cite{allermann-rau-2010} \emph{up to sign}. 
The above generalization to arbitrary tropical toric varieties $N_\Sigma$ is done in Meyer's thesis  \cite[Definition 2.47]{meyer-thesis}. 
The sign change is explained by the fact that Meyer uses $-\trop$ for the canonical tropicalization. 
Our signs are governed to get a perfect analogy of the Poincar\'e--Lelong formula in non-archimedean geometry and tropical geometry.
	
If $D$ is a tropical toric Cartier divisor and $C$ is a classical tropical cycle of $N_\Sigma$, then $D \cdot C$ is a classical tropical cycle contained in the boundary $N_\Sigma \setminus N_\R$. 
Moreover, if we also assume that $C$ is a tropical fan subdividing $\Sigma$, then we obtain the intersection product studied by Gross \cite[Subsection 3.3]{gross2018} using \cite[Remark 3.13]{gross2018}.  
	
In fact, Allermann and Rau's intersection product on $N_\R$ and the fan case studied by Gross is enough to recover Meyer's intersection product from \ref{intersection product with tropical Cartier divisor}. 
Indeed, we have seen in Construction \ref{construction intersection product} that the multipicities in the faces $\Delta' \in \Ccal_{d-1}$ are determined by rays of $\Sigma$ and replacing $C$ by the star in $\Delta'$, meaning the affine spaces $\A_\Delta$ with $\Delta \in \Ccal_d$ containing $\Delta'$, we can reduce the definition of the weight $w(\Delta')$ to the fan situation. 
\end{rem}

\begin{prop} \label{symmetry of intersection product with tropical Cartier divisors}
Let $D, D'$ be tropical Cartier divisors on $N_\Sigma$ and let $C$ be a classical $d$-dimensional tropical cycle on $N_\Sigma$. 
Then we have
\[
D \cdot (D' \cdot C)= D' \cdot (D \cdot C)
\]
if all these intersection products are well-defined as tropical cycles on $N_\Sigma$.
\end{prop}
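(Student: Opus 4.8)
The plan is to reduce the commutativity statement to the classical case on $N_\R$ proved by Allermann and Rau, and then handle the boundary strata by the same reduction. First I would recall, following Construction \ref{intersection product with tropical Cartier divisor}, that if $C$ has sedentarity $\sigma$, then $D\cdot C = i_*(i^*(D)\cdot C)$ for the immersion $i\colon \overline{N(\sigma)}\to N_\Sigma$, where $C$ is viewed as a classical tropical cycle of sedentarity $(0)$ on $\overline{N(\sigma)}$. So by induction on the codimension of the sedentarity, and using that $i_*$ is injective on cycles of given sedentarity, it suffices to treat the case $\sigma=(0)$, i.e. to prove $D\cdot(D'\cdot C)=D'\cdot(D\cdot C)$ when $C$ has sedentarity $(0)$ in $N_\Sigma$. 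Here one must check that the various intersection products appearing on the right are well-defined exactly when those on the left are, which follows from the description of the unbounded locus in \ref{unbounded locus for tropical Cartier divisor}: the hypothesis says all four iterated products are well-defined, so no stratum meeting the support of $C$ or its successive corner loci lies in $S(D)$ or $S(D')$.

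Next I would localize. Commutativity is a statement about the weights $w(\Delta'')$ that $D\cdot(D'\cdot C)$ assigns to codimension-two faces $\Delta''$ of a common polyhedral complex of definition, and these weights are computed locally near the relative interior of $\Delta''$. Choosing a simultaneous polyhedral complex of definition $\Ccal$ for $C$, for polyhedral complexes of definition of the local piecewise affine functions representing $D$ and $D'$, and for both corner loci, I would observe that near a point of $\relint(\Delta'')$ only finitely many polyhedra are involved and $D$, $D'$ are each given by a single piecewise affine function. Thus we are reduced to the principal case: $D=\div(\phi)$, $D'=\div(\phi')$ for piecewise affine functions $\phi,\phi'$ on a neighbourhood, and we must show $\phi\cdot(\phi'\cdot C) = \phi'\cdot(\phi\cdot C)$ locally.

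At this point I would split $\Ccal$ into its sedentarity-$(0)$ part and its boundary part. For faces $\Delta''$ of sedentarity $(0)$, the weight formula of Construction \ref{construction intersection product} involves only the sedentarity-$(0)$ combinatorics, so the identity is precisely Allermann--Rau's commutativity $\phi\cdot(\phi'\cdot C(0))=\phi'\cdot(\phi\cdot C(0))$ on $N_\R$, which we may invoke. For faces $\Delta''$ of positive sedentarity $\tau$, I would use Remark \ref{Meyer and Gross}: the weight $w(\Delta'')$ is determined by rays of $\Sigma$ and, by replacing $C$ by its star along $\Delta''$, the computation reduces either to the fan/cone-complex situation studied by Gross --- where commutativity of intersection with toric divisors is available --- or, after applying Lemma \ref{lemm:1}~\ref{item:2} to express $\Delta''(\tau)=\pi_{\tau}(\Delta)$ and using that $\pi_\tau$ carries recession cones to recession cones, to the already-treated affine commutativity on $N(\tau)$ together with the lattice-index bookkeeping $[N(\tau)_{\Delta(\tau)}:N_\Delta(\tau)]$ appearing symmetrically in both iterated products.

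The main obstacle I anticipate is the bookkeeping in the mixed terms: when one of $\phi,\phi'$ contributes via a genuine boundary ray $\rho_\Delta=\tau\cap\rec(\Delta)$ (the derivative term with the lattice index) while the other contributes via an ordinary interior normal vector, one must verify that the two orders of intersection produce the same primitive vectors, the same lattice indices, and the same derivative factors. This is where the precise compatibility of the projections $\pi_{\tau,\sigma}$ with recession cones (Corollary \ref{cor:transitivity}, Lemma \ref{lemm:1}) and the transitivity of faces for polyhedra constant towards the boundary (Corollary \ref{cor:3}) will do the work; none of it is conceptually hard, but it is the step where a careless sign or index would break the argument. Once these local identities are established for every codimension-two face, they patch to the global equality $D\cdot(D'\cdot C)=D'\cdot(D\cdot C)$ of tropical cycles on $N_\Sigma$, completing the proof.
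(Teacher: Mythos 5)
The paper does not reprove this statement at all: its proof is a one-line citation of Meyer's thesis \cite[Theorem 2.50]{meyer-thesis}, where the commutativity on $N_\Sigma$ is established. So you are attempting something genuinely harder than what the paper does, namely a direct proof. Your general plan is the natural one and parts of it are sound: the reduction via $D\cdot C=i_*(i^*D\cdot C)$ to sedentarity $(0)$, the localization of the weight comparison to codimension-two faces of a common complex of definition, the local reduction to principal divisors $D=\div(\phi)$, $D'=\div(\phi')$, and the invocation of Allermann--Rau for the faces of sedentarity $(0)$ are all fine.

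The gap is that you never actually carry out the only case in which the statement goes beyond \cite{allermann-rau-2010}, namely the boundary weights. For a codimension-two face whose sedentarity is a ray $\rho$, one order of operations first applies the boundary formula of Construction \ref{construction intersection product} (the directional derivative $\partial\phi'/\partial\omega_{\rho_\Delta}$ weighted by the lattice index $[N(\rho)_{\Delta(\rho)}:N_\Delta(\rho)]$) and then an interior corner locus inside $N(\rho)$, while the other order first takes an interior corner locus in $N_\R$ and then pushes into the boundary; for a face whose sedentarity is a two-dimensional cone both steps use the boundary formula. Checking that these weights agree is precisely the content of Meyer's Theorem 2.50, and your proposal defers it as ``bookkeeping'' without computing anything. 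Your fallback appeal to Gross does not close this: by Remark \ref{Meyer and Gross}, the comparison with \cite{gross2018} is only available for tropical \emph{toric} Cartier divisors and for cycles that are fans subdividing $\Sigma$, whereas after your localization $\phi,\phi'$ are arbitrary piecewise affine functions and the star of $C$ along a boundary face need not subdivide $\Sigma(\tau)$, so Gross's commutativity is not directly applicable. As written, then, the proposal reduces the proposition to exactly the computation it was supposed to establish; either that local weight identity must be worked out explicitly (with the signs, primitive vectors and lattice indices you flag), or one should simply cite \cite[Theorem 2.50]{meyer-thesis} as the paper does.
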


\begin{proof} 
This follows from \cite[Theorem 2.50]{meyer-thesis}.
\end{proof}

\begin{prop} \label{projection formula of intersection product}
Let $L\colon N' \to N$ be a homomorphism of free abelian groups of finite rank, let $F \colon N_{\Sigma'}' \to N_\Sigma$ be an $L$-equivariant morphism of tropical toric varieties, let $D$ be a tropical Cartier divisor on $N_\Sigma$ and let $C'$ be a classical tropical cycle of $N'_{\Sigma'}$. 
We assume that $F(N'_\R)$ is not contained in the unbounded locus $S(D)$. Then we have the projection formula
\[
F_*(F^*(D) \cdot C) = D \cdot F_*(C).
\]
\end{prop}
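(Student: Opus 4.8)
The plan is to reduce the projection formula to the local, principal case and then to the fan case, where it becomes a combinatorial identity that can be traced back to the definitions in Construction \ref{construction intersection product}. First I would observe that the statement is local on $N_\Sigma$: by Construction \ref{intersection product with tropical Cartier divisor} the intersection product $D\cdot C$ is defined by gluing the local corner loci $\phi_i\cdot C$ over an open covering $(U_i)$ on which $D$ is principal, and push-forward and pull-back of tropical cycles and Cartier divisors are compatible with restriction to opens. Hence it suffices to prove the formula when $D$ is a principal tropical Cartier divisor, i.e.\ given by a single piecewise affine function $\phi$ on $N_\Sigma$; then $F^*(D)$ is principal, given by $\phi\circ F$ on $N'_{\Sigma'}$ (using \ref{pull-back of tropical Cartier divisor}), and we must show $F_*((\phi\circ F)\cdot C') = \phi\cdot F_*(C')$.

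Next I would handle the sedentarity bookkeeping. Since $F$ is $L$-equivariant, there is a unique $\sigma\in\Sigma$ with $F(N'_\R)\subset N(\sigma)$, and each stratum $N'(\tau')$ of $N'_{\Sigma'}$ is mapped affinely into some stratum $N(\tau)$ of $N_\Sigma$ with $\sigma\prec\tau$. Using that $F(N'_\R)$ is not contained in $S(D)$, the restriction of $\phi$ to $\overline{N(\sigma)}$ is a genuine piecewise affine function, so after replacing $N_\Sigma$ by $\overline{N(\sigma)}$ and $\phi$ by its restriction we may assume $\sigma=\{0\}$, i.e.\ $F(N'_\R)$ is dense-generic. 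Both sides of the desired identity are $\Z$-bilinear in $C'$ and additive over the sedentarity decomposition of $C'$, so it is enough to treat a classical tropical cycle $C'$ of a fixed sedentarity; by viewing such a $C'$ inside the appropriate tropical toric subvariety and using $F_*\circ i'_* = (F\circ i')_*$ one reduces further to $C'$ of sedentarity $(0)$. At this point the claim is: for $L$-equivariant $F$ with $F(N'_\R)$ dense-generic, $F_*((\phi\circ F)\cdot C') = \phi\cdot F_*(C')$ as elements of $\TZ(N_\Sigma)$.

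Then I would invoke Remark \ref{Meyer and Gross}: Meyer's intersection product is recovered from Allermann--Rau's product on real vector spaces together with the fan case, because the weight $w(\Delta')$ at a codimension-one face is computed from the star of $\Delta'$, i.e.\ locally from linear data. Concretely, I would choose compatible polyhedral complexes of definition: a $(\Z,\R)$-complex $\Ccal'$ representing $C'$ and a complex $\Dcal'$ of definition for $\phi\circ F$, refined so that $F$ is affine on each polyhedron of $\Ccal'$ and $\Dcal'$ refines the pull-back of a complex of definition $\Dcal$ for $\phi$. Then $F_*(C')$ is represented by the images $F(\Delta')$ with the lattice-index multiplicities, and the corner-locus weights on both sides are sums over codimension-one faces of the respective complexes. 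Comparing the weight formula for $(\phi\circ F)\cdot C'$ at a face $\Delta''$ of $\Ccal'$ with the weight formula for $\phi\cdot F_*(C')$ at its image, one uses the chain rule for directional derivatives $\tfrac{\partial(\phi\circ F)}{\partial v} = \tfrac{\partial\phi}{\partial L(v)}$ along with the standard compatibility of normal vectors, primitive lattice vectors and lattice indices under the linear map $L$ (exactly the package of identities already used to prove the projection formula for Allermann--Rau divisors in $N_\R$ and for toric Cartier divisors in \cite{gross2018}). I expect the main obstacle to be purely organizational: making sure the sedentarity reductions are watertight and that the boundary contributions (where $\phi\cdot F_*(C')$ may acquire components supported in strata $N(\tau)$ with $\tau\succ\{0\}$, via the ray-and-lattice-index formula) match the images of the boundary contributions of $(\phi\circ F)\cdot C'$; once everything is reduced to the fan/linear situation, the identity is the known projection formula and can be cited, e.g.\ via \cite[Theorem 2.50 and its proof]{meyer-thesis} or the corresponding statement in \cite{gross2018}.
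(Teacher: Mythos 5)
Your proposal is correct and lands exactly where the paper does: the paper's proof is a pure citation, invoking \cite[Proposition 7.7]{allermann-rau-2010} for the case $N_\R$, \cite[Theorem 2.53]{meyer-thesis} (not Theorem 2.50, which is the commutativity statement) for tropical toric varieties with the remark that completeness is not needed, and \cite[Proposition 3.16]{gross2018} for the boundary part, while your locality, principal-case and sedentarity reductions are precisely the bookkeeping those references carry out before the chain-rule/lattice-index comparison. So this is essentially the same approach, just with the reduction steps spelled out rather than delegated to the cited sources.
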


\begin{proof}
In the euclidean case $N_\Sigma = N_\R$ and $ N_{\Sigma'}'=N_\R'$, this is \cite[Proposition 7.7]{allermann-rau-2010}. The generalization to tropical toric varieties is due to Meyer \cite[Theorem 2.53]{meyer-thesis} noting that the completeness hypotheses are not needed in the proof. The projection formula at the boundary can also be deduced from  \cite[Proposition 3.16]{gross2018}.
\end{proof}

\subsection{Tropical Green functions} \label{subsection: tropical Green functions}

We will introduce tropical Green functions and we will see that there is an associated tropical toric Cartier divisor. 

\begin{definition} \label{definition: tropical Green function}
	A \emph{tropical Green function  on $N_\Sigma$} is a function $g\colon N_\R \to \R$ such that for every  $x \in N_\Sigma$ there exists an open neighbourhood $\Omega$ of $x$ in $N_\Sigma$, $m \in M$ and a continuous function $h \colon \Omega \to \R$ such that $g=m+h|_{N_\R\cap\Omega}$ holds on $N_\R\cap\Omega$. We call $g$ \emph{of smooth (resp.~piecewise smooth) type} if $h$ is smooth (resp.~piecewise smooth).
\end{definition}

\begin{rem} \label{choice of slope for tropical Green function}
	We discuss uniqueness of the \emph{asymptotic slope} $m$ in the above definition. Let $\sigma$ be the sedentarity of $x$. Then the restriction  $m|_\sigma$ is uniquely determined by $x$. Indeed, if $m'$ is another such slope, then $m-m'$ is bounded in a neighbourhood of $x$ which yields that $m|_{\sigma}=m|_{\sigma'}$.
	
	The argument shows that $m|_\sigma$ is the same in a neighbourhood of $x$ in $N(\sigma)$ and as $N(\sigma)$ is connected, we conclude that $m|_\sigma$ depends only on $\sigma$. 
\end{rem}

\begin{prop} \label{associated tropical toric Cartier divisor}
	Let $g$ be a tropical Green function on $N_\Sigma$. Then there is a unique tropical toric Cartier divisor $D$ on $N_\Sigma$ such that for  $\sigma \in \Sigma$ and $D$ given on $U_\sigma$ by $m_\sigma \in M$, we have that $g-m_\sigma$ extends to a continuous function $U_\sigma \to \R$.
\end{prop}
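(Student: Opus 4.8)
The plan is to construct the tropical toric Cartier divisor $D$ from the local data defining $g$, using the uniqueness of asymptotic slopes discussed in Remark \ref{choice of slope for tropical Green function}. First I would recall that for each $\sigma \in \Sigma$, by Remark \ref{choice of slope for tropical Green function}, the asymptotic slope $m$ appearing in a local representation $g = m + h$ near a point of $N(\sigma)$ has a restriction $m|_\sigma$ which depends only on $\sigma$. Pick any point $x_\sigma \in N(\sigma)$ and a local representation $g = m_\sigma + h_\sigma$ on a neighbourhood $\Omega$ of $x_\sigma$; I claim the class of $m_\sigma$ modulo $\sigma^\perp \cap M$ is well-defined, and after choosing an honest lift $m_\sigma \in M$ we get a candidate piecewise linear function $\psi$ on $|\Sigma|$ by setting $\psi|_\sigma \coloneqq -m_\sigma|_\sigma$. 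The key compatibility to check is that these choices glue: if $\tau \prec \sigma$, then $m_\sigma|_\tau$ and $m_\tau|_\tau$ must agree as elements of $M/(\tau^\perp \cap M)$, which follows because near a point of $N(\sigma)$ the two representations $g = m_\sigma + h_\sigma$ and $g = m_\tau + h_\tau$ (valid on a smaller neighbourhood meeting $N(\tau)$ as well) force $m_\sigma - m_\tau$ to be bounded on that piece of $N_\R$, hence zero on $\tau$ by the argument of Remark \ref{choice of slope for tropical Green function}.

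Next I would verify that the data $(U_\sigma, m_\sigma)_{\sigma \in \Sigma}$ — where $U_\sigma$ is the open set of $N_\Sigma$ associated to the subfan generated by $\sigma$ and its faces — genuinely defines a tropical toric Cartier divisor in the sense of Example \ref{tropical toric Cartier divisiors}. Since $\Sigma$ may not be the finest decomposition adapted to $g$, I should pass to a refinement of $\Sigma$ or, better, observe that the piecewise linear function $\psi$ with $\psi|_\sigma = -m_\sigma|_\sigma$ is indeed well-defined and linear on each cone of $\Sigma$ by the gluing just established; then $D \coloneqq D(\psi)$ from Example \ref{tropical toric Cartier divisiors} is the desired divisor. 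Concretely, on $U_\sigma$ the divisor $D(\psi)$ is given by $-k_\sigma = m_\sigma$, so the defining property $g - m_\sigma$ extends continuously to $U_\sigma$ holds by construction near $N(\sigma)$; I then need to propagate this to all of $U_\sigma$. For a face $\tau \prec \sigma$, a point of $N(\tau) \cap U_\sigma$ has a local representation $g = m_\tau' + h_\tau'$ with $h_\tau'$ continuous, and $m_\tau' - m_\sigma$ is linear on $N_\R$ and bounded near that point on the relevant strata, hence $g - m_\sigma = (m_\tau' - m_\sigma) + h_\tau'$ extends continuously there; covering $U_\sigma$ by such neighbourhoods and using that the extensions agree on overlaps (being continuous and agreeing on the dense stratum $N_\R \cap U_\sigma$) gives a global continuous extension $U_\sigma \to \R$.

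For uniqueness, suppose $D'$ is another tropical toric Cartier divisor with the stated property, given on $U_\sigma$ by $m_\sigma' \in M$. Then both $g - m_\sigma$ and $g - m_\sigma'$ extend continuously to $U_\sigma$, so $m_\sigma - m_\sigma'$ extends continuously, hence is bounded near any point of $N(\sigma)$; by the slope argument of Remark \ref{choice of slope for tropical Green function}, $(m_\sigma - m_\sigma')|_\sigma = 0$, i.e.\ $m_\sigma$ and $m_\sigma'$ define the same linear function on $\sigma$. Since a tropical toric Cartier divisor is determined by the associated piecewise linear function $\psi$ on $|\Sigma|$ (Example \ref{tropical toric Cartier divisiors}), we get $D = D'$.

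The main obstacle I anticipate is the bookkeeping around refinement and the precise handling of the overlap regions $U_\sigma \cap U_{\sigma'}$: one must be careful that the polyhedral complex of definition for $g$ need not be compatible with $\Sigma$, so the asymptotic slopes $m_\sigma$ are only determined on the cone $\sigma$ itself and choices of lifts to $M$ must be made coherently. The clean way around this is to work directly with the piecewise linear function $\psi$ on $|\Sigma|$ rather than with the collection of lifts, reducing everything to checking that $\psi$ is well-defined and linear on each cone — which is exactly the content of the gluing step — and then invoking Example \ref{tropical toric Cartier divisiors} as a black box for existence and uniqueness of $D(\psi)$. The continuity-of-extension arguments are then routine applications of the boundedness criterion in Remark \ref{choice of slope for tropical Green function} together with the description of the topology of $N_\Sigma$ in \cite[Remark 3.1.2]{burgos-gubler-jell-kuennemann1}.
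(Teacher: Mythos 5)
Your proposal is correct and follows essentially the same route as the paper's (much terser) proof: use Remark \ref{choice of slope for tropical Green function} to glue the asymptotic slopes $m_\sigma$ into a piecewise linear function $\psi=-\phi$ on $\Sigma$, take $D=D(\psi)$ as in Example \ref{tropical toric Cartier divisiors}, and deduce the extension property and uniqueness from the same boundedness-of-slope argument. The extra details you supply (compatibility on faces, density of $N_\R\cap U_\sigma$ for gluing the local continuous extensions, independence of the lift of $m_\sigma$ modulo $\sigma^\perp\cap M$) are exactly the points the paper leaves implicit.
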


\begin{proof}
	By Remark \ref{choice of slope for tropical Green function}, the asymptotic slopes $m_\sigma$ for $g$ towards $N(\sigma)$ fit together to a piecewise linear function $\phi$ on $\Sigma$. Using $m_\sigma$ on $U_\sigma$, it is clear that we get the toric Cartier divisor $D$ associated to the piecewise linear function $\psi \coloneqq -\phi$ with the required property, see Example \ref{tropical toric Cartier divisiors}. Uniqueness is clear by construction.
\end{proof}

\begin{rem} \label{notation for Green functions}
We call $D$ \emph{the tropical toric Cartier divisor associated to $g$} and \emph{$g$ a Green function for $D$}. The map $g \mapsto D$ is additive.
\end{rem}

\begin{ex} \label{canonical Green function}	
If $\Sigma$ is a complete fan, then every tropical toric Cartier divisor $D$ on $N_\Sigma$ has a canonical Green function given by $g \coloneqq -\psi$ where $\psi$ is the piecewise linear function associated to $D$, see Example \ref{tropical toric Cartier divisiors}. 
Note that $g$ is a Green function  of smooth type, but $g|_{N_\R}$ is not a smooth function.
\end{ex}

\begin{ex} \label{piecewise affine functions as Green functions}
Let $\Sigma$ be complete and let $\phi$ be a piecewise affine function on $N_\Sigma$. 
We assume that $\phi$ is given by a polyhedral complex of definition $\Dcal$ such that the recession cone of every face of $\Dcal$ is subdivided by cones in $\Sigma$. 
By Proposition \ref{recession-cone-boundary-condition}, this just means that the polyhedra in $\Dcal$ are constant towards the boundary. 
Recall the definition of the \emph{recession function $\rec(\phi)$ of $\phi$} from \cite[Definition 2.6.4]{bps-asterisque}.
The recession function $\rec(\phi)$ 
is piecewise linear with respect to $\Sigma$ and 
$\phi- \rec(\phi)$ is a bounded piecewise affine function. 
We conclude that $\phi$ is a Green function for the tropical toric Cartier divisor associated to $\psi \coloneqq -\rec(\psi)$.  
\end{ex}

\begin{rem} \label{unbounded locus for tropical Green function}
Similarly as in Remark \ref{unbounded locus for tropical Cartier divisor}, we define the unbounded locus $S(g)$ of a Green function $g$ 
as the complement of the set of points which have a neighbourhood in $N_\Sigma$ where $g$ is bounded. Clearly, we have $S(g)=S(\psi)$ where $D=D(\psi)$ is the  Cartier divisor associated to $g$. 
\end{rem}

\begin{art} \label{pull-back of tropical Green functions}
Let $L\colon N' \to N$ be a homomorphism of free abelian groups of finite rank. 
If $F\colon N'_{\Sigma'} \to N_\Sigma$ is an $L$-equivariant morphism of tropical toric varieties and if $F(N')$ is not contained in $S(\psi)$, then the \emph{pull-back} $F^*(g) \coloneqq g \circ F$ is a well-defined Green function for the tropical toric Cartier divisor $F^*(D)$ from \ref{pull-back of tropical Cartier divisor}. 
If $g$ is of smooth (resp.~piecewise smooth) type, then the same holds for $F^*(g)$.
\end{art}

\subsection{The tropical Poincar\'e--Lelong equation on $N_\R$}  \label{subsection: The euclidean tropical PL-equation}

The tropical Poincar\'e--Lelong equation on $N_\R$ computes the $d'd''$-residue of a piecewise smooth function. It is based on results of Mihatsch \cite{mihatsch2021} and generalizes \cite[Theorem 0.1]{gubler-kuenne2017}.

\begin{art} \label{residue for piecewise smooth form}
Let $\gamma$ be a piecewise smooth form on $N_\R$ of bidegree $(p,q)$ and let $C=(\Ccal,m)$ be a classical $d$-dimensional tropical cycle on $N_\R$.
We want to compute the $d'$-residue of the polyhedral current $\gamma \wedge \delta_C$. 
As this can be done locally, we may assume that there is a finite polyhedral complex of definition $\Dcal$ for $\gamma$. 
	
Applying  (3.3) in the proof of \cite[Theorem 3.3]{mihatsch2021} to the polyhedral current $T \coloneqq \gamma \wedge\delta_C$ and using the balancing condition for $C$, we get the following explicit expression for the $d'$-residue of $\gamma \wedge \delta_C$:
\[
d'(\gamma \wedge \delta_C)-\dpa \gamma \wedge \delta_C = \sum_{\tau \in (\Ccal \cap \Dcal)_{d-1}} \sum_{\sigma \in (\Ccal \cap \Dcal)_d, \, \sigma \succ \tau} m_\sigma \cdot (\gamma_\sigma, n_{\sigma,\tau}'')\wedge[\tau,\mu_\tau]
\]
where we endow the $(d-1)$-dimensional faces $\tau$ of $\Ccal \cap \Dcal$ with any weights $\mu_\tau$ as in \ref{definition weighted polyhedra} and where $n_{\sigma,\tau}''$ is the normal vector defined in \ref{orthogonal lattice vectors} which was used for defining boundary integrals.
\end{art}

\begin{definition} \label{corner locus}
The \emph{corner locus} $\delta_{h \cdot C}$ for a piecewise smooth function $h$ on $N_\R$ is a polyhedral current on $N_\R$ locally  defined by 
$$\delta_{h \cdot C} \coloneqq {-}\sum_{\tau \in (\Ccal \cap \Dcal)_{d-1}} \sum_{\sigma \in (\Ccal \cap \Dcal)_d, \, \sigma \succ \tau} m_\sigma \cdot (\gamma_\sigma, n_{\sigma,\tau}'')\wedge[\tau,\mu_\tau]$$
in the notation above applied to $\gamma \coloneqq \dpb h$ .
\end{definition}

\begin{rem} \label{remarks for the corner locus}
The name corner locus is due to the fact that $\delta_{h \cdot C}$ is supported in the locus where the piecewise smooth function $h|_C$ is singular. 
If we can choose $\Ccal$ as a $(\Z,\R)$-polyhedral complex, then we get the corner locus defined in \cite[Definition 1.10]{gubler-kuenne2017} \emph{up to sign}. 
In particular, if $h$ is piecewise linear, then we get the intersection product $h \cdot C$ from Allermann and Rau \cite{allermann-rau-2010} \emph{up to sign}.
\end{rem}

We have the \emph{tropical Poincar\'e--Lelong formula for piecewise smooth functions.}

\begin{thm} \label{tropical PL-formula for piecewise smooth functions}
For a piecewise smooth function $h$ on $N_\R$ and a classical tropical cycle $C$ of $N_\R$, we have the identity
\[
d'd''(h \wedge \delta_C)= (\dpa \dpb h) \wedge \delta_C- \delta_{h \cdot C}
\]
where $(\dpa \dpb h) \wedge \delta_C$ is a product of $\delta$-forms on $N_\R$.
\end{thm}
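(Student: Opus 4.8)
The plan is to reduce the statement to a purely local computation and then apply the explicit residue formula from \ref{residue for piecewise smooth form} together with the definition of the corner locus. First I would note that the identity to be proven is a statement about equality of two polyhedral currents (indeed $\delta$-forms, by Corollary \ref{special-types-are-delta-forms} and Remark \ref{differentials for delta-forms}) on $N_\R$. Since currents are determined locally and the polyhedral complex of definition of a piecewise smooth form is only locally finite, it suffices to work in a neighbourhood of an arbitrary point $x \in N_\R$, where we may assume there is a \emph{finite} polyhedral complex of definition $\Dcal$ for $h$ and a finite polyhedral complex of definition $(\Ccal,m)$ for the tropical cycle $C$; after a common refinement we may assume $\Dcal \cap \Ccal = \Ccal$ in the sense used in \ref{residue for piecewise smooth form} and Construction \ref{construction intersection product}.

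The key computation proceeds as follows. Write $T \coloneqq h \wedge \delta_C$, a polyhedral current of tridegree $(0,0,n-d)$ on $N_\R$. Applying the residue formula of Proposition \ref{prop: residue formula} (equivalently, equation (3.3) in the proof of \cite[Theorem 3.3]{mihatsch2021}), we get
\[
d''(h \wedge \delta_C) = \dpb h \wedge \delta_C - \partial''(h \wedge \delta_C),
\]
and since $h$ restricted to each maximal cell of $C$ is smooth, the boundary contribution $\partial''(h \wedge \delta_C)$ is supported on the $(d-1)$-skeleton and, using the balancing condition for $C$, cancels in the way recorded in \ref{residue for piecewise smooth form}. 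Crucially, $\dpb h \wedge \delta_C$ is already a polyhedral current, hence a $\delta$-form by Proposition \ref{criterion for delta-forms}; thus $d''(h \wedge \delta_C)$ is a $\delta$-form and we may legitimately apply $d'$ to it in the sense of $\delta$-forms (Remark \ref{differentials for delta-forms}). Next I would apply the residue formula of \ref{residue for piecewise smooth form} once more, this time to $\gamma \coloneqq \dpb h$ and the polyhedral current $\gamma \wedge \delta_C$, obtaining
\[
d'(\dpb h \wedge \delta_C) - \dpa \dpb h \wedge \delta_C = \sum_{\tau \in (\Ccal\cap\Dcal)_{d-1}} \sum_{\genfrac{}{}{0pt}{}{\sigma \in (\Ccal\cap\Dcal)_d}{\sigma \succ \tau}} m_\sigma \cdot (\gamma_\sigma, n_{\sigma,\tau}'') \wedge [\tau,\mu_\tau],
\]
which is by Definition \ref{corner locus} precisely $-\delta_{h\cdot C}$. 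Combining the two steps and using $d'd''(h\wedge\delta_C) = d'(d''(h\wedge\delta_C))$ — together with $d'(\partial''(h\wedge\delta_C))$ contributing nothing new because the boundary terms were already in the form $\dpb h \wedge \delta_C$ minus those corner contributions, i.e.\ the decomposition $d''(h\wedge\delta_C) = \dpb h \wedge \delta_C$ holds \emph{as $\delta$-forms} once the balancing cancellation is used — yields the claimed formula $d'd''(h\wedge\delta_C) = \dpa\dpb h \wedge \delta_C - \delta_{h\cdot C}$.

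\textbf{Main obstacle.} The delicate point is bookkeeping the boundary contributions: a priori $d''(h\wedge\delta_C)$ equals $\dpb h \wedge \delta_C$ minus a genuine residue current $\partial''$, and one must verify that the residue vanishes (as a current on $N_\R$) so that $d''(h\wedge\delta_C) = \dpb h \wedge \delta_C$ is in fact polyhedral. This is exactly where the balancing condition $\sum_{\sigma \succ \tau} m_\sigma n_{\sigma,\tau} = 0$ (in $N_\R/\L_\tau$) enters: the function $h$ being \emph{continuous} across the cells of $C$ means the local linear approximations of $h$ on adjacent $\sigma$'s agree on $\tau$, so that $\sum_\sigma m_\sigma (d''h_\sigma - d''h_{\sigma'}) = 0$ pairs to zero against the balancing relation. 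I would make this precise by the same argument Mihatsch uses in \cite[proof of Theorem 3.3]{mihatsch2021}, being careful that here $h$ is only piecewise smooth rather than piecewise linear, so the cancellation is of the \emph{first-order jump} of $\dpb h$ rather than of $\dpb h$ itself — but since $h$ is $C^0$, the tangential derivatives along $\tau$ match, and only the normal components survive, which is what the formula in \ref{residue for piecewise smooth form} already encodes. Granting that, the rest is a direct substitution. I would also remark that the resulting identity is independent of the choice of auxiliary weights $\mu_\tau$, as both sides are, by construction in Definition \ref{corner locus} and in \ref{residue for piecewise smooth form}.
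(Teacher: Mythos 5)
Your proposal is correct and follows essentially the same route as the paper's proof: apply the residue formula of \ref{residue for piecewise smooth form} once (in its $d''$-version, with $\gamma=h$) to get $d''(h\wedge\delta_C)=\dpb h\wedge\delta_C$, and once more (with $\gamma=\dpb h$) to identify $d'(\dpb h\wedge\delta_C)-\dpa\dpb h\wedge\delta_C$ with $-\delta_{h\cdot C}$ via Definition \ref{corner locus}. The only cosmetic difference is that you re-derive the vanishing of the $d''$-residue of $h\wedge\delta_C$ directly from continuity of $h$ and the balancing condition, whereas the paper simply invokes the $d''$-analogue of \ref{residue for piecewise smooth form}, in which the surviving term $(h_\sigma,n'_{\sigma,\tau})$ is zero because $h$ has bidegree $(0,0)$ --- the same cancellation, packaged differently.
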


Note that $h \wedge \delta_C$ is a $\delta$-form on $N_\R$, so the identity can be seen as an identity of (polyhedral) currents in $D(N_\R)$. 
On the other hand, it is also an identity of formal $\delta$-currents as the derivatives of the formal $\delta$-current associated to a $\delta$-form agrees with the formal $\delta$-current associated to the derivatives of the $\delta$-form by Corollary \ref{derivatives of delta-forms and delta-currents}.

\begin{proof}
There is a similar formula for the $d''$-residue  as in \ref{residue for piecewise smooth form}. Applying this to $\gamma \coloneqq h$ yields
\[
d''(h \wedge \delta_C)=  \dpb h \wedge \delta_C.
\]
Using \ref{residue for piecewise smooth form} for $\gamma \coloneqq \dpb h$ and Definition \ref{corner locus}, we have
\[
d'(\dpb h \wedge \delta_C) = -\delta_{h \cdot C}
+ (\dpa \dpb h) \wedge \delta_C.
\]
Both identities together prove the claim.
\end{proof}

\subsection{The tropical Poincar\'e--Lelong formula for piecewise affine functions}

\label{subsection: tropical PL-formula for piecewise affine functions}

We have introduced piecewise affine functions in \S \ref{subsection: tropical Cartier divisors}. In tropical geometry, they are analogues of rational functions. In this subsection, we will prove the tropical Poincar\'e--Lelong equation for them. Again, we do this in a semi-relative setting involving a classical tropical cycle $C$ on $N_\Sigma$ of dimension $d$ and of sedentarity $(0)$.

\begin{cons} \label{wedge product of function with tropical cycle}
Let $\phi$ be any piecewise smooth function on $N_\R$. 
Then we define a formal $\delta$-current $\phi \wedge \delta_C \in E^{n-d,n-d}(N_\Sigma)$ as follows. 
Let $\omega \in B_c^{d,d}(N_\Sigma)$, then $\phi\omega \wedge \delta_C \in B_c^{n,n}(N_\R)=B_c^{n,n}(N_\Sigma)$. 
Note that the fact that $\omega$ is constant towards the boundary and has compact support yields that $\omega \wedge \delta_C$ has compact support in $N_\R$ by Lemma \ref{support lemma for top-delta-forms}.
Then we define
\[
(\phi \wedge \delta_C)(\omega) \coloneqq \int_C \phi \omega \coloneqq \int_{N_\R} \phi \omega \wedge \delta_C.
\]
\end{cons}

\begin{prop} \label{projection formula for above product}
Let $L\colon N' \to N$ be a homomorphism of free abelian groups of finite rank, let $F\colon  N'_{\Sigma'} \to N_\Sigma$ be an $L$-equivariant morphism of tropical toric varieties, let $\phi$ be a piecewise smooth function on $N_\R$ and let $C'$ be a classical tropical cycle on $N'_{\Sigma'}$ of sedentarity $(0)$. 
Then we have the projection formula
\[
F_*(F^*(\phi)\wedge \delta_{C'})= \phi \wedge \delta_{F_*(C')}.
\]
\end{prop}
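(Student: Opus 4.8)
The statement is a projection formula for the formal $\delta$-current $F^*(\phi) \wedge \delta_{C'}$ constructed in Construction \ref{wedge product of function with tropical cycle}. The plan is to test the claimed identity against an arbitrary compactly supported $\delta$-form $\omega \in B_c^{d,d}(N_\Sigma)$, where $d = \dim C'$, and unwind both sides using the definition of the pairing and the already established projection formula for $\delta$-forms on the non-archimedean/tropical side. First I would recall that by definition of the push-forward of formal $\delta$-currents (dual to the pull-back $F^*\colon B(N_\Sigma) \to B(N'_{\Sigma'})$ from Proposition \ref{pull-back of delta-forms}), we have
\[
\bigl(F_*(F^*(\phi) \wedge \delta_{C'})\bigr)(\omega) = (F^*(\phi) \wedge \delta_{C'})\bigl(F^*(\omega)\bigr) = \int_{N'_\R} F^*(\phi) \cdot F^*(\omega) \wedge \delta_{C'},
\]
using Construction \ref{wedge product of function with tropical cycle} applied to the piecewise smooth function $F^*(\phi) = \phi \circ F$ on $N'_\R$ and the cycle $C'$. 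On the other side, $(\phi \wedge \delta_{F_*(C')})(\omega) = \int_{N_\R} \phi\, \omega \wedge \delta_{F_*(C')}$.

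The key step is to identify these two integrals. Since $F^*(\phi) \cdot F^*(\omega) = F^*(\phi \omega)$ and $\phi\omega$ is a compactly supported $\delta$-form of top degree on (a neighborhood in $N_\Sigma$ of) the support of $F_*(C')$ — here one uses Lemma \ref{support lemma for top-delta-forms} to see that $\omega \wedge \delta_{F_*(C')}$, and likewise $F^*(\omega) \wedge \delta_{C'}$, have compact support in the respective dense tori — the identity reduces to the compatibility of integration of $\delta$-forms with push-forward of currents of integration over tropical cycles, i.e.
\[
\int_{N'_\R} F^*(\phi\omega) \wedge \delta_{C'} = \int_{N_\R} (\phi\omega) \wedge \delta_{F_*(C')}.
\]
This is precisely the statement that $F_*(\delta_{C'}) = \delta_{F_*(C')}$ as formal $\delta$-currents, paired against the $\delta$-form $\phi\omega$. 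I would establish this by reducing to the affine case $N'_\R \to N_\R$ (restricting everything to the dense strata, which is legitimate because the relevant forms are constant towards the boundary and supported in the finite parts), and then invoking Mihatsch's theory: the compatibility of the integral of $\delta$-forms with proper push-forward along affine maps of tropical cycles of integration is part of the functoriality package recalled in \S\ref{delta-forms-on-N_R} and \cite{mihatsch2021}. Concretely, for the affine linear map $L\colon N'_\R \to N_\R$ underlying $F$, the equality $\int_{N'_\R} L^*(\eta) \wedge \delta_{C'} = \int_{N_\R} \eta \wedge \delta_{L_*(C')}$ holds for all $\eta \in B_c^{n,n}$ by the definition of the push-forward of tropical cycles together with the projection formula for the integral of $\delta$-forms under affine maps.

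The main obstacle I anticipate is a careful handling of the non-properness of $F$ on the level of the compactifications: although $\omega$ has compact support in $N_\Sigma$, its pull-back $F^*(\omega)$ need not have compact support in $N'_{\Sigma'}$, only in $F^{-1}(\supp\omega)$. One must therefore verify that $F^*(\omega) \wedge \delta_{C'}$ still has compact support in $N'_\R$ — this follows because $F_*(C')$ is defined only when the restriction of $F$ to the support of $C'$ is proper (this is implicit in the well-definedness of $F_*(C')$ as a tropical cycle, compare the discussion around Construction \ref{intersection product with tropical Cartier divisor}), so that $\supp(C') \cap F^{-1}(\supp\omega)$ is compact, hence $\supp(C') \cap F^{-1}(\supp\omega) \cap N'_\R$ is a compact subset of $N'_\R$ on which $F^*(\omega) \wedge \delta_{C'}$ is supported. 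Once this compactness is in place, everything is a formal consequence of definitions and the affine-level functoriality, and there is no further analytic difficulty.
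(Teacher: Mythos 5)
Your overall route is the same as the paper's: the printed proof is a one-line citation of the projection formula in \cite[Proposition 4.2]{mihatsch2021}, and your unwinding (test against $\omega\in B_c^{d,d}(N_\Sigma)$, rewrite both sides via Construction \ref{wedge product of function with tropical cycle}, and reduce to $F_*\delta_{C'}=\delta_{F_*(C')}$ paired against $\phi\omega$ on the dense strata) is precisely the reduction to that affine statement. The genuine problem is the step where you dispose of the support issue. You claim that $F_*(C')$ ``is defined only when the restriction of $F$ to the support of $C'$ is proper'', and deduce that $|C'|\cap F^{-1}(\supp\omega)$ is compact. That claim is false: push-forward of classical tropical cycles along an affine map is defined for arbitrary $L$ because the underlying complexes are finite, and faces whose image drops dimension are simply discarded (this is the Allermann--Rau construction also behind Proposition \ref{projection formula of intersection product}). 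Concretely, take $N'=\Z^2$, $N=\Z$, $L$ the first projection, and $C'$ the one-dimensional balanced fan with rays through $(1,0)$, $(0,1)$, $(-1,-1)$, all weights $1$: then $F_*(C')=\R$ with weight $1$ is perfectly well defined, yet the ray through $(0,1)$ is contracted to a point, so $|C'|\cap F^{-1}(\supp\omega)$ is unbounded and your compactness argument collapses. (Your cross-reference to Construction \ref{intersection product with tropical Cartier divisor} concerns unbounded loci of Cartier divisors, not properness of push-forward, and does not help here.)

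There are two ways to repair this. Either observe that the paper only ever applies the proposition with $F$ proper (in the proof of Theorem \ref{PL-formula for piecewise affine}, where $\Sigma'$ refines a complete fan $\Sigma$), consistently with Definition \ref{assumptions for functoriality of star product}, which defines direct images of formal $\delta$-currents only for proper $F$; under that hypothesis $F^*(\omega)\wedge\delta_{C'}$ has compact support in $N'_\R$ and your argument goes through verbatim. Or, if you want the statement as written, argue that on every $d$-dimensional face $\Delta'$ of $C'$ over which $F$ fails to be proper above $\supp\omega$, the map $L$ must kill a direction of $\L_{\Delta'}$, so the top-degree contribution of $F^*(\phi)\,F^*(\omega)\wedge\delta_{C'}$ from $\Delta'$ vanishes (for the smooth part of $\omega$ this is a degree count; in general this vanishing is exactly what is packaged in Mihatsch's projection formula, so citing \cite[Proposition 4.2]{mihatsch2021}, as the paper does, already covers it). With either repair your proposal is correct and coincides in substance with the paper's proof; as written, the justification of the key compactness step is wrong.
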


\begin{proof}
This follows from the projection formula in \cite[Proposition 4.2]{mihatsch2021}.
\end{proof}

 Using the intersection product with piecewise affine functions from Construction \ref{intersection product with tropical Cartier divisor}, we have the following \emph{tropical Poincar\'e--Lelong formula}.
\begin{thm} \label{PL-formula for piecewise affine}
Let $\phi$ be a piecewise affine function and let $C \in \TZ_d(N_\Sigma)$ of sedentarity $(0)$. Then we have the  identity $d'd''(\phi \wedge \delta_C)= -\delta_{\phi \cdot C}$ of formal $\delta$-currents on $N_\Sigma$. 
\end{thm}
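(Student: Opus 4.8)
The plan is to reduce the statement to the already-established tropical Poincar\'e--Lelong formula for piecewise smooth functions on $N_\R$ (Theorem \ref{tropical PL-formula for piecewise smooth functions}), exploiting that a piecewise affine function is in particular piecewise smooth and that its corner locus $\delta_{\phi\cdot C}$ coincides, up to the classical intersection product, with the piecewise-smooth corner locus of Definition \ref{corner locus}. First I would observe that both sides of the claimed identity are formal $\delta$-currents on $N_\Sigma$, and that by Construction \ref{wedge product of function with tropical cycle} the current $\phi\wedge\delta_C$ is obtained by pairing against $\omega\in B_c^{d,d}(N_\Sigma)$ via $\int_{N_\R}\phi\,\omega\wedge\delta_C$. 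Since $\omega$ is constant towards the boundary and compactly supported, $\omega\wedge\delta_C$ has compact support inside $N_\R$ by Lemma \ref{support lemma for top-delta-forms}; hence the whole computation of $d'd''(\phi\wedge\delta_C)$, as a formal $\delta$-current, takes place on the dense stratum $N_\R$ and we may apply Theorem \ref{tropical PL-formula for piecewise smooth functions} there with $h\coloneqq\phi$. This gives
\[
d'd''(\phi\wedge\delta_C)=(\dpa\dpb\phi)\wedge\delta_C-\delta_{\phi\cdot C}
\]
as formal $\delta$-currents, using the remark after Theorem \ref{tropical PL-formula for piecewise smooth functions} that derivatives of the associated formal $\delta$-current agree with the formal $\delta$-current of the derivative (Corollary \ref{derivatives of delta-forms and delta-currents}).

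\textbf{Key steps.} The remaining work is to identify the two pieces on the right with the single term $-\delta_{\phi\cdot C}$ of the theorem, i.e.\ to show $(\dpa\dpb\phi)\wedge\delta_C=0$ and that the piecewise-smooth corner locus of $\phi$ relative to $C$ agrees with the intersection product $\phi\cdot C$ of Construction \ref{intersection product with tropical Cartier divisor}. For the first: since $\phi$ is piecewise affine, on each polyhedron $\Delta$ of a common refinement $\Ccal\cap\Dcal$ the restriction $\phi|_\Delta$ is affine, so $d'd''(\phi|_\Delta)=0$; therefore the smooth form $\dpa\dpb\phi$ vanishes identically as a piecewise smooth form and $(\dpa\dpb\phi)\wedge\delta_C=0$. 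For the second: Definition \ref{corner locus} with $\gamma\coloneqq\dpb\phi$ gives $\delta_{\phi\cdot C}=-\sum_\tau\sum_{\sigma\succ\tau}m_\sigma\,(\gamma_\sigma,n_{\sigma,\tau}'')\wedge[\tau,\mu_\tau]$; because $\phi|_\sigma$ is affine, $(\gamma_\sigma,n''_{\sigma,\tau})$ is the constant equal to the jump of the slope of $\phi$ across $\tau$ in the direction $n_{\sigma,\tau}$, which is exactly the local expression for the weight $w(\tau)$ in the corner-locus/intersection formula of Construction \ref{construction intersection product}. A short bookkeeping argument with the normalization of normal vectors and weights $\mu_\tau$ (matching primitive lattice normals $n_{\sigma,\Delta'}$ with the chosen $n_{\sigma,\tau}$) identifies this with $\delta_{\phi\cdot C}$ as defined via the tropical intersection product, on the dense stratum; since both are constant towards the boundary, they agree as formal $\delta$-currents on $N_\Sigma$.

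\textbf{Main obstacle.} I expect the genuinely delicate point to be the boundary behaviour: a priori $\phi\cdot C\in\TZ_{d-1}(N_\Sigma)$ may have components of positive sedentarity (Construction \ref{construction intersection product} produces weights $w(\Delta')$ for faces $\Delta'$ in the boundary via the rays $\rho_\Delta=\tau\cap\rec(\Delta)$), whereas the left-hand side was computed on $N_\R$. The resolution is that these boundary components do not contribute to the pairing with $\omega\in B_c^{d,d}(N_\Sigma)$: since $\omega\wedge\delta_C$ has support in $N_\R$ by Lemma \ref{support lemma for top-delta-forms}, only the sedentarity-$(0)$ part of $\phi\cdot C$ is seen by the formal $\delta$-current $\delta_{\phi\cdot C}$, and this part is precisely the corner locus of Definition \ref{corner locus}. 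Thus as an identity of \emph{formal} $\delta$-currents the boundary terms are invisible and the reduction to Theorem \ref{tropical PL-formula for piecewise smooth functions} is legitimate. The rest is routine matching of signs and lattice normalizations.
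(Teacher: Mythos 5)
The reduction to Theorem \ref{tropical PL-formula for piecewise smooth functions} breaks down at exactly the point you call the ``main obstacle'', and your proposed resolution is false: the positive-sedentarity components of $\phi\cdot C$ are \emph{not} invisible to formal $\delta$-currents on $N_\Sigma$. The current $\delta_{\phi\cdot C}$ pairs a test form $\omega\in B_c^{d-1,d-1}(N_\Sigma)$ with the $(d-1)$-dimensional boundary components of $\phi\cdot C$ by integrating the induced form on the corresponding stratum $N(\tau)$ (Example \ref{delta-current for weighted polyhedron}); this is a top-degree integral on those components and has no reason to vanish, and constancy towards the boundary does not kill it. Lemma \ref{support lemma for top-delta-forms} only tells you that $d'd''\omega\wedge\delta_C$ (a bidegree $(d,d)$ object) is supported in $N_\R$; it says nothing about the pairing of the $(d-1,d-1)$-form $\omega$ itself with boundary cycles. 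A one-dimensional example already refutes your argument: take $N=\Z$, $\Sigma=\{0,\R_{\ge 0}\}$, so $N_\Sigma=\R_\infty$, let $C=N_\Sigma$ with weight $1$ and $\phi(x)=x$. Then $\dpa\dpb\phi=0$ and the corner locus of Definition \ref{corner locus} is empty, so your argument yields $d'd''(\phi\wedge\delta_C)=0$; but for $f\in B_c^{0,0}(\R_\infty)$ (vanishing near $-\infty$, constant equal to $f(\infty)$ near $\infty$) one computes $\int_\R x\,f''(x)\,dx=-\bigl(f(\infty)-f(-\infty)\bigr)=-f(\infty)$, which is exactly $-\delta_{\phi\cdot C}(f)$ with $\phi\cdot C=1\cdot\{\infty\}$, not $0$.

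The underlying error is that Theorem \ref{tropical PL-formula for piecewise smooth functions} is an identity of currents on $N_\R$, i.e.\ tested against forms compactly supported in $N_\R$, whereas here the test forms are only compactly supported in $N_\Sigma$. You cannot substitute $d'd''\omega$ into that identity, because the integration by parts it encodes produces boundary terms at infinity coming from the unbounded (linear) part of $\phi$, and these terms are precisely the boundary part of $\phi\cdot C$. This is what the paper's proof is organized around: after reductions (toric completion, simplicial subdivision so that $C$ and the complex of definition of $\phi$ are constant towards the boundary, and the projection formulas of Propositions \ref{projection formula of intersection product} and \ref{projection formula for above product}), one writes $\phi=m+\phi'$ locally with $\phi'$ bounded; the bounded part is handled essentially as you propose, but the linear part $m$ requires a separate argument with a cutoff $P_R=(-\infty,R]\times\R^{n-1}$ and Stokes for polyhedral currents (Theorem \ref{polyhedral-stokes-formula-proposition}), in which the surviving boundary integral produces $-\int_{\div_\infty(\phi)}\omega$. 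Your identification of the sedentarity-zero corner locus with Construction \ref{construction intersection product}, and the observation $\dpa\dpb\phi=0$, are fine, but without the analysis of the linear part at infinity the proof does not establish the theorem.
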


\begin{proof}
This can be checked locally at $u \in N_\Sigma$. 
By toric completion, we know that $\Sigma$ is a subfan of a complete fan. 
Then $N_\Sigma$ is an open tropical toric subvariety of the corresponding complete tropical toric subvariety and hence we may assume that $\Sigma$ is complete. 
By subdivision and toric resolution of singularities, there is a regular simplicial fan $\Sigma'$ which subdivides $\Sigma$ and the recession fan of a polyhedral complex of definition for $\phi$. Moreover, we may assume that the recession cones of the faces of $C$ are subdivided by cones in $\Sigma'$. 
Then there is a dominant equivariant morphism $N_{\Sigma'} \to N_\Sigma$. 
Note that $C$ is a classical tropical cycle of $N_{\Sigma'}$ which is constant towards the boundary by Proposition \ref{recession-cone-boundary-condition}. 
Using the projection formulas from Proposition \ref{projection formula of intersection product} and from Proposition \ref{projection formula for above product}, we may assume that $\Sigma = \Sigma'$.

Using the above, it is enough to prove the claim in a neighbourhood of $u$ 
and for a  complete regular simplicial fan $\Sigma$. Moreover, we may assume that the recession cones of the faces of $C$ are subdivided by cones in $\Sigma$ which means that $C$ is constant towards the boundary by Proposition \ref{recession-cone-boundary-condition}. 
By Example \ref{example_classical_tropical_cycle}, we know that $\delta_C$ is a $d'$- and $d''$-closed $\delta$-form on $N_\Sigma$. 
We have to check the identity 
\begin{equation} \label{residue formula check}
\int_{C}     \phi d'd''\omega= -\int_{\phi \cdot C}\omega.
\end{equation}
for $\omega \in B_c^{d-1,d-1}(N_\Sigma)$ with compact support in a
small open neighbourhood of $u$. 
	
As $\sigma$ is a regular simplex, there is a basis $x_1,\dots,x_n$ of
$M$ such that $\sigma=\R_{\geq 0}^r \times \{0_{n-r}\}$ in
$N_\R= \R^n$ for some $r \in \{0,\dots,n\}$.  Let
$U_\sigma=\R_\infty^r\times \R^{n-r}$ be the standard open
neighbourhood of $u$ in $N_\Sigma$ and
$u=(\infty_r,u_{r+1},\dots,u_n)$ in these coordinates.  As we may also
assume that the recession fan of a polyhedral complex of definition
$\Dcal$ for $\phi$ is subdivided by $\Sigma$, we deduce from
Proposition \ref{recession-cone-boundary-condition} that all polyhedra
in $\Dcal$ are constant towards the boundary.  Using these very
explicit assumptions, we deduce easily that $\phi=m+\phi'$ in a
neighbourhood of $u$ for some $m \in M$ and a bounded piecewise affine
function $\phi'$.
	
We prove now the claim for $\phi'$.  As the polyhedra in $\Dcal$ are
constant towards the boundary, the piecewise affine function $\phi'$
is a piecewise smooth function on $N_\Sigma$ and in particular
constant towards the boundary.  It follows that the
$\phi' d'd''\omega \wedge \delta_C \in B^{n,n}(N_\Sigma)$ has compact
support in $N_\R$, see Lemma \ref{support lemma for top-delta-forms}.
In this case, $\phi\cdot C$ is a classical tropical cycle of
sedentarity $(0)$ and $\omega \wedge \delta_{\phi \cdot C}$ is also a
$\delta$-form with compact support in $N_\R$, hence we deduce the
claim from the tropical Poincar\'e--Lelong formula for piecewise
smooth functions on $N_\R$ given in Theorem \ref{tropical PL-formula
  for piecewise smooth functions}.
	
We may assume now that $\phi=m$. Note that the above argument also
proves the claim in a neighbourhood of $u$ if $u \in N_\R$, so we now
assume $u \in N_\Sigma \setminus N_\R$.  We claim that by replacing
$\omega$ by $\omega \wedge \delta_C$, we may assume that $C=N_\Sigma$.
Indeed, we have
\[
\int_{C}     \phi d'd''\omega= \int_{N_\Sigma} \phi d'd''\omega \wedge \delta_C = \int_{N_\Sigma} \phi d'd''(\omega \wedge \delta_C)
\]
using that $\delta_C$ is a $d'$- and $d''$-closed $\delta$-form.
Using that $\phi=m$, we have
$\phi \cdot C= \sum_\rho m(n_\rho) \overline{C(\rho)}$ and
$\div(\phi)=\sum_\rho m(n_\rho) \overline{N(\rho)}$ where $\rho$
ranges over the rays in $\Sigma$ and where $n_\rho$ is the primitive
lattice vector in the ray $\rho$. Then
\[
\int_{\phi \cdot C}\omega = \sum_\rho m(n_\rho)\int_{N(\rho)} \omega(\rho) \wedge \delta_{C(\rho)}= \int_{\div(\phi)} \omega \wedge \delta_C
\]
showing the reduction step to $C=N_\Sigma$.  We denote the
intersection product $\phi \cdot C$ in the case $C=N_\Sigma$ by
$\div(\phi)=\div_0(\phi) + \div_\infty(\phi)$ with the decomposition
of the tropical Weil divisor into finite and boundary part.
	
There is a unique $\sigma \in \Sigma$ such that $u \in N(\sigma)$.
Since $u \not\in N_\R$, we have
$\sigma=\R_{\geq 0}^r \times \{0_{n-r}\}$ in $N_\R= \R^n$ for some
$r \in \{1,\dots,n\}$. Then $U_\sigma =\R_\infty^r \times \R^{n-r}$ is
an open neighbourhood of $u=(\infty_{r},u_{r+1}, \dots, u_n)$.  If
$r \geq 2$, then the boundary $N(\sigma)$ at $u$ has dimension $n-r$
and $\omega$ has bidegree $(n-1,n-1)$, so $\omega$ has to be zero in a
neighbourhood of $u$ using that $\omega$ is constant towards the
boundary. Now a partition of unity argument reduces the proof of
\eqref{residue formula check} to the case $r=1$.

So we assume from now on $r=1$. For $R >0$, we will use the cut $P_R \coloneqq (-\infty,R] \times \R^{n-1}$ which has the boundary $\partial P_R= \{x_1=R\} \times \R^{n-1}$.

	Recall that for a $\delta$-form $\beta$, we have $d'\beta = \dpa \beta - \partial' \beta$ and $d''\beta = \dpb \beta - \partial'' \beta$ for the residues defined in \S\ref{residue formula}. By the Leibniz formula, we have
	$$\int_{P_R} \phi d'd''\omega = \int_{P_R}d'(\phi d'' \omega)-\int_{P_R} d'\phi \wedge d''\omega.$$
	Using the formula of Stokes for polyhedral currents in Theorem \ref{polyhedral-stokes-formula-proposition}, 
	we have 
	$$\int_{P_R} d'(\phi d''\omega)= \int_{\partial'P_R}\phi d'' \omega.$$
	The only contribution for the boundary integral comes from the boundary $\partial P_R$. Since $\omega$ is constant towards the boundary, the boundary  integral on the right will be $0$ for $R$ sufficiently large and we get 
	$$\int_{P_R} \phi d'd''\omega = -\int_{P_R} d'\phi \wedge d''\omega.$$
	A similar argument with $d''$ instead of $d'$ shows
	$$-\int_{P_R} d'\phi \wedge d''\omega= \int_{P_R} d''(d'\phi \wedge \omega)+\int_{P_R} d'd''\phi \wedge \omega$$
	and
	$$\int_{P_R} d''(d'\phi \wedge \omega)=\int_{\partial''P_R} d'\phi \wedge \omega.$$
The only non-zero contribution to the boundary integral can be in the codimension $1$ face $D_R \coloneqq \{x_1=R\}$ of $P_R$, so we have
\[
\int_{\partial''P_R} d'\phi \wedge \omega = \int_{D_R} (d'\phi \wedge \omega,n')
\]
for the inwards pointing normal vector $n=-(1,0,\dots,0)$ by definition of the boundary integral. 
We extend here the contraction of forms to delta-forms as we did in the definition of boundary integrals in Definition \ref{define-boundary-delta-integrals} by contracting just the forms involved in the corresponding polyhedral current. 
Using the properties of the contraction in $n'$, we get
\[
\int_{D_R} (d'\phi \wedge \omega,n')= -\int_{D_R} \frac{\phi}{\partial x_1}\omega= -\int_{\div_\infty(\phi)} \omega
\]
for $R$ sufficiently large.
We used in the last step that $\omega$ is constant towards the boundary and the definition of $\phi \cdot N_\Sigma = \div(\phi)$ given in Construction \ref{construction intersection product}. 
We conclude from the displayed identities above that
\[
\int_{P_R} \phi d'd''\omega = -\int_{\div_\infty(\phi)} \omega + \int_{P_R} d'd''\phi \wedge \omega
\]
for $R$ sufficiently large. 
Since $d'd''\phi = -\delta_{\div_0(\phi)}$ on $N_\R$ by the tropical
Poincar\'e--Lelong formula  in Theorem \ref{tropical PL-formula for
  piecewise smooth functions}, we deduce that
\[
\int_{P_R} \phi d'd''\omega = -\int_{\div_\infty(\phi)} \omega - \int_{P_R}  \delta_{\div_0(\phi)} \wedge \omega
\]
for $R$ sufficiently large. Letting $R \to \infty$ and using
$\div(\phi)=\div_0(\phi) + \div_\infty(\phi)$, we deduce
\eqref{residue formula check} and hence the claim. 
\end{proof}

\subsection{Tropical Poincar\'e--Lelong formula for Green functions}
\label{subsection: tropical PL-formula for Green functions}

The tropical Poincar\'e--Le\-long equation computes the residue of a Green function $g$  of piecewise smooth type on $N_\Sigma$. 
It has two parts, the first summand is the first Chern $\delta$-form of $g$, which we will introduce in the course of this subsection, and the second part is the current of integration over the associated tropical Cartier divisor. 
For latter purposes, all of this is stated more generally in a semi-relative setting for $g \wedge \delta_C$ where $C$ is a fixed classical tropical cycle on $N_\Sigma$ of dimension $d$.

\begin{definition} \label{first Chern delta form}
Let $g$ be a Green function of piecewise smooth type for the tropical toric Cartier divisor $D$. We define the \emph{first Chern $\delta$-form} $c_1(D,g) \in B^{1,1}(N_\Sigma)$ as follows: 
Suppose that $x \in N(\sigma)$ and that $D$ is given in a neighbourhood $\Omega$ of $x$ in $N_\Sigma$ by $m_\sigma \in M$, then there is a piecewise smooth function $h$ on $\Omega$ such that $g=m_\sigma+h$ on $\Omega \cap N_\R$. 
Then we set 
$$c_1(D,g)|_\Omega \coloneqq d'd'' h.$$
Clearly, this does not depend on the choice of $\Omega$ and $m_\sigma$ and hence these $\delta$-forms glue to a global $\delta$-form $c_1(D,g) \in B^{1,1}(N_\Sigma)$. 
\end{definition}

\begin{rem} \label{properties of the first Chern delta form}
The first Chern $\delta$-form $c_1(D,g)$ is linear in $(D,g)$. 
Let $L\colon N' \to N$ be a homomorphism of free abelian groups of finite rank and $F \colon N'_{\Sigma'} \to N_\Sigma$ an $L$-equivariant morphism of tropical toric varieties.
If $F(N'_\R)$ is not contained in the unbounded locus $S(D)$, then we have
\[
F^*c_1(D,g) = c_1(F^*D,F^*g).
\]
Note that the latter condition is needed to make $F^*D$ well-defined. 
Note that we can change locally $D$ by $m \in M$ such that $x$ is no longer in the unbounded locus. 
This does not change the first Chern $\delta$-forms.
\end{rem}

We are now ready to state and prove the \emph{tropical Poincar\'e--Lelong equation for Green functions of piecewise smooth type}.

\begin{thm} \label{tropical PL equation for Green functions}
Let $g$ be Green function of piecewise smooth type for the tropical toric Cartier divisor $D$ and let $C$ be a classical tropical cycle of sedentarity $(0)$  on $N_\Sigma$. 
Then we have the identity 
\[
d'd''(g \wedge \delta_C)= c_1(D,g)\wedge \delta_C - \delta_{D \cdot C}
\]
of formal $\delta$-currents on $N_\Sigma$.
\end{thm}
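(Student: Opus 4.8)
The plan is to reduce the claim to the two Poincaré–Lelong formulas already established, namely Theorem \ref{tropical PL-formula for piecewise smooth functions} for piecewise smooth functions on $N_\R$ and Theorem \ref{PL-formula for piecewise affine} for piecewise affine functions on $N_\Sigma$. The statement being local on $N_\Sigma$ (both sides are formal $\delta$-currents, which form a sheaf), it suffices to verify the identity in a neighbourhood $\Omega$ of an arbitrary point $x \in N_\Sigma$. Let $\sigma = \mathrm{sed}(x)$ and let $m_\sigma \in M$ be the local equation of $D$ near $x$, so that by definition of a Green function of piecewise smooth type there is a piecewise smooth function $h$ on $\Omega$ with $g = m_\sigma + h$ on $\Omega \cap N_\R$ and $c_1(D,g)|_\Omega = d'd''h$.

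First I would use linearity of everything in $g$ (equivalently in $(D,g)$): the wedge product $g \wedge \delta_C$, the operators $d',d''$, the first Chern $\delta$-form $c_1(D,g)$, and the intersection cycle $D \cdot C$ are all additive in the pair $(D,g)$, using Remark \ref{notation for Green functions}, Remark \ref{properties of the first Chern delta form}, bilinearity of the intersection product in Construction \ref{intersection product with tropical Cartier divisor}, and bilinearity of Construction \ref{wedge product of function with tropical cycle}. Hence it is enough to treat separately the case $g = h$ piecewise smooth (so $D = 0$, $c_1(D,g) = d'd''h$ and $D \cdot C = 0$) and the case $g = m_\sigma$ a linear function (a special piecewise affine function, so $c_1(D,g) = 0$ and the asserted identity reads $d'd''(m_\sigma \wedge \delta_C) = -\delta_{D \cdot C}$). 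The first case is exactly Theorem \ref{tropical PL-formula for piecewise smooth functions}, once one observes that $h \wedge \delta_C$ is a $\delta$-form and that by Corollary \ref{derivatives of delta-forms and delta-currents} the identity of polyhedral currents there translates into the desired identity of formal $\delta$-currents; the corner locus $\delta_{h \cdot C}$ of Definition \ref{corner locus} coincides with $\delta_{D \cdot C}$ in this case since $D = 0$ means there is no asymptotic contribution and $h \cdot C$ is computed on $N_\R$ — more precisely, with $D = 0$ the associated toric Cartier divisor is trivial and $D \cdot C = 0$, which matches the fact that we are in the subcase $g = h$; so really the splitting must be arranged so that $D \cdot C$ is carried entirely by the $g = m_\sigma$ summand. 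The second case is Theorem \ref{PL-formula for piecewise affine} applied to the piecewise affine function $\phi = m_\sigma$, which gives $d'd''(m_\sigma \wedge \delta_C) = -\delta_{m_\sigma \cdot C}$, and $m_\sigma \cdot C = D \cdot C$ near $x$ since $D$ is locally given by $m_\sigma$ (Construction \ref{intersection product with tropical Cartier divisor}).

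The one subtlety to get right, and the step I expect to be the main obstacle, is the bookkeeping of the local decomposition $g = m_\sigma + h$: $m_\sigma$ is only defined locally near $x$ (it depends on $\sigma$), and $h$ is globally piecewise smooth on $\Omega$ only, so the additive splitting must be done after restricting to $\Omega$, and one must check that the three terms $d'd''(g \wedge \delta_C)$, $c_1(D,g) \wedge \delta_C$ and $\delta_{D \cdot C}$ each restrict well to $\Omega$ and that the equation glues. This is where I would invoke that $c_1(D,g)$ is defined precisely by gluing the local pieces $d'd''h$ (Definition \ref{first Chern delta form}), that $D \cdot C$ is defined by gluing the local corner loci $\phi_i \cdot C$ (Construction \ref{intersection product with tropical Cartier divisor}), and that the formal $\delta$-currents $d'd''(g \wedge \delta_C)$ form a sheaf. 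A partition of unity subordinate to a cover by such neighbourhoods $\Omega$ (available by \cite[Proposition 3.2.12]{burgos-gubler-jell-kuennemann1}) then finishes the argument. I would also remark, as in the text following Theorem \ref{tropical PL-formula for piecewise smooth functions}, that since $g \wedge \delta_C$ is a $\delta$-form (it is piecewise smooth of bidegree zero wedged with the closed $\delta$-form $\delta_C$ when $C$ is constant towards the boundary, and in general a polyhedral current whose derivatives are polyhedral), the identity may be read either at the level of currents or of formal $\delta$-currents, the two being compatible by Corollary \ref{derivatives of delta-forms and delta-currents}.
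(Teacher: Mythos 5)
Your overall skeleton is the paper's: localize at $x$, write $g = m_\sigma + h$ on a neighbourhood $\Omega$ with $h$ piecewise smooth, handle the linear summand by Theorem~\ref{PL-formula for piecewise affine} (so $d'd''(m_\sigma\wedge\delta_C) = -\delta_{m_\sigma\cdot C} = -\delta_{D\cdot C}$ on $\Omega$), and add. The genuine gap is in your treatment of the piecewise smooth summand $h$. You invoke Theorem~\ref{tropical PL-formula for piecewise smooth functions}, whose right-hand side is $(\dpa\dpb h)\wedge\delta_C - \delta_{h\cdot C}$, and then assert that the corner locus $\delta_{h\cdot C}$ ``coincides with $\delta_{D\cdot C}$ since $D=0$''. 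That is false: in this summand $D\cdot C=0$, but the corner locus of $h$ along $C$ is in general nonzero (take $h$ piecewise linear with a break along a maximal face of $C$). Your closing hedge about arranging the splitting so that $D\cdot C$ is carried by the $m_\sigma$ summand does not repair this, because the problematic term is $\delta_{h\cdot C}$, not $\delta_{D\cdot C}$. What your route actually requires, and does not supply, is the identity $c_1(0,h)\wedge\delta_C = (d'd''h)\wedge\delta_C = (\dpa\dpb h)\wedge\delta_C - \delta_{h\cdot C}$, i.e.\ a compatibility between Mihatsch's product of the $(1,1)$ $\delta$-form $d'd''h$ (whose polyhedral, tridegree-$(0,0,1)$ part already encodes the corners of $h$) with $\delta_C$ and the explicit residue formula. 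Note also that Theorem~\ref{tropical PL-formula for piecewise smooth functions} is an identity on $N_\R$, whereas the claim is one of formal $\delta$-currents on $N_\Sigma$; reducing to $N_\R$ needs the support argument of Lemma~\ref{support lemma for top-delta-forms}.

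The paper closes exactly this gap by a shorter argument that never sees the corner locus: since $\delta_C$ is $d'$- and $d''$-closed (Corollary~\ref{delta-c-is-closed}) and $h$ is a genuine $\delta$-form of bidegree $(0,0)$ on $\Omega$, the Leibniz rule gives $d'd''(h\wedge\delta_C) = (d'd''h)\wedge\delta_C = c_1(D,g)\wedge\delta_C$ directly, all corner contributions of $h$ being packaged inside the current derivative $d'd''h = c_1(D,g)$ of Definition~\ref{first Chern delta form}. Replace your appeal to Theorem~\ref{tropical PL-formula for piecewise smooth functions} by this closedness-plus-Leibniz step; the rest of your argument (the $m_\sigma$ summand via Theorem~\ref{PL-formula for piecewise affine} with $m_\sigma\cdot C = D\cdot C$ on $\Omega$, and locality, where a partition of unity is not even needed since you are verifying an identity of sheaf sections locally) then matches the paper's proof.
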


\begin{proof}
We can check this identity of formal $\delta$-currents locally at $x \in N_\Sigma$, say of sedentarity $\sigma \in \Sigma$. 
We have seen in Definition \ref{first Chern delta form} that there is an open neighbourhood $\Omega$ of $x$ in $N_\Sigma$ such that $D$ is given on $\Omega$ by $m_\sigma \in M$ and there is a piecewise smooth function $h\colon U_\sigma \to \R$ such that $g=m_\sigma + h$ on $\Omega$. 
By definition, we have $d'd''h=c_1(D,g)$ on $\Omega$. 
Since $\delta_C$ is $d'$- and $d''$-closed {by Corollary \ref{delta-c-is-closed}}, we conclude that 
\[
d'd''(h \wedge \delta_C) = c_1(D,g)\wedge \delta_C
\]
on $\Omega$. 
On the other hand, Theorem \ref{PL-formula for piecewise affine} shows that 
\[
d'd''(m_\sigma \wedge \delta_C)= -\delta_{D \cdot C}
\]
on $\Omega$. 
Using $g=m_\sigma + h$ on $\Omega$, we deduce the claim.
\end{proof}

\section{Arakelov theory on tropical toric varieties} \label{sec:a tropical star-product}

In this section, we denote by $\Sigma$ a rational polyhedral fan in
$N_\R$ for a free $\Z$-module $N$ of finite rank $n$ with dual
$M=\mathrm{Hom}_\Z(N,\Z)$.  
We will introduce a star-product with Green functions of piecewise
smooth type on $N_\Sigma$ similarly as the one on the complex toric
variety $X_{\Sigma}(\C)$ in Arakelov theory.

\subsection{Definition of the tropical $*$-product} \label{subsection: definition of tropical star-product}

Recall that $\TZ^p(N_\Sigma)$ denotes the space of classical tropical
cycles of codimension $p$ on $N_\Sigma$. 
Recall also that classical tropical cycles have constant weights of
any sedentarity and that there is no condition of being constant
towards the boundary.  
We denote by $E(N_\Sigma)$ the space of formal $\delta$-currents on $N_\Sigma$.

\begin{definition}\label{define-green-delta-forms}
Given a pair $(C,T)\in \TZ^p(N_\Sigma) \times E^{p-1,p-1}(N_\Sigma)$, we define
\[
\omega(C,T)\coloneqq d'd''T+\delta_C\in E^{p,p}(N_\Sigma).
\]
If the formal $\delta$-current $\omega(C,T)$ is a $\delta$-form, then we call \emph{$T$ a Green $\delta$-current for $C$}.
\end{definition}

\begin{rem}
Recall that, by Corollary \ref{derivatives of delta-forms and delta-currents}, the map from $\delta $-forms to $\delta$-currents is injective. 
Therefore, if the $\delta$-current $\omega(C,T)$ can be represented by a  $\delta$-form, this $\delta $-form is unique and we still denote it as $\omega(C,T)$. 
\end{rem}

\begin{rem} \label{closedness of tropical cycles}
We have seen in Corollary \ref{delta-c-is-closed} that for $C\in \TZ^p(N_\Sigma)$ the $\delta$-current $\delta_{C}$ is $d'$- and $d''$-closed.
Hence the same is true for the $\delta$-current $\omega(C,T)$ for any $T\in E^{p-1,p-1}(N_\Sigma)$.
If $T$ a Green $\delta$-current for $C$, then we see in particular that the $\delta$-form $\omega(C,T)$ is  $d'$- and $d''$-closed.
\end{rem}

\begin{ex}\label{green-delta-divisor}
If $D$ is a tropical {toric} Cartier divisor on $N_\Sigma$ and $g$ is a Green function of piecewise smooth type for $D$, then $\omega(D,g)=c_1(D,g)$ is a $\delta$-form by the tropical Poincar\'e--Lelong formula in Theorem  \ref{tropical PL equation for Green functions} and $g$ is a Green $\delta$-current for the \emph{tropical Weil divisor $\cyc(D)\coloneqq D\cdot N_\Sigma$ associated to $D$.} 
\end{ex}

\begin{definition} \label{proper intersection}
Let $P$ be a polyhedral subset of $N_\Sigma$ given as a finite union of polyhedra of dimension $d$. 
Let $D_0, \dots, D_q$ be tropical toric Cartier divisors on $N_\Sigma$. 
Recall from \ref{unbounded locus for tropical Cartier divisor} that $S(D_i)$ denotes the unbounded locus of $D_i$. 
We say that \emph{$D_0, \dots, D_q$ intersect $P$ properly} if for any $I \subset \{0,\dots,q\}$, we have
\[
\dim \left(\bigcap_{i \in I} S(D_{i})\right) \cap P\leq d- |I|.
\]
If $C$ is a tropical cycle of $N_\Sigma$ of dimension $d$, then we say that \emph{$D_0, \dots, D_q$ intersect $C$ properly}, if $D_0, \dots, D_q$ intersect the polyhedral set $|C|$ properly.
\end{definition}

\begin{rem} \label{single proper intersection and wedge product}
Let $D$ be a tropical toric Cartier divisor intersecting {a classical tropical cycle} $C \in \TZ_d(N_\Sigma)$ properly. 
This means that no maximal face of $C$ is contained in {the unbounded locus} $S(D)$. 
For a Green function $g$ for $D$ of piecewise smooth type, we define {the product} $g \wedge \delta_C \in E_{d,d}(N_\Sigma)$ as follows:

By linearity, we may assume that $C$ has sedentarity $\sigma$. 
Using that $D$ intersects $C$ properly, we deduce that $N(\sigma)$ is not contained in $S(D)$. 
By \ref{pull-back of tropical Green functions}, the restriction
$D_\sigma \coloneqq D|_{\overline{N(\sigma)}}$ is a well-defined
tropical toric Cartier divisor and $g_\sigma \coloneqq
g|_{\overline{N(\sigma)}}$ is a Green function for $D_\sigma$ of
piecewise smooth type on the tropical toric variety
$\overline{N(\sigma)}$.  
In particular, the function $g_\sigma$ is piecewise smooth on $N(\sigma)$ and $g_\sigma \wedge \delta_C \in E_{d,d}(\overline{N(\sigma)})$ is defined in Construction \ref{wedge product of function with tropical cycle}. 
Finally, we define $g \wedge \delta_C \in E_{d,d}(N_\Sigma)$ as the push-forward with respect to the map $\overline{N(\sigma)} \to N_\Sigma$.
\end{rem}

Observe that we do not assume in the next definition that $C$ is of
sedentarity $(0)$.
Furthermore our construction of the $*$-products
requires the use of formal $\delta$-currents, not just 
currents, as we multiply them by $\delta$-forms.

\begin{definition}\label{star product}
For a piecewise smooth Green function $g$ for $D$ on $N_\Sigma$ and a pair $(C,T)\in \TZ^p(N_\Sigma) \times E^{p-1,p-1}(N_\Sigma)$ such that $D$ intersects $C$ properly, let
\[
(D,g) * (C,T) \coloneqq (D \cdot C, g \wedge \delta_C+\omega(D,g)\wedge T)\in \TZ^{p+1}(N_\Sigma) \times E^{p,p}(N_\Sigma).
\]
Here the term $g\wedge\delta_C$ was defined above and the term $\omega(D,g)\wedge T$ is well defined because $\omega(D,g)$ is a $\delta$-form by Example \ref{green-delta-divisor}. 
\end{definition}

\begin{prop} \label{multiplicativity of omega}
In the situation of Definition \ref{star product}, we have
\[
\omega\bigl((D,g) * (C,T)\bigr) =\omega(D,g)\wedge \omega(C,T).
\]
\end{prop}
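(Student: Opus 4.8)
The plan is to unwind both sides using the definitions of $\omega$ and of the $*$-product, and then to match terms using the tropical Poincaré--Lelong formula (Theorem \ref{tropical PL equation for Green functions}) together with the fact that $\delta_C$ is closed (Corollary \ref{delta-c-is-closed}) and the multiplicativity of $\omega$ is really just the Leibniz rule plus bilinearity of $\wedge$ on $\delta$-forms and formal $\delta$-currents. Concretely, by Definition \ref{star product} we have $(D,g)*(C,T) = (D\cdot C,\ g\wedge\delta_C + \omega(D,g)\wedge T)$, so by Definition \ref{define-green-delta-forms},
\[
\omega\bigl((D,g)*(C,T)\bigr) = d'd''\bigl(g\wedge\delta_C + \omega(D,g)\wedge T\bigr) + \delta_{D\cdot C}.
\]
First I would compute $d'd''(g\wedge\delta_C)$: this is exactly the content of Theorem \ref{tropical PL equation for Green functions}, which gives $d'd''(g\wedge\delta_C) = c_1(D,g)\wedge\delta_C - \delta_{D\cdot C} = \omega(D,g)\wedge\delta_C - \delta_{D\cdot C}$, using that $\omega(D,g)=c_1(D,g)$ by Example \ref{green-delta-divisor}. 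Note one has to first reduce to the case that $C$ has sedentarity $(0)$ by linearity and push-forward, exactly as in Remark \ref{single proper intersection and wedge product}, so that Theorem \ref{tropical PL equation for Green functions} applies; the push-forward is compatible with $d'd''$, so this reduction is harmless.

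Next I would compute $d'd''\bigl(\omega(D,g)\wedge T\bigr)$. Since $\omega(D,g)$ is a $\delta$-form which is $d'$- and $d''$-closed (Remark \ref{closedness of tropical cycles}, using that $c_1(D,g)$ is closed — this follows since $d'd''c_1(D,g) = d'd''d'd''h$ vanishes locally, or directly from Remark \ref{closedness of tropical cycles} applied to a suitable pair), the Leibniz rule for the action of $\delta$-forms on formal $\delta$-currents gives $d'd''\bigl(\omega(D,g)\wedge T\bigr) = \omega(D,g)\wedge d'd''T$. Here one must be slightly careful about signs: the bidegree of $\omega(D,g)$ is $(1,1)$, which is even in total degree, so no sign appears when commuting $d'$ and $d''$ past it. Adding the two contributions,
\[
\omega\bigl((D,g)*(C,T)\bigr) = \bigl(\omega(D,g)\wedge\delta_C - \delta_{D\cdot C}\bigr) + \omega(D,g)\wedge d'd''T + \delta_{D\cdot C} = \omega(D,g)\wedge\bigl(d'd''T + \delta_C\bigr),
\]
and the right-hand side is precisely $\omega(D,g)\wedge\omega(C,T)$ by Definition \ref{define-green-delta-forms}.

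The main obstacle is not conceptual but bookkeeping: one must make sure that the Leibniz rule $d'd''(\beta\wedge S) = (d'd''\beta)\wedge S \pm d'\beta\wedge d''S \pm d''\beta\wedge d'S + \beta\wedge d'd''S$ for a $\delta$-form $\beta$ acting on a formal $\delta$-current $S$ is valid in the formal $\delta$-current formalism of \S\ref{formal-delta-currents}, and that the two cross terms $d'\omega(D,g)\wedge d''T$ and $d''\omega(D,g)\wedge d'T$ genuinely vanish because $\omega(D,g)$ is closed — this uses Remark \ref{closedness of tropical cycles}. A second point requiring care is that $\omega(D,g)\wedge T$ is well-defined as a formal $\delta$-current only because $\omega(D,g)$ is a $\delta$-form (not merely a formal current), which is again Example \ref{green-delta-divisor}; this is already invoked in Definition \ref{star product}, so it costs nothing here. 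Finally, the reduction of the $g\wedge\delta_C$ term to sedentarity $(0)$ via push-forward should be spelled out, but it is routine given Remark \ref{single proper intersection and wedge product} and the compatibility of push-forward with the differentials.
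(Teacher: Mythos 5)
Your proposal is correct and follows essentially the same route as the paper: expand $\omega\bigl((D,g)*(C,T)\bigr)$ by definition, use closedness of $\omega(D,g)$ (Remark \ref{closedness of tropical cycles}) with the Leibniz rule to get $d'd''(\omega(D,g)\wedge T)=\omega(D,g)\wedge d'd''T$, apply the tropical Poincar\'e--Lelong formula (Theorem \ref{tropical PL equation for Green functions}) to $d'd''(g\wedge\delta_C)$, and cancel the $\delta_{D\cdot C}$ terms. Your extra remark about reducing the $g\wedge\delta_C$ term to sedentarity $(0)$ via push-forward is a point the paper leaves implicit, but it does not change the argument.
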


\begin{proof}
Using the definitions, we have
\[
\omega((D,g) * (C,T))=d'd''(g \wedge \delta_C+\omega(D,g)\wedge T)+\delta_{D\cdot C}.
\]
Since $\omega(D,g)$ is $d'$- and $d''$-closed by Remark \ref{closedness of tropical cycles}, the Leibniz rule yields
\[
\omega((D,g) * (C,T))=d'd''(g \wedge \delta_C)+\omega(D,g)\wedge d'd''T+\delta_{D\cdot C}.
\]
Using the
{tropical Poincar\'e--Lelong equation for Green functions of piecewise
  smooth type in Theorem \ref{tropical PL equation for Green
    functions}}, we deduce
\[
\omega((D,g) * (C,T))= \omega(D,g)\wedge \delta_C- \delta_{D\cdot C}+\omega(D,g)\wedge d'd''T+\delta_{D\cdot C}.
\]
Finally, using $d'd''T=\omega(C,T)-\delta_C$ yields the claim.
\end{proof}

\subsection{Properties of the tropical $*$-product}  \label{subsection: properties of the tropical star-product} 
We will prove the symmetry of the action induced by the above tropical $*$-product and functoriality.

\begin{prop} \label{symmetry of star-product}
Consider a pair $(C,T)\in \TZ^p(N_\Sigma) \times E^{p-1,p-1}(N_\Sigma)$ and tropical Cartier divisors $D_1,D_2$ on $N_\Sigma$ intersecting the tropical cycle $C$ properly. 
For Green functions $g_1,g_2$ {for} $D_1,D_2$ of
piecewise smooth type, we have
\[
(D_1,g_1)*(D_2,g_2)*(C,T)=(D_2,g_2)*(D_1,g_1)*(C,T) \quad \text{\rm modulo} \quad \mathrm{Im}(d'+d'')
\]	
where 
\[
\mathrm{Im}(d'+d'')=(0,d'E^{p,p+1}(N_\Sigma)+d''E^{p+1,p}(N_\Sigma))\subset \TZ^{p+2}(N_\Sigma) \times E^{p+1,p+1}(N_\Sigma).
\]
\end{prop}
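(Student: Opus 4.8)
The plan is to unwind both threefold $*$-products using Definition \ref{star product} and compare the two resulting pairs in $\TZ^{p+2}(N_\Sigma)\times E^{p+1,p+1}(N_\Sigma)$ componentwise. The first (tropical cycle) components agree on the nose: by Proposition \ref{symmetry of intersection product with tropical Cartier divisors} we have $D_1\cdot(D_2\cdot C)=D_2\cdot(D_1\cdot C)$, using that all the intersection products in sight are well-defined because $D_1,D_2$ intersect $C$ properly (so in particular $D_2$ intersects $D_1\cdot C$ properly and vice versa — this needs a short remark from the proper-intersection dimension count in Definition \ref{proper intersection}). So the entire content is in the second component, and the claim reduces to showing that the difference of the two $\delta$-current components lies in $d'E^{p,p+1}(N_\Sigma)+d''E^{p+1,p}(N_\Sigma)$.

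Concretely, writing $\omega_i\coloneqq\omega(D_i,g_i)=c_1(D_i,g_i)$ for $i=1,2$ (these are $\delta$-forms by Example \ref{green-delta-divisor}), the second component of $(D_1,g_1)*(D_2,g_2)*(C,T)$ expands to
\[
g_1\wedge\delta_{D_2\cdot C}+\omega_1\wedge\bigl(g_2\wedge\delta_C+\omega_2\wedge T\bigr).
\]
Swapping the roles of $1$ and $2$ gives the other expression. The term $\omega_1\wedge\omega_2\wedge T$ is symmetric in $1,2$ since the $\wedge$-product of $\delta$-forms is (graded-)commutative and $\omega_1,\omega_2$ are of even bidegree $(1,1)$, so it cancels in the difference. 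Hence we must show
\[
g_1\wedge\delta_{D_2\cdot C}+\omega_1\wedge g_2\wedge\delta_C
\;-\;g_2\wedge\delta_{D_1\cdot C}-\omega_2\wedge g_1\wedge\delta_C
\;\in\; d'E^{p,p+1}(N_\Sigma)+d''E^{p+1,p}(N_\Sigma).
\]
The strategy is to produce an explicit potential. The natural candidate is $g_1\wedge g_2\wedge\delta_C$ (interpreted as a formal $\delta$-current via Construction \ref{wedge product of function with tropical cycle}, after reducing to fixed sedentarity and restricting to the corresponding orbit closure as in Remark \ref{single proper intersection and wedge product}), together with correction terms of the form $g_i\,d'd''$ or $d'g_i\wedge d''g_j$. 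I would apply the tropical Poincaré--Lelong equation for Green functions, Theorem \ref{tropical PL equation for Green functions}, in the semi-relative form: $d'd''(g_2\wedge\delta_C)=c_1(D_2,g_2)\wedge\delta_C-\delta_{D_2\cdot C}=\omega_2\wedge\delta_C-\delta_{D_2\cdot C}$, and similarly with the indices swapped. Then a Leibniz-rule computation for $d'd''(g_1\wedge g_2\wedge\delta_C)$ — using that $\delta_C$ is $d'$- and $d''$-closed (Corollary \ref{delta-c-is-closed}), that $g_1,g_2$ are piecewise smooth functions so the residue terms in the polyhedral-derivative description vanish on the relevant part, and that $d'g_i$, $d''g_i$ are $\delta$-forms — should yield precisely
\[
d'd''\bigl(g_1\wedge g_2\wedge\delta_C\bigr)
= \omega_1\wedge g_2\wedge\delta_C - g_2\wedge\delta_{D_1\cdot C}
- \bigl(\omega_2\wedge g_1\wedge\delta_C - g_1\wedge\delta_{D_2\cdot C}\bigr)
+ R,
\]
where $R$ collects the mixed first-derivative terms; one checks $R$ is itself symmetric (hence cancels) or is separately a sum of a $d'$-exact and a $d''$-exact term. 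Writing $d'd''=d'd''$ as $d'(d''(\cdot))=-d''(d'(\cdot))$ up to sign then exhibits the left-hand side as an element of $d'E^{p,p+1}(N_\Sigma)+d''E^{p+1,p}(N_\Sigma)$, which is exactly what is needed.

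The main obstacle is the bookkeeping in the Leibniz expansion of $d'd''(g_1\wedge g_2\wedge\delta_C)$: one has to keep track of signs coming from the bidegrees (each $g_i$ has bidegree $(0,0)$, each $d'g_i$ bidegree $(1,0)$, $\delta_C$ bidegree $(p,p)$), verify that the mixed term $d'g_1\wedge d''g_2\wedge\delta_C + d'g_2\wedge d''g_1\wedge\delta_C$ that appears is genuinely symmetric under $1\leftrightarrow2$ (it is, by commutativity of the $\delta$-form product together with the sign bookkeeping), and confirm that no boundary/residue contributions survive — here is where Construction \ref{wedge product of function with tropical cycle} and the fact that $g_i\,\omega\wedge\delta_C$ has compact support in $N_\R$ when $\omega$ is compactly supported and constant towards the boundary (Lemma \ref{support lemma for top-delta-forms}) are used, exactly as in the proof of Theorem \ref{PL-formula for piecewise affine}. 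A secondary point requiring care is the reduction to fixed sedentarity: since $C$ need not be of sedentarity $(0)$, one splits $C=\sum_\sigma C_\sigma$, restricts $D_1,D_2,g_1,g_2$ to $\overline{N(\sigma)}$ (legitimate by proper intersection, as in Remark \ref{single proper intersection and wedge product}), proves the identity there, and pushes forward along $\overline{N(\sigma)}\to N_\Sigma$, using that push-forward commutes with $d'$ and $d''$ so that $d'$- and $d''$-exactness is preserved.
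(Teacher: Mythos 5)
Your treatment of the first components (via Proposition \ref{symmetry of intersection product with tropical Cartier divisors}) and the cancellation of $\omega_1\wedge\omega_2\wedge T$ is fine, but the heart of your argument does not work. The potential $g_1\wedge g_2\wedge\delta_C$ is symmetric under the swap $1\leftrightarrow 2$, hence so is $d'd''(g_1\wedge g_2\wedge\delta_C)$, whereas the quantity you need to exhibit as exact, namely $g_1\wedge\delta_{D_2\cdot C}+\omega_1\wedge g_2\wedge\delta_C-g_2\wedge\delta_{D_1\cdot C}-\omega_2\wedge g_1\wedge\delta_C$, is antisymmetric under that swap. So your displayed Leibniz expansion is structurally impossible: if the remainder $R$ were symmetric, comparing antisymmetric parts would force the two star-products to be literally equal, which is not the assertion (they agree only modulo $\mathrm{Im}(d'+d'')$); and if $R$ is itself required to be exact, you have assumed exactly what is to be proved. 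A correct expansion of the symmetric potential only produces the symmetric combination (both terms $g_i\wedge\delta_{D_j\cdot C}$ with the same sign, plus symmetric cross terms) and carries no information about the difference. The useful potentials are the antisymmetric-type currents $g_1\,d'g_2\wedge\delta_C$ and $g_2\,d''g_1\wedge\delta_C$: the paper proves the precise Weil reciprocity law that the difference equals $d''(g_1 d'g_2\wedge\delta_C)+d'(g_2 d''g_1\wedge\delta_C)$.

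You are also missing the step that makes any such Leibniz computation legitimate. Both $g_1$ and $g_2$ are singular along their unbounded loci, and terms such as $g_1\wedge\delta_{D_2\cdot C}$, $g_1 d'g_2\wedge\delta_C$, or the corner contributions in the polyhedral derivatives are only controllable where at most one of the two is singular. The paper handles this by testing against a compactly supported $\delta$-form $\eta$ of bidegree $(n-p-1,n-p-1)$ and observing that, by the proper intersection hypothesis together with the fact that $\eta$ is constant towards the boundary, $\eta$ vanishes in a neighbourhood of $S(D_1)\cap S(D_2)\cap|C|$; one may therefore assume locally that $g_1$ is an honest piecewise smooth function and $D_1=0$. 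After this reduction $d'g_1$, $d''g_1$, $d'd''g_1$ are genuine $\delta$-forms, multiplication of formal $\delta$-currents by them is defined, and the short computation with the Leibniz rule and Theorem \ref{tropical PL equation for Green functions} applied to $g_2$ goes through. In your proposal the proper intersection hypothesis is used only to make the cycle-level products well-defined, and the vague appeal to ``no boundary/residue contributions survive'' does not substitute for this reduction: the argument in Theorem \ref{PL-formula for piecewise affine} is tailored to a single function with linear growth and does not cover products of two singular Green functions, whose joint singular locus is precisely what the dimension count in Definition \ref{proper intersection} is there to neutralize.
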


\begin{proof}
More precisely, following Moriwaki's terminology \cite{moriwaki-book}, we will prove Weil's reciprocity law
\begin{equation} \label{Weil reciprocity law}
\begin{split}
(D_1,g_1)*(D_2,g_2)*(C,T)&-(D_2,g_2)*(D_1,g_1)*(C,T)\\&=
\bigl( 0, d''(g_1 d'g_2 \wedge \delta_C)+d'(g_2 d'' g_1 \wedge \delta_C)\bigr).
\end{split}
\end{equation}
By Proposition \ref{symmetry of intersection product with tropical Cartier divisors}, we have $D_{1}\cdot D_2\cdot C-D_{2}\cdot D_1\cdot C=0$. 
Therefore, there is a $\delta $-current $S$ such that 
\begin{align*}
(D_1,g_1)*(D_2,g_2)*(C,T)-(D_2,g_2)*(D_1,g_1)*(C,T)
=( 0, S).
\end{align*}
and we have to prove the identity of $\delta$-currents
\begin{displaymath}
S=d''(g_1 d'g_2 \wedge \delta_C)+d'(g_2 d'' g_1 \wedge \delta_C)
\end{displaymath}
The claim is local and we have to test against a compactly supported $\delta$-form $\eta$ of bidegree $(n-p-1,n-p-1)$. 
By the proper intersection hypothesis and using that the $\delta$-form is constant towards the boundary, the form $\eta$ is zero in a neighbourhood of $S(D_1) \cap S(D_2) \cap |C|$ and there the claim is clear. 
So we may assume that $g_1$ is a piecewise smooth function and that $D_1=0$. 
Then we have
  \begin{align*}
    S&=(g_1 \wedge \delta_{D_2 \cdot C}+\omega_1\wedge g_2 \wedge
       \delta_C+\omega_1 \wedge \omega_2 \wedge T)-(\omega_2\wedge g_1
       \wedge \delta_C+\omega_2 \wedge \omega_1 \wedge T) \\
    &=g_1 \wedge \delta_{D_2 \cdot C}+\omega_1\wedge g_2 \wedge
       \delta_C-\omega_2\wedge g_1 \wedge \delta_C
  \end{align*}
We use now that $g_1$ is a piecewise smooth function and hence
\[
\omega_1 \wedge g_2 \wedge \delta_C=d'd''g_1 \wedge g_2 \wedge \delta_C=d'(d''g_1 \wedge g_2 \wedge \delta_C) +d''g_1\wedge d'g_2 \wedge \delta_C
\]
is true as an identity of formal $\delta$-currents. 
The {tropical Poincar\'e--Lelong equation for Green functions of piecewise smooth type in Theorem \ref{tropical PL equation for Green functions}}
for $g_2$ and multiplication with the piecewise smooth $g_1$ yield
\[
\omega_2\wedge g_1 \wedge \delta_C=g_1 d'd''g_2 \wedge \delta_C+g_1 \wedge \delta_{D_2 \cdot C}.
\]
Again the Leibniz formula for $\delta$-forms shows that
\[
g_1d'd''g_2\wedge \delta_C= -g_1d''d'g_2\wedge \delta_C=-d''(g_1d'g_2)\wedge \delta_C+d''g_1 \wedge d'g_2\wedge \delta_C
\]
proving \eqref{Weil reciprocity law} by using Remark \ref{closedness of tropical cycles}.
\end{proof}

We study now functoriality of the $*$-product.

\begin{definition}(Direct image) \label{assumptions for functoriality of star product}
Let $n'$ denote the finite rank of the free abelian group $N'$ and let $L\colon N' \to N$ be a homomorphism of groups.
Let $F\colon N_{\Sigma'}'\to N_\Sigma$ be a proper $L$-equivariant
morphism of tropical toric varieties. 
For a cone $\sigma' \in \Sigma'$, there is a unique cone $\sigma \in \Sigma$ with $F(N'(\sigma'))\subset N(\sigma)$.
Let $C' \in \TZ^{p' }(N_{\Sigma'}')$  of sedentarity $\sigma'$ and $T' \in E^{p'-1,p'-1}(N_{\Sigma'}')$. 
Then we note that $F_*(C')$ is a tropical cycle in $N_\Sigma$ of sedentarity $\sigma$.
Since $F_*(C')$ has dimension $n'-p'$, this  tropical cycle has codimension $n-n'+p$. 
For $T' \in E^{p-1,p-1}(N_{\Sigma'}')$, we define  
\[
F_*(C',T') \coloneqq  (F_*(C'),F_*(T')) \in E^{n-n'+p-1,n-n'+p-1}(N_{\Sigma}).
\]
\end{definition}

\begin{prop} \label{projection formula for star product}
Under the assumptions in Definition \ref{assumptions for functoriality of star product}, we consider a tropical toric Cartier divisor $D$ on $N_\Sigma$ which intersects $F(|C'|)$ properly.  For a Green function $g$ for $D$ of piecewise smooth type, we have the projection formula
	\[
	F_*\bigl((F^*D,F^*g) * (C',T')\bigr)=(D,g)*F_*(C',T').
	\]
\end{prop}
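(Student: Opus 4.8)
The plan is to unfold both sides of the asserted identity using the definition of the $*$-product in Definition \ref{star product}, and then match the four resulting components by invoking the projection formulas already established in this paper. First I would record the two sides explicitly. By Definition \ref{star product},
\[
(F^*D,F^*g)*(C',T')=\bigl(F^*D\cdot C',\ F^*g\wedge\delta_{C'}+\omega(F^*D,F^*g)\wedge T'\bigr),
\]
and applying $F_*$ (in the sense of Definition \ref{assumptions for functoriality of star product}) gives the pair
\[
\bigl(F_*(F^*D\cdot C'),\ F_*(F^*g\wedge\delta_{C'})+F_*(\omega(F^*D,F^*g)\wedge T')\bigr).
\]
On the other hand,
\[
(D,g)*F_*(C',T')=(D,g)*(F_*C',F_*T')=\bigl(D\cdot F_*C',\ g\wedge\delta_{F_*C'}+\omega(D,g)\wedge F_*T'\bigr).
\]
So the claim splits into a ``cycle part'' and a ``current part''.

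For the cycle part, $F_*(F^*D\cdot C')=D\cdot F_*C'$ is exactly the projection formula for the intersection product with tropical Cartier divisors, Proposition \ref{projection formula of intersection product}; one only needs to check that the hypothesis $F(N'_\R)\not\subset S(D)$ (needed for $F^*D$ to be defined) follows from the properness assumption that $D$ intersects $F(|C'|)$ properly, which forces at least one maximal face of $C'$ to map outside $S(D)$ and hence $F(N'_\R)\not\subset S(D)$ since $N'_\R$ is irreducible. For the current part, I would treat the two summands separately. The term $F_*(F^*g\wedge\delta_{C'})=g\wedge\delta_{F_*C'}$ is the projection formula of Proposition \ref{projection formula for above product}, after reducing to a fixed sedentarity $\sigma'$ and passing to the orbit closures $\overline{N'(\sigma')}\to\overline{N(\sigma)}$ as in Remark \ref{single proper intersection and wedge product}; here one should note that $F$ restricted to $\overline{N'(\sigma')}$ is again a proper equivariant morphism, so the cited projection formula applies. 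The remaining term requires $\omega(F^*D,F^*g)=F^*\omega(D,g)$, which by Example \ref{green-delta-divisor} is the identity $c_1(F^*D,F^*g)=F^*c_1(D,g)$ of Remark \ref{properties of the first Chern delta form}; then $F_*\bigl(F^*\omega(D,g)\wedge T'\bigr)=\omega(D,g)\wedge F_*(T')$ is the projection formula for the action of $\delta$-forms on formal $\delta$-currents, which holds by duality/adjointness of $F^*$ and $F_*$ on $\delta$-forms and formal $\delta$-currents (compatibility of $F^*$ with products, Proposition \ref{pull-back of delta-forms}, together with the definition of $F_*$ on formal $\delta$-currents as the adjoint of $F^*$).

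The main obstacle I expect is bookkeeping around sedentarity and the reduction to orbit closures: the $*$-product is defined by decomposing $C'$ into its sedentarity components, restricting $D$ and $g$ to the corresponding tropical toric subvariety $\overline{N'(\sigma')}$, and then pushing forward along $\overline{N'(\sigma')}\to N'_{\Sigma'}$; one must verify that $F$ interacts correctly with these restrictions, i.e. that $F^*D|_{\overline{N'(\sigma')}}=(D|_{\overline{N(\sigma)}})$ pulled back along the induced equivariant map $\overline{N'(\sigma')}\to\overline{N(\sigma)}$, and likewise for $g$, and that all the projection formulas cited above are valid for these (generally non-complete) orbit closures — which is fine since completeness is never used in Meyer's and Mihatsch's projection formulas, as already remarked in the proofs of Propositions \ref{projection formula of intersection product} and \ref{projection formula for above product}. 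Once this compatibility is in place, the proof is a direct termwise comparison, so after the reduction I would simply assemble the four identities above and conclude by linearity over the sedentarity decomposition of $C'$.
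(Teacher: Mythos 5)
Your proposal is correct and follows essentially the same route as the paper's proof: unfold both sides via Definition \ref{star product}, handle the cycle part with Proposition \ref{projection formula of intersection product}, the term $F_*(F^*g\wedge\delta_{C'})$ with Proposition \ref{projection formula for above product}, and the term $F_*(\omega(F^*D,F^*g)\wedge T')$ via $c_1(F^*D,F^*g)=F^*c_1(D,g)$ and the adjunction defining $F_*$ on formal $\delta$-currents. Your extra remarks on well-definedness of $F^*D$ and the sedentarity bookkeeping are fine and only make explicit what the paper leaves implicit.
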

\begin{proof}
	It follows from the assumptions and Example \ref{pull-back of tropical toric Cartier divisor} that the pull-back $F^*(D)$ is a well-defined tropical toric Cartier divisor on $N_{\Sigma'}'$.
	By Proposition \ref{projection formula of intersection product}, we get
	$$F_*(F^*D\cdot C')=D \cdot F_*C'$$
	as an identity of tropical cycles on $N_\Sigma$. By definition, we have 
	$$ (F^*D,F^*g) * (C',T') = (F^*D \cdot C', F^*g \wedge \delta_{C'}+\omega(F^*D,F^*g)\wedge T').$$
We deduce from the projection formula in Proposition \ref{projection formula for above product} that we have
\[
F_*(F^*g \wedge \delta_{C'})= g \wedge \delta_{F_*C'}.
\]
We have seen in Proposition \ref{properties of the first Chern delta form} that $\omega(F^*D,F^*g)=F^*(\omega(D,g))$ and hence 
\[
F_*(\omega(F^*D,F^*g) \wedge T') = \omega(D,g) \wedge F_*T'.
\]
Using the above identities, we get the claimed projection formula.
\end{proof}

\section{Non-archimedean Arakelov theory} \label{section: Non-archimedean Arkelov theory}

In this section, we study $\delta$-forms on the Berkovich analytification $\Xan$ of a $d$-dimensional algebraic variety $X$ over a non-archimedean field $K$. 
We use $\delta$-forms to define a $*$-product with Green functions for Cartier divisors of piecewise smooth type extending the approach in \cite[Section 11]{gubler-kuenne2017} from so-called $\delta$-metrics to all piecewise smooth metrics. 
This is possible by Mihatsch's improved approach to $\delta$-forms \cite{mihatsch2021a}.

Let $K$ be a non-archimedean field.
Let $\Xan$ denote the Berkovich analytification of a $d$-dimensional algebraic variety $X$ over $K$. 
Let $\Sigma$ be a fan in $N_\R$ for some free abelian group $N$ of finite rank $n$.
If $X_\Sigma$ is the toric variety over $K$ associated with $\Sigma$ and $\varphi\colon X\to X_\Sigma$ is a morphism, then we consider the so-called \emph{extended tropicalization}
\begin{equation}
\varphi_\trop\colon X^\an\stackrel{\varphi^\an}{\longrightarrow}
X_\Sigma^\an\stackrel{\trop}{\longrightarrow}N_\Sigma
\end{equation} 
where $\trop\colon X_\Sigma^\an\rightarrow N_\Sigma$ is the extended tropicalization map \cite[Remark 3.1.3]{burgos-gubler-jell-kuennemann1}.
Jell \cite{jell-thesis} used extended tropicalizations to represent
differential forms on $\Xan$ by differential forms on the
tropicalization $N_\Sigma$.  
Similarly, we will represent $\delta$-forms on $\Xan$ by $\delta$-forms on $N_\Sigma$. 
In this way, we can use tropical results from Section \ref{sec:a tropical star-product} for non-archimedean Arakelov theory on the non-archimedean analyitification $X^\an$.

\subsection{Mihatsch's $\delta$-forms} \label{subsection: Mihatsch's delta-forms}
In \cite{gubler-kuenne2017}, $\delta$-forms were introduced on $\Xan$
to construct a $*$-product with Green
functions for Cartier divisors associated to so-called
$\delta$-metrics which are piecewise smooth metrics such that the
first Chern current is a $\delta$-form.  
This includes all model metrics and all smooth metrics. 
In \cite[Section 4]{gubler-kuenne2017}, $\delta$-forms on $\Xan$ were introduced by using tropicalizations on open subsets of $\Yan$ for all morphisms from $Y\to X$ of $K$-varieties.
Mihatsch \cite{mihatsch2021,mihatsch2021a} generalized this approach,
allowing more $\delta$-forms on the tropical side as described in
Subsection \ref{delta-forms-on-N_R} including all piecewise smooth
forms and a more intrinsic approach on the analytic side using
Ducros's skeletons in $\Xan$ instead of morphisms $Y \to X$ to
identify $\delta$-forms obtained from tropicalizations.  
We briefly recall Mihatsch's construction as follows.

\begin{cons} \label{Mihatsch's delta forms}
Mihatsch constructs a  sheaf $B$ of $\delta$-forms on $\Xan$ (in fact on any $K$-analytic space of pure dimension $d$ with empty boundary). 
This is a sheaf of bigraded differential $\R$-algebras $B$ with differentials $d',d''$. 
A $\delta$-form $\alpha$ is locally given by a morphism $\varphi\colon V \to (\G_{\rm m}^n)^\an$ on an open subset $V$  and a $\delta$-form $\alphatrop$ on $\R^n$ such that $\alpha=\varphi_{\rm \trop}^*(\alphatrop)$ seen as a polyhedral current on the tropical skeleton $\Sigma(V,\varphi)$. 
Here, we use the \emph{tropicalization of $\varphi$} defined by $\varphitrop \coloneqq \trop \circ \varphi$ and Ducros's tropical skeleton $\Sigma(V,\varphi)$ is explained below. 
The construction of $\delta$-forms is compatible with refinements of moment maps. The trigrading of $\delta$-forms on $\R^n$ induces also a trigrading of the sheaf $B$ on $\Xan$ and 
\[
B^{p,q}(W) = \oplus_{r \in \N} B^{p-r,q-r,r}(W)
\]
for any open subset $W$ of $\Xan$ as in Definition \ref{definition of delta-form}.
There is a pull-back of $\delta$-forms with respect to morphisms of analytic spaces. 
This pull-back respects the bigrading and the trigrading. 
We refer to \cite{mihatsch2021a} for details. 
\end{cons}

\begin{rem} \label{Ducros's skeletons and a warning}
We recall that Ducros \cite[Section 5]{ducros12} introduced skeletons $\Sigma(V,\varphi)\coloneqq \varphi^{-1}(S(\mathbb G_{\rm m}^\an))$ as the preimage of the canonical skeleton $S(\mathbb G_{\rm m}^\an)$ of $\mathbb G_{\rm m}^\an$ given by the weighted Gauss points. 
He showed that $\Sigma(V,\varphi)$ has a canonical piecewise linear structure such that $\varphitrop$ is an injective affine map on the faces of $\Sigma(V,\varphi)$ to $\R^n$. 
The piecewise linear space $\Sigma(V,\varphi)$ has pure dimension $n$ (if non-empty).
Note that $\Sigma(V,\varphi)$ is a \emph{weighted} $n$-dimensional polyhedral space \cite[Section 4]{gubler-jell-rabinoff1} or equivalently there is a canonical calibration of $\Sigma(V,\varphi)$ as used by Mihatsch, see \cite[Section 7]{gubler-forms} for the equivalence between calibrations and weights. 
\end{rem}

\begin{rem} \label{algebraization}
The tropical skeleton $\Sigma(V,\varphi)$ depends only on the values $|\varphi|$ taken on the open subset $V$ of $\Xan$, see the description in terms of graded reductions of the functions $\varphi_1,\dots,\varphi_n$ at $x$ in \cite[(3.4)]{mihatsch2021a}. 
Since locally at $x \in \Xan$ with respect to the Berkovich topology, we can approximate analytic functions by algebraic functions with respect to uniform convergence, we deduce that we may always assume that the morphism $\varphi \colon V \to (\G_{\rm m}^n)^\an$ tropicalizing the $\delta$-form $\alpha$ is given by restriction of an algebraic morphism $U \to \G_{\rm m}^n$ for a Zariski open subset $U$ of $X$ and a possible smaller open neighbourhood $V$ of $x$ in $\Uan$  using the same $\alphatrop \in B(\R^n)$. 
\end{rem}

\begin{definition} 
A \emph{refinement} of a morphism $\varphi\colon V \to (\G_{\rm m}^n)^\an$ is given by an open subset $V'$ of $V$, a morphism $\varphi'\colon V' \to (\G_{\rm m}^{n'})^\an$ and a homomorphism $p\colon  \G_{\rm m}^{n'} \to  \G_{\rm m}^{n}$ such that $\varphi|_{V'} = p^\an \circ \varphi'$. 

Similarly, we say that a piecewise linear map $F' \colon P' \to \R^{n'}$ on a polyhedral subset $P'$ of a polyhedral space $P$ (as in Remark \ref{Ducros's skeletons and a warning}) refines a given piecewise linear map $F \colon P \to \R^n$ if there is a linear map
$p\colon \R^{n'} \to \R^n$ such that $F|_{P'}= p\circ F'$.
\end{definition}

\begin{rem} \label{tropical germ}
If $V$ and $\varphi$ are algebraic, then the Bieri--Groves theorem shows that $\varphitrop(V)$ is a tropical cycle in $\R^n$ of pure dimension at most $d$. 
On the other hand, if $V$ is an arbitrary open subset of $\Xan$ and $\varphi$ is any analytic morphism, then we cannot expect that this still holds. 
However, Ducros \cite[Theorem 3.4]{ducros12} proved that every $x \in V$ has an open neighbourhood $W$ such that $\varphitrop(W)$ is an open subset of a translated tropical fan in $\R^n$ of pure dimension at most $d$. 
\end{rem}

\begin{art} \label{differential forms and currents by Chambert-Loir and Ducros}
There is a bigraded sheaf $A$ of smooth forms on $\Xan$ introduced by Chambert-Loir and Ducros \cite{chambert-loir-ducros}. 
It is a sheaf of bigraded differential $\R$-algebras with differentials $d'$, $d''$
similar to the sheaf of smooth $(p,q)$-forms on complex manifolds. 
Note that $A$ is a subsheaf of $B$, so $\delta$-forms are a natural
generalization of the smooth forms capturing the fact that first Chern
currents of formal metrics associated to models of line bundles are
not necessarily given by smooth $(1,1)$-forms, but by $\delta$-forms
\cite{gubler-kuenne2017}.  

The sheaf of \emph{currents $D$} is the topological dual of $A_{c}$. We use the bigrading of $D$ such that $A$ is a bigraded differential subsheaf of 
$D$. Note also that we can realize the underlying polyhedral currents of the $\delta$-forms as currents and $B$ is also a bigraded differential subsheaf of $D$. 
\end{art}

\begin{rem} \label{warning for delta-forms}
There is an important difference between smooth forms and $\delta$-forms which we have to take into account if we generalize facts from smooth forms to $\delta$-forms. 
If $\alpha, \beta$ are smooth forms on the open subset $V$ of $\Xan$, given by smooth forms $\alphatrop$ and $\betatrop$ on $\R^n$ with respect to the same tropicalization $\varphitrop$, then it follows from \cite[Lemma 3.2.2]{chambert-loir-ducros} that $\alpha=\beta$ if and only if for all $x \in V$, we have $\alphatrop|_{{\varphi_{\trop}}(W)}=\betatrop|_{{\varphi_{\trop}}(W)}$ where $W$ is the open neighbourhood of $x$ from Remark \ref{tropical germ}.

This is no longer true for $\delta$-forms $\alpha,\beta$ as shown in \cite[Example 4.22]{gubler-kuenne2017}. 
In Mihatsch's definition, we have $\alpha=\beta$ if and only if for every polyhedral subspace $Q$ of $P=\Sigma(V,\varphi)$ of pure dimension $d$ with boundary $\partial Q$ in $P$ and every refinement $G\colon Q \to \R^{n'}$ of $F = \varphitrop|_P$, we have the identity of currents
\[
p^*\alphatrop \wedge G_*\delta_{(Q,m)}= p^*\betatrop \wedge G_*\delta_{(Q,m)}
\]
away from $Q(\partial Q)$. Here, we use the restriction of the canonical weight $m$ of $\Sigma(V,\varphi)$ to $Q$ to see $(Q,m)$ as a weighted polyhedral set of pure dimension $d$. The problem here is that there is no pull-back of $\delta$-forms with respect to morphisms of polyhedral spaces, so it is essential to give a $\delta$-form $\alpha$ on $V$ by a $\delta$-form $\alphatrop$ on $\R^n$ and \emph{not} on any polyhedral subspace of $\R^n$ containing $\varphitrop(V)$.
\end{rem}

\subsection{Mihatsch's $\delta$-forms and extended tropicalizations} \label{subsection: delta-forms and extended tropicalizations}
We use extended tropicalizations to construct $\delta$-forms on the analytification $X^\an$.

\begin{cons}[$\delta$-forms from $\A$-tropicalizations]\label{delta-forms with A-tropicalizations}
To apply the results of our previous paragraphs, we need to consider \emph{$\A$-tropicalizations}. 
Let $\psi\colon W \to (\A^n)^\an$ be a morphism on an open subset $W$ of $\Xan$. 
Then the induced \emph{extended tropicalization}  is defined by 
\[
\psitrop \coloneqq \trop \circ \psi\colon W \longrightarrow \R_\infty^n
\]
with $\R_\infty=\R\cup\{\infty\}$.
We show that every $\delta$-form $\betatrop$ on $\R_\infty^n$ induces a $\delta$-form $\beta$ on $W$ which we denote by $\psi^*_\trop(\beta)$. 
The construction of $\psi^*_\trop(\beta)$ is local on $W$, so we fix some $x \in W$. 
By reordering the coordinates, we may assume that $\psi_1(x),\dots,\psi_k(x)$ are non-zero and $\psi_{k+1}(x)=\dots=\psi_n(x)=0$. 
Hence, the first $k$ coordinates of $\psi_{\trop}(x)$ are finite while the last $n-k$ coordinates are equal to $\infty$.
Let $p \colon \R^k \times \R_\infty^{n-k} \to \R^k$ denote the first projection. 
Then there is an open neighbourhood $\Omega$ of $p \circ \psitrop(x)$ in $\R^k$, $r \in \R$ and $\alphatrop \in B(\Omega)$ such that $\betatrop=p^*(\alphatrop)$ on $\Omega(r)\coloneqq \Omega \times (r,\infty]^{n-k}$. 
The set $V \coloneqq \varphi_\trop^{-1}(\Omega(r))$ is an open
neighbourhood of $x$ in $W$ and we have a morphism $\varphi\colon W
\to (\G_{\rm m}^k)^\an$ induced by $\psi_1,\dots, \psi_k$.  
We  define $\beta|_V \coloneqq \varphi_\trop^*(\alphatrop)$. 
Clearly, this does not depend on the choice of $V$ and hence the $\delta$-forms glue to a $\delta$-form $\beta=\psi^*_\trop(\beta)$ on $W$.
\end{cons}

\begin{cons}[$\delta$-forms from extended
  tropicalizations]\label{delta-forms with T-tropicalizations} 
Construction \ref{delta-forms with A-tropicalizations} has an obvious generalization to morphisms $\psi \colon W \to X_\Sigma^\an$ for any toric variety $X_\Sigma$. 
Then the \emph{extended tropicalization} $\psitrop \coloneqq \trop \circ \psi$ is a continuous map from the open subset $W$ of $\Xan$ to the tropical toric variety $N_\Sigma$. 
We claim that every $\delta$-form $\betatrop$ on $N_\Sigma$ induces a $\delta$-form $\beta$ on $W$ denoted by $\psi_\trop^*(\betatrop)$. 
We say that \emph{$\beta$ is tropicalized by $\psitrop$ and $\betatrop$.}  
	
We proceed locally at $x \in V$. 
Let $\sigma \in \Sigma$ such that $\psitrop(x) \in N(\sigma)$. 
Let $U_\sigma$ be the affine open  subset of $X_\Sigma$ associated to the cone $\sigma$. 
Then $V \coloneqq \psi^{-1}(U_\sigma)$ is an open subset of $W$ given by the union of $\psi_\trop^{-1}({N(\tau )})$ where $\tau$ ranges over all faces of $\sigma$. 
Let $S_\sigma \coloneqq \{u \in M \mid \langle u, \sigma \rangle \geq 0\}$ be the semigroup associated to $\sigma$, then we have $U_\sigma = \Spec (K[S_\sigma])$. 
Since $\betatrop$ is constant towards the boundary, there is an open neighbourhood $\Omega$ of $\psitrop(x)$ in $U_\sigma^\an$ such that $\betatrop|_\Omega$ is determined by $\alphatrop|_{\Omega \cap N(\sigma)}$ for a $\delta$-form $\alphatrop$ on $N(\sigma)$ using pull-backs of $\alphatrop$ along the canonical maps $N(\tau) \to N(\sigma)$ for all faces $\tau$ of $\sigma$. Note that the integral structure $N \cap N(\sigma)$ has dual $M(\sigma)\coloneqq \sigma^\perp \cap M$ and hence we have a morphism from $U_\sigma$ to the split torus $T(\sigma)\coloneqq \Spec K[M(\sigma)]$. 
Let $\varphi \colon V \to T(\sigma)$ be the morphism induced by composition with $\psi$, then we define $\beta|_V \coloneqq \varphi_{\trop}^*(\alphatrop)$. 
Again this $\delta$-form does not depend on the choice of $V$ and hence we obtain a well-defined $\delta$-form $\beta$ on $W$.
\end{cons}

\begin{prop} \label{properties of tropicalized delta-forms}
Let $\psitrop$ be an extended tropicalization given by a morphism $\psi \colon W \to X_\Sigma$. 
Then Construction \ref{delta-forms with T-tropicalizations} gives a
morphism $\psi_\trop^*\colon B(N_\Sigma) \to B(W)$ of bigraded
$\R$-algebras compatible with the differentials $d',d''$. 
Moreover, this morphism is compatible with 
pull-back maps associated with 
morphisms $W' \to W$ of analytic spaces and also with 
refinements of extended tropicalization maps. 
\end{prop}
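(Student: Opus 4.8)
The plan is to reduce everything to the affine statements already available in Mihatsch's work, recalled in Remark \ref{Mihatsch's notation}, together with Construction \ref{delta-forms with T-tropicalizations} and the local description of extended tropicalizations. First I would check that $\psi_\trop^*$ is $\R$-linear and multiplicative. Fix $x\in W$ and let $\sigma\in\Sigma$ with $\psitrop(x)\in N(\sigma)$. By Construction \ref{delta-forms with T-tropicalizations}, on a suitable open neighbourhood $V$ of $x$ we have $\psi_\trop^*(\betatrop)|_V=\varphi_\trop^*(\alphatrop)$, where $\varphi\colon V\to T(\sigma)$ is the torus moment map induced by $\psi$ and $\alphatrop$ is the $\delta$-form on $N(\sigma)$ with $\betatrop(\sigma)=\alphatrop$. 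Since $\betatrop\mapsto\betatrop(\sigma)$ is $\R$-linear and multiplicative (this is Theorem \ref{product on delta-forms}, equation \eqref{characterization of product of delta-forms}) and since the affine pull-back $\varphi_\trop^*$ on the analytic side is $\R$-linear and multiplicative by Mihatsch's construction (Construction \ref{Mihatsch's delta forms}), linearity and multiplicativity of $\psi_\trop^*$ follow locally, hence globally as $B$ is a sheaf. Compatibility with the bigrading is immediate from the fact that both $\betatrop\mapsto\betatrop(\sigma)$ and $\varphi_\trop^*$ preserve the trigrading.

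Next I would treat compatibility with $d'$ and $d''$. This again is local at $x$: on $V$ we have $d'\psi_\trop^*(\betatrop)=d'\varphi_\trop^*(\alphatrop)=\varphi_\trop^*(d'\alphatrop)$ by the affine statement, while on the tropical side $d'\betatrop$ restricts on $N(\sigma)$ to $d'(\betatrop(\sigma))=d'\alphatrop$ by Theorem \ref{product on delta-forms} (the differentials commute with restriction to strata, which is part of Proposition \ref{delta-forms and constant towards the boundary} and the construction of the product). Hence $\psi_\trop^*(d'\betatrop)|_V=\varphi_\trop^*(d'\alphatrop)=d'\psi_\trop^*(\betatrop)|_V$, and similarly for $d''$.

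For compatibility with pull-back along a morphism $h\colon W'\to W$ of analytic spaces, I would observe that $\psi\circ h\colon W'\to X_\Sigma$ is again a morphism to the toric variety, with $(\psi\circ h)_\trop=\psitrop\circ h$, and that near a point $x'\in W'$ the local moment map for $\psi\circ h$ is $\varphi\circ h$ for the $\varphi$ associated to $x=h(x')$ (same cone $\sigma$, since $\psitrop(h(x'))=(\psi\circ h)_\trop(x')$ lies in $N(\sigma)$, and the same $\alphatrop$). Then $h^*\psi_\trop^*(\betatrop)|_{V'}=h^*\varphi_\trop^*(\alphatrop)=(\varphi\circ h)_\trop^*(\alphatrop)=(\psi\circ h)_\trop^*(\betatrop)|_{V'}$ using the analytic functoriality of the affine pull-back from Construction \ref{Mihatsch's delta forms}. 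Compatibility with refinements of the extended tropicalization map is handled the same way: a refinement amounts, cone by cone, to factoring the torus moment map $\varphi$ through a larger torus via a homomorphism $p$, and the $\delta$-form $\alphatrop$ gets replaced by $p^*$ of a $\delta$-form on the smaller space; the affine statement that $\delta$-forms are compatible with refinements of moment maps (Construction \ref{Mihatsch's delta forms}) then gives the claim.

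The main obstacle is purely bookkeeping: one must make sure that the local data $(\sigma,\varphi,\alphatrop)$ chosen at a point $x$ are genuinely compatible on overlaps — i.e. that when two points $x_1,x_2$ lie in $V_1\cap V_2$ with possibly different defining cones $\sigma_1,\sigma_2$, the two recipes for $\psi_\trop^*(\betatrop)$ agree there. This is exactly the content of the well-definedness already asserted at the end of Construction \ref{delta-forms with T-tropicalizations}, and it rests on two facts: that $\betatrop$ is constant towards the boundary, so that $\betatrop(\sigma_i)$ determines $\betatrop$ near $\psitrop(x_i)$ via the canonical projections (Proposition \ref{delta-forms and constant towards the boundary}), and that for $\tau\prec\sigma$ the torus moment map for $U_\tau$ is a refinement of the one for $U_\sigma$ composed with $\pi_{\sigma,\tau}$, so the affine refinement-compatibility reconciles the two descriptions. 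Once well-definedness is in hand, all the assertions of the proposition reduce, stratum by stratum, to Mihatsch's affine results and to Theorem \ref{product on delta-forms}, and no further computation is needed.
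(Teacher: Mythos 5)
Your proposal is correct and follows essentially the same route as the paper, which simply observes that by Construction \ref{delta-forms with T-tropicalizations} everything reduces locally to tropicalizations induced by morphisms to $\G_{\rm m}^n$, where all the claims are Mihatsch's affine results. You merely spell out the bookkeeping (strata restrictions via Theorem \ref{product on delta-forms} and Proposition \ref{delta-forms and constant towards the boundary}, and well-definedness on overlaps) that the paper's one-line proof leaves implicit.
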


\begin{proof}
This is clear for tropicalizations induced by morphisms $\varphi\colon W \to \G_{\rm m}^n$. As the above construction reduces to the case of such morphisms, we get the claim. 
\end{proof}

\begin{rem} \label{integration and tropicalization}
	We can integrate a compactly supported $\delta$-form $\beta$
        on $\Xan$ of bidegree $(d,d)$.  Mihatsch \cite[\S
        4.3]{mihatsch2021a} notes that the support of $\beta$ is
        contained in  the $d$-dimensional part $S$ of a finite
        union of tropical skeletons which tropicalize $\beta$
        locally. Note that the set $S$ is denoted by $\Sigma $ in \emph{loc.cit}. 
	Since $\beta|_\Sigma$ is a polyhedral current on $S$, he defines $\int_\Xan \beta \coloneqq \int_S \beta|_\Sigma$ using the canonical calibrations on $S$ introduced by Chambert--Loir and Ducros. We refer to \textit{ibid.}\,for more details.
\end{rem}	

We will use integration mainly in the following situation.

\begin{ex} \label{integration and global tropicalization}
Let $\psi\colon X \to X_\Sigma$ be a closed embedding into a toric variety $X_\Sigma$ and $\beta_\trop \in
B_c^{n,n}(N_\Sigma)$.  
The Bieri--Groves theorem shows that $\psitrop(\Xan)$ is a polyhedral subset of pure dimension $d$ and of some sedentarity $\sigma \in \Sigma$. 
From tropical geometry,  we have canonical weights on $\psitrop(\Xan)$ giving it the structure of a classical tropical cycle $C$ on $N_\Sigma$ of pure dimension $d$, see e.~g.~\cite[\S 13]{gubler-guide}.
	 
	 Replacing $X_\Sigma$ by the closure of the orbit corresponding to $\sigma$, we may assume that $\sigma$ is the dense orbit $T$. We consider a compactly supported $\betatrop \in B^{d,d}(N_{\Sigma})$. We claim then that $\beta \coloneqq \psi_\trop^*(\betatrop) \in B_c^{d,d}(\Xan)$. If $X_\Sigma$ is proper, then this would follow directly from the fact that $\trop$ is a proper map. In general, we pass to a toric compactification $X_{\overline \Sigma}$ and then we use that the support of a top degree $\delta$-form  on the closure of $X$ in $X_{\overline \Sigma}$ is disjoint from any lower dimensional subset  \cite[Corollary 4.9]{mihatsch2021a} and hence is contained in $\Xan$.  
	Using (4) in \cite[\S 4.3]{mihatsch2021a}, we get
	\begin{equation} \label{integration of globally tropicalized forms}
		\int_\Xan \beta = \int_\Xan \psi_\trop^*(\betatrop) = \int_{C} \betatrop.
	\end{equation}
\end{ex} 

The following result is the \emph{transformation formula} for $\delta$-forms.

\begin{prop} \label{transformation formula over X}
Let $\varphi \colon X' \to X$ be a morphism of varieties of dimension $d$ and let $\beta \in B_c^{d,d}(X)$. 
We define $\deg(\varphi)=[K(X'):K(X)]$ if $\varphi$ is surjective and $\deg(\varphi)=0$ otherwise. 
Then $\varphi^*\beta \in B_c^{d,d}((X')^\an)$ and we have
\[
\int_{(X')^\an} \varphi^*\beta = \deg(\varphi) \int_\Xan \beta.
\]
\end{prop}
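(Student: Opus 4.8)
\textbf{Proof proposal for Proposition \ref{transformation formula over X}.}

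The plan is to reduce the statement to a purely tropical transformation formula via the global tropicalization machinery developed in \S\ref{subsection: delta-forms and extended tropicalizations}, and then to invoke the projection formula for compactly supported top-degree $\delta$-forms already available in Mihatsch's framework \cite[\S4.3]{mihatsch2021a}, combined with the tropical projection formula from Proposition \ref{projection formula for above product}. First I would dispose of the trivial case: if $\varphi$ is not dominant, then $\dim \overline{\varphi(X')} < d$, and since $\beta$ has compact support of bidegree $(d,d)$ while $\varphi^*\beta$ is supported on a subset of $(X')^\an$ mapping into a lower-dimensional locus, the vanishing $\int_{(X')^\an}\varphi^*\beta = 0 = \deg(\varphi)\int_\Xan\beta$ follows from the fact that a top-degree $\delta$-form is disjoint from lower-dimensional subsets, see \cite[Corollary 4.9]{mihatsch2021a}. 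So I may assume $\varphi$ is dominant, hence generically finite of degree $[K(X'):K(X)]$.

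Next I would globalize. Since $\beta \in B_c^{d,d}(\Xan)$, its support is covered by finitely many tropical skeletons on which it is given by $\delta$-forms pulled back along moment maps (Remark \ref{integration and tropicalization}); after the algebraization remark \ref{algebraization} and passing to a product of moment maps, I can arrange a single locally closed embedding-type morphism $\psi \colon U \to X_\Sigma$ on a Zariski-open $U \subseteq X$ containing the (finitely many) relevant points, together with $\betatrop \in B_c(N_\Sigma)$ with $\beta|_{\Uan} = \psi_\trop^*(\betatrop)$; shrinking, I may take $\psi$ a closed embedding onto its image and enlarge $X_\Sigma$ so that $\psitrop(\Uan)$ is a classical tropical cycle $C$ of pure dimension $d$ in the dense-orbit stratum, as in Example \ref{integration and global tropicalization}. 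Compatibility of $\psi_\trop^*$ with analytic pull-back (Proposition \ref{properties of tropicalized delta-forms}) gives $\varphi^*\beta|_{(\varphi^{-1}U)^\an} = (\psi\circ\varphi)_\trop^*(\betatrop)$, so both sides of the desired identity are computed by the tropical integrals
\[
\int_\Xan \beta = \int_C \betatrop, \qquad \int_{(X')^\an}\varphi^*\beta = \int_{C'}\betatrop
\]
where $C' = (\psi\circ\varphi)_{\trop}(X'^\an)$, by \eqref{integration of globally tropicalized forms}. It then suffices to show that $C' = \deg(\varphi)\cdot C$ as tropical cycles — equivalently, that the tropicalization of a generically-finite-of-degree-$e$ morphism multiplies the canonical weights on the image skeleton by $e$ — and to invoke Proposition \ref{projection formula for above product} (or directly \cite[Proposition 4.2]{mihatsch2021}) with $F = \id_{N_\Sigma}$ reading off $F_*C' = \deg(\varphi)\, C$.

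The main obstacle is precisely the weight bookkeeping: one must check that the canonical calibrations/weights on the tropical skeleton $\Sigma(V,\psi\circ\varphi)$ push forward under $\varphi$ to $\deg(\varphi)$ times those on $\Sigma(W,\psi)$. This is the content of the multiplicativity of degrees for tropicalizations of generically finite morphisms; it is essentially \cite[Proposition 4.2]{mihatsch2021} (transformation formula for $\delta$-forms on euclidean space) applied stratum-by-stratum, using that away from a lower-dimensional locus $\varphi$ is finite flat of degree $\deg(\varphi)$ and that tropicalization weights of a finite flat cover add up with the multiplicities of the fibres (the tropical analogue of the projection formula, already used implicitly in Example \ref{integration and global tropicalization} when endowing $\psitrop(\Xan)$ with its canonical tropical-cycle structure, cf.\ \cite[\S13]{gubler-guide}). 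Once this weight identity is in hand, the equality of integrals is immediate and the proof closes. I would also remark that the case where $\varphi$ fails to be finite in codimension zero but is still dominant is absorbed by discarding a lower-dimensional closed subset from $X$ disjoint from $\supp\beta$, which is harmless for the integral.
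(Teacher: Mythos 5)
Your main line of argument is the paper's: reduce to the surjective case, tropicalize $\beta$ by a closed embedding $\psi$ and $\betatrop$, observe that $\varphi^*\beta$ is tropicalized by $\psi\circ\varphi$ and the \emph{same} $\betatrop$, and conclude from $\int_{(X')^{\an}}\varphi^*\beta=\int_{C'}\betatrop$ (Mihatsch, \cite[\S 4.3(4)]{mihatsch2021a}) together with $C'=\deg(\varphi)\,C$. The non-dominant case via \cite[Corollary 4.9]{mihatsch2021a} is also fine. However, there is a genuine gap: you never prove the assertion $\varphi^*\beta\in B_c^{d,d}((X')^{\an})$, which is part of the statement and is needed before the left-hand integral is even defined, since integration of $\delta$-forms on analytic spaces is only available for compactly supported top-degree forms (Remark \ref{integration and tropicalization}). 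As $\varphi$ is in general not proper, $\supp(\varphi^*\beta)\subset\varphi^{-1}(\supp\beta)$ gives no compactness. The paper's first step supplies exactly this: by Nagata's compactification theorem one extends $\varphi$ to a proper morphism $\overline{\varphi}\colon\overline{X'}\to X$, so $\overline{\varphi}^*\beta$ has compact support, and by \cite[Corollary 4.9]{mihatsch2021a} the support of this top-degree form avoids the lower-dimensional set $\overline{X'}\setminus X'$, hence lies in $(X')^{\an}$ and $\varphi^*\beta$ is compactly supported there. You have this ingredient in hand (you quote Corollary 4.9 for the non-dominant case) but never deploy it for the compactness claim, so as written your identity compares an undefined quantity with a defined one.

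A smaller point: the identity $C'=\deg(\varphi)\,C$ of classical tropical cycles is not a consequence of \cite[Proposition 4.2]{mihatsch2021}, which is a projection formula for affine maps of euclidean spaces and knows nothing about the algebraic morphism $\varphi$; likewise invoking Proposition \ref{projection formula for above product} with $F=\id_{N_\Sigma}$ is vacuous. What is needed is the Sturmfels--Tevelev multiplicity formula, i.e.\ compatibility of tropicalization with push-forward of algebraic cycles, as in \cite[\S 13]{gubler-guide} (which you do also cite, and which the paper relies on as a known fact, as in Example \ref{integration and global tropicalization}). With the correct reference your ``weight bookkeeping'' step closes, and apart from the missing compact-support argument your proof coincides with the paper's.
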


\begin{proof}
By Nagata's compactification theorem, $X'$ is an open dense subset of a variety $\overline{X'}$ such that $\varphi$ extends to a proper morphism $\overline{\varphi}\colon \overline{X'} \to X$. 
Then $\overline{\varphi}^* \beta $ is a compactly supported $(d,d)$-form whose support is disjoint from the lower dimensional algebraic set $ \overline{X'} \setminus X$ by \cite[Corollary 4.9]{mihatsch2021a} and hence $\varphi^*\beta$ is compactly supported in $\Xan$. 
We may assume that $\varphi$ is surjective, as otherwise both sides of the transformation formula vanish. 
The claim is local in $X$, so we may assume $X$ affine and that $\beta$ is tropicalized by a morphism  $\psi\colon X \to \A^n$ and by $\betatrop \in B_c^{d,d}(\R_\infty^n)$. 
Note also that $\varphi^*\beta$ is tropicalized by $\psi' \coloneqq \psi \circ \varphi$ and the same $\delta$-form $\betatrop$. 
The tropical variety associated to $\psi'$ is the classical tropical cycle $C'=\deg(\phi)C$ and it is shown in  \cite[\S 4.3]{mihatsch2021a}(4) that
\[
\int_{(X')^\an} \varphi^*\beta = \int_{C'} \betatrop.
\]
Together with \eqref{integration of globally tropicalized forms}, we get the transformation formula.
\end{proof}

\subsection{Global tropicalizations} \label{subsection: global tropicalizations}

Jell \cite{jell-thesis} used extended tropicalizations to tropicalize smooth $(p,q)$-forms globally. In this subsection, we do the same for $\delta$-forms. 

\begin{definition} \label{tropical charts}
	A \emph{tropical $\A$-chart} for $X$ is a pair $(V,\psi)$ where $\psi\colon U \to \A^n$ 
is a closed embedding from an open subset $U$ of $X$ and $V$ is an open subset of $\Uan$ given by $V \coloneqq \psi_\trop^{-1}(\Omega)$ for an open subset $\Omega$ of $\psitrop(\Uan)$.

We say that $(V',\psi')$ is a \emph{tropical $\A$-subchart} of $(V,\psi)$ if $(V',\varphi')$ is a tropical chart for $X$ such that the closed embedding $\psi'\colon U' \to \A^{n'}$ refines $\varphi$ and $V' \subset V$. 
\end{definition}

\begin{rem} \label{remarks about tropical charts}
Since $\psi$ is a closed embedding, the Bieri--Groves theorem shows that $\psitrop(\Uan)$ is a polyhedral subset of pure dimension $d$ in $\R_\infty^n$. By construction, we have $\Omega = \psitrop(\Uan)$.  

By the definition of the Berkovich topology on the analytification of an affine variety, it is clear that the tropical $\A$-charts form a basis for the topology. 
We will recall the proof below.
\end{rem}

The next lemma hints that tropical $\A$-charts are very useful to tropicalize $\delta$-forms.

\begin{lemma} \label{first step for affine tropicalization}
Let $X$ be an affine variety over $K$ and let $\beta$ be a $\delta$-form on an open subset $W$ of $\Xan$. Then for every $x \in W$, there is a closed embedding $\psi\colon X \to \A^n$,  $\betatrop \in B(\R_\infty^n)$ and an open neighbourhood  $\Omega$ of $\psitrop(x)$ in  $\psitrop(\Xan)$ such that $V \coloneqq \psi_\trop^{-1}(\Omega)$ is an open neighbourhood of $x$ in $W$ and $\beta|_V= \psi_\trop^*(\betatrop)|_V$.
\end{lemma}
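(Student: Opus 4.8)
The statement is purely local near $x$, so the plan is to reduce to Mihatsch's local description of a $\delta$-form on $W$ (Construction \ref{Mihatsch's delta forms}) and then absorb the resulting multiplicative tropicalization $\varphi$ into a closed embedding of $X$ into affine space. First I would apply Construction \ref{Mihatsch's delta forms} together with Remark \ref{algebraization}: shrinking $W$ around $x$, there is a Zariski open $U \subset X$, an algebraic morphism $\varphi\colon U \to \mathbb G_{\rm m}^k$, a possibly smaller open neighbourhood $V_0$ of $x$ in $U^{\rm an}$, and a $\delta$-form $\alpha_{\trop} \in B(\R^k)$ with $\beta|_{V_0} = \varphi_{\trop}^*(\alpha_{\trop})$ as a polyhedral current on the tropical skeleton $\Sigma(V_0,\varphi)$. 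Since $X$ is affine, choose a closed embedding $\iota\colon X \hookrightarrow \A^m$; writing $\varphi = (\varphi_1,\dots,\varphi_k)$ with each $\varphi_i \in \mathcal O(U)^\times$, and using that $U$ is open in the affine $X$, one can after further shrinking (replacing $U$ by a principal open $X_f$, which only adds one more coordinate $1/f$) arrange that $\varphi_1,\dots,\varphi_k$ together with $\iota$ assemble into a closed embedding $\psi = (\iota,\varphi_1,\dots,\varphi_k)\colon X' \hookrightarrow \A^n$ with $n = m + k$ (where $X' = X_f$) such that the last $k$ coordinate functions are exactly $\varphi_1,\dots,\varphi_k$ and, at $x$, these last $k$ coordinates of $\psi_{\trop}(x)$ are finite.

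Next I would transport $\alpha_{\trop}$ to a $\delta$-form $\beta_{\trop}$ on $\R_\infty^n$ by pulling back along the coordinate projection $p\colon \R^m \times \R_\infty^{k} \dashrightarrow \R^k$ onto the last $k$ coordinates — more precisely, set $\beta_{\trop} := p^*(\alpha_{\trop})$ on the open set $\Omega_0 \times \R^k$ where $\Omega_0 \subset \R_\infty^m$ is a suitable open neighbourhood of the projection of $\psi_{\trop}(x)$ to the first $m$ coordinates, and extend by $0$ (or rather, it suffices to define $\beta_{\trop}$ only on a neighbourhood of $\psi_{\trop}(x)$, which is all the statement asks for). By Mihatsch's pull-back formalism, $\beta_{\trop}$ is a $\delta$-form on (an open subset of) $\R_\infty^n$. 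Then Construction \ref{delta-forms with A-tropicalizations} — which is exactly the recipe for pulling back a $\delta$-form on $\R_\infty^n$ along an $\A$-tropicalization — applied to $\psi$ and $\beta_{\trop}$ gives a $\delta$-form $\psi_{\trop}^*(\beta_{\trop})$ on $(X')^{\rm an}$, and by the very construction there (it reduces to the multiplicative case via the projection onto the coordinates that are nonzero at $x$) we get $\psi_{\trop}^*(\beta_{\trop})|_{V} = \varphi_{\trop}^*(\alpha_{\trop})|_V = \beta|_V$ on the open neighbourhood $V := \psi_{\trop}^{-1}(\Omega)$ of $x$, where $\Omega$ is an appropriate open subset of $\psi_{\trop}((X')^{\rm an})$ cut out by requiring the last $k$ tropical coordinates to lie in the domain of $\alpha_{\trop}$ and to be finite. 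Finally, since $X' = X_f$ is an open subscheme of $X$, one either states the lemma for $X'$ or notes that in the lemma's formulation $X$ may be taken to be this principal open (the argument is unaffected); alternatively the extra coordinate $1/f$ can be kept and $X$ embedded in $\A^{n+1}$ directly, so no passage to a subvariety is needed.

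The main obstacle I anticipate is bookkeeping rather than conceptual: making sure that the multiplicative moment map $\varphi$ coming out of Mihatsch's local description genuinely extends to (or can be enlarged to) a \emph{closed embedding} $\psi\colon X \to \A^n$ whose last coordinates recover $\varphi$, and that the chosen $\beta_{\trop}$ on $\R_\infty^n$ really pulls back to $\beta$ and not merely to something agreeing with $\beta$ on a polyhedral subset — this is precisely the subtlety flagged in Remark \ref{warning for delta-forms}, namely that $\delta$-forms must be specified by a $\delta$-form on the ambient $\R^n$, not on a polyhedral subspace. The point that saves us is that $\alpha_{\trop}$ already lives on an open subset of $\R^k$ (not just on $\varphi_{\trop}(V_0)$), so its pull-back $p^*(\alpha_{\trop})$ genuinely lives on an open subset of $\R_\infty^n$, and Construction \ref{delta-forms with A-tropicalizations} together with Proposition \ref{properties of tropicalized delta-forms} (compatibility of $\psi_{\trop}^*$ with refinements and with the multiplicative case) then yields the equality $\beta|_V = \psi_{\trop}^*(\beta_{\trop})|_V$ honestly. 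I would also record that $\Omega = \psi_{\trop}(V)$ is open in $\psi_{\trop}((X')^{\rm an})$ by the Bieri--Groves theorem and the description of tropical charts in Remark \ref{remarks about tropical charts}, so that $(V,\psi)$ is indeed of the form required by Definition \ref{tropical charts}.
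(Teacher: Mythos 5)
There is a genuine gap, and it sits exactly where the real content of the lemma lies: you never arrange that $V=\psi_\trop^{-1}(\Omega)$ is contained in $W$, nor in the neighbourhood $V_0$ on which the local identity $\beta=\varphitrop^*(\alphatrop)$ actually holds. Your $\Omega$ is cut out by conditions on the last $k$ tropical coordinates only (and even allowing conditions on all coordinates of $\psi=(\iota,\varphi)$ would not help), but preimages under $\psi_\trop$ of open subsets of $\R_\infty^n$ do \emph{not} form a neighbourhood basis of $x$ in the Berkovich topology: already for $X=\A^1$ with $\iota=\id$, sets of the form $\{r<|T|<s\}$ cannot separate a point $a\neq 0$ from nearby points, whereas $\{|T-a|<\varepsilon\}$ can. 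Hence your $V$ may be much larger than $V_0$, may leave $W$ (so that $\beta|_V$ is not even defined), and the asserted equality $\beta|_V=\psi_\trop^*(\betatrop)|_V$ is unjustified. The paper closes this gap by an extra step you are missing: after algebraizing, it shrinks the neighbourhood to a ``Laurent box'' $V=\{r_1<|f_1|<s_1\}\cap\dots\cap\{r_k<|f_k|<s_k\}$ for regular functions $f_i$ on $X$, adds $f=(f_1,\dots,f_k)$ (enlarged to a closed immersion) to the coordinates, and takes $\psi=(f,g)$; only then is $V$ literally the preimage under $\psi_\trop$ of an open box, i.e.\ $(V,\psi)$ is a tropical $\A$-chart inside $W$, and $\betatrop\coloneqq p_\trop^*(\alphatrop)$ for the second projection $p$ does the job.

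A second, smaller but real, deviation: the statement requires a closed embedding of $X$ itself, and this is needed later (in Proposition \ref{global tropicalization for affine} one takes the product of finitely many such embeddings of the \emph{same} $X$). Your construction only embeds the principal open $X'=X_f$, since the coordinates $\varphi_1,\dots,\varphi_k$ (and $1/f$) are not regular on $X$; the proposed alternative of ``keeping $1/f$ and embedding $X$ in $\A^{n+1}$ directly'' therefore does not define a morphism on $X$, and restating the lemma for $X'$ changes the statement rather than proving it. The paper avoids this by clearing denominators: writing $\varphi_j=g_j/h^{m_j}$ with $g_j,h$ regular on $X$ and tropicalizing instead by the everywhere-defined morphism built from the $g_j$ (and $h$), which can then be enlarged by further regular functions to a closed embedding $X\to\A^n$. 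With these two repairs your argument becomes essentially the paper's proof; as written, both the containment $V\subset W$ and the source of the embedding are unproved.
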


\begin{proof}
By Remark \ref{algebraization}, there is a Zariski open subset $U$ of
$X$, a morphism $\varphi \colon U \to \G_{\rm m}^n$ and  a relatively
compact open neighbourhood $V$ of $x$ in $W \cap \Uan$ such that
$\beta|_{V}=\phi_\trop^*(\alphatrop)|_{V}$ for some $\alphatrop \in
B(\R^n)$. Passing to  smaller $U$ and $V$, we may assume that $U$ is a
basic affine open subset of $X$, i.e.~$U=\{x \in X \mid h(x) \neq 0\}$
for some regular function {$h$} on $X$. 
The coordinate functions $\varphi_1, \dots, \varphi_n$ of $\varphi$ satisfy $\varphi_j= g_j/h^{m_j}$ for some regular functions $g_j$ on $X$ and $m_j \in \N$. Obviously, we may tropicalize $\beta$ on $V$ also by the morphism $g=(g_1,\dots,g_n)\colon X \to \A^n$ and so we assume from now on that $\varphi =g$ and that $\alphatrop \in B(\R_\infty^n)$.
 Using the definition of the Berkovich topology on the affine variety $X$, we may assume that 
$$V=\{r_1 <|f_1|<s_1\} \cap \dots \{r_k <|f_k|<s_k\}$$
for finitely many regular functions $f_1,\dots, f_k$ on $X$ and $r_1,s_1 \dots ,r_k,s_k \in \R$.
Let us consider the morphism $f\colon X \to \A^k$ given by $(f_1,\dots,f_k)$. Adding more regular functions to the list, we may assume that $f$ is a closed immersion. By construction, $(V,f)$ is a tropical $\A$-chart. The morphism $\psi \coloneqq (f,g)\colon X \to \A^{k+n}$ refines $f$ and $g$, hence $(V,\psi)$ is a tropical $\A$-subchart of $(V,f)$. Let $p\colon \A^{k+n}=\A^k \times \A^n\to \A^n$ be the second projection. Setting $\betatrop \coloneqq p_\trop^*(\alphatrop)$, we deduce the claim.
\end{proof}

\begin{prop} \label{global tropicalization for affine}
Let $X$ be an affine variety over $K$ and let $\beta$ be a $\delta$-form on $\Xan$ with compact support. 
Then there is a closed embedding $\psi\colon X \to \A^n$  and $\betatrop \in B_c(\R_\infty^n)$ such that $\beta = \psi_\trop^*(\betatrop)$. 
\end{prop}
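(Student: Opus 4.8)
The strategy is to deduce the global statement from the local one in Lemma \ref{first step for affine tropicalization} by a compactness-plus-gluing argument, using that $\beta$ has compact support. First I would cover $\supp(\beta)$ by finitely many open sets $V_1,\dots,V_m$ on each of which, by Lemma \ref{first step for affine tropicalization}, there is a closed embedding $\psi_i\colon X \to \A^{n_i}$, a $\delta$-form $\betatrop^{(i)} \in B(\R_\infty^{n_i})$ and an open subset $\Omega_i$ of $(\psi_i)_\trop(\Xan)$ with $V_i = (\psi_i)_\trop^{-1}(\Omega_i)$ and $\beta|_{V_i} = (\psi_i)_\trop^*(\betatrop^{(i)})|_{V_i}$. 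I would then form the product embedding $\psi \coloneqq (\psi_1,\dots,\psi_m)\colon X \to \A^{n}$ with $n = n_1 + \dots + n_m$; this is again a closed embedding, and each $\psi_i = p_i \circ \psi$ for the coordinate projection $p_i\colon \A^n \to \A^{n_i}$, so $(\psi_i)_\trop = (p_i)_\trop \circ \psitrop$ and, by the compatibility of pull-backs with refinements in Proposition \ref{properties of tropicalized delta-forms} (and Construction \ref{delta-forms with T-tropicalizations} for the extended setting), $\beta|_{V_i} = \psitrop^*\bigl((p_i)_\trop^*(\betatrop^{(i)})\bigr)|_{V_i}$. Thus after replacing $\betatrop^{(i)}$ by $(p_i)_\trop^*(\betatrop^{(i)}) \in B(\R_\infty^n)$, all the local tropicalizations live on the \emph{same} $\R_\infty^n$ via the \emph{same} map $\psitrop$.

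The second step is to patch the $\betatrop^{(i)}$ together into a single $\betatrop \in B(\R_\infty^n)$ whose pull-back is $\beta$. The issue is that the $V_i$ need not cover all of $\Xan$, only $\supp(\beta)$; also, the images $\psitrop(V_i)$ overlap in $\R_\infty^n$, and on overlaps the two local tropicalizing forms need not literally agree as $\delta$-forms on $\R_\infty^n$ — they only agree after pull-back along $\psitrop$ (this is exactly the warning in Remark \ref{warning for delta-forms}). To handle this I would work on the polyhedral subset $P \coloneqq \psitrop(\Xan)$, which by Bieri--Groves is of pure dimension $d$ with a canonical structure of classical tropical cycle $C$ (as in Example \ref{integration and global tropicalization}), and choose a partition of unity. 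More precisely: since $\supp(\beta)$ is compact and contained in $\bigcup V_i$, pick an open $V_0 \subseteq \Xan \setminus \supp(\beta)$ so that $V_0,\dots,V_m$ cover $\Xan$; the sets $\Omega_i \coloneqq \psitrop(V_i)$ (together with $\Omega_0$) form an open cover of $P$, refine it to a locally finite one, and take a smooth partition of unity $(\chi_i)$ on $P$ (hence on $\R_\infty^n$ after extending, using that $P$ is closed) subordinate to it, with $\chi_0$ supported away from $\psitrop(\supp\beta)$. Then set $\betatrop \coloneqq \sum_{i=1}^m \chi_i \cdot \betatrop^{(i)} \in B(\R_\infty^n)$. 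Its support is contained in $\bigcup_{i\ge 1}\overline{\Omega_i}$, which is compact since the $V_i$ are relatively compact (one may shrink them in Lemma \ref{first step for affine tropicalization} to be relatively compact), so $\betatrop \in B_c(\R_\infty^n)$.

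The final step is to verify $\psitrop^*(\betatrop) = \beta$ on all of $\Xan$. On each $V_i$ one has $\psitrop^*(\betatrop)|_{V_i} = \sum_{j} \psitrop^*(\chi_j \betatrop^{(j)})|_{V_i}$, and since $\psitrop^*$ is a homomorphism of $\R$-algebras (Proposition \ref{properties of tropicalized delta-forms}) this equals $\sum_j (\chi_j\circ\psitrop)\cdot \psitrop^*(\betatrop^{(j)})|_{V_i}$. The key point is that on $V_i \cap V_j$ (where both are defined) the two forms $\psitrop^*(\betatrop^{(i)})$ and $\psitrop^*(\betatrop^{(j)})$ both equal $\beta$, by the local statement, so they \emph{do} agree as $\delta$-forms on $V_i \cap V_j$ — the Remark \ref{warning for delta-forms} subtlety is precisely irrelevant here because we pull back before comparing. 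Hence $\sum_j (\chi_j\circ\psitrop)\psitrop^*(\betatrop^{(j)})|_{V_i} = \bigl(\sum_j \chi_j\circ\psitrop\bigr)\beta|_{V_i} = \beta|_{V_i}$, using $\sum_j \chi_j = 1$ on $P \supseteq \psitrop(V_i)$. On $V_0$, all $\chi_j\circ\psitrop$ with $j\ge 1$ and $\beta$ itself vanish, so both sides are zero there. Since the $V_i$ cover $\Xan$ and $\delta$-forms form a sheaf, $\psitrop^*(\betatrop) = \beta$ globally. I expect the main obstacle to be the bookkeeping in the partition-of-unity argument — specifically, arranging the cover so that $\chi_0$ (the ``garbage'' piece) really is supported off $\psitrop(\supp\beta)$ while still getting $\sum\chi_i = 1$ on the relevant region, and confirming that extending a partition of unity from the closed polyhedral set $P$ to $\R_\infty^n$ is legitimate for $\delta$-forms; both are routine but need care, and one should also double-check that relative compactness of the $V_i$ can be arranged in Lemma \ref{first step for affine tropicalization} (it can, since tropical $\A$-charts with $\Omega$ relatively compact form a basis).
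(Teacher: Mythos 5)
Your proposal is correct and follows essentially the same route as the paper's proof: cover the compact support by finitely many tropical $\A$-charts from Lemma \ref{first step for affine tropicalization}, pass to the product embedding so all local data live over one $\psitrop$, and glue the local tropicalizing forms with a partition of unity, using the full-preimage property $V_i=\psitrop^{-1}(\Omega_i)$ of the charts to kill the cross terms. The only (harmless) deviations are that the paper takes the partition of unity directly on $\R_\infty^n$, subordinate to relatively compact open subsets $\Omega_V\subset\R_\infty^n$ with $\psitrop^{-1}(\Omega_V)=V$ together with $\R_\infty^n\setminus\psitrop(\supp\beta)$, which avoids your extension step from $P=\psitrop(\Xan)$ to $\R_\infty^n$, and that your claim that all $\chi_j\circ\psitrop$ with $j\ge 1$ vanish on $V_0$ is not literally true -- off the support one should argue pointwise, exactly as you do on the $V_i$, that each term is killed either by $\chi_j\circ\psitrop$ or by $\beta=0$ near the point in question.
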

	
\begin{proof}
For $x \in \Xan$, Lemma \ref{first step for affine tropicalization} shows that there is an open neighbourhood $V$ of $x$ in $\Xan$ and a closed embedding $\psi_V\colon X \to \A^{n_V}$ such that $(V,\varphi_V)$ is a tropical $\A$-chart which tropicalizes $\beta|_V$ in the sense that $\beta|_V=\psi_{V,\trop}^*(\beta_{V,\trop})$ for some $\beta_{V,\trop} \in B(\R_\infty^n)$. 
We may also assume that $V$ is relatively compact in $\Xan$.
Note that we may always replace $\psi_V$ by a morphism $X \to \A^N$ refining $\psi_V$.

	Since $S \coloneqq \supp(\beta)$ is compact, we can cover it by finitely many $V$'s as above. We set $I$ for this finite set of $V$'s. Replacing $\psi_V$ by $\prod_{V \in I} \psi_V$ for each $V \in I$, we may assume that all these $\psi_V$ agree with a closed embedding $\psi\colon X \to \A^n$. For all $V \in I$, the construction yields
	\begin{equation} \label{basic identity from construction}
		\beta|_V=\psi_{\trop}^*(\beta_{V,\trop})|_V.
    \end{equation}
	For each $V \in I$, there is a relatively compact open subset $\Omega_V$ of $\R_\infty^n$ such that $\psi_\trop^{-1}(\Omega_V)=V$. This is based on the fact that $(V,\psi)$ are $\A$-tropical charts. Since the open subset $V \in I$ cover $S$, we conclude that  $(\Omega_V)_{V \in I}$ is an open covering of $\psitrop(S)$. The latter is compact and hence we get a finite open covering $(\Omega_V)_{V \in I'}$ by setting $I' \coloneqq I \cup \{\infty\}$ and $\Omega_\infty \coloneqq \R_\infty^N\setminus \psitrop(S)$. We choose a partition of unity $(\phi_V)_{V \in I'}$ subordinate to $(\Omega_V)_{V \in I'}$. We define 
	$$\betatrop \coloneqq  \sum_{V \in I} \phi_V \beta_{V,\trop} \in B_c(\R_\infty^N)$$
	using that $\phi_V$ has compact support for $V \in I$ as $\Omega_V$ is relatively compact.

	We have to show that $\beta = \psi_\trop^*(\betatrop)$. We check this identity locally at $x \in \Xan$. 
	
	Suppose first that $x \not\in S$. Let us pick $V \in I$ such that $x \in V$ (if it exists). Since $x$ is not in the support $S$ of $\beta$, we conclude from \eqref{basic identity from construction} that $\psi_\trop^*(\beta_{V,\trop})|_W =0$ in a sufficiently small open neighbourhood $W$ of $x$ in $\Xan \setminus S$. As there are only finitely many such $V$'s, we may choose $W$ independently of the choice of $V$. On the other hand, let us consider $V \in I$ with $x \not \in V$. Using that $(V,\psi)$ is an $\A$-tropical chart, we deduce that $\psitrop(x)  \not\in \Omega_V$ and hence $\phi_V$ vanishes in an open neighbourhood $\Omega$ of $\psitrop(x)$. Again, we may choose that independently of the choice of such $V$'s and we may assume that $W \subset \psi_\trop^{-1}(\Omega)$. We conclude that for all $V \in I$, we have $\psi_\trop^*(\phi_V)|_W=0$ or $\psi_\trop^*(\beta_{V,\trop})|_W =0$. This proves 
	$\psi_\trop^*(\betatrop)|_W=0$ and hence the claimed identity follows on the open neighbourhood $W$ of $x$.
	
	It remains to check the identity locally at $x \in S$. Pick any $V \in I$. If $x \in V$, then \eqref{basic identity from construction} holds on $V$. 
	If $x \not\in V$, then it follows as above that $\psi_\trop^*(\phi_V)=0$ in a neighbourhood of $x$. In any case, finiteness of $I$ yields that there is an open neighbourhood $W$ of $x$ such that 
	\begin{equation} \label{conclusion of basic identity}
		\psi_\trop^*(\phi_V) \beta|_W = \psi_\trop^*(\phi_V\beta_{V,\trop})|_W.
	\end{equation}
	Note that the support of $\phi_\infty$ is a closed subset in $\R_\infty^n$ disjoint of $\psitrop(S)$ and hence we may assume that the open neighbourhood $W$ of $x$ is disjoint from the support of $\psi_\trop^*(\phi_\infty)$ which means that $\psi_\trop^*(\phi_\infty)|_W=0$. Now summing up \eqref{conclusion of basic identity} over all $V \in I$ and using that $(\phi_V)_{V \in I'}$ is a partition of unity of $\R_\infty^n$, we get $\beta|_W = \psi_\trop^*(\betatrop)|_W$. This proves the desired identity locally at $x$.
\end{proof}

We have the following generalization of the global tropicalization result for quasi-projective varieties.
Let $\Trop(\P^n)$ denote the tropical toric variety $X_{\Sigma(\P^n)}$ for the complete fan $\Sigma(\P^n)$ in $\R^n$ with one dimensional faces $-e_1-\ldots -e_n,e_1,\ldots,e_n$, with $\{e_{1},\dots,e_n\}$ the canonical basis of $\R^{n}$.

\begin{prop} \label{global tropicalization for quasi-projective}
Let $X$ be a quasi-projective variety over $K$ and let $\beta$ be a $\delta$-form on $\Xan$ with compact support. Then there exist an immersion $\psi\colon X \to \P^n$  and a $\delta$-form $\betatrop$ on $\Trop(\P^n)$ such that $\beta = \psi_\trop^*(\betatrop)$. 
\end{prop}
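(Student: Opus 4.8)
The plan is to reduce the quasi-projective case to the already established affine case in Proposition~\ref{global tropicalization for affine}. First I would choose a projective embedding: since $X$ is quasi-projective, there is an immersion $j\colon X \hookrightarrow \P^N$ for some $N$. Write $\overline{X}$ for the closure of $X$ in $\P^N$, so that $X$ is a dense open subvariety of the projective variety $\overline{X}$ and the boundary $\overline{X}\setminus X$ is a closed algebraic subset of dimension $<d$. The key point, exactly as used several times in the excerpt (e.g.\ in the proof of Proposition~\ref{transformation formula over X} via \cite[Corollary 4.9]{mihatsch2021a}), is that a compactly supported $\delta$-form of top degree on $\Xan$ has support disjoint from any lower-dimensional algebraic subset; but here $\beta$ need not be of top degree, so instead I would use the compactness of $\supp(\beta)$ directly: $\supp(\beta)$ is a compact subset of $\Xan$, hence its image under the proper map $\overline{X}^{\an}\to \P^{N,\an}$ avoids a neighbourhood of the boundary, and in any case $\beta$ extends by zero to a $\delta$-form on a neighbourhood of $\supp(\beta)$ inside $\overline{X}^{\an}$.

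Next I would cover $\supp(\beta)$ by finitely many affine opens. Cover $\P^N$ by the standard affine charts $\A^N \cong \P^N \setminus H_i$ for the coordinate hyperplanes $H_i$, $i = 0,\dots,N$; pulling back along $j$ gives affine open subvarieties $U_i \subset X$ whose analytifications cover $\Xan$. Since $\supp(\beta)$ is compact, finitely many $\Uan_i$ suffice to cover it, and using a partition of unity on $\Xan$ subordinate to this covering (existence of smooth partitions of unity is part of Chambert-Loir--Ducros's formalism recalled in \ref{differential forms and currents by Chambert-Loir and Ducros}), I would write $\beta = \sum_i \beta_i$ with $\beta_i$ a compactly supported $\delta$-form on $\Uan_i$. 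Apply Proposition~\ref{global tropicalization for affine} to each affine $U_i$: there is a closed embedding $\psi_i\colon U_i \to \A^{n_i}$ and $\beta_{i,\trop} \in B_c(\R_\infty^{n_i})$ with $\beta_i = \psi_{i,\trop}^*(\beta_{i,\trop})$.

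Then I would assemble these local pieces into a single extended tropicalization. The cleanest route is to use that each affine chart $\A^N \setminus (\text{further coordinate hyperplanes of }\A^{n_i})$ sits inside $\Trop(\P^n)$ for a suitable $n$; concretely, take $n$ large enough so that the coordinate functions defining all the $\psi_i$, together with the projective coordinates of $\P^N$, give an immersion $\psi\colon X \to \P^n$ (e.g.\ by taking $\P^n$ to be a Segre-type product or by re-embedding $\overline X$ with a Veronese to absorb the extra affine coordinates as sections of $\mathcal O(1)$). The extended tropicalization $\psitrop\colon \Xan \to \Trop(\P^n)$ then refines each $\psi_{i,\trop}$ in the sense of Construction~\ref{delta-forms with T-tropicalizations}, so each $\beta_{i,\trop}$ pulls back to a $\delta$-form on an open subset of $\Trop(\P^n)$ via the appropriate linear projection $p_i$, compatible with the chart structure. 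Using a partition of unity $(\phi_V)$ on $\Trop(\P^n)$ subordinate to the open covering of $\Trop(\P^n)$ induced by the affine charts (exactly as in the proof of Proposition~\ref{global tropicalization for affine}), set
\[
\betatrop \coloneqq \sum_i \phi_{V_i}\, p_i^*(\beta_{i,\trop}) \in B_c(\Trop(\P^n)).
\]
A local verification at each $x \in \Xan$ — distinguishing $x \notin \supp(\beta)$ from $x \in \supp(\beta)$, and using the compatibility of $\psitrop^*$ with refinements from Proposition~\ref{properties of tropicalized delta-forms} — gives $\beta = \psi_\trop^*(\betatrop)$, exactly as in the affine proof.

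The main obstacle is the bookkeeping of the re-embedding: one must ensure that all the auxiliary affine coordinates appearing in the various $\psi_i$ can be realized simultaneously as coordinates of a \emph{single} projective embedding $\psi\colon X\to\P^n$ with $\psitrop$ refining every $\psi_{i,\trop}$, while keeping track of which open stratum of $\Trop(\P^n)$ corresponds to which affine chart so that the partition of unity argument goes through. This is essentially the same patching argument as in Proposition~\ref{global tropicalization for affine}, transported from $\R_\infty^n$ to $\Trop(\P^n)$; once the charts are set up correctly, the gluing and the final local check are routine and formally identical to the affine case. I expect no genuinely new analytic input is needed beyond Proposition~\ref{global tropicalization for affine}, Construction~\ref{delta-forms with T-tropicalizations}, and Proposition~\ref{properties of tropicalized delta-forms}.
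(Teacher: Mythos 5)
Your overall strategy -- tropicalize locally on affine pieces, re-embed $X$ into a single $\P^n$ by a Veronese-type construction so that the new embedding refines all local tropicalizations, then patch with a partition of unity on the tropical side -- is the same as the paper's, but two concrete steps fail as written. First, the charts $U_i=j^{-1}(\P^N\setminus H_i)=X\cap\{x_i\neq 0\}$ need not be affine: $X$ is only \emph{open} in its closure $\overline X$, so $U_i$ is an open subvariety of the affine variety $\overline X\cap\{x_i\neq 0\}$ and may fail to be affine (e.g.\ $X=\A^2\setminus\{0\}\subset\P^2$). Hence Proposition \ref{global tropicalization for affine} cannot be applied to the $U_i$ directly. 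The paper avoids this by using basic affine opens $U=\{h\neq 0\}$ of $\overline X$ with $h$ homogeneous, chosen so that $U\subset X$ and $x_i\mid h$; these are genuinely affine and, crucially, their chart coordinates can be written as ratios $p_j/h$ of homogeneous forms of one common degree $e$, which is exactly what makes the re-embedding work: one takes $q_0,\dots,q_n$ spanning the degree-$e$ forms so that $\overline X\to\P^n$ is a closed immersion whose restriction to each chart refines the local embedding. This is the step you defer as ``bookkeeping'', but it is where the actual work of the proof lies.

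Second, your patching formula does not reconstruct $\beta$. You first decompose $\beta=\sum_i\beta_i$ with an analytic partition of unity and tropicalize the pieces, $\beta_i=\psi_{i,\trop}^*(\beta_{i,\trop})$, and then multiply by a second, tropical partition of unity; but then
\[
\psi_\trop^*\Bigl(\sum_i \phi_{V_i}\,p_i^*(\beta_{i,\trop})\Bigr)=\sum_i \psi_\trop^*(\phi_{V_i})\,\beta_i,
\]
which differs from $\sum_i\beta_i=\beta$ because the functions $\psi_\trop^*(\phi_{V_i})$ are not identically $1$ on $\supp(\beta_i)$: the two layers of cutoffs double-count. The paper's partition-of-unity mechanism works only because each local tropical form represents $\beta$ \emph{itself} on its chart ($\beta|_V=\psi_\trop^*(\beta_{V,\trop})|_V$ with $V=\psi_\trop^{-1}(\Omega_V)$), so at each point either this identity holds or the tropical cutoff vanishes near the point, and the cutoffs sum to $1$ near $\psitrop(\supp\beta)$. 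Within your setup the repair is either to drop the analytic decomposition and tropicalize $\beta$ itself locally (this is Lemma \ref{first step for affine tropicalization}, the paper's route), or, keeping your decomposition, to replace the tropical partition of unity by individual cutoffs $\chi_i$ that are identically $1$ on a neighbourhood of $\psitrop(\supp\beta_i)$ and compactly supported in the $i$-th chart. Either repair essentially lands you on the paper's proof.
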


\begin{proof}
The proof is similar to the one in the affine case
and we indicate just the differences. We pick an immersion $X \to
\P^m$ into projective space $\P^m$ with projective coordinates
$x_0,\dots,x_m$. 
Let $\overline X$ be the closure of $X$ in $\P^{m}$.  
For $x \in \Xan$, there is some non-constant homogeneous polynomial $h
\in K[x_0,\dots,x_m]$ such that $x$ is contained in the basic affine
open subset  $U \coloneqq \{x \in \overline X\,|\, h \neq 0\}$.  
After multiplying $h$ by another polynomial, we may assume that $U
\subset X$ and that there is an $i\in\{0,\dots,m\}$ with that $x_i
\vert h$ which means $U \subset \{x_i \neq 0\}$.

Applying Lemma \ref{first step for affine tropicalization}, there is  a {relatively compact} open neighbourhood $V$ of $x$ in $U^\an$, a morphism $\psi_V\colon U \to \A^{n_V}$, an open neighbourhood $\Omega_V$ of $\psi_{V,\trop}(x)$ in $\R_\infty^{n_V}$ and $\beta_{V,\trop} \in B(\R_\infty^{n_V})$ such that $\beta|_{V} =\psi_{V,\trop}^*(\beta_{V,\trop})|_{V}$ on the open neighbourhood $V \coloneqq \psi_{V,\trop}^{-1}(\Omega_V)$ of $x$ in $U^\an$. 
From now on, we will write $U_V$ instead of $U$ for the basic affine
open subset involved with $V$ and $i(V)$ for the index $i$ used above. 

We cover the compact support $S \coloneqq \supp(\beta)$ by finitely
many such $V$'s collected in the set $I$ as before. Using that $U_V$
is a basic affine open subset, there are homogeneous polynomials $p_1,
\dots, p_n \in K[x_0,\dots,x_m]$ of the same degree $e$ such that
$\psi_V$ is given by $\psi_{V,j}=p_j/h^{k}$ on $U$. Replacing $h$ by
$h^k$, we may assume $k=1$. Then we set $p_0 \coloneqq h$.   A priori,
the integer $e$ depends on $V$. As we may enlarge it by multiplying
all polynomials by a power of the chosen $x_{i(V)}$ above, finiteness
of $I$ shows that we may assume $e$ independent of $V \in I$.   

We use now the list of all polynomials $p_j$ for varying $V \in I$ and
we expand the list to get a set $q_0,\dots,q_n$ generating the vector
space of homogeneous polynomials of degree $e$. Then we get a morphism
$\psi\colon X \to \P^n$ given by mapping $q_0,\dots,q_n$ to the
projective coordinates $y_0, \dots, y_n$. As we may choose $e$
sufficiently large, we may assume that the restrictions of
$q_0,\dots,q_n$ span $H^0(\overline X,\Ocal_{\P^m}(e))$ and hence
$\psi$ extends to a closed immersion ${\overline{X}} \to
\P^n$. In particular, $\psi$ is an immersion. By construction, the
restriction of $\psi$ to ${U_V}$ gives a closed immersion
$U_V \to \A^n$ which refines $\psi_V$ for any $V \in I$, where we
identify $\A^n$ with the standard open subset $\{y_{j(V)}\neq 0\}$ in
$\P^n$ with $y_{j(V)}$ the coordinate corresponding to the homogenous
polynomial $q_{i(V)}=p_0=h$ in the list from $V$.  Replacing $\psi_V$
by this refinement, we may assume that $\beta|_V$ is tropicalized by
the tropical chart $(\psi,V)$ with $V=\psi_\trop^{-1}(\Omega_V)$ for
an open subset $\Omega_V$ of $\Trop(\P^n)$ and by
$\beta_{V,\trop} \in B(\Trop(\P^m))$ using that $V$ is relatively compact.
Now the same partition of unity argument as in the proof of Proposition \ref{global tropicalization for affine} gives the claim.
\end{proof}

\begin{rem} \label{global tropicalization for toric varieties}
There is variant for global tropicalization. 
If $X$ is a normal variety which admits a closed embedding into a toric variety, then  every $\beta \in B_c(\Xan)$ can be tropicalized by a closed embedding $\psi\colon X \to X_\Sigma$ into a toric variety $X_\Sigma$ and by $\betatrop \in B_c(N_\Sigma)$. 
The crucial property is that  every closed embedding $\varphi\colon U \to \A^n$ of an affine open subset $U$ of $X$ can be refined to a closed embedding $\psi\colon U \to U_\sigma$ where $U_\sigma$ is the open affine subset of a toric variety $X_\Sigma$ associated to some $\sigma \in \Sigma$ in such a way that $\psi$ extends to an immersion of $X$ to $X_\Sigma$, see \cite[Example 4.13]{jell-2019}. 
Then we can proceed as in the proof of Proposition \ref{global tropicalization for affine}.	
\end{rem}

\subsection{Green functions} \label{subsec: non-arch Green functions} 
As we did in Definition \ref{formal delta-current partial compactifiction} for tropical toric varieties, we introduce $\delta$-currents on an analytic space $\Xan$ as linear
functionals on $\delta$-forms.  
For simplicity, we do not require any continuity of these linear functionals and call them \emph{formal $\delta$-currents}.

\begin{definition} \label{formal delta-current} 
Let $W\subset \Xan$ be an open subset.
A \emph{formal $\delta$-current} on $W$ is a linear functional on the
space $B_c(W)$ of compactly supported $\delta$-forms.  
It is a sheaf $E$ of bigraded differential $\R$-vector spaces with
differentials $d',d''$. 
We use the bigrading
\[
E^{p,q}(W)\coloneqq \Hom(B_c^{d-p,d-q}(W),\R)
\]
similarly as in Sections \ref{section: delta-forms} and \ref{sec:a tropical star-product}.
\end{definition}

\begin{ex} \label{current of integration}
We denote by $Z^p(X)$ the space of algebraic cycles of codimension $p$ on $X$.
For any $Z \in Z^p(X)$, we have the \emph{current of integration} $\delta_Z \in E^{p,p}(\Xan)$ given by 
\[
\delta_Z(\alpha)=\int_{\Zan} \alpha
\]
for $\alpha \in B_c^{d-p,d-p}(\Xan)$. 
Since the formula of Stokes holds \cite[Proposition 4.11]{mihatsch2021a}, we conclude that the formal $\delta$-current $\delta_Z$ is $d'$- and $d''$-closed.
\end{ex}

\begin{art} \label{piecewise smooth on analytic spaces}
We are mainly concerned with the sheaf $\PS$ of piecewise smooth forms
on $\Xan$ \cite[\S 4.2]{mihatsch2021a}. 
This is a sheaf of bigraded differential $\R$-algebras coming with
natural polyhedral differentials $\dpa,\dpb$ which do not necessarily agree with the derivatives $d',d''$ \emph{in the sense} of currents introduced above. 
Mihatsch \cite[Proposition 4.7]{mihatsch2021a} showed that 
\[
\PS^{p,q}(W)=B^{p,q,0}(W).
\]
The \emph{space of piecewise smooth functions on $W$} is {by definition} $\PS^{0,0}(W)$. 
\end{art}
  
\begin{definition}
  Let $D$ be a Cartier divisor of $X$. A function $g_{D}\colon
  X^{\an}\setminus |D|\to \R$ is called a \emph{Green function for $D$
    of piecewise smooth type} if for every open set $W\subset X$ and every
  local equation $t$ of $D$ on $W$ the function $g_{D}+\log|t|$ can
  be extended to a piecewise smooth function on the whole $W^{\an}$.  
\end{definition}

  \begin{ex}\label{exm:Green_functions_metric}
    A metric on a line bundle $L$ of $X$ is called \emph{piecewise
      smooth} if for any open subset $W$ and any nowhere vanishing
    section $t \in \Gamma(W,L)$, the function $-\log\|t\|$ is
    piecewise smooth on $W$.  
Any non-zero meromorphic section $s$ of $L$ gives rise to a
Green function $g_D \coloneqq -\log \|s\|$ of piecewise
  smooth type for the Cartier divisor $D=\Div(s)$ which we often consider as
a formal $\delta$-current in $E^{0,0}(\Xan)$. In fact every Green
function of piecewise smooth type is obtained in this way. Namely, if
$D$ is a divisor, $g_{D}$ a Green function for $D$ of piecewise smooth
type and $s$ a rational section of $\KO(D)$ with $\Div(s)=D$, then
there exists a unique piecewise smooth metric on $\KO(D)$ such that
$g_{D}=-\log\|s\|.$   
  \end{ex}

\begin{definition}\label{analytification-define-green-delta-forms}
	Given a pair $(Z,T)\in Z^p(X) \times E^{p-1,p-1}(\Xan)$, we define
	\[
	\omega(Z,T)\coloneqq d'd''T+\delta_Z\in E^{p,p}(\Xan).
	\]
	If the formal $\delta$-current $\omega(Z,T)$ is a $\delta$-form, then we call \emph{$T$ a Green $\delta$-current for $Z$}.
\end{definition}

\begin{rem} \label{closedness of cycles on X}
	Since $\delta_Z$ is $d'$- and $d''$-closed for any cycle $Z$, we conclude that the same is true for $\omega(Z,T)$.
\end{rem}

\begin{ex}\label{green-delta-function on X}
	Let $D$ be a  Cartier divisor on $X$ and let $g$ be a Green
        function of piecewise smooth type for $D$ with corresponding
        piecewise smooth metric $\metr$ on $L=O(D)$ as in
          Example \ref{exm:Green_functions_metric}. 
	The first Chern $\delta$-current $c_1(D,g)=c_1(L,\metr) \in
        E^{1,1}(\Xan)$ is locally on an open subset $W$ with nowhere
        vanishing $t \in \Gamma(W,L)$ defined by
        $-d'd''\log\|t\|$. Since $\log\|t\|$ is piecewise smooth, we
        conclude that $c_1(L,\metr) \in B^{1,1}(\Xan)$.  
	Then we have the 
	 \emph{Poincar\'e--Lelong equation} 
	\begin{equation} \label{Mihatsch's Poincare-Lelong}
		d'd''g=c_1(D,g) - \delta_D \in E^{1,1}(\Xan)
	\end{equation}
	which follows by applying locally \cite[Theorem 5.5]{mihatsch2021a} for the case $r=1$. In \ref{proof of non-archimedean Poincare--Lelong equation}, we will give an independent proof of the Poincar\'e--Lelong equation.
	We conclude that $\omega(D,g)=c_1(D,g)$ and hence $g$ is a Green $\delta$-function for $D$.
\end{ex}

\begin{rem} \label{product with green functions of piecewise smooth type}
Let $g$ be a Green function of piecewise smooth type for a Cartier divisor $D$ on $X$ and let $Z$ be a cycle of codimension $p$ in $X$ intersecting $D$ properly. 
Then $g|_Z$ is a Green function for the Cartier divisor $D \cdot Z =
D|_Z$ on $Z$  and we define $g\wedge \delta_Z \in E^{p,p}(\Xan)$ by
using the  push-forward of $g|_Z$  with respect to the immersion $Z
\to X$. More precisely, let $Z_{i}$, $i=1,\dots, r$ be the
irreducible components of $|Z|$ so that $Z=\sum n_{i}Z_{i}$. For each
$i$, let $\iota_{i}\colon Z_{i}\to X$ be the
inclusion. Then we define
\begin{displaymath}
  g\wedge \delta_Z=\sum_{i=1}^{r}
  n_{i}(\iota^{\an}_{i})_{\ast}([g|_{Z^{\an}_{i}}]). 
\end{displaymath}
Here $[g|_{Z^{\an}_{i}}]$ means the function $g|_{Z^{\an}_{i}}$ viewed as a formal
$\delta $-current on $Z^{\an}_{i}$.
\end{rem}

\begin{rem} \label{pull-back of Green functions}
Similarly, let $\varphi \colon X' \to X$ be a morphism of algebraic varieties.  
Let  $D$ be a Cartier divisor on $X$ which intersects $\varphi(X')$ properly and let $g$ be a Green function for $D$ of piecewise smooth type. 
Then $\varphi^*D$ is a well-defined Cartier divisor on $X'$ for which we have the Green function $\varphi^*g=g \circ \varphi$. We have 
\begin{equation}\label{pull-back-formula-divisors}
c_1(\varphi^*D,\varphi^*g) = \varphi^*c_1(D,g).
\end{equation}
\end{rem}

\subsection{Tropicalized Green functions}
We investigate extended tropicalizations of Green functions of piecewise smooth type and prove the Poincaré--Lelong equation.

\begin{art} \label{tropicalization of Cartier divisors}
Let $D$ be a Cartier divisor on $X$. If there is a morphism $\psi\colon X \to  X_\Sigma$
and a toric Cartier divisor $E$ of $X_\Sigma$ which intersects $\psi(X)$ properly, 
then we say that \emph{$D$ is tropicalized by $\psi$ and $D_\trop$}
where $D_\trop$ is the tropical toric Cartier divisor on $N_\Sigma$
corresponding to $E$. If $D$ is effective, then we require $E$
{to be} effective as well.

It is clear that $\supp(E)$ is a (closed) union of strata of
$X_\Sigma$ corresponding to cones $\tau$ of $\Sigma$ not containing
the sedentarity of {$\trop(\psi (X))$} and that
$\supp(D_\trop)=\trop(\supp(E))$ is the  union of the corresponding
$N(\tau)$, hence we conclude $D_\trop$ intersects $\psitrop(\Xan)$ properly.

It is clear that every (effective) Cartier divisor on an affine or projective variety can be tropicalized by a closed embedding to $\A^n$ and $\P^n$, respectively.
\end{art}

\begin{definition} \label{def: tropicalization of Green function}
Let $g$ be a Green function of piecewise smooth type for a Cartier divisor $D$ on $X$. We say that $g$ is \emph{tropicalized by a morphism $\psi\colon X \to {X_{\Sigma }}$ and by $g_\trop$ on $N_\Sigma$} if the Cartier divisor $D$ is tropicalized by $\psi$ and the tropical toric Cartier divisor $D_\trop$ on $N_\Sigma$ and if there is a Green function $g_\trop$ of piecewise smooth type for $D_\trop$ on $N_\Sigma$ such that $g= g_\trop \circ \psitrop$.

We say that $g$ is \emph{locally tropicalized at $x \in \Xan$} if there is a Zariski open neighbourhood $U$ of $x$ in $X$, a morphism $\psi \colon U \to X_\Sigma$ tropicalizing $D|_U$ and a Green function $g_\trop$ of piecewise smooth type for $D_\trop$ on $N_\Sigma$ such that $g=g_\trop \circ \psitrop$ on an open neighbourhood of $x$ in $\Uan$.
\end{definition}

\begin{prop} \label{first Chern form and tropicalization}
Let $g$ be a Green function of piecewise smooth type for the Car\-tier divisor $D$ on $X$ which is tropicalized by the morphism $\psi\colon X \to X_\Sigma$ and by the Green form $g_\trop$ for the tropical toric Cartier divisor $D_\trop$ on $N_\Sigma$. Then we have 
$$c_1(D,g)=\psi_\trop^*(c_1(D_\trop,g_\trop))$$
for the associated first Chern $\delta$-forms.
\end{prop}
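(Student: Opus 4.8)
The plan is to reduce the global statement to a purely local computation and then invoke the tropical Poincaré--Lelong machinery that has already been set up. The key point is that $c_1(D,g)$ and $c_1(D_\trop,g_\trop)$ are $\delta$-forms (by Example \ref{green-delta-function on X} and Theorem \ref{tropical PL equation for Green functions}, respectively), and $\psi_\trop^*$ is a homomorphism of sheaves of differential $\R$-algebras (Construction \ref{delta-forms with T-tropicalizations}, Proposition \ref{properties of tropicalized delta-forms}). Since $\delta$-forms form a sheaf and pull-back is compatible with restriction, it suffices to check the identity on a neighbourhood of an arbitrary point $x \in \Xan$.

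First I would fix $x \in \Xan$ and choose $\sigma \in \Sigma$ with $\psitrop(x) \in N(\sigma)$. Working on the affine toric open $U_\sigma \subset X_\Sigma$ and its preimage, the tropical toric Cartier divisor $D_\trop$ is given near $\psitrop(x)$ by a single $m_\sigma \in M$, so that $g_\trop = m_\sigma + h_\trop$ on a neighbourhood $\Omega$ of $\psitrop(x)$ in $N_\Sigma$ for a piecewise smooth function $h_\trop$ on $\Omega$, and by Definition \ref{first Chern delta form} one has $c_1(D_\trop,g_\trop)|_\Omega = d'd''h_\trop$. On the analytic side, the function $t \coloneqq \chi^{m_\sigma} \circ \psi$ (the character $\chi^{m_\sigma}$ pulled back via $\psi$, which makes sense since the sedentarity of $\psitrop$ near $x$ is $\sigma$ and $m_\sigma \in M$ is relevant to that stratum) is a local equation, up to the absolute value normalisation, and one checks that $g + \log|t| = h_\trop \circ \psitrop$ on a neighbourhood $W$ of $x$ in $\Xan$. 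Hence $c_1(D,g)|_W = d'd''(h_\trop \circ \psitrop) = \psi_\trop^*(d'd''h_\trop) = \psi_\trop^*(c_1(D_\trop,g_\trop))|_W$, using that $\psi_\trop^*$ commutes with $d'd''$ and that the pull-back of the piecewise smooth function $h_\trop$ is exactly the piecewise smooth function whose $d'd''$ computes $c_1(D,g)$ locally (Example \ref{green-delta-function on X}).

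The one point that needs care, and which I expect to be the main obstacle, is the bookkeeping of the asymptotic slope $m_\sigma$ near a boundary stratum: one must verify that the local equation $t$ of $E$ on the toric side pulls back under $\psi$ to a local equation of $D$ on $X$ whose valuation tropicalises to the linear functional $m_\sigma$, i.e. $-\log|t| = \chi^{m_\sigma}\circ \psitrop$ on the appropriate open set, and that $D$ being tropicalised by $\psi$ and $D_\trop$ (\S\ref{tropicalization of Cartier divisors}) guarantees precisely this compatibility of local equations. Once that identification is in place, the $\delta$-form identity follows formally from the compatibility of $\psi_\trop^*$ with $d',d''$ (Proposition \ref{properties of tropicalized delta-forms}), together with the fact (Proposition \ref{piecewise smooth forms as delta-forms}, \ref{piecewise smooth on analytic spaces}) that piecewise smooth functions pull back to piecewise smooth functions and that $c_1$ is defined locally as $d'd''$ of such a function. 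A partition of unity argument (or simply the sheaf property of $B^{1,1}$) then globalises the local identities to the asserted equality on all of $\Xan$.
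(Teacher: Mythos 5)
Your proposal is correct and takes essentially the same route as the paper: localize at $x\in\Xan$, split off the linear part $m_\sigma$ of $g_\trop$ so that both first Chern forms become $d'd''$ of piecewise smooth functions (the paper phrases this as adding a linear function to $g_\trop$ to reduce to $D_\trop=0$ and hence $D=0$), and conclude from the compatibility of $\psi_\trop^*$ with $d'$, $d''$ in Proposition \ref{properties of tropicalized delta-forms}. The bookkeeping point you single out — that $\chi^{m_\sigma}\circ\psi$ is a local equation of $D$ matching the asymptotic slope, which is guaranteed by the hypothesis that $D$ is tropicalized by $\psi$ and $D_\trop$ — is exactly what the paper's step ``hence $D=0$'' uses implicitly, so your argument is just a more explicit unwinding of the same reduction.
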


\begin{proof}
We may check this locally in $x \in \Xan$ and hence we may assume that $X_\Sigma$ is the open toric subvariety $X_\sigma$ for some $\sigma \in \Sigma$.  Adding to $g_\trop$ a linear function, we may assume that $D_\trop=0$ and hence $D=0$. Then $g_\trop$ is a bounded piecewise linear function in a neighbourhood $\Omega$ of $\psitrop(x)$ in $N_\Sigma$ and we have $c_1(D_\trop,g_\trop)=d'd''g_\trop$ on $\Omega$. Similarly, we have $c_1(D,g)=d'd''g$ on $\psi_\trop^{-1}(\Omega)$. The claim follows from Proposition \ref{properties of tropicalized delta-forms}.
\end{proof}

\begin{prop} \label{local tropicalization of green functions}
	Every Green function $g$ of piecewise smooth type for a
        Cartier divisor $D$ on $X$ can be locally tropicalized by a
        closed embedding $\psi\colon U \to \A^n$ and by a Green
        function of piecewise smooth type $g_\trop$ for $D_\trop$ on
        $\R_\infty^n$. If $X$ is affine, we may choose $U=X$.
\end{prop}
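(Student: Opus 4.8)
The plan is to write $g$ as the sum of a ``toric'' Green function $g^{0}$ for $D$, which tropicalizes essentially by construction, and a globally piecewise smooth function $h$, which can be tropicalized by Lemma~\ref{first step for affine tropicalization}, and then to glue the two tropicalizations into a single closed embedding into an affine space. First I would reduce to the case that $X$ is affine: for a general $X$ and a point $x\in\Xan$, replace $X$ by a Zariski open affine neighbourhood $U$ of the image of $x$ in $X$ and $D,g$ by their restrictions to $U$; a solution of the affine case for $(U,D|_{U},g|_{U^{\an}})$ is precisely a witness that $g$ is locally tropicalized at $x$, and when $X$ is already affine this neighbourhood can be taken to be $X$. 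So assume from now on that $X$ is affine and fix $x\in\Xan$.

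By \ref{tropicalization of Cartier divisors} (if $D$ is not effective, first write $D=D'+\Div(f)$ with $D'$ effective, using that on an affine variety every line bundle is isomorphic to an invertible ideal, and tropicalize the two summands separately), I would choose a closed embedding $\psi_{0}\colon X\to\A^{n_{0}}$ together with a torus-invariant Cartier divisor $E_{0}$ on $\A^{n_{0}}$ such that $\psi_{0}^{\ast}(E_{0})=D$, and let $D_{0,\trop}$ be the tropical toric Cartier divisor on $\R_{\infty}^{n_{0}}$ associated to $E_{0}$. Since the fan of $\A^{n_{0}}$ has a single maximal cone, $D_{0,\trop}$ is given on all of its toric charts by one and the same character $m_{0}$ of $\A^{n_{0}}$, so the linear function $v\mapsto m_{0}(v)$ is a Green function $g^{0}_{0,\trop}$ of \emph{smooth} type for $D_{0,\trop}$ by Proposition~\ref{associated tropical toric Cartier divisor}. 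Its pull-back $g^{0}\coloneqq g^{0}_{0,\trop}\circ\psi_{0,\trop}$ is then, by \ref{pull-back of tropical Green functions}, a Green function of smooth (in particular piecewise smooth) type for $D=\psi_{0}^{\ast}(E_{0})$ on $\Xan$. Since $g$ and $g^{0}$ are Green functions of piecewise smooth type for the \emph{same} Cartier divisor $D$, for every local equation $t$ of $D$ on an open $W\subseteq X$ the difference $h\coloneqq g-g^{0}=(g+\log|t|)-(g^{0}+\log|t|)$ is a difference of piecewise smooth functions on $W^{\an}$; hence $h$ is a global piecewise smooth function, i.e.\ $h\in\PS^{0,0}(\Xan)=B^{0,0,0}(\Xan)$ by \ref{piecewise smooth on analytic spaces}.

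Next I would apply Lemma~\ref{first step for affine tropicalization} to the $\delta$-form $h$ (of bidegree $(0,0)$) on $W=\Xan$; discarding the components of bidegree $\neq(0,0)$ one obtains a closed embedding $\psi_{1}\colon X\to\A^{n_{1}}$, a piecewise smooth function $h_{\trop}\in B^{0,0}(\R_{\infty}^{n_{1}})=\PS^{0,0}(\R_{\infty}^{n_{1}})$ (Proposition~\ref{piecewise smooth forms as delta-forms}), and an open neighbourhood $V_{1}=\psi_{1,\trop}^{-1}(\Omega_{1})$ of $x$ with $h|_{V_{1}}=\psi_{1,\trop}^{\ast}(h_{\trop})|_{V_{1}}$. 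Then I would set $\psi\coloneqq(\psi_{0},\psi_{1})\colon X\to\A^{n_{0}}\times\A^{n_{1}}=\A^{n}$, which is again a closed embedding, and write $p_{0}\colon\R_{\infty}^{n}\to\R_{\infty}^{n_{0}}$, $p_{1}\colon\R_{\infty}^{n}\to\R_{\infty}^{n_{1}}$ for the two coordinate projections, which are equivariant morphisms of tropical toric varieties and satisfy $p_{i}\circ\psitrop=\psi_{i,\trop}$. Putting $D_{\trop}\coloneqq p_{0}^{\ast}(D_{0,\trop})$ — which tropicalizes $D$ via $\psi$, since the corresponding toric divisor $E$ on $\A^{n}$ satisfies $\psi^{\ast}(E)=\psi_{0}^{\ast}(E_{0})=D$ — I would define
\[
g_{\trop}\coloneqq p_{0}^{\ast}(g^{0}_{0,\trop})+p_{1}^{\ast}(h_{\trop}).
\]
By \ref{pull-back of tropical Green functions} the first term is a Green function of smooth type for $D_{\trop}$, and by \ref{functoriality of ps} the second is a piecewise smooth function on $\R_{\infty}^{n}$, so $g_{\trop}$ is a Green function of piecewise smooth type for $D_{\trop}$. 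On the open neighbourhood $V_{1}$ of $x$ one then computes
\[
g_{\trop}\circ\psitrop\big|_{V_{1}}=\bigl(g^{0}_{0,\trop}\circ\psi_{0,\trop}+h_{\trop}\circ\psi_{1,\trop}\bigr)\big|_{V_{1}}=\bigl(g^{0}+h\bigr)\big|_{V_{1}}=g\big|_{V_{1}},
\]
which exhibits $g$ as locally tropicalized at $x$ by the closed embedding $\psi\colon X\to\A^{n}$ and the tropical Green function $g_{\trop}$, with $U=X$ in the affine case.

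I do not expect a deep obstacle. The two points that need care are: checking that tropical toric Cartier divisors on $\A^{n_{0}}$ really admit a \emph{smooth}-type Green function on $\R_{\infty}^{n_{0}}$ — this is what makes $h=g-g^{0}$ polar-free and hence globally piecewise smooth on $\Xan$ — and the compatibility of the product embedding $\psi=(\psi_{0},\psi_{1})$ with the two separate tropicalizations, which is handled exactly by the product and partition-of-unity bookkeeping already carried out in the proof of Proposition~\ref{global tropicalization for affine}.
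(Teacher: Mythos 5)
Your proposal is correct and takes essentially the same route as the paper's proof: reduce to affine $X$, tropicalize $D$ by a closed embedding into affine space via \ref{tropicalization of Cartier divisors}, pull back a tropical Green function for $D_\trop$ so that its difference with $g$ is a piecewise smooth function, tropicalize that difference by a second embedding, and combine everything through the product embedding with $g_\trop$ the sum of the two pulled-back pieces. The only (harmless) deviations are that you construct the reference Green function explicitly as the linear function attached to the toric divisor on $\A^{n_0}$ instead of ``picking any'' piecewise smooth one, and that you tropicalize the piecewise smooth difference with the local Lemma \ref{first step for affine tropicalization}, whereas the paper uses Proposition \ref{global tropicalization for affine} on a relatively compact neighbourhood together with a partition-of-unity cutoff.
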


\begin{proof}
	Passing to an affine open subset, we may assume $X$ affine. {By
        \ref{tropicalization of Cartier divisors}}
        there is a closed embedding $\phi\colon X \to \A^n$ such that
        $D$ is tropicalized by $\psi$ and a tropical Cartier divisor
        $D_\trop$. We pick any Green function $g'_\trop$ of piecewise
        smooth type for $D_\trop$ on $\R_\infty^n$. Then $g' \coloneqq
        g_\trop'$ is a Green function of piecewise smooth type for
        $D$ and hence $g-g'$ is a piecewise smooth function on
        $\Uan$. We pick any relatively compact open neighbourhood $W$
        of $x$ in $\Xan$. Using a partition of unity argument and
        using that the piecewise smooth functions are precisely the
        elements in $B^{0,0,0}$ both on the tropical side and on the
        non-archimedean side, we conclude from Proposition \ref{global
          tropicalization for affine} that there is closed embedding
        $\phi'\colon U \to \A^{n'}$ and a piecewise smooth function
        $h$ on $\R_\infty^{n'}$ which tropicalizes $g-g'$ on $W$.
        We
        may replace $\phi$ and $\phi'$ by their product $\psi$ as this
        {is} a common refinement. By construction, the Green
        function $g$ is tropicalized by $\psi$ and by the piecewise
        smooth Green function $g_\trop \coloneqq g_\trop'+h$. 
\end{proof}

\begin{prop} \label{tropicalization of Green functions}
Any Green function  of piecewise smooth type for an (effective) Cartier divisor $D$ on a projective variety $X$ can be tropicalized by a closed embedding $\psi \colon X \to X_\Sigma$  and by a Green function  of piecewise smooth type for $D_\trop$ on $N_\Sigma$. We may choose $X_\Sigma= \P^{n}\times \P^{n'}$.
\end{prop}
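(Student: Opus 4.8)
The plan is to combine the projective global tropicalization result for $\delta$-forms (Proposition \ref{global tropicalization for quasi-projective}) with the local tropicalization result for Green functions (Proposition \ref{local tropicalization of green functions}), and then to form the product of the resulting embeddings, landing in a multiprojective space as indicated in \S\ref{subsec: non-archimedean Arakelov theory and extended tropicalizations}. The first step is to fix a closed embedding $\iota\colon X\hookrightarrow \P^{n}$ realizing $X$ as a projective variety. By \ref{tropicalization of Cartier divisors}, after possibly composing with a Veronese embedding, we may assume that $D$ is tropicalized by $\iota$ and an (effective, if $D$ is effective) toric Cartier divisor $E$ on $\P^n$ which intersects $\iota(X)$ properly; let $D_{\trop}'$ be the associated tropical toric Cartier divisor on $\Trop(\P^n)=N_{\Sigma(\P^n)}$. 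Choose the canonical Green function $g_{\trop}'$ of smooth type for $D_{\trop}'$ on the complete tropical toric variety $\Trop(\P^n)$ from Example \ref{canonical Green function}. Then $g' \coloneqq g_{\trop}'\circ\iota_{\trop}$ is a Green function of piecewise smooth type for $D$, so the difference $h\coloneqq g-g'$ is a \emph{global} piecewise smooth function on $\Xan$, that is, an element of $B_c^{0,0,0}(\Xan)=\PS^{0,0}(\Xan)$ once we note that $h$ has compact support because $\Xan$ is compact.

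Next I would invoke Proposition \ref{global tropicalization for quasi-projective} applied to the compactly supported $\delta$-form (in fact piecewise smooth function) $h$: there is an immersion $\psi'\colon X\to\P^{n'}$ and $\betatrop\in B(\Trop(\P^{n'}))$ with $h=\psi'^{*}_{\trop}(\betatrop)$; since $h$ is piecewise smooth of bidegree $(0,0)$ and the pull-back $\psi'^{*}_{\trop}$ respects the trigrading (Construction \ref{delta-forms with T-tropicalizations}, Proposition \ref{properties of tropicalized delta-forms}), we may take $\betatrop$ to be a piecewise smooth function $h_{\trop}$ on $\Trop(\P^{n'})$. (Alternatively one uses Proposition \ref{local tropicalization of green functions} affine-locally and then patches with a partition of unity exactly as in the proof of Proposition \ref{global tropicalization for affine}; since $X$ is projective, $\psi'$ is automatically a closed embedding after enlarging the linear system.) Now form the product embedding $\psi\coloneqq (\iota,\psi')\colon X\to\P^{n}\times\P^{n'}=X_\Sigma$, which is a closed embedding since $\iota$ already is. Write $\Sigma=\Sigma(\P^n)\times\Sigma(\P^{n'})$ so that $N_\Sigma=\Trop(\P^n)\times\Trop(\P^{n'})$, with projections $q_1,q_2$ to the two factors; these are equivariant morphisms of tropical toric varieties, and $\psi_{\trop}=(q_1,q_2)\circ\psi_{\trop}$ with $q_1\circ\psi_{\trop}=\iota_{\trop}$ and $q_2\circ\psi_{\trop}=\psi'_{\trop}$.

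Then I would set $E_{X_\Sigma}\coloneqq q_1^{*}(E)$, a toric Cartier divisor on $X_\Sigma$; it is effective if $E$ is, and since $q_1\circ\psi_\trop=\iota_\trop$ and $E$ intersects $\iota(X)$ properly, $E_{X_\Sigma}$ intersects $\psi(X)$ properly and $\psi^{*}(E_{X_\Sigma})=\iota^{*}(E)=D$. Its associated tropical toric Cartier divisor is $D_{\trop}\coloneqq q_1^{*}(D_{\trop}')$ by Example \ref{pull-back of tropical toric Cartier divisor}. Define the tropical Green function
\[
g_{\trop}\coloneqq q_1^{*}(g_{\trop}')+q_2^{*}(h_{\trop})
\]
on $N_\Sigma$. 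By Remark \ref{notation for Green functions} and \ref{pull-back of tropical Green functions}, $q_1^{*}(g_{\trop}')$ is a Green function of smooth type for $D_{\trop}=q_1^{*}(D_{\trop}')$, while $q_2^{*}(h_{\trop})$ is a piecewise smooth function on $N_\Sigma$ (pull-back of a piecewise smooth form, using Remark \ref{functoriality of ps} or Construction \ref{delta-forms with T-tropicalizations}); hence $g_{\trop}$ is a Green function of piecewise smooth type for $D_{\trop}$. Finally, compatibility of pull-back with composition gives
\[
g_{\trop}\circ\psi_{\trop}= g_{\trop}'\circ q_1\circ\psi_\trop + h_{\trop}\circ q_2\circ\psi_\trop = g_{\trop}'\circ\iota_\trop + h_{\trop}\circ\psi'_\trop = g'+h = g,
\]
so $g$ is tropicalized by $\psi$ and $g_{\trop}$, which is the assertion. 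I expect the only delicate point to be bookkeeping: matching up the ``effective'' hypothesis (which forces choosing $E$ effective in \ref{tropicalization of Cartier divisors}, while the smooth canonical Green function $g_{\trop}'$ for a possibly non-effective-looking $D_{\trop}'$ is unproblematic), and confirming that the piecewise smooth form produced by Proposition \ref{global tropicalization for quasi-projective} can indeed be taken of pure type $(0,0,0)$ — this follows since the trigrading is preserved by $\psi'^{*}_{\trop}$ and $h$ itself lies in $B^{0,0,0}$, so its tropical representative may be replaced by its $(0,0,0)$-component. Everything else is a direct application of the projection/pull-back formalism already established in Sections \ref{Section: tropical Green functions and the PL-equation}--\ref{section: Non-archimedean Arkelov theory}.
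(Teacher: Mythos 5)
Your proposal is correct and follows essentially the same route as the paper: tropicalize $D$ by a closed embedding into $\P^n$, pick an auxiliary tropical Green function $g_\trop'$ so that $g-g'$ becomes a globally defined piecewise smooth function (an element of $B^{0,0,0}(\Xan)$, with no compact-support issue since $X$ is projective), tropicalize that function by Proposition \ref{global tropicalization for quasi-projective}, and pass to the product embedding into $\P^n\times\P^{n'}$ with $g_\trop$ the sum of the two pull-backs along the projections. The paper's proof is exactly this, stated by reduction to Proposition \ref{local tropicalization of green functions} with $U=X$ and $W=\Xan$; your extra bookkeeping (closedness of the product embedding, preservation of the trigrading under $\psi_\trop^*$) only makes explicit what the paper leaves implicit.
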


\begin{proof}
The proof is the same as for Proposition \ref{local tropicalization of green functions}. It is even easier as we may use $U=X$. Since we don't have the problem with the compact support of the piecewise smooth function $g-g'$, we can also choose $W=\Xan$ and hence the tropicalization is global.
\end{proof}

\begin{art} \label{push-forward of delta-currents}
Obviously, we have a \emph{push-forward} of formal $\delta$-currents
with respect to proper morphisms. Let us consider a proper morphism
$\psi\colon X \to X_\Sigma$, e.g.~a closed immersion. Using that
$\trop\colon X_\sigma \to N_\Sigma$ is a proper map, we conclude that
$\psitrop$ is a proper map of topological spaces. We conclude that
every formal $\delta$-current $T$ on $\Xan$ induces a formal
$\delta$-current $\psi_{\trop,*}(T)$ on $N_\Sigma$.  
\end{art}

\begin{rem} \label{compatibility of wedge and tropicalization}
Let $D$ be an effective Cartier divisor on $X$ which intersects $Z \in Z^p(X)$ properly. We consider a Green function $g$ of piecewise smooth type for $D$ which is tropicalized by the closed embedding $\psi\colon X \to X_\Sigma$ and by the Green function $g_\trop$ of piecewise smooth type for $D_\trop$ on $N_\Sigma$. Then it follows from Example \ref{integration and global tropicalization} that $C\coloneqq \psi_{\trop,*}(Z)$ is a classical tropical cycle of dimension $d$ on $N_\Sigma$. 

Using that the toric Cartier divisor $E$ on $X_\Sigma$ corresponding
to {$D_\trop$}  is effective, we may see $E$ as a closed subscheme whose underlying set is $\supp(E)$. This is compatible with preimages and hence $E$ intersects $\psi_*(Z)$ properly. Using that $E$ is toric with associated tropical toric divisor $D_\trop$, we conclude that $D_\trop$ intersects $\psi_{\trop,*}(Z)$ properly.
We claim that
\begin{equation} \label{product of Green functions with cycles and tropicalization}
\psi_{\trop,*}(g\wedge \delta_Z)= g_\trop \wedge \delta_C.
\end{equation}
Indeed, we have to test that against $\betatrop \in B_c^{d,d}(N_\Sigma)$. By linearity, we may assume that $Z=X$  and that $\psi(X)$ has sedentarity $0$. The support $S$  of the top-degree $\delta$-form $\beta \coloneqq \psi_\trop^*(\betatrop) \in B_c(\Xan)$ is contained in the dense orbit of $X_\Sigma$ and the support $S_\trop$ of the top-degree polyhedral current $\betatrop \wedge \delta_C$ on $N_\Sigma$ is contained in $N_\R$. It follows that the restrictions of $g$ and $g_\trop$ to these supports are given by piecewise smooth functions and hence  we have $g \beta \in B_c^{d,d}(\Xan)$ which is tropicalized by $\phi_\trop g_\trop \betatrop$ for a suitable  smooth function $\phi_\trop$ with compact support in $N_\R$ which is identically $1$ on $\psi_\trop(S) \cup S_\trop$. Then we may replace in the evalution of \eqref{product of Green functions with cycles and tropicalization} at $\betatrop$ the Green function $g_\trop$ by $\phi_\trop g$ and the claim  follows indeed from \eqref{integration of globally tropicalized forms}.
\end{rem}

\begin{art}[Proof of the Poincar\'e--Lelong equation] \label{proof of non-archimedean Poincare--Lelong equation}
We are going to proof \eqref{Mihatsch's Poincare-Lelong} using our
corresponding result on tropical toric varieties. Since the equation
is local, we may assume that $X$ is affine and that $D$ is
effective. By Proposition \ref{local tropicalization of green
  functions}, there is a closed embedding $\psi\colon X \to \A^n$
which tropicalizes $D$ and a Green function $g_\trop$ of piecewise
smooth type for $D_\trop$ which tropicalizes $g$. We have to test
\eqref{Mihatsch's Poincare-Lelong} against $\beta \in
B_c^{d-1,d-1}(\R_\infty^n)$. Replacing $\psi$ by a refinement and
using Proposition \ref{global tropicalization for affine}, we may
assume that $\beta$ is tropicalized by $\betatrop \in
B_c^{d-1,d-1}(\R_\infty^n)$.  
We have
$$ [d'd''g](\beta)=[d'd''g](\psi_\trop^*\betatrop)=\psi_{\trop,*}[d'd''g](\betatrop)=\psi_{\trop,*}[g](d'd''\betatrop).$$
Let $C$ be the classical tropical cycle in $N_\Sigma$ associated to $X$, see Example \ref{integration and global tropicalization}. 
Using \eqref{product of Green functions with cycles and tropicalization}, we get
$$ [d'd''g](\beta)=g_\trop \wedge \delta_C(d'd''\betatrop)=(d'd''(g_\trop \wedge \delta_C))(\betatrop).$$
Now the tropical Poincar\'e--Lelong equation for Green functions of piecewise smooth type in Theorem \ref{tropical PL equation for Green functions} shows that 
\[
d'd''(g_\trop \wedge \delta_C) = c_1(D_\trop,g_\trop)\wedge \delta_C - \delta_{D_\trop \cdot C}
\]
using that the Weil divisor associated to a tropical toric Cartier divisor agrees with its infinite part. We conclude that 
\[
[d'd''g](\beta)= \delta_C(c_1(D_\trop,g_\trop)\wedge \betatrop)-\delta_{D_\trop \cdot C}(\betatrop).
\]
Using that $D_\trop \cdot C$ is the classical tropical cycle associated to the Weil divisor $D \cdot X$ of $D$ and using Proposition \ref{first Chern form and tropicalization}, the same steps backwards as above show the Poincar\'e--Lelong equation \eqref{Mihatsch's Poincare-Lelong}.
\qed
\end{art}

\subsection{The $*$-product with Green functions of piecewise smooth type} \label{subsec: star-product with ps Green functions}
We construct a $*$-product with Green functions of piecewise smooth type for a divisor.

\begin{definition}[$*$-product]\label{star product on X}
For a  Green function $g$ of piecewise smooth type for $D$  and a pair $(Z,T)\in Z^p(X) \times E^{p-1,p-1}(\Xan)$ such that $D$ intersects $Z$ properly, let
\[
(D,g) * (Z,T) \coloneqq (D \cdot Z, g \wedge \delta_Z+\omega(D,g)\wedge T)\in {Z^{p+1}(X) \times E^{p,p}(\Xan).}
\]
Using that $\omega(D,g)$ is a $\delta$-form as seen in Example \ref{green-delta-function on X},  the term $\omega(D,g)\wedge T$ is defined as usual and the term $g\wedge\delta_Z$ is defined in Remark \ref{product with green functions of piecewise smooth type}.
\end{definition}

\begin{prop} \label{multiplicativity of omega on X}
In the situation of Definition \ref{star product on X}, we have
\[
\omega((D,g) * (Z,T)) =\omega(D,g)\wedge \omega(Z,T).
\]
\end{prop}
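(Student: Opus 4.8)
\textbf{Plan for the proof of Proposition \ref{multiplicativity of omega on X}.}
The strategy is to mimic verbatim the proof of the tropical counterpart, Proposition \ref{multiplicativity of omega}, since all the ingredients have now been transported to the non-archimedean setting. First I would unwind the definitions: by Definition \ref{star product on X},
\[
\omega\bigl((D,g) * (Z,T)\bigr)=d'd''\bigl(g \wedge \delta_Z+\omega(D,g)\wedge T\bigr)+\delta_{D\cdot Z}.
\]
Next I would use that $\omega(D,g)=c_1(D,g)$ is a $\delta$-form (Example \ref{green-delta-function on X}) which is $d'$- and $d''$-closed by Remark \ref{closedness of cycles on X}; hence the Leibniz rule for $\delta$-forms gives $d'd''\bigl(\omega(D,g)\wedge T\bigr)=\omega(D,g)\wedge d'd''T$, so that
\[
\omega\bigl((D,g) * (Z,T)\bigr)=d'd''(g \wedge \delta_Z)+\omega(D,g)\wedge d'd''T+\delta_{D\cdot Z}.
\]

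The key analytic input is the Poincar\'e--Lelong equation in its relative form $d'd''(g\wedge \delta_Z)=c_1(D,g)\wedge \delta_Z-\delta_{D\cdot Z}$. I would obtain this by restricting the Cartier divisor and the Green function to (the components of) $Z$: by Remark \ref{product with green functions of piecewise smooth type}, $g|_Z$ is a Green function of piecewise smooth type for $D\cdot Z=D|_Z$ on $Z$, and $g\wedge\delta_Z$ is the push-forward of $[g|_Z]$ along the immersion $\iota\colon Z\to X$; applying the Poincar\'e--Lelong equation \eqref{Mihatsch's Poincare-Lelong} on each irreducible component $Z_i$, together with the pull-back formula $c_1(\iota_i^*D,\iota_i^*g)=\iota_i^*c_1(D,g)$ from \eqref{pull-back-formula-divisors} and the fact that $\delta_Z$ is $d'$- and $d''$-closed, yields the desired identity. (Alternatively one can cite the higher-codimension version \cite[Theorem 5.5]{mihatsch2021a}, but the reduction via \ref{proof of non-archimedean Poincare--Lelong equation} keeps the argument self-contained.) Substituting, the term $\delta_{D\cdot Z}$ cancels and we are left with
\[
\omega\bigl((D,g) * (Z,T)\bigr)=\omega(D,g)\wedge \delta_Z+\omega(D,g)\wedge d'd''T.
\]

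Finally, using $d'd''T=\omega(Z,T)-\delta_Z$ from Definition \ref{analytification-define-green-delta-forms}, the right-hand side becomes $\omega(D,g)\wedge\bigl(\delta_Z+\omega(Z,T)-\delta_Z\bigr)=\omega(D,g)\wedge\omega(Z,T)$, which is the claim. The only point requiring slight care is the proper-intersection hypothesis: it is exactly what guarantees that $g|_Z$ is a genuine Green function for $D\cdot Z$ on $Z$ (so that $g\wedge\delta_Z$ is well defined) and that the restricted Poincar\'e--Lelong equation applies componentwise; I expect this bookkeeping, rather than any genuine difficulty, to be the main thing to get right, since every structural result (Leibniz rule, closedness of cycles, Poincar\'e--Lelong, pull-back compatibility of $c_1$) is already available.
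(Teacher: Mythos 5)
Your proposal is correct and follows essentially the same route as the paper's proof: expand the definition, use $d'$- and $d''$-closedness of $\omega(D,g)$ plus the Leibniz rule, apply the Poincar\'e--Lelong equation to $g|_Z$ componentwise via push-forward along $Z\to X$ (using compatibility of $c_1$ with pull-back and that $D|_Z$ pushes forward to $D\cdot Z$), and conclude with $d'd''T=\omega(Z,T)-\delta_Z$. The extra bookkeeping you flag about proper intersection and componentwise restriction is exactly the point the paper also relies on, so nothing is missing.
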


\begin{proof}
	Using the definitions, we have
	\[
	\omega((D,g) * (Z,T))=d'd''(g \wedge \delta_Z+\omega(D,g)\wedge T)+\delta_{D\cdot Z}.
	\]
	Since $\omega(D,g)$ is $d'$- and $d''$-closed {by Remark \ref{closedness of cycles on X}}, the Leibniz rule yields
	\[
	\omega((D,g) * (Z,T))=d'd''(g \wedge \delta_Z)+\omega(D,g)\wedge d'd''T+\delta_{D\cdot Z}.
	\]
	Now we use that $g\wedge \delta_Z$ is the push-forward of $g|_Z$ with respect to the immersion $Z \to N_\Sigma$ and that $d',d''$ commute with this push-forward of currents. In combination with the Poincar\'e--Lelong equation \eqref{Mihatsch's Poincare-Lelong} for $g|_Z$ on the  cycle $Z$, we deduce
	\[
	\omega((D,g) * (Z,T))= \omega(D,g)\wedge \delta_Z- \delta_{D\cdot Z}+\omega(D,g)\wedge d'd''T+\delta_{D\cdot Z}
	\]
	using that $g|_Z$ is a Green $\delta$-current for $D|_Z$ by Example \ref{green-delta-function on X}  and that the push-forward of $D|_Z$ to $X$ is $D \cdot Z$.
	Finally,  $d'd''T=\omega(Z,T)-\delta_Z$ leads to the claim.
\end{proof}

\begin{lem} \label{star-product and tropicalization}
Let $g$ be a Green function of piecewise smooth type for the effective Cartier divisor $D$ on $X$ and let  $(Z,T)\in Z^p(X) \times E^{p-1,p-1}(\Xan)$ such that $D$ intersects $Z$ properly.  We consider a closed embedding $\psi\colon X \to X_\Sigma$ such that $g$ is tropicalized by the Green function $g_\trop$ of piecewise smooth type for $D_\trop$. Then $D_\trop$ intersects the classical tropical cycle $C\coloneqq \psi_{\trop,*}(Z)$ properly  and we have
$$\psi_{\trop,*}((D,g)*(Z,T))=(D_\trop,g_\trop)*(C,\psi_{\trop,*}(T)).$$
\end{lem}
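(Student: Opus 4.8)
The plan is to unwind both sides of the claimed identity using the definitions of the $*$-products on $X$ (Definition \ref{star product on X}) and on $N_\Sigma$ (Definition \ref{star product}), and then match the pieces term by term using the compatibility results already proven. First I would record that $D_\trop$ intersects $C=\psi_{\trop,*}(Z)$ properly: this was observed in Remark \ref{compatibility of wedge and tropicalization}, which requires $D$ effective so that the corresponding toric Cartier divisor $E$ on $X_\Sigma$ can be viewed as a closed subscheme with support $\supp(E)$; properness of intersection is then preserved under $\psi_\trop$. Next, by definition
\[
(D,g)*(Z,T) = (D\cdot Z,\ g\wedge\delta_Z + \omega(D,g)\wedge T),
\]
so applying $\psi_{\trop,*}$ and using linearity of push-forward of formal $\delta$-currents, it suffices to check the three identities
\[
\psi_{\trop,*}(\delta_{D\cdot Z}) = \delta_{D_\trop\cdot C},\qquad
\psi_{\trop,*}(g\wedge\delta_Z) = g_\trop\wedge\delta_C,\qquad
\psi_{\trop,*}(\omega(D,g)\wedge T) = \omega(D_\trop,g_\trop)\wedge \psi_{\trop,*}(T),
\]
since the right-hand side is by Definition \ref{star product} equal to $(D_\trop\cdot C,\ g_\trop\wedge\delta_C + \omega(D_\trop,g_\trop)\wedge\psi_{\trop,*}(T))$.

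The first identity is the statement that the classical tropical cycle associated to the Weil divisor $D\cdot Z$ is the tropical intersection product $D_\trop\cdot C$; this follows from the fact that $D\cdot Z$ is tropicalized by $\psi$ and $D_\trop$ together with the compatibility of tropicalization and intersection products — more precisely, it is exactly the computation carried out in \ref{proof of non-archimedean Poincare--Lelong equation}, where it is noted that $D_\trop\cdot C$ is the classical tropical cycle associated to the Weil divisor $D\cdot X$, applied with $X$ replaced by the cycle $Z$ (reduce to the irreducible components $Z_i$ of $|Z|$ and use that $\psi|_{Z_i}$ tropicalizes the restriction of $D$). The second identity is precisely equation \eqref{product of Green functions with cycles and tropicalization} from Remark \ref{compatibility of wedge and tropicalization}. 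For the third identity, recall from Example \ref{green-delta-function on X} that $\omega(D,g)=c_1(D,g)$ is a $\delta$-form, and likewise $\omega(D_\trop,g_\trop)=c_1(D_\trop,g_\trop)$ by Example \ref{green-delta-divisor}; by Proposition \ref{first Chern form and tropicalization} we have $c_1(D,g)=\psi_\trop^*(c_1(D_\trop,g_\trop))$. Then the projection formula for the wedge product of a pulled-back $\delta$-form with a formal $\delta$-current — that is, $\psi_{\trop,*}\big(\psi_\trop^*(\alpha)\wedge T\big)=\alpha\wedge\psi_{\trop,*}(T)$, which is the analogue used repeatedly in \S\ref{subsec: non-archimedean Arakelov theory and extended tropicalizations} and follows directly from the definitions of pull-back of $\delta$-forms, push-forward of formal $\delta$-currents, and the compatibility of $\psi_\trop^*$ with integration in Proposition \ref{properties of tropicalized delta-forms} — yields the claim.

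I expect the only genuine subtlety to be bookkeeping around the effectivity hypothesis and the reduction to irreducible components in the first identity: one must be careful that "$D_\trop$ intersects $\psi_{\trop,*}(Z)$ properly" really is guaranteed (so that $D_\trop\cdot C$ is defined) and that the tropicalization of the Weil divisor $D\cdot Z$ computed componentwise agrees with the tropical intersection product $D_\trop\cdot C$ rather than merely with $D_\trop\cdot\psi_{\trop,*}(Z_i)$ summed with multiplicities — but this is exactly handled by the linearity of the tropical intersection product (Construction \ref{intersection product with tropical Cartier divisor}) and of $\psi_{\trop,*}$, together with the fact, already used in \ref{proof of non-archimedean Poincare--Lelong equation}, that tropicalization intertwines $D\mapsto D\cdot(-)$ with $D_\trop\mapsto D_\trop\cdot(-)$. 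Everything else is a direct substitution of results stated earlier in the paper, so the proof should be short.
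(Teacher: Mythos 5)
Your proposal is correct and follows essentially the same route as the paper's proof: the paper likewise reduces the identity to the proper-intersection statement and the equality \eqref{product of Green functions with cycles and tropicalization} from Remark \ref{compatibility of wedge and tropicalization}, Proposition \ref{first Chern form and tropicalization}, and the definitions of the two $*$-products (with the cycle-level compatibility $\psi_{\trop,*}(D\cdot Z)=D_\trop\cdot C$ handled as in \ref{proof of non-archimedean Poincare--Lelong equation}). You merely spell out the three componentwise identities and the formal projection formula $\psi_{\trop,*}(\psi_\trop^*\alpha\wedge T)=\alpha\wedge\psi_{\trop,*}T$ that the paper leaves implicit.
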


\begin{proof}
We have already seen in Remark \ref{compatibility of wedge and tropicalization} that $C$ is a classical tropical cycle of $N_\Sigma$ and that $D_\trop$ intersects $C$ properly. Now the claim follows from the definition of the $*$-product together with \eqref{product of Green functions with cycles and tropicalization} and Proposition \ref{first Chern form and tropicalization}.
\end{proof}

\begin{prop} \label{analytification-symmetry of star-product}
Consider a pair $(Z,T)\in Z^p(X) \times E^{p-1,p-1}(\Xan)$ and Cartier divisors $D_1,D_2$ on $X$ intersecting the  cycle $Z$ properly. 
For Green functions $g_1,g_2$ for $D_1,D_2$ of piecewise smooth type, we have
\[
(D_1,g_1)*(D_2,g_2)*(Z,T)=(D_2,g_2)*(D_1,g_1)*(Z,T) \quad \text{\rm modulo} \quad \mathrm{Im}(d'+d'')
\]	
where 
\[
\mathrm{Im}(d'+d'')=(0,{d'E^{p,p+1}(\Xan)+d''E^{p+1,p}(\Xan)})\subset {Z^{p+2}(X) \times E^{p+1,p+1}(\Xan)}.
\]
\end{prop}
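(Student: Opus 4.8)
The plan is to mimic the proof of its tropical counterpart, Proposition~\ref{symmetry of star-product}, replacing the tropical Poincar\'e--Lelong equation of Theorem~\ref{tropical PL equation for Green functions} by the non-archimedean one \eqref{Mihatsch's Poincare-Lelong}, and the commutativity of the intersection product of tropical Cartier divisors (Proposition~\ref{symmetry of intersection product with tropical Cartier divisors}) by the commutativity of the algebraic intersection product $D_1\cdot D_2\cdot Z=D_2\cdot D_1\cdot Z$ in $Z^{p+2}(X)$. Granting the latter, the cycle components of the two iterated $*$-products agree, so their difference has the form $(0,S)$ with $S\in E^{p+1,p+1}(\Xan)$, and it remains to prove $S\in d'E^{p,p+1}(\Xan)+d''E^{p+1,p}(\Xan)$. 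First I would unravel the $*$-product: writing $\omega_i\coloneqq\omega(D_i,g_i)=c_1(D_i,g_i)$, which is a $\delta$-form of bidegree $(1,1)$ by Example~\ref{green-delta-function on X}, the two $(1,1)$-forms $\omega_1$ and $\omega_2$ commute in $B(\Xan)$, so the terms $\omega_1\wedge\omega_2\wedge T$ cancel and
\[
S=g_1\wedge\delta_{D_2\cdot Z}-g_2\wedge\delta_{D_1\cdot Z}+\omega_1\wedge g_2\wedge\delta_Z-\omega_2\wedge g_1\wedge\delta_Z;
\]
in particular $T$ drops out and every term is supported on $\Zan$.

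I would then aim at the sharper \emph{Weil reciprocity law}
\[
S=d''(g_1\,d'g_2\wedge\delta_Z)+d'(g_2\,d''g_1\wedge\delta_Z),
\]
which already exhibits $S$ in the required form, once the mixed products (read as $g_1\wedge d'(g_2\wedge\delta_Z)$, and so on) are given meaning. This is a local identity on $\Xan$, so by a partition of unity one may test it against a compactly supported $\delta$-form $\eta$ with support in a small open set $W$. The function $g_1$ is piecewise smooth on $\Xan\setminus|D_1|$ and $g_2$ on $\Xan\setminus|D_2|$, and both sides of the identity are antisymmetric under $(D_1,g_1)\leftrightarrow(D_2,g_2)$ --- for the right-hand side this follows from $g_i\,d'g_j+g_j\,d'g_i=d'(g_ig_j)$ together with $d'd''=-d''d'$. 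Hence it is enough to treat, on the one hand, the case where $W$ misses $|D_1|$, so that locally $D_1=0$ and $g_1$ is piecewise smooth (and the mixed products then make sense as formal $\delta$-currents), and, on the other hand, the case where $W$ meets both $|D_1|$ and $|D_2|$. In the second case, since every term of $S$ is supported on $\Zan$, the proper intersection hypothesis confines the contribution to $\eta$ supported near the analytic set $|D_1|\cap|D_2|\cap|Z|$ of dimension $\le d-2$, and one argues there as in the tropical proof, using the support properties of $\delta$-forms (see \cite[Corollary 4.9]{mihatsch2021a}) to conclude that this contribution vanishes.

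For the remaining case $D_1=0$ with $g_1$ piecewise smooth, I would reproduce the tropical computation verbatim: here $\omega_1=d'd''g_1$, the Leibniz rule for $\delta$-forms is available, and multiplying the Poincar\'e--Lelong equation \eqref{Mihatsch's Poincare-Lelong} for $g_2|_Z$ by $g_1$ and pushing forward to $Z$ gives $\omega_2\wedge g_1\wedge\delta_Z=g_1\,d'd''g_2\wedge\delta_Z+g_1\wedge\delta_{D_2\cdot Z}$; expanding $\omega_1\wedge g_2\wedge\delta_Z=d'(d''g_1\wedge g_2\wedge\delta_Z)+d''g_1\wedge d'(g_2\wedge\delta_Z)$ and $g_1\,d'd''g_2\wedge\delta_Z=-d''(g_1\,d'g_2\wedge\delta_Z)+d''g_1\wedge d'(g_2\wedge\delta_Z)$ by Leibniz, and using that $\delta_Z$ is $d'$- and $d''$-closed (Example~\ref{current of integration}), the two cross terms cancel and one is left with $S=d'(g_2\,d''g_1\wedge\delta_Z)+d''(g_1\,d'g_2\wedge\delta_Z)$, as claimed. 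I expect the genuine obstacle to lie in the middle paragraph: pinning down the precise meaning of the mixed products $g_i\,d'g_j\wedge\delta_Z$ as formal $\delta$-currents and, above all, verifying that the piece of $\eta$ supported near $|D_1|\cap|D_2|\cap|Z|$ truly contributes nothing --- this is the analytic point where the tropical argument invokes being \emph{constant towards the boundary}, and on $\Xan$ it has to be replaced by the proper intersection hypothesis together with the local structure of $\delta$-forms near lower-dimensional subsets.
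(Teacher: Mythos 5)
Your argument is correct in substance, but it takes a genuinely different route from the paper for the core computation. Both proofs share the same opening moves: commutativity of the algebraic intersection product reduces everything to showing that the current-part difference $S$ lies in $d'E^{p,p+1}(\Xan)+d''E^{p+1,p}(\Xan)$, the terms $\omega_1\wedge\omega_2\wedge T$ cancel, and a local reduction using the proper intersection hypothesis brings one to the case $D_1=0$ with $g_1$ piecewise smooth. From there the paper does \emph{not} redo the integration by parts on $\Xan$: it tropicalizes $g_1$, $g_2$, $D_2$ and the test form by a closed immersion into $\A^n$ (Propositions \ref{global tropicalization for affine} and \ref{local tropicalization of green functions}), pushes $(Z,T)$ forward, and transfers the tropical Weil reciprocity law \eqref{Weil reciprocity law} of Proposition \ref{symmetry of star-product} term by term via Lemma \ref{star-product and tropicalization}, \eqref{product of Green functions with cycles and tropicalization} and Proposition \ref{properties of tropicalized delta-forms}. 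You instead carry out the Leibniz/Stokes computation directly on $\Xan$, with the Poincar\'e--Lelong equation \eqref{Mihatsch's Poincare-Lelong} applied to $g_2|_Z$ and pushed forward; this is exactly the technique the paper uses for Proposition \ref{multiplicativity of omega on X}, and your formal manipulations (with the mixed terms read as $g_1\wedge d'(g_2\wedge\delta_Z)$, $d''g_1\wedge(g_2\wedge\delta_Z)$, etc.) are sound and deliver $S=d'(g_2\,d''g_1\wedge\delta_Z)+d''(g_1\,d'g_2\wedge\delta_Z)$, which is all the proposition asserts. What the paper's detour buys is that the singular mixed products appearing in its Weil reciprocity law \eqref{Weil reciprocity law on X} acquire a concrete meaning on the tropical side, where the identity was already proved; what your route buys is a shorter, more self-contained argument on $\Xan$ whose only non-formal input is the non-archimedean Poincar\'e--Lelong equation (itself proved in the paper by tropicalization, so the tropical machinery still enters, but only once and in packaged form). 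The one point where you remain imprecise --- the vanishing of the contribution near $|D_1|\cap|D_2|\cap|Z|$ in the reduction step --- is handled just as briefly in the paper (a one-sentence appeal to proper intersection and supports), so it is not a gap relative to the paper's own standard, though in a final write-up the support/dimension argument you allude to via \cite[Corollary 4.9]{mihatsch2021a} should be spelled out.
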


\begin{proof}
More precisely, following Moriwaki's terminology \cite{moriwaki-book}, we will prove Weil's reciprocity law
\begin{equation} \label{Weil reciprocity law on X}
\begin{split}
(D_1,g_1)*(D_2,g_2)*(Z,T)&-(D_2,g_2)*(D_1,g_1)*(Z,T)\\
&{=(0,d''(g_1 d'g_2 \wedge \delta_Z)+d'(g_2 d''g_1\wedge \delta_Z))} 
\end{split}
\end{equation}
in $Z^{p+2}(X) \times  E^{p+1,p+1}(\Xan)$.
The claim is local and so we may assume that $X$ is affine. 
We may also assume that the Cartier divisors $D_1,D_2$ are effective.
Since we have an equality of cycles $D_1\cdot D_2\cdot Z=D_2\cdot D_1\cdot Z$ we are reduced to check an equality of formal $\delta $-currents, that can be tested against a compactly supported $\delta$-form $\beta$ of bidegree $(d-1,d-1)$. 
Using that $D_1,D_2$ intersect $Z$ properly, it is clear that the support of $\beta$ is disjoint from  $\supp(D_1)\cap\supp(D_2) \cap |Z|$. 
As we may check the claim locally and as \eqref{Weil reciprocity law on X} holds trivially outside the support $|Z|$, we conclude that we may assume that $g_1$  is a bounded piecewise smooth function and that $D_1=0$.

We can now apply the philosophy of the paper already applied in the proof of the Poincar\'e--Lelong equation in \ref{proof of non-archimedean Poincare--Lelong equation}. 
Using Proposition \ref{global tropicalization for affine} and Proposition \ref{local tropicalization of green functions}, all the objects in \eqref{Weil reciprocity law on X} and $\beta$ are tropicalized by a closed immersion  $\psi \colon W\to \A^n$ with the only exception of $(Z,T)$.
We may also assume that $D_{2,\trop}$ is an effective tropical Cartier divisors and hence the associated toric Cartier divisors $D_{2,\tor}$ is an effective toric Cartier divisor on $X_\Sigma$. 
Using that it is effective and restricts to $D_2$ on $X$, we see that $D_2$ intersects the cycle $Z$ properly. 
Let $C$ be the classical tropical cycle in $N_\Sigma$ associated to the cycle $Z$, see Example \ref{integration and global tropicalization}. 
It follows that the tropical toric Cartier divisor  $D_{2,\trop}$ intersects $C$ properly. 
Here, we use that it is toric and hence its support agrees with the union $N(\tau)$ with $\tau$ ranging over all $1$-dimensional cones of $\Sigma$ corresponding to the components of $D_1$.
Using the push-forward of $T$ to $\R_\infty^n$ with respect to $\psitrop$, we deduce \eqref{Weil reciprocity law on X} evaluated at $\beta$ from \eqref{Weil reciprocity law} for $N_\Sigma$ similarly as in the proof of the proof of the Poincar\'e--Lelong equation in \ref{proof of non-archimedean Poincare--Lelong equation}. 
	 
We work this out as follows. 
Using Lemma \ref{star-product and tropicalization}, we have 
\[
\psi_{\trop,*}((D_1,g_1)*(D_2,g_2)*(Z,T))=(D_{1,\trop},g_{2,\trop})*(D_{2,\trop},g_{2,\trop})*\psi_{\trop,*}(Z,T)
\]
and a similar identity holds for $(D_2,g_2)*(D_1,g_1)*(Z,T)$.
The evaluation of the term $d''(g_1 d'g_2 \wedge \delta_Z)$  in  \eqref{Weil reciprocity law on X} at $\beta$ is equal to
\[
d''(g_1 d'g_2 \wedge \delta_Z)(\beta)=-(d'g_2 \wedge \delta_Z)(g_1  d''\beta)=-(g_2 \wedge \delta_Z)(d'(g_1d''\beta)).
\]
Similarly, we have 
\[
d''(g_{1,\trop} d'g_{2,\trop} \wedge \delta_C)(\betatrop)=-(g_{2,\trop} \wedge \delta_C)(d'(g_{1,\trop}d''\betatrop))
\]
on $N_\Sigma$ as we already have seen in the proof of \eqref{Weil reciprocity law}. 
These two displays are equal by \eqref{product of Green functions with cycles and tropicalization} and Proposition \ref{properties of tropicalized delta-forms}. 
Similar arguments prove
\[
d'(g_2 d'' g_1 \wedge \delta_Z)(\beta) 
=d'(g_{2,\trop} d'' g_{1,\trop} \wedge \delta_C)(\betatrop).
\]
We conclude that all corresponding terms in \eqref{Weil reciprocity law on X} and in \eqref{Weil reciprocity law} match.
\end{proof}

\begin{prop}[Projection formula] \label{projection formula on X}
Let $\varphi\colon X' \to X$ be a proper morphism of algebraic varieties with $d \coloneqq \dim(X)$ and $d' \coloneqq \dim(X')$, let $D$ be a Cartier divisor on $X$ and $(Z',T')\in Z^p(X') \times E^{p-1,p-1}((X')^\an)$ such that $D$ intersects $\varphi(|Z'|)$ properly in $X$. For a Green function $g$ for $D$ of piecewise smooth type, we have
\[
\varphi_*(\varphi^*(D,g) * (Z',T'))= (D,g) * \varphi_*(Z',T').
\]
\end{prop}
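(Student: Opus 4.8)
The plan is to expand both sides of the asserted identity using the definition of the $*$-product in Definition~\ref{star product on X} and to verify the three resulting component identities separately. Write $Z' = \sum_j n_j Z'_j$ with $Z'_j$ the irreducible components of $|Z'|$, set $W_j := \varphi(Z'_j)$ — a closed subvariety of $X$ since $\varphi$ is proper — and let $\varphi_j\colon Z'_j \to W_j$ be the induced morphism. The proper intersection hypothesis for $D$ and $\varphi(|Z'|)$ forces $W_j \not\subset |D|$ for every $j$; hence $\varphi^*D$ is a well-defined Cartier divisor on $X'$ which meets $Z'$ properly, and $D$ meets $\varphi_*(Z')$ properly, so that both sides of the formula are defined. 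Unwinding Definition~\ref{star product on X}, the identity to prove splits into: (a) $\varphi_*(\varphi^*D \cdot Z') = D \cdot \varphi_*(Z')$; (b) $\varphi_*(\varphi^*g \wedge \delta_{Z'}) = g \wedge \delta_{\varphi_*(Z')}$; and (c) $\varphi_*\bigl(\omega(\varphi^*D,\varphi^*g)\wedge T'\bigr) = \omega(D,g)\wedge\varphi_*(T')$.

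Part (a) is the classical projection formula for intersections with Cartier divisors. For part (c), first note that $\omega(\varphi^*D,\varphi^*g) = c_1(\varphi^*D,\varphi^*g) = \varphi^* c_1(D,g) = \varphi^*\omega(D,g)$ by Example~\ref{green-delta-function on X} together with \eqref{pull-back-formula-divisors} in Remark~\ref{pull-back of Green functions}. Thus (c) reduces to the general projection formula $\varphi_*(\varphi^*\alpha \wedge T') = \alpha \wedge \varphi_*(T')$ for a $\delta$-form $\alpha$ on $\Xan$ and a formal $\delta$-current $T'$ on $(X')^\an$. This is proved by testing against a compactly supported $\delta$-form $\beta$ on $\Xan$: since $\varphi$ is proper, $\varphi^*\beta$ has compact support, and using that the pull-back of $\delta$-forms is a homomorphism of bigraded $\R$-algebras (Construction~\ref{Mihatsch's delta forms}) we get $(\varphi^*\alpha\wedge T')(\varphi^*\beta) = \pm\,T'\bigl(\varphi^*(\alpha\wedge\beta)\bigr) = \pm\,\varphi_*(T')(\alpha\wedge\beta)$, the signs depending only on the (matching) bidegrees.

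Part (b) is the crux. Both $\varphi^*g\wedge\delta_{Z'}$ and $g\wedge\delta_{\varphi_*(Z')}$ are, by Remark~\ref{product with green functions of piecewise smooth type}, push-forwards of Green functions from the components of the relevant cycles; testing against a compactly supported top-degree $\delta$-form on $\Xan$ and using $\varphi\circ\iota_{Z'_j} = \iota_{W_j}\circ\varphi_j$, the claim reduces to the transformation formula
\[
(\varphi_j)_*\bigl[\, g|_{W_j^\an}\circ\varphi_j^\an\,\bigr] = \deg(\varphi_j)\,\bigl[\, g|_{W_j^\an}\,\bigr] \quad\text{in } E^{0,0}(W_j^\an)
\]
for each $j$, where $g|_{W_j}$ is a Green function of piecewise smooth type for $D|_{W_j}$ (legitimate since $W_j\not\subset|D|$) and $\deg(\varphi_j) = [K(Z'_j):K(W_j)]$ if $\varphi_j$ is generically finite and $0$ otherwise. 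If $\dim W_j < \dim Z'_j$ both sides vanish: the right side because $\deg(\varphi_j)=0$, the left side because the test form, being pulled back from the lower-dimensional $W_j^\an$, contributes zero for degree reasons as in the proof of Proposition~\ref{transformation formula over X}. When $\varphi_j$ is generically finite, I would deduce the formula from Proposition~\ref{transformation formula over X} by approximation: the truncations $g_n := \min(g|_{W_j}, n)$ are piecewise smooth functions on $W_j^\an$ — the Green function tends to $+\infty$ along $|D|\cap W_j$, so $g_n$ equals the constant $n$ near that locus and is piecewise smooth away from it, gluing continuously — they increase to $g|_{W_j}$, and $g_n\circ\varphi_j$ is piecewise smooth on $(Z'_j)^\an$; applying Proposition~\ref{transformation formula over X} with the piecewise smooth weight $g_n$ to each $n$ and letting $n\to\infty$ gives the claim.

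The main obstacle I anticipate is precisely this limiting step in part (b): justifying that $\int g_n\,\gamma \to \int g|_{W_j^\an}\,\gamma$ for the signed measures $\gamma$ attached to top-degree $\delta$-forms, which requires the integrability of Green functions of piecewise smooth type against $\delta$-forms that is built into Mihatsch's formalism (and into the very definition of $g\wedge\delta_Z$ in Remark~\ref{product with green functions of piecewise smooth type}). Once this transformation formula is established, assembling (a), (b) and (c) yields the projection formula.
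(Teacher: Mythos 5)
Your overall architecture is the same as the paper's very terse proof: split the $*$-product into its three constituents, handle the cycle identity by the classical projection formula, handle the $\omega(D,g)\wedge T'$ term via $c_1(\varphi^*D,\varphi^*g)=\varphi^*c_1(D,g)$ (Remark \ref{pull-back of Green functions}) together with the duality definition of push-forward of formal $\delta$-currents, and reduce the $g\wedge\delta$ term to the transformation formula of Proposition \ref{transformation formula over X}. Parts (a) and (c) of your argument are fine.

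The gap is in your execution of (b). You want to apply Proposition \ref{transformation formula over X} to $g_n\cdot\gamma$ with $g_n=\min(g|_{W_j},n)$, but $g_n$ is in general \emph{not} piecewise smooth in the sense of this paper: piecewise smoothness is a polyhedral notion, and for a genuinely piecewise smooth (not piecewise affine) $g$ the level set $\{g=n\}$ and hence the decomposition into $\{g\le n\}$ and $\{g\ge n\}$ is not polyhedral; moreover, for a non-effective $D$ the Green function tends to $-\infty$ along part of $|D|$, where $\min(g,n)=g$ retains its logarithmic singularity (your "$g$ tends to $+\infty$ along $|D|$" presupposes effectivity). So $g_n\gamma$ need not lie in $B_c$ and the transformation formula cannot be applied to it; and the limit interchange you yourself flag as the main obstacle is not justified by anything you cite. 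Both problems evaporate — and no approximation is needed at all — once you use the support property of compactly supported top-degree $\delta$-forms that the paper invokes repeatedly (\cite[Corollary 4.9]{mihatsch2021a}, used in Example \ref{integration and global tropicalization}, Proposition \ref{transformation formula over X} and, in the same spirit, Remark \ref{compatibility of wedge and tropicalization}): the compact support of $\beta|_{W_j^\an}$, resp.\ of $\varphi_j^*(\beta|_{W_j^\an})$, is disjoint from the analytification of the lower-dimensional closed sets $\supp D\cap W_j$, resp.\ $\varphi_j^{-1}(\supp D\cap W_j)$ (a proper closed subset of $Z'_j$ in the generically finite case). Hence $g$, resp.\ $g\circ\varphi_j$, is piecewise smooth and bounded on a neighbourhood of these supports, $g\cdot\beta|_{W_j^\an}$ extended by zero is already a compactly supported $\delta$-form, and Proposition \ref{transformation formula over X} applied to it yields (b) directly; this is also exactly why the defining integrals in Remark \ref{product with green functions of piecewise smooth type} make sense. (A minor further point: proper intersection of $D$ with $\varphi(|Z'|)$ only forces $W_j\not\subset|D|$ for those $W_j$ that are components of $\varphi(|Z'|)$; for the remaining $j$ this condition is anyway needed for the left-hand side to be defined, and the paper is equally brisk about it.)
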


\begin{proof}
If $Z'=0$, then the claim has to be read as 
$$\varphi_*(\varphi^*(c_1(D,g)) \wedge T')= c_1(D,g) \wedge \varphi_*(T')$$
which is obvious. So we may assume $Z' \neq 0$ and hence $D$
intersects $\varphi(X')$ properly. Then $\varphi^*D$ is a well-defined Cartier divisor
on $X'$ which intersects $Z'$ properly and $\varphi^*g$ is a Green
function for $\varphi^*D$ of piecewise smooth type. The claim follows
now from the transformation formula in Proposition \ref{transformation
  formula over X} and Remark \ref{pull-back of Green functions}.
\end{proof}

\section{Monge--Amp\`ere measures and local heights} \label{section MA-measures and local heights}

The goal of this section is to show that we can compute local heights tropically. We fix a $d$-dimensional algebraic variety $X$ over a non-archimedean field $K$.

\subsection{Monge--Amp\`ere measures} \label{subsection MA-measures}

We consider line bundles $\overline{L}_1, \dots, \overline{L}_d$ on
$X$ endowed with piecewise smooth metrics. 

\begin{art} \label{definition MA measure} 
We note that $c_1(\overline{L}_1)\wedge \dots \wedge
c_1(\overline{L}_d)$ is a $\delta$-form of bidegree $(d,d)$ on $\Xan$
which can be seen as a top-degree polyhedral current in a piecewise
linear space $S$ of pure dimension $d$ in $\Xan$
which is given 
locally by tropical skeletons.  
Writing the coefficients of this polyhedral current as the difference of two positive Lagerberg forms of top-degree, it is clear that we can write the {formal} $\delta$-current associated to $c_1(\overline{L}_1)\wedge \dots \wedge c_1(\overline{L}_d)$ as two positive linear functionals on $C_c^\infty(\Xan)$. 
Using that the compactly supported smooth functions are dense in {the space of compactly supported continuous functions} $C_c(V)$ for any open subset $V$ on $\Xan$ \cite[Proposition 3.3.5]{chambert-loir-ducros}, we deduce that the positive linear functionals extend uniquely to positive functionals on $C_c(\Xan)$ and hence define positive Radon measures by the Riesz representation theorem. 
We conclude that there is a unique real Radon measure $\MA(\overline{L}_1, \dots, \overline{L}_d)$ on $\Xan$ which agrees on $C^\infty_c(\Xan)$ with the formal $\delta$-current associated to $c_1(\overline{L}_1)\wedge \dots \wedge c_1(\overline{L}_d)$. 
We call $\MA(\overline{L}_1, \dots, \overline{L}_d)$ the \emph{Monge--Amp\`ere measure associated to $\overline{L}_1, \dots, \overline{L}_d$}. 
By abuse of notation, we will denote this Monge--Amp\`ere measure often by $c_1(\overline{L}_1)\wedge \dots \wedge c_1(\overline{L}_d)$ as well.
\end{art}

\begin{prop} \label{properties of MA measures}
The Monge--Amp\`ere measures $\MA(\overline{L}_1, \dots, \overline{L}_d)$ have the following properties:
\begin{enumerate}
\item \label{MA multilinearity}
They are multi-linear and symmetric in $\overline{L}_1, \dots, \overline{L}_d$.
\item \label{MA projection formula}
If $\varphi \colon X' \to X$ is a proper morphism of algebraic varieties, then we have the projection formula
\[
\varphi_*\left(\MA(\varphi^*\overline{L}_1, \dots, \varphi^*\overline{L}_d)\right)= \deg(\varphi) \MA(\overline{L}_1, \dots, \overline{L}_d).
\]
\item \label{MA total mass}
If $X$ is a proper variety, then $\MA(\overline{L}_1, \dots, \overline{L}_d)(\Xan)=\deg_{L_1,\dots,L_d}(X)$.
\end{enumerate}
\end{prop}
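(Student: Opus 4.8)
The plan is to prove the three properties in order, reducing each to corresponding statements about $\delta$-forms or to the tropical side via a global tropicalization.

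\textbf{Multilinearity and symmetry (i).} First I would note that this is essentially inherited from the algebra structure on $\delta$-forms. By construction in \ref{definition MA measure}, $\MA(\overline L_1,\dots,\overline L_d)$ agrees on $C_c^\infty(\Xan)$ with the formal $\delta$-current $[c_1(\overline L_1)\wedge\dots\wedge c_1(\overline L_d)]$. Since $B(\Xan)$ is a bigraded differential $\R$-algebra (Construction \ref{Mihatsch's delta forms}) and first Chern $\delta$-forms are linear in $(D,g)$ (as recalled for the toric case in Remark \ref{properties of the first Chern delta form} and for $X$ in Example \ref{green-delta-function on X}), the $\delta$-form $c_1(\overline L_1)\wedge\dots\wedge c_1(\overline L_d)$ is already multilinear and symmetric in the entries — the wedge product of $(1,1)$-forms is commutative. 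Two Radon measures that agree on the dense subspace $C_c^\infty(\Xan)\subset C_c(\Xan)$ coincide, so the measures are multilinear and symmetric as well.

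\textbf{Projection formula (ii).} Here I would first observe that if $\varphi$ is not surjective then $\dim(X')>\dim\varphi(X')$ forces both sides to vanish: the left side because $\varphi^*c_1(\overline L_i)$ is a $(1,1)$-form and $\varphi^*\beta$ for a top-degree $\delta$-form $\beta$ on $X$ is supported away from the lower-dimensional image, by \cite[Corollary 4.9]{mihatsch2021a}, combined with the fact that $\deg(\varphi)=0$ in this case. So assume $\varphi$ surjective. Testing against $f\in C_c^\infty(X)$, I would use $\varphi^*c_1(\overline L_i)=c_1(\varphi^*\overline L_i)$ (Remark \ref{pull-back of Green functions}), so that $\varphi^*\bigl(f\cdot c_1(\overline L_1)\wedge\dots\wedge c_1(\overline L_d)\bigr)=\varphi^*(f)\cdot c_1(\varphi^*\overline L_1)\wedge\dots\wedge c_1(\varphi^*\overline L_d)$, and then apply the transformation formula in Proposition \ref{transformation formula over X} to this compactly supported top-degree $\delta$-form on $(X')^\an$. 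This yields $\int_{(X')^\an}\varphi^*(f)\,c_1(\varphi^*\overline L_\bullet)=\deg(\varphi)\int_\Xan f\,c_1(\overline L_\bullet)$, which is precisely the projection formula evaluated against $f$; density of $C_c^\infty$ in $C_c$ upgrades this to an identity of Radon measures. A minor technical point is that $f\circ\varphi^\an$ need not have compact support if $X\to\Spec K$ is not proper, but properness of $\varphi$ ensures $\varphi^{-1}(\supp f)$ is compact, so $\varphi^*(f)$ has compact support as needed.

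\textbf{Total mass (iii).} For $X$ proper this is the degree formula, and I would prove it by global tropicalization. Using Theorem \ref{global tropicalization result} (or directly Proposition \ref{tropicalization of Green functions} applied to Green functions for $D_i=\Div(s_i)$ with piecewise smooth metric on $L_i$), choose a closed embedding $\psi\colon X\to X_\Sigma$ into a projective toric variety so that each $\overline L_i$ is tropicalized by a tropical toric Cartier divisor $D_{i,\trop}$ with Green function $g_{i,\trop}$ of piecewise smooth type. Then $c_1(\overline L_i)=\psi_\trop^*(c_1(D_{i,\trop},g_{i,\trop}))$ by Proposition \ref{first Chern form and tropicalization}, and since $\psi_\trop^*$ is a ring homomorphism compatible with integration (Construction \ref{delta-forms with T-tropicalizations}, Proposition \ref{properties of tropicalized delta-forms}), the total mass equals $\int_\Xan \psi_\trop^*\bigl(c_1(D_{1,\trop},g_{1,\trop})\wedge\dots\wedge c_1(D_{d,\trop},g_{d,\trop})\bigr)$. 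By Example \ref{integration and global tropicalization}, this integral equals $\int_C c_1(D_{1,\trop},g_{1,\trop})\wedge\dots\wedge c_1(D_{d,\trop},g_{d,\trop})$ where $C=\psi_{\trop,*}(X)$ is the associated classical tropical cycle. Now I would iterate the tropical Poincaré--Lelong equation (Theorem \ref{tropical PL equation for Green functions}): each factor $c_1(D_{i,\trop},g_{i,\trop})\wedge\delta_{(\cdot)}$ can be traded, modulo $d',d''$-exact terms which integrate to zero, for $\delta_{D_{i,\trop}\cdot(\cdot)}$, reducing the integral to the tropical intersection number $\deg(D_{1,\trop}\cdots D_{d,\trop}\cdot C)$. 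Finally, the standard dictionary between tropical intersection numbers of tropicalizations of toric Cartier divisors and the classical intersection numbers $\deg_{L_1,\dots,L_d}(X)$ — together with the fact that tropicalization of cycles is degree-preserving, e.g.\ via \cite[Remark 3.13]{gross2018} and \cite[\S 13]{gubler-guide} — identifies this with $\deg_{L_1,\dots,L_d}(X)$.

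\textbf{Main obstacle.} The routine parts are (i) and (ii); the real content is in (iii), and within it the main obstacle is making precise the final step identifying the tropical intersection number with the classical degree. One has to be careful that the tropicalization $\psitrop$ of a proper $X$ may land in a non-dense boundary orbit, that the toric Cartier divisors $E_i$ on $X_\Sigma$ restricting to $\Div(s_i)$ must be chosen to intersect $\psi(X)$ properly so that all intermediate intersection products are defined, and that the weights on $C=\psi_{\trop,*}(X)$ are the correct ones making $\deg(D_{1,\trop}\cdots D_{d,\trop}\cdot C)=\deg(E_1\cdots E_d\cdot\psi_*X)=\deg_{L_1,\dots,L_d}(X)$. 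An alternative that sidesteps some of this is to first reduce to the case where $X$ itself is a toric variety (replacing $X$ by $X_\Sigma$ and using the projection formula (ii) for the degree-one map $\psi$, noting $\psi$ is a closed embedding so $\psi_*$ preserves the degree), but one still needs the comparison between tropical and classical toric intersection numbers, which is where I would invest the care.
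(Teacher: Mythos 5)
Your treatment of (i) and (ii) is essentially the paper's own proof: (i) is inherited from the algebra structure on $\delta$-forms together with the density of $C_c^\infty$ in $C_c$, and (ii) is exactly the reduction to the transformation formula of Proposition \ref{transformation formula over X} (your separate discussion of the non-surjective case is redundant, since the test-function computation $\varphi_*(\MA')(f)=\MA'(\varphi^*f)=\deg(\varphi)\int_{\Xan}f\,c_1(\overline L_1)\wedge\dots\wedge c_1(\overline L_d)$ already covers $\deg(\varphi)=0$, but it does no harm).

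For (iii) you take a genuinely different route. The paper simply observes that the computation of \cite[Proposition 6.4.3]{chambert-loir-ducros} goes through verbatim for piecewise smooth metrics once one works with $\delta$-forms, i.e.\ the total mass is computed directly on $\Xan$; this works for an arbitrary proper $X$. Your route — global tropicalization, iterated tropical Poincar\'e--Lelong, reduction to a tropical intersection number — is in the spirit of Theorem \ref{computation of local heights tropically}, but as written it has three gaps. First, Proposition \ref{tropicalization of Green functions} (and any closed embedding into a toric variety) requires $X$ projective, whereas the statement is for proper $X$; you would need to reduce to the projective case via Chow's lemma together with part (ii) and the corresponding algebraic projection formula, and this step should be made explicit. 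Second, to run the iterated $*$-product you must choose rational sections $s_i$ (equivalently toric divisors $E_i$) so that all the successive proper-intersection conditions of Definition \ref{proper intersection} hold for $D_{i,\trop}$ against the intermediate boundary cycles; arbitrary sections will not do, so a moving argument combined with multilinearity (reduction to very ample $L_i$ and generic sections) is needed and is only gestured at. Third, and most seriously, the concluding identification $\deg(D_{1,\trop}\cdots D_{d,\trop}\cdot C)=\deg_{L_1,\dots,L_d}(X)$ is precisely the Sturmfels--Tevelev-type degree comparison between tropical and algebraic intersection numbers; it is true, but it is nowhere proved in the paper and is not a formal consequence of the results you cite, so leaving it as a ``standard dictionary'' is a real gap — indeed it is the whole content of your argument, and avoiding exactly this input is what the paper's appeal to the Chambert-Loir--Ducros computation buys.
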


\begin{proof}
Property \eqref{MA multilinearity} follows from \ref{Mihatsch's delta forms}. 
The projection formula in \eqref{MA projection formula} is a consequence of the transformation formula in Proposition \ref{transformation formula over X}. 
Finally, the total mass in \eqref{MA total mass} is computed as in \cite[Proposition 6.4.3]{chambert-loir-ducros}.
\end{proof}

\begin{rem}  \label{Chambert-Loir measures}
A special case of piecewise smooth metrics are metrics on line bundles $L_1,\dots,L_d$ over a proper variety $X$ of dimension $d$ which are induced by models over the valuation ring $K^\circ$. 
Chambert-Loir introduced an associated discrete measure on $\Xan$ \cite{chambert-loir-2006} which is important in diophantine geometry for equidistribution results as shown e.g.~by Yuan \cite{yuan-2008}. 
It follows from \cite[\S 6.9]{chambert-loir-ducros} and \cite[Theorem 10.5]{gubler-kuenne2017} that these discrete measures agree with $\MA(\overline{L}_1, \dots, \overline{L}_d)$.
\end{rem}

\subsection{Local heights} \label{subsection local heights}

In this subsection, we assume that the $d$-dimensional variety $X$ is proper over $K$. 
Using the Arakelov theory from the previous section, we will define local heights for properly intersecting Cartier divisors and corresponding Green functions of piecewise smooth type.

\begin{definition}[Local heights] \label{definition of local heights}
{A \emph{Néron divisor on $X$} is  given by a pair $(D,g)$ where $D$ is a Cartier divisor on $X$ and $g$ is a Green function of piecewise smooth type for $D$ as defined in \ref{piecewise smooth on analytic spaces}.}
Let us consider N\'eron divisors $\widehat{D_0}=(D_0,g_0), \dots, \widehat{D_d}=(D_d,g_d)$ {on $X$}, where {the Cartier divisors} $D_0,\dots, D_d$  intersect properly on $X$. 
Then we define the \emph{local height of $X$ with respect to $\widehat{D_0}, \dots, \widehat{D_d}$} by
\[
\lambda_{\widehat{D_0}, \dots, \widehat{D_d}}(X) \coloneqq \bigl((D_0,g_0) * \dots  *(D_d,g_d)* (X,0)\bigr)(1).
\]
By multi-linearity, we can define the local height $\lambda_{\widehat{D_0}, \dots, \widehat{D_d}}(Z)$ of a $t$-dimensional cycle $Z$ with respect to N\'eron divisors $\widehat{D_0}, \dots, \widehat{D_t}$ which intersect $Z$ properly. Similarly as in \cite[Remark 12.3]{gubler-kuenne2017}, we can define this local height under the weaker condition that 
$
{\supp(D_0) \cap \ldots\cap\supp(D_t)}\cap |Z|
=\emptyset$.
\end{definition}

\begin{prop} \label{properties local heights}
The local heights $\lambda_{\widehat{D_0}, \dots, \widehat{D_d}}(X)$ have the following properties:
\begin{enumerate}
\item \label{multilinear and symmetric local height}
They are multi-linear and symmetric in $\widehat{D_0}, \dots, \widehat{D_d}$.
\item \label{induction formula}
Let $\cyc(D_0)$ be the Weil divisor, then we have the induction formula
\[
\lambda_{\widehat{D_0}, \dots, \widehat{D_d}}(X)= \lambda_{\widehat{D_1}, \dots, \widehat{D_d}}(\cyc(D_0)) + \int_\Xan g_0 \cdot c_1(D_1,g_1) \wedge \dots \wedge c_1(D_d,g_d).
\]
\item \label{projection formula local height}
If $\varphi\colon X' \to X$ is a morphism from a proper variety $X'$ over $K$, then
\[
\lambda_{\varphi^*\widehat{D_0}, \dots, \varphi^*\widehat{D_d}}(X')=
\deg{\varphi}\cdot \lambda_{\widehat{D_0}, \dots, \widehat{D_d}}(X).
\]
\item \label{change of metric formula}
Let $g_0'$ be another Green function of piecewise smooth type for $D_0$, then $\rho \coloneqq g_0'-g_0$ is a piecewise smooth function. For $\widehat{D_0}' \coloneqq (D_0,g_0)$, we have
\[
\lambda_{\widehat{D_0}', \dots, \widehat{D_d}}(X)-\lambda_{\widehat{D_0}, \dots, \widehat{D_d}}(X)=\int_\Xan \rho \cdot c_1(D_1,g_1) \wedge \dots \wedge c_1(D_d,g_d).
\]
\end{enumerate}
\end{prop}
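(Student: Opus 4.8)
The plan is to deduce all four parts from the formal properties of the $*$-product established in \S\ref{subsec: star-product with ps Green functions}: its linearity in each argument, its symmetry modulo $\mathrm{Im}(d'+d'')$ (Proposition \ref{analytification-symmetry of star-product}), the multiplicativity $\omega((D,g)*(Z,T))=\omega(D,g)\wedge\omega(Z,T)$ (Proposition \ref{multiplicativity of omega on X}), and the projection formula (Proposition \ref{projection formula on X}). Throughout I use that $X$ is proper, so that $\Xan$ is compact, the constant function $1$ lies in $B^{0,0}_c(\Xan)$, and $d'1=d''1=0$; recall that the local height is by definition the value at $1$ of the $\delta$-current component of $(D_0,g_0)*\dots*(D_d,g_d)*(X,0)$.

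\emph{Multilinearity and symmetry.} I would first record that Néron divisors form a group under $(D,g)+(D',g')\coloneqq(D+D',g+g')$ and that $(D,g)*(Z,T)$ is additive in $(D,g)$, using bilinearity of $D\cdot Z$, additivity of $g\wedge\delta_Z$ in $g$, and additivity of $\omega(D,g)=c_1(D,g)$ in $(D,g)$ from Example \ref{green-delta-function on X}; proper intersection is preserved since passing to a sum only shrinks $\bigcap\supp$. Iterating gives multilinearity of $\lambda$. For symmetry it suffices to interchange two adjacent entries $\widehat{D_i},\widehat{D_{i+1}}$, as adjacent transpositions generate the symmetric group. With $(Z,T)\coloneqq(D_{i+2},g_{i+2})*\dots*(D_d,g_d)*(X,0)$, Proposition \ref{analytification-symmetry of star-product} shows the two orderings of $(D_i,g_i)*(D_{i+1},g_{i+1})*(Z,T)$ differ by an element $(0,d'S'+d''S'')$ of $\mathrm{Im}(d'+d'')$; applying the remaining factors $(D_0,g_0)*\dots*(D_{i-1},g_{i-1})*$ wedges the current component with $c_1(D_0,g_0)\wedge\dots\wedge c_1(D_{i-1},g_{i-1})$, which is $d'$- and $d''$-closed (Remark \ref{closedness of cycles on X}), so the Leibniz rule brings the difference back to the form $(0,d'R'+d''R'')$. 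Evaluating its current component at $1$ gives a sum of terms $\pm R'(d'1)$ and $\pm R''(d''1)$, all zero, so $\lambda$ is unchanged. This well-definedness check — that the correction stays exact after multiplication by the closed first Chern forms, and that compactness of $\Xan$ makes evaluation against $1$ kill it — is the only genuinely delicate point, and is the main obstacle.

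\emph{Induction formula.} By symmetry I would move $\widehat{D_0}$ to the front, so that $\lambda_{\widehat{D_0},\dots,\widehat{D_d}}(X)=\bigl((D_1,g_1)*\dots*(D_d,g_d)*(D_0,g_0)*(X,0)\bigr)(1)$. Since the term $\omega(D_0,g_0)\wedge 0$ vanishes, $(D_0,g_0)*(X,0)=(\cyc(D_0),g_0\wedge\delta_X)=(\cyc(D_0),[g_0])$ by Remark \ref{product with green functions of piecewise smooth type}, where $[g_0]$ is $g_0$ viewed as a formal $\delta$-current. A straightforward induction on the number of factors, using that at each step the new current component is $g_i\wedge\delta_{(\mathrm{cycle})}+c_1(D_i,g_i)\wedge(\mathrm{previous\ current})$, shows that the $\delta$-current component of $(D_1,g_1)*\dots*(D_d,g_d)*(\cyc(D_0),[g_0])$ equals the current component of $(D_1,g_1)*\dots*(D_d,g_d)*(\cyc(D_0),0)$ plus $c_1(D_1,g_1)\wedge\dots\wedge c_1(D_d,g_d)\wedge[g_0]$. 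Evaluating the first summand at $1$ gives $\lambda_{\widehat{D_1},\dots,\widehat{D_d}}(\cyc(D_0))$, which is legitimate because $\bigcap_i\supp(D_i)\cap|\cyc(D_0)|\subseteq\bigcap_i\supp(D_i)=\emptyset$; evaluating the second summand at $1$ gives $[g_0]\bigl(c_1(D_1,g_1)\wedge\dots\wedge c_1(D_d,g_d)\bigr)=\int_\Xan g_0\cdot c_1(D_1,g_1)\wedge\dots\wedge c_1(D_d,g_d)$ (the sign is trivial since the first Chern forms have even total degree). Summing proves (ii).

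\emph{Change of metric and projection formula.} Part (iv) is immediate from (ii): since $g_0$ and $g_0'$ are Green functions of piecewise smooth type for the \emph{same} $D_0$, their difference $\rho=g_0'-g_0$ is piecewise smooth on $\Xan$; applying the induction formula to $\widehat{D_0}'=(D_0,g_0')$ and to $\widehat{D_0}=(D_0,g_0)$, the terms $\lambda_{\widehat{D_1},\dots,\widehat{D_d}}(\cyc(D_0))$ agree because $\cyc(D_0)$ depends only on $D_0$, and subtracting leaves $\int_\Xan\rho\cdot c_1(D_1,g_1)\wedge\dots\wedge c_1(D_d,g_d)$. For (iii), note $\varphi$ is proper because $X'$ is proper and $X$ is separated; assume $\varphi$ surjective, so $\dim X'=\dim X=d$ (otherwise $\deg(\varphi)=0$ and both sides vanish, the left-hand side because the iterated intersection product degenerates). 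Iterating Proposition \ref{projection formula on X} — checking at each stage from the proper intersection of $D_0,\dots,D_d$ on $X$ that the relevant divisor meets the image of the current cycle properly — yields
\[
\varphi_*\bigl(\varphi^*(D_0,g_0)*\dots*\varphi^*(D_d,g_d)*(X',0)\bigr)=(D_0,g_0)*\dots*(D_d,g_d)*\varphi_*(X',0),
\]
and $\varphi_*(X',0)=(\deg(\varphi)[X],0)$, so by the multilinearity of (i) the right side is $\deg(\varphi)\cdot\bigl((D_0,g_0)*\dots*(D_d,g_d)*(X,0)\bigr)$. Evaluating the $\delta$-current components at $1$ and using $(\varphi_*S)(1)=S(\varphi^*1)=S(1)$ gives the projection formula; the remaining bookkeeping of proper-intersection hypotheses along the iteration is routine.
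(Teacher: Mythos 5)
Your parts (i), (ii) and (iv) are essentially correct and amount to writing out the arguments that the paper itself delegates: the paper's proof of Proposition \ref{properties local heights} is a one-line reduction to \cite[\S 12]{gubler-kuenne2017} (observing that, with Mihatsch's $\delta$-forms, every piecewise smooth metric has a $\delta$-form as first Chern current), whereas you rederive the statements directly from Propositions \ref{analytification-symmetry of star-product}, \ref{multiplicativity of omega on X} and \ref{projection formula on X}. Your two key observations there are sound and are indeed the standard ones: the Weil-reciprocity defect $(0,d'S'+d''S'')$ stays in $\mathrm{Im}(d'+d'')$ after wedging with the $d'$- and $d''$-closed forms $c_1(D_j,g_j)$ and is killed by evaluation at $1$ since $d'1=d''1=0$ on the compact $\Xan$; and the current component of the iterated product with $(\cyc(D_0),[g_0])$ differs from that with $(\cyc(D_0),0)$ exactly by $c_1(D_1,g_1)\wedge\dots\wedge c_1(D_d,g_d)\wedge[g_0]$. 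The proper-intersection hypotheses you need for these steps (of $D_1,\dots,D_d$ against $\cyc(D_0)$, and for the adjacent transpositions) do follow from proper intersection of $D_0,\dots,D_d$ on the $d$-dimensional $X$, so (i), (ii), (iv) go through.

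Part (iii), however, has a genuine gap. You propose to iterate Proposition \ref{projection formula on X}, ``checking at each stage from the proper intersection of $D_0,\dots,D_d$ on $X$ that the relevant divisor meets the image of the current cycle properly'', and call the bookkeeping routine. It is not: proper intersection on $X$ does not imply the hypotheses needed on $X'$, nor even that the iterated $*$-product on $X'$ is defined step by step. Take $d=2$, let $\varphi\colon X'\to X$ be the blow-up of the point $p=\supp D_1\cap\supp D_2$ with exceptional curve $E$, and let $D_0$ avoid $p$, so that $D_0,D_1,D_2$ intersect properly on $X$. Then $E$ is a component of $\cyc(\varphi^*D_2)$ and $E\subset\supp(\varphi^*D_1)$ with $\varphi(E)=\{p\}\subset\supp D_1$: the step $(\varphi^*D_1,\varphi^*g_1)*(\cyc(\varphi^*D_2),\,\cdot\,)$ is undefined (the Green function $\varphi^*g_1$ has no restriction to $E^{\an}$), and the proper-intersection requirement underlying Proposition \ref{projection formula on X} ($\varphi^*D_1$ must meet the current cycle properly on $X'$, equivalently no component of the cycle may map into $\supp D_1$) fails for the component $E$. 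In such cases the left-hand side $\lambda_{\varphi^*\widehat{D_0},\dots,\varphi^*\widehat{D_d}}(X')$ is only defined via the weaker condition $\bigcap_i\supp(\varphi^*D_i)=\emptyset$ mentioned at the end of Definition \ref{definition of local heights}, and the projection formula must be proved by the more careful arguments of \cite[\S 12]{gubler-kuenne2017} --- for instance by induction on $d$ through the induction formula (ii), combined with $\varphi_*\cyc(\varphi^*D_0)=\deg(\varphi)\,\cyc(D_0)$ and the transformation formula of Proposition \ref{transformation formula over X} --- which is precisely why the paper defers to that reference. Your treatment of the non-surjective case suffers from the same problem, and there the pull-backs $\varphi^*D_i$ need not even be defined when $\varphi(X')\subset\supp D_i$.
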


\begin{proof}
This proposition was shown in \cite[\S 12]{gubler-kuenne2017} for $\delta$-metrics. 
These are special piecewise smooth metrics such that the first Chern currents are $\delta$-forms in the sense of \cite{gubler-kuenne2017}. 
Using Mihatsch's $\delta$-forms as we do in the paper here, all piecewise smooth metrics have this property. 
Hence the proposition holds for all piecewise smooth metrics using the same arguments as {in \emph{loc.cit}}.
\end{proof}

The following result shows that we can compute local heights tropicallly.

\begin{thm}\label{computation of local heights tropically}
Let $\widehat{D_0}=(D_0,g_0), \dots, \widehat{D_d}=(D_d,g_d)$ be N\'eron divisors on the $d$-dimensional projective variety $X$ such that $D_0,\dots,D_d$ intersect properly. Then there {exist a fan $\Sigma$ and} a closed immersion $\psi\colon X \to X_\Sigma$ into the {toric} variety $X_\Sigma$ which tropicalizes $g_j$ for every $j=0,\dots,d$ by a Green function $g_{j,\trop}$ of piecewise smooth type for $D_{j,\trop}$ on $N_\Sigma$. 
For the tropical cycle $C=\psi_{\trop,*}(X)$, we have
\[
\lambda_{\widehat{D_0}, \dots, \widehat{D_d}}(X)=((D_{0,\trop},g_{0,\trop})* \dots * (D_{d,\trop},g_{d,\trop})*(C,0))(1).
\]
\end{thm}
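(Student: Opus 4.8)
The strategy is to realize both sides of the asserted identity as integrals of $\delta$-forms (or formal $\delta$-currents evaluated at $1$) and to transport the non-archimedean $*$-product to the tropical $*$-product via the extended tropicalization $\psitrop$. First I would apply Theorem \ref{global tropicalization result} (equivalently Propositions \ref{global tropicalization for quasi-projective} and \ref{tropicalization of Green functions}, passed to the product of the relevant closed embeddings) to produce a single closed immersion $\psi\colon X \to X_\Sigma$ into a projective toric variety, together with effective toric Cartier divisors $E_j$ restricting to $D_j$ with associated tropical toric Cartier divisors $D_{j,\trop}$, and Green functions $g_{j,\trop}$ of piecewise smooth type for $D_{j,\trop}$ on $N_\Sigma$ such that $g_j = g_{j,\trop}\circ\psitrop$ for $j=0,\dots,d$. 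Here one must check that the proper-intersection hypothesis on $D_0,\dots,D_d$ on $X$ is compatible with choosing the $E_j$ effective; this is handled exactly as in \ref{tropicalization of Cartier divisors} and the discussion in Remark \ref{compatibility of wedge and tropicalization}, using that the support of a toric Cartier divisor is a union of boundary strata and that preimages respect proper intersection.

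Next I would set $C \coloneqq \psi_{\trop,*}(X)$, which by Example \ref{integration and global tropicalization} is a classical tropical cycle of pure dimension $d$ on $N_\Sigma$, and I would invoke Lemma \ref{star-product and tropicalization} repeatedly. Concretely, starting from $(X,0)$ on the non-archimedean side and $(C,0)=\psi_{\trop,*}(X,0)$ on the tropical side, an induction on $j$ using Lemma \ref{star-product and tropicalization} gives
\[
\psi_{\trop,*}\bigl((D_j,g_j)*\dots*(D_d,g_d)*(X,0)\bigr) = (D_{j,\trop},g_{j,\trop})*\dots*(D_{d,\trop},g_{d,\trop})*(C,0),
\]
where at each step one needs that $D_{j,\trop}$ intersects the intermediate tropical cycle properly; this follows from Remark \ref{compatibility of wedge and tropicalization} together with the fact, recorded there, that for a toric Cartier divisor proper intersection on $X$ transports to proper intersection of $D_{j,\trop}$ with the tropicalized cycle. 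After $d+1$ steps we obtain the identity of formal $\delta$-currents
\[
\psi_{\trop,*}\bigl((D_0,g_0)*\dots*(D_d,g_d)*(X,0)\bigr) = (D_{0,\trop},g_{0,\trop})*\dots*(D_{d,\trop},g_{d,\trop})*(C,0)
\]
on $N_\Sigma$.

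Finally I would evaluate both sides at the function $1$. The left-hand side pushes forward a formal $\delta$-current in $E^{d+1,d+1}$ — wait, more precisely, the $*$-product lands in $E^{p,p}$ with $p$ increasing by one at each step, so after $d+1$ divisors we have an element of $E^{d+1,d+1}(\Xan)$ paired against $B_c^{-1,-1}$; the point is that the cycle component $D_0\cdot\dots\cdot D_d\cdot X$ has dimension $-1$ hence is zero, and the $*$-product's Green-current component is what gets evaluated at $1\in B^{0,0}$. Since $\psitrop$ is proper (it is $\trop\circ\psi^\an$ with $\psi$ a closed immersion and $\trop$ proper, see \ref{push-forward of delta-currents}), push-forward of formal $\delta$-currents is adjoint to pull-back of compactly supported $\delta$-forms, and $\psi_\trop^*(1) = 1$, so
\[
\bigl((D_0,g_0)*\dots*(D_d,g_d)*(X,0)\bigr)(1) = \bigl(\psi_{\trop,*}((D_0,g_0)*\dots*(D_d,g_d)*(X,0))\bigr)(1).
\]
Combining this with the displayed tropicalization identity and the definition of the local height in \ref{definition of local heights} yields
\[
\lambda_{\widehat{D_0},\dots,\widehat{D_d}}(X) = \bigl((D_{0,\trop},g_{0,\trop})*\dots*(D_{d,\trop},g_{d,\trop})*(C,0)\bigr)(1),
\]
which is the claim. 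The main obstacle I anticipate is bookkeeping the proper-intersection conditions through the induction: one must ensure that at every stage the tropical toric Cartier divisor still meets the current intermediate tropical cycle properly so that the next $*$-product (and in particular the term $g_{j,\trop}\wedge\delta_{C_j}$ from Remark \ref{single proper intersection and wedge product}) is defined — this is precisely where effectivity of the $E_j$ and the toric description of their supports is used, via Remark \ref{compatibility of wedge and tropicalization}. Everything else is a formal consequence of Lemma \ref{star-product and tropicalization}, properness of $\psitrop$, and the definitions.
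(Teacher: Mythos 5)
Your proposal is correct and follows essentially the same route as the paper: obtain the joint tropicalization from Proposition \ref{tropicalization of Green functions} (via a product of closed embeddings into multiprojective space), iterate Lemma \ref{star-product and tropicalization}, and evaluate at $1$ using properness of $\psitrop$ together with $\psi_\trop^*(1)=1$. The only quibble is a bookkeeping slip: after all $d+1$ star-products the current component lies in $E^{d,d}(\Xan)$, paired against $B_c^{0,0}(\Xan)\ni 1$, not in $E^{d+1,d+1}$, but this does not affect the argument.
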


\begin{proof}
The existence of the joint tropicalization $\psi\colon X \to X_\Sigma$ follows from Proposition \ref{tropicalization of Green functions}. 
In fact, we might choose a closed embedding into a multi-projective space. 
By definition, we have
\[
\lambda_{\widehat{D_0}, \dots, \widehat{D_d}}(X)=(\psi_{\trop,*}((D_0,g_0)* \dots * (D_d,g,d)*(X,0)))(1).
\]
Applying  Lemma \ref{star-product and tropicalization} to the right hand side, we get the claim.
\end{proof}


\end{document}